\documentclass[11pt]{article}
\usepackage[includeheadfoot,margin=1in]{geometry}
\usepackage{times}
\usepackage{setspace}
\usepackage{amsmath,amsfonts,amssymb,amsthm}
\usepackage{mathtools}
\usepackage{enumerate}
\usepackage{verbatim}
\usepackage{commath}
\usepackage{upgreek}
\usepackage{bm}
    \usepackage[usenames]{color}
    \definecolor{plum}  {rgb}{.4,0,.4}
    \definecolor{BrickRed} {rgb}{0.6,0,0}
	\definecolor{DarkBlue} {rgb}{0,0,0.6}
\usepackage[plainpages=false,pdfpagelabels,colorlinks=true,linkcolor=BrickRed,citecolor=plum,urlcolor=DarkBlue]{hyperref}
\usepackage[mathscr]{euscript}
\usepackage[round]{natbib}
\usepackage{cleveref}

\def\ddefloop#1{\ifx\ddefloop#1\else\ddef{#1}\expandafter\ddefloop\fi}

\def\ddef#1{\expandafter\def\csname b#1\endcsname{\ensuremath{\mathbb{#1}}}}
\ddefloop ABCDEFGHIJKLMNOPQRSTUVWXYZ\ddefloop

\def\ddef#1{\expandafter\def\csname bf#1\endcsname{\ensuremath{\mathbf{#1}}}}
\ddefloop ABCDEFGHIJKLMNOPpQRSTUuVvWXYZz\ddefloop

\def\ddef#1{\expandafter\def\csname c#1\endcsname{\ensuremath{\mathcal{#1}}}}
\ddefloop ABCDEFGHIJKLMNOPQRSTUVWXYZ\ddefloop

\def\ddef#1{\expandafter\def\csname s#1\endcsname{\ensuremath{\mathsf{#1}}}}
\ddefloop ABCDEFGHIJKLMmNOPQRSTUuVWXYZ\ddefloop

\def\ddef#1{\expandafter\def\csname e#1\endcsname{\ensuremath{\mathscr{#1}}}}
\ddefloop ABCDEFGHIJKLMNOPQRSTUVWXYZ\ddefloop

\def\ddef#1{\expandafter\def\csname f#1\endcsname{\ensuremath{\mathfrak{#1}}}}
\ddefloop ABCDEFGgHIJKLMNOPQRSTUVWXYZ\ddefloop

\def\ddef#1{\expandafter\def\csname r#1\endcsname{\ensuremath{\mathrm{#1}}}}
\ddefloop ABCDEFGHIJKLMNOPQRSTUVWXYZ\ddefloop
\def\Reals{{\mathbb R}}

\def\p{{\partial}} % partial derivative
\def\Ex{{\mathbf E}} % Exation
\def\Pr{{\mathbf P}} % probability
\def\eps{\varepsilon}

\DeclareMathOperator*{\diam}{diam}
\DeclareMathOperator*{\Var}{Var}
\DeclareMathOperator*{\Cov}{Cov}

\let\set\relax   % undefine commath's \set
\let\inner\relax
\let\norm\relax
\let\abs\relax

\DeclarePairedDelimiter{\set}{\{}{\}}
\DeclarePairedDelimiter{\norm}{\lVert}{\rVert}
\DeclarePairedDelimiter{\abs}{\lvert}{\rvert}
\DeclarePairedDelimiter{\floor}{\lfloor}{\rfloor}

\DeclarePairedDelimiterX{\inner}[2]{\langle}{\rangle}{#1,\,#2}

\newcommand{\deq}{\coloneqq}
\newcommand{\eqd}{\eqqcolon}

\newtheorem{theorem}{Theorem}
\newtheorem{definition}{Definition}

\newtheorem{proposition}{Proposition}
\newtheorem{lemma}{Lemma}
\newtheorem{corollary}{Corollary}
\newtheorem{remark}{Remark}

\allowdisplaybreaks

\begin{document}
\title{Upper and Lower Bounds on Expected Soft Maxima\\of Gaussian 
Processes}
\author{Yifeng Chu\thanks{University of Illinois, Urbana, IL 61801, USA.}\\
\href{mailto:ychu26@illinois.edu}{ychu26@illinois.edu} \and Maxim Raginsky\thanks{University of Illinois, Urbana, IL 61801, USA.}\\ \href{mailto:maxim@illinois.edu}{maxim@illinois.edu}}
\date{}
\maketitle
\begin{abstract}
We obtain upper and lower bounds for ``smoothed'' versions of the expected supremum of centered 
Gaussian processes with finite or countable index sets. These so-called \textit{soft maxima} are computed in terms of expected values of random Gibbs averages at inverse temperature $\beta > 0$ and reduce to expected suprema in the zero-temperature limit $\beta \to \infty$. Our analysis builds on ideas from statistical physics and information theory, and relies crucially 
on the tensorization technique introduced recently by \citet{liuSimpleSharpGeneralization2025}.
The bounds retain the same multiscale structure as in the expressions for the expected supremum derived using the method of generic chaining, with a truncation term governed
by the inverse temperature $\beta$. In the zero-temperature limit, we recover the majorizing measure theorem. As an illustrative example, we apply our results to the analysis of the quenched free energy 
in the Sherrington--Kirkpatrick model and obtain a Parisi formula in the finite system size setting. 
\end{abstract}

\tableofcontents

\section{Introduction and statement of main results}

Let $X = (X_t)_{t \in T}$ be a centered Gaussian process, where the index set $T$ is finite or countable. The method of generic chaining \citep{talagrand_2014} gives sharp upper and lower bounds on the expected supremum of $X$ in terms of the geometry of the index set $T$ induced by the canonical metric $d(s,t) := (\Ex|X_s - X_t|^2)^{1/2}$. The key idea is that, for finite $T$, $\sqrt{\log |T|}$ is a good proxy for $\Ex[\sup_{t \in T}X_t]$, in the sense that\footnote{We will use the following notation throughout the paper: The letters $c,C$ will denote universal constants. We will write $x \lesssim  y$ 
(resp.\ $x \gtrsim y$) if
$x \le Cy$ (resp.\ $x \ge c y$) for some universal constant $c,C >0$ 
and $x \asymp y$ if $x \gtrsim y$ and $x \lesssim y$.
Finally, $x \wedge y \deq \min(x,y)$, $x \vee y \deq \max(x,y)$, and $(x)_+ \deq x \vee 0$.}
\begin{align}\label{eq:finite_T}
	a \sqrt{\log |T|} \lesssim \Ex\Big[\sup_{t \in T}X_t\Big] \lesssim \Delta\sqrt{\log |T|},
\end{align}
where $a = \min \{d(s,t) : s,t \in T, s \neq t\}$ is the minimum separation between the elements of $T$ and $\Delta = \max \{d(s,t) : s,t \in T\}$ is the diameter of $T$. The upper bound in \eqref{eq:finite_T} is by the well-known maximal inequality for Gaussian random variables, while the lower bound follows from Sudakov minoration. Another important point is that the centered Gaussian random variables $X_s$ and $X_t$ are highly correlated when $d(s,t)$ is very small and nearly independent when $d(s,t)$ is very large.  The method of generic chaining is a multiscale iterative application of these ideas, where at each scale $n=0,1,2,\dots$ one approximates the process $X$ by its ``subsampled'' version on a finite set of indices $T_n \subseteq T$ satisfying the cardinality bound $|T_n| < 2^{2^n}$. 

While the expected supremum is a natural measure of the size of $X$, various ``smoothed'' versions of it are of interest as well, arising in connection with concentration and superconcentration phenomena in Gaussian random fields \citep{Chatterjee_2014} or statistical physics of disordered systems \citep{Talagrand_MF,panchenko_sherrington-kirkpatrick_2013}. Consider again the case when $T$ is finite. Let $\mu$ be a probability measure on $T$ that assigns positive mass to every singleton. Then $\max_{t \in T} X_t$ can be bounded from above and from below, for every $\beta > 0$, as follows:\footnote{Here, and everywhere throughout the paper, $\log$ denotes natural logarithms.}
\begin{align}\label{eq:softmax_bounds}
	\begin{split}
	\max_{t \in T} X_t - \frac{1}{\beta} \log \max_{t \in T}\frac{1}{\mu(t)}\le \frac{1}{\beta}\log \sum_{t \in T}\mu(t) e^{\beta X_t} \le \max_{t \in T} X_t; \\
	\max_{t \in T} X_t - \frac{1}{\beta} \log \max_{t \in T}\frac{1}{\mu(t)} \le \frac{\sum_{t \in T}\mu(t) X_t e^{\beta X_t}}{\sum_{t \in T}\mu(t) e^{\beta X_t}} \le \max_{t \in T} X_t.
	\end{split}
\end{align}
Moreover, as we increase the parameter $\beta > 0$, the two quantities in the middle of the inequalities in \eqref{eq:softmax_bounds} vary from the average value $\sum_{t \in T}\mu(t) X_t$ at $\beta = 0$ to the maximum $\max_{t \in T} X_t$ in the limit as $\beta \to \infty$. In this paper, we obtain sharp upper and lower bounds on the expected values of such ``soft maxima" of Gaussian processes. These bounds can be seen as the natural analogues of the chaining-type bounds for the expected supremum \citep{talagrand_2014} or their earlier incarnations dating back to the seminal works of \citet{Fernique_1975,Fernique_1978} and \citet{Talagrand_1987,Talagrand_1992}. In addition to the probabilistic tools that are commonly used in this literature, our analysis draws heavily on ideas from statistical physics and from information theory. In particular, we make fundamental use of the tensorization technique introduced recently by \citet{liuSimpleSharpGeneralization2025} in his elegant information-theoretic approach to the study of empirical processes. 

%%%%%%%%%%%%%%%%%%%%%%%%%%%%%%%%%%%%

\paragraph{Gaussian processes and random Gibbs measures.} We consider centered Gaussian processes  $(X_t)_{t \in T}$, where the index set $T$ is either finite or countable. We will often treat such a process as a random element $X$ of the space $\Reals^T$ of real-valued functions on $T$, and will denote its probability law by $P_X$. We will make use of the following definition of stationarity \citep{Fernique_1975}:

\begin{definition}\label{def:stationary} We say that $X$ is a {\em stationary process} if there exists a group ${\mathbb G}$ acting transitively on $T$, such that $d(g(s),g(t)) = d(s,t)$ for all $g \in {\mathbb G}$ and all $s,t \in T$, where
\begin{align}\label{eq:canonical_metric}
	d(s,t) \deq \big( \Ex (X_s - X_t)^2\big)^{1/2}
\end{align}
is the canonical metric on $T$ associated with $X$.\end{definition}

\noindent Let $\eP(T)$ denote the space of probability measures on $T$. Given a constant $\beta > 0$, a probability measure $\mu \in \eP(T)$, and a Gaussian process $X$ on $T$, we denote by $\sm_{\beta,\mu}$ the random element of $\eP(T)$ given by\footnote{For any $F \in \Reals^T$, $\langle \mu, F\rangle \deq \sum_{t \in T}\mu(t)F_t$ denotes the expected value of $F$ with respect to $\mu$.}
\begin{align}
	\frac{\dif \sm_{\beta,\mu}}{\dif \mu} = \frac{e^{\beta X}}{\inner{\mu} {e^{\beta X}}}
\end{align}
and by $\uptau_{\beta,\mu}$ the random element of $T$ whose (regular) conditional probability law given $X$ is equal to $\sm_{\beta,\mu}$. The marginal probability law of $\uptau_{\beta,\mu}$ will be denoted by $\overline{\sm}_{\beta,\mu}$. We further define the quantities
\begin{align}
	\Phi_\beta(X;\mu)  &\deq \frac{1}{\beta}\log \inner{\mu}{e^{\beta X}} = \frac{1}{\beta}\log \sum_{t \in T}\mu(t) e^{\beta X_t}
\end{align}
and
\begin{align}
	\langle X \rangle_{\beta,\mu} &\deq \inner{\sm_{\beta,\mu}}{X},
\end{align}
as well as their expected values with respect to $P_X$:
\begin{align}
	\eF_\beta(P_X; \mu) &\deq \Ex \Phi_\beta(X;\mu), \\
	\eG_\beta(P_X; \mu) &\deq \Ex \langle X \rangle_{\beta,\mu}
\end{align}
With a slight abuse of notation, we will also write $\eF_\beta(X; \mu)$ and $\eG_\beta(X; \mu)$ even though these functionals depend only on the probability law of $X$.

From the viewpoint of statistical physics of disordered systems \citep{bovier_statistical_2006}, we interpret the index set $T$ as the state space of a physical system and $-X_t$ as the \textit{random} energy of the state $t \in T$. Then $\sm_{\beta,\mu}$ is the random Gibbs measure, $\Phi_\beta(X;\mu)$ is the equilibrium free energy, and $\langle X \rangle_{\beta,\mu}$ is the Gibbs average, all computed at inverse temperature $\beta$ with respect to the base measure $\mu$. The expected values $\eF_\beta(P_X;\mu)$ and $\eG_\beta(P_X;\mu)$ are referred to as the quenched equilibrium free energy and the quenched Gibbs average, respectively. 

\paragraph{Some notions from information theory.} We summarize here the key definitions and concepts from information theory that will be used throughout the paper. The reader can consult the texts of \citet{cover_thomas} or \citet{polyanskiyInformationTheoryCoding2024} for more details. Given a Borel space $\cY$,
the relative entropy (or Kullback-Leibler divergence) $D(\mu \| \nu)$
between $\mu, \nu \in \eP(\cY) $  is defined as
\begin{align}
	D(\mu \| \nu) \deq \begin{cases}\left\langle\mu,\log \displaystyle\frac{\dif\mu}{\dif \nu}\right\rangle, & \text{if } \mu \ll \nu \\
	+ \infty, & \text{otherwise}
\end{cases},
\end{align}
where $\ll$ denotes absolute continuity of measures. The relative entropy is always nonnegative, and $D(\mu \| \nu) = 0$ iff $\mu = \nu$. For a random element $(Y,Z)$ of a product space $\cY \times \cZ$ with joint law
$P_{YZ}$, the mutual information $I(Y ; Z)$ between $Y$ and $Z$ is defined as
$D(P_{YZ}\| P_Y \otimes P_Z )$. Let a metric $d$ on $T$ be given. The following definition is key:

\begin{definition}\label{def:RDF} For a fixed $\mu \in \eP(T)$, the {\em Shannon rate-distortion function} is defined as
\begin{align}\label{eq:RDF}
	R_\mu(\epsilon)& \deq \inf_{(\tau,\tau'): \, P_\tau = \mu} \left \{ I(\tau;\tau') : \Ex d^2(\tau,\tau') \le \epsilon \right\},
\end{align}
where the infimum is over random elements $(\tau,\tau')$ of $T \times T$ such that $P_\tau = \mu$.
\end{definition}
\noindent An equivalent expression for $R_\mu(\epsilon)$ is
\begin{align*}
	R_\mu(\epsilon) = \inf_{P_{\tau'|\tau}} \left \{ I(\tau;\tau') : \Ex d^2(\tau,\tau') \le \epsilon \right\},
\end{align*}
where the infimum is over all regular conditional probability laws $P_{\tau'|\tau}$ of $\tau'$ given $\tau$ with $P_\tau = \mu$. The rate-distortion function plays a key role in the work of \citet{liuSimpleSharpGeneralization2025}, and it will occupy a prominent position in our development as well.

\paragraph{Main results: the case of finite $T$.} We first consider the case when the index set $T$ is finite. Let $\Delta$ denote the diameter of the metric space $(T,d)$, where $d$ is the natural metric associated with $X$:
\begin{align*}
	\Delta = \diam(T) \deq \max \left\{ d(s,t) : s,t \in T \right\}.
\end{align*}
For any $t \in T$ and $\epsilon \ge 0$, 
\begin{align*}
	B(t,\epsilon) \deq \left\{ s \in T : d(s,t) \le \epsilon \right\}
\end{align*}
is the closed ball of radius $\epsilon$ centered at $t$. The \textit{covering number} $\sN(A,\epsilon)$ of any $A \subseteq T$ is the smallest number of such $\epsilon$-balls that cover $A$. We will write $B(t,\epsilon,d)$ and $\sN(A,\epsilon,d)$ whenever we need to indicate the metric $d$ explicitly (e.g., when we are working with several different metric structures on $T$). 

From \eqref{eq:softmax_bounds}, it is readily seen that the quantities
\begin{align*}
	\sup_{\mu \in \eP(T)} \eF_\beta(P_X; \mu) \quad \text{and} \quad \sup_{\mu \in \eP(T)} \eG_\beta(P_X; \mu)
\end{align*}
are the natural proxies for the expected supremum of $X$ at a given inverse temperature $\beta > 0$. Our first two results deal with the case when $X = (X_t)_{t \in T}$ is stationary (recall Definition~\ref{def:stationary}). Define the following function of $n \in \bN$ and $\beta > 0$:
\begin{align}\label{eq:eta}
	\eta(n,\beta) \deq \beta^2 \wedge \log n.
\end{align}
This definition encodes the difference between low-temperature (large $\beta$) and high-temperature (small $\beta$) regimes, where the breakpoint occurs at $\beta = \sqrt{\log n}$. As an illustration, let $G = (G_i)_{i \le n}$ be a vector of i.i.d.\ standard Gaussian random variables, and let $\su_n$ denote the uniform probability distribution on the set $[n] \deq \{1,\dots,n\}$. We will show in Lemma~\ref{lem:iid} that
\begin{align*}
	\frac{1}{\beta}\Ex \log \Bigg(\frac{1}{n}\sum^n_{i=1}e^{\beta G_i}\Bigg) \equiv \eF_\beta(G; \su_n) \asymp \sqrt{\eta(n,\beta)},
\end{align*}
where the linear (high-temperature) portion of $\sqrt{\eta}$ reflects the law of large numbers (LLN) behavior, i.e., for small $\beta$ and large $n$,
\begin{align*}
  \frac{1}{\beta}\Ex \log \Bigg(\frac{1}{n} \sum_{i=1}^n e^{\beta  G_i}\Bigg) 
  \approx \frac{1}{\beta} \log \Ex [e^{\beta G_1}] = \frac{\beta}{2},
\end{align*}
whereas the constant (low-temperature) portion of $\sqrt{\eta}$ reflects the fact that, as $\beta \to \infty$,  $\eF_\beta(G ; \su_n)$ saturates at 
$\Ex[\max_{1 \le i \le n} G_i]\asymp \sqrt{\log n}$. This intuition can be made precise in the rigorous analysis of the thermodynamic limit formula for the so-called \textit{Random Energy Model} or REM (see e.g. \cite{bovier_statistical_2006}). In particular, the linear (high-temperature) scaling of $\eF_\beta(G; \su_n)$ with $\beta$ when $\beta$ is small and $n$ is sufficiently large arises from the fact that the \emph{quenched free energy} $\eF_\beta(G; \su_n)$ is approximately 
equal to the \textit{annealed free energy} $\frac{1}{\beta} \log \Ex[e^{\beta G_1}]$ due to law of large numbers. (In a more general setting of limit theorems for sums of random exponentials \citep{ben_arous_2005}, the high-temperature regime corresponds to LLN-like behavior, whereas the low-temperature regime is better described by extreme value theory.)

With this motivating discussion out of the way, we can state our first main result:

\begin{theorem}\label{thm:stationary_1}
	Let $X = (X_t)_{t\in T}$ be a centered stationary Gaussian process, $\Delta = 1$.
	For a fixed $r \ge 4$ and any $\beta > 0$,
	\begin{align*}
	\sup_{\mu \in \eP(T)}\eF_\beta(P_X;\mu) \asymp \sup_{\mu \in \eP(T)}\eG_\beta(P_X;\mu) 
	\asymp \sum_{k \ge 1} r^{-k } \sqrt{\eta(n_k, \beta  r^{-k})} 
	\end{align*}
	where $n_k=\sN(B(r^{-k+1}),{r^{-k}})$ and $B(r^{-k})$ is a $r^{-k}$-ball at any center.
\end{theorem}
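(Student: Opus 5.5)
We will prove a chain of inequalities: a lower bound $\sup_\mu \eG_\beta \gtrsim \sum_k r^{-k}\sqrt{\eta(n_k,\beta r^{-k})}$, the comparison $\sup_\mu\eF_\beta \asymp \sup_\mu\eG_\beta$, and a matching upper bound for $\sup_\mu \eF_\beta$. For the comparison, note that $\frac{\dif}{\dif\beta}\big(\beta\Phi_\beta(X;\mu)\big) = \langle X\rangle_{\beta,\mu}$ and that $\beta\mapsto \langle X\rangle_{\beta,\mu}$ is nondecreasing, since its derivative is a Gibbs variance. Hence
\begin{align*}
\eF_\beta(X;\mu) = \frac1\beta\int_0^\beta \eG_s(X;\mu)\,\dif s \le \eG_\beta(X;\mu).
\end{align*}
Since this average also satisfies $\eF_\beta \ge \tfrac12 \eG_{\beta/2}$, it suffices to show that the right-hand side $S(\beta)\deq\sum_k r^{-k}\sqrt{\eta(n_k,\beta r^{-k})}$ satisfies $S(\beta/2)\asymp S(\beta)$. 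This holds because $\eta(n,\beta/2)\ge \eta(n,\beta)/4$.

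\textbf{Upper bound.} Take a Gibbs variational (Donsker--Varadhan) representation,
\begin{align*}
\Phi_\beta(X;\mu)=\sup_{\nu}\big\{\inner{\nu}{X}-\beta^{-1}D(\nu\|\mu)\big\}.
\end{align*}
For the random maximizer $\nu=\sm_{\beta,\mu}$, we have $\Ex\langle X\rangle_{\beta,\mu} = \Ex X_{\uptau}$, where $\uptau=\uptau_{\beta,\mu}$. Then I would apply Liu's tensorization and chaining scheme. Build a chain of quantizations $\uptau_k$ of $\uptau$ at scales $r^{-k}$, with $\uptau_0$ fixed, and write
\begin{align*}
\Ex X_\uptau=\sum_k \Ex\big(X_{\uptau_k}-X_{\uptau_{k-1}}\big).
\end{align*}
Bound each increment in two ways. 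The first is a decoupling (Donsker--Varadhan plus Gaussian mgf) bound of order $r^{-k}\sqrt{I_k}$, where $I_k$ is the conditional mutual information between $X$ and $\uptau_k$ given $\uptau_{k-1}$. By stationarity this is at most $\log n_k$, because given $\uptau_{k-1}$ only about $n_k$ values are possible. The second is a bound of order $\beta r^{-2k}$, obtained by exploiting the entropy penalty $\beta^{-1}D$: by stationarity the optimal $\mu$ can be taken uniform, or at least invariant, and then the information $I_k$ is paid for at rate $1/\beta$. Optimizing $r^{-k}\sqrt{I_k}-I_k/\beta$ gives at most $\beta r^{-2k}$. Together these give $r^{-k}\min(\sqrt{\log n_k},\beta r^{-k}) = r^{-k}\sqrt{\eta(n_k,\beta r^{-k})}$. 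Summing over $k$ gives the upper bound, up to constants depending only on $r\ge4$.

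\textbf{Lower bound.} Construct $\mu$ hierarchically: at each level $k$, split each $r^{-k+1}$-ball uniformly among $n_k$ well-separated children. Packing numbers at separation about $r^{-k}$ are comparable to covering numbers, and by stationarity the count is the same at every center. Then use Lemma~\ref{lem:iid} scale by scale, in the spirit of Talagrand's proof for stationary processes. Children at separation $\gtrsim r^{-k}$ behave, through Sudakov/Slepian comparison, at least like i.i.d.\ Gaussians of variance $\asymp r^{-2k}$. The free energy then decomposes recursively: $\beta\Phi$ at level $k$ is at least the level-$k$ free energy of $n_k$ roughly independent pieces, plus the conditional free energy of the subtrees. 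Induct from the finest level, using Jensen to pass the expectation inside. Each level contributes $\eF_{\beta r^{-k}}(G;\su_{n_k})\,r^{-k}\asymp r^{-k}\sqrt{\eta(n_k,\beta r^{-k})}$.

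\textbf{Main obstacle.} The hardest step is the lower bound's decoupling. Slepian's comparison with an i.i.d.\ tree requires controlling correlations across branches, and the ratio $r\ge4$ is what ensures separation dominates the within-branch diameters. I would first try a tree-indexed comparison process $Y$ with $d_Y\le d_X$ on the chosen leaves, and apply Sudakov–Fernique/Slepian to the soft maximum. The soft maximum is a smooth, monotone function with nonnegative mixed second derivatives in the right sense, so Gaussian interpolation should apply.
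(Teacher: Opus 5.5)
Your comparison step is fine and matches Lemma~\ref{lm:equiv_soft}. The genuine gap is in the lower bound, at the step you flag as the main obstacle.

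\textbf{Lower bound.} Your construction takes children $s_1,\dots,s_{n_k}$ that are $r^{-k}$-separated inside an $r^{-k+1}$-ball. Each child then carries the whole ball $B(s_i,r^{-k})$ to be split at the next level. Under this construction, leaves in different branches need not be separated at all. The balls $B(s_i,r^{-k})$ and $B(s_j,r^{-k})$ overlap whenever $d(s_i,s_j)<2r^{-k}$, so descendants of different children can coincide. The subtree radius $\sum_{j>k}r^{-j+1}$ exceeds $r^{-k}$ for every $r$. Hence no comparison process with $d_Y\le d_X$ and cross-branch increments of order $r^{-k}$ exists, and taking $r\ge4$ does not help.

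The missing idea, which is how the paper uses $r\ge4$, is to pack at separation $r^{-k}=4\sigma$ but recurse only into the smaller balls $B(s_i,\sigma)$. Cross-branch distances are then $\ge2\sigma$. The process $Y_t=X^{(i)}_t-X^{(i)}_{s_i}+\sigma G_i$ on $B(s_i,\sigma)$ satisfies $d_Y\le d_X$, which is Proposition~\ref{prop:one_step_low}. Since $r\ge4$ gives $B(s_i,r^{-k}/4)\supseteq B(s_i,r^{-k-1})$, monotonicity and stationarity of $\zeta_\beta(A)\deq\sup_{\mu\in\eP(A)}\eF_\beta(X;\mu)$ give
$\zeta_\beta(B(r^{-k+1}))\gtrsim r^{-k}\sqrt{\eta(n_k,\beta r^{-k})}+\zeta_\beta(B(r^{-k-1}))$.
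This recursion advances two scales per step, so it collects only every other term. The full sum is recovered by running it from $B(t_0,1)$ and from $B(t_0,r^{-1})$ and using $2\zeta_\beta(T)\ge\zeta_\beta(B(t_0,1))+\zeta_\beta(B(t_0,r^{-1}))$.

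A smaller point on the interpolation: for $s\ne t$ the mixed derivatives are $\partial_s\partial_t\Phi_\beta=-\beta\,\sm_{\beta,\mu}(s)\sm_{\beta,\mu}(t)\le0$, i.e.\ nonpositive. Together with $\Phi_\beta$ commuting with adding constants, this is what turns $d_Y\le d_X$ into $\eF_\beta(Y;\mu)\le\eF_\beta(X;\mu)$.

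\textbf{Upper bound.} Your route here genuinely differs from the paper's. The paper uses Proposition~\ref{prop:one_step_up}: Gaussian interpolation to independent copies of $X$ on the cells plus shifts $\frac{a}{\sqrt{2}}G_i$, then Lemmas~\ref{lem:concentration} and~\ref{lem:iid}, iterated over balls. Your route can be completed, but one step is false as stated. Donsker--Varadhan plus the Gaussian mgf does not give $\Ex[X_{\uptau_k}-X_{\uptau_{k-1}}]\lesssim r^{-k}\sqrt{I(X;\uptau_k\mid\uptau_{k-1})}$, because conditionally on $\uptau_{k-1}$ the process $X$ is no longer centered. For instance, if $\uptau_{k-1}$ is selected by $X$ and $\uptau_k$ is a function of $\uptau_{k-1}$, the conditional information is $0$ while the increment has positive mean.

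The fix is as follows.
\begin{enumerate}
\item Decouple against $P_X\otimes P_{\uptau_k}$ instead. With nested partitions and representatives inside the cells, this gives $\Ex[X_{\uptau_k}-X_{\uptau_{k-1}}]\le C_r r^{-k}\sqrt{I(X;\uptau_k)}$.
\item By the chain rule, $I(X;\uptau_k)=\sum_{j\le k}I_j$ with $I_j\deq I(X;\uptau_j\mid\uptau_{j-1})$. Hence $\sqrt{I(X;\uptau_k)}\le\sum_{j\le k}\sqrt{I_j}$, and swapping sums gives $\Ex X_{\uptau}\le C_r'\sum_j r^{-j}\sqrt{I_j}$.
\item By Lemma~\ref{lm:identities}, $\eF_\beta(X;\mu)\le\Ex X_\uptau-\beta^{-1}I(X;\uptau)$ for every $\mu$, so no uniform $\mu$ is needed.
\item Once the finest cells are singletons, $\sum_j I_j=I(X;\uptau)$.
\item By stationarity, each level-$(j-1)$ cell lies in an $r^{-j+1}$-ball and splits into at most $n_j$ cells, so $I_j\le\log n_j$.
\end{enumerate}
Your termwise optimization of $C_r' r^{-j}\sqrt{I_j}-I_j/\beta$ then yields the bound. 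This route needs only subgaussian increments and no interpolation. The paper's one-step proposition, by contrast, is reusable and mirrors the lower bound.
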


\noindent Our next result shows that the 
the weighted ``soft'' local metric entropy is equivalent to the weighted global
one up to some multiplicative constant.
Thus, we obtain a soft version of Fernique's theorem for stationary Gaussian processes:

\begin{theorem}\label{thm:stationary_2}
	  With the same assumption as in Theorem~\ref{thm:stationary_1},
	\begin{align*}
	\sup_{\mu \in \eP(T)}\eF_\beta(P_X;\mu) \asymp \sup_{\mu \in \eP(T)}\eG_\beta(P_X;\mu) 
	\asymp 
	\int_0^\Delta 
	\sqrt{\eta(\sN_\epsilon,\beta \epsilon)} \dif \epsilon
	\end{align*}
	where $\sN_\epsilon \deq \sN(T,\epsilon)$.
\end{theorem}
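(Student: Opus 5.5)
Given Theorem~\ref{thm:stationary_1}, the first two equivalences are already established, so it remains to show
\begin{align*}
\sum_{k \ge 1} r^{-k}\sqrt{\eta(n_k,\beta r^{-k})} \asymp \int_0^{1}\sqrt{\eta(\sN_\epsilon,\beta\epsilon)}\,\dif\epsilon
\end{align*}
with $\Delta=1$ and $r\ge 4$ fixed. First discretize the integral along the geometric scale. Both $\epsilon\mapsto \sN_\epsilon$ and $\epsilon \mapsto \beta\epsilon$ are monotone, and $\sqrt{\eta(n,b)} = b \wedge \sqrt{\log n}$. Hence on $[r^{-k},r^{-k+1}]$ the integrand is at most $\sqrt{\eta(\sN_{r^{-k}},\beta r^{-k+1})} \le r\sqrt{\eta(\sN_{r^{-k}},\beta r^{-k})}$ and at least $\sqrt{\eta(\sN_{r^{-k+1}},\beta r^{-k})}$. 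This gives
\begin{align*}
\int_0^1\sqrt{\eta(\sN_\epsilon,\beta\epsilon)}\,\dif\epsilon \asymp \sum_{k\ge1} r^{-k}\sqrt{\eta(N_k,\beta r^{-k})}, \qquad N_k \deq \sN_{r^{-k}},
\end{align*}
where an index shift by one scale costs only a factor depending on $r$.

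The comparison then reduces to local versus global covering numbers. The local-to-global inequality is $n_k \le N_k$: any cover of $T$ by $r^{-k}$-balls, restricted to the ball $B(r^{-k+1})$, covers it after doubling the radius. To avoid the factor $2$, compare instead with $\sN(\cdot, r^{-k}/2)$ and absorb the shift using $r\ge4$. Together with monotonicity of $\eta$ in $n$, this yields $\sum_k \lesssim \int$.

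For the reverse direction, iterate covering across scales: $N_k \le N_{k-1}\cdot \sup_t \sN(B(t,r^{-k+1}),r^{-k})$, and by stationarity the supremum equals $n_k$ up to radius adjustments. Thus $N_k \le \prod_{j\le k} n_j'$ with $n_j'$ local covering numbers at comparable scales, and $\log N_k \le \sum_{j\le k}\log n_j'$. I then need
\begin{align*}
\sum_k r^{-k}\Big(\beta r^{-k}\wedge \Big(\sum_{j\le k}\log n_j'\Big)^{1/2}\Big) \lesssim \sum_k r^{-k}\big(\beta r^{-k}\wedge \sqrt{\log n_k'}\big).
\end{align*}
Using $\sqrt{\sum_j a_j}\le \sum_j\sqrt{a_j}$ and the elementary inequality $b\wedge(x+y)\le b\wedge x + b\wedge y$, the left side is at most
\begin{align*}
\sum_k\sum_{j\le k} r^{-k}\big(\beta r^{-k}\wedge\sqrt{\log n_j'}\big) \le \sum_j\sum_{k\ge j} r^{-k}\big(\beta r^{-j}\wedge \sqrt{\log n_j'}\big),
\end{align*}
since $\beta r^{-k}\le\beta r^{-j}$ for $k\ge j$. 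Summing the geometric series over $k\ge j$ gives at most $\frac{r}{r-1}\sum_j r^{-j}\sqrt{\eta(n_j',\beta r^{-j})}$.

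The main obstacle will be the radius bookkeeping between $n_k$, defined with the ball $B(r^{-k+1})$ at radius $r^{-k}$, and the $n_j'$ that arise from iterated covering, whose centers need not lie in $T$ in the right way and whose radii drift by factors of $2$. I would handle this with the standard trick of replacing covering numbers by packing numbers, which are comparable up to a factor-$2$ change in radius. Each radius shift then becomes a shift of $k$ by at most one index, and those shifts are controlled by the monotonicity argument above, with constants depending only on $r$.
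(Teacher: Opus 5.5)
Your proposal is correct and is essentially the paper's argument: discretize the integral along the scales $r^{-k}$, use $n_k\le N_k$ for one direction, and for the other use $N_k\le N_{k-1}n_k$ together with $\sqrt{a+b}\le\sqrt a+\sqrt b$ and $(x+y)\wedge z\le x\wedge z+y\wedge z$. The paper finishes with the one-line self-bound $\mathrm{I}\le\mathrm{II}+r^{-1}\mathrm{I}$ (using $N_0=1$) instead of your unrolled double geometric sum, and both give the constant $r/(r-1)$. The radius bookkeeping you flag as the main obstacle does not arise: covering balls may be centered anywhere in $T$, so $n_k\le N_k$ is immediate, and stationarity makes $\sN(B(t,r^{-k+1}),r^{-k})=n_k$ exactly for every $t$, so $N_k\le N_{k-1}n_k$ holds with no change of radius.
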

\begin{remark}\label{rmk:fix_pt} Since the functional $\mu \mapsto \eF_\beta(P_X;\mu)$ is concave and $\eP(T)$ is compact, the supremum of $\eF_\beta(P_X;\mu)$ over $\mu \in \eP(T)$ is achieved at some $\mu^*_\beta \in \eP(T)$. As shown in Lemma~\ref{lem:optimal_mu}, such a maximizing $\mu^*_\beta$ satisfies the fixed-point relation
	\begin{align*}
	\overline{\sm}_{\beta,\mu^*_\beta} = \mu^*_\beta,
\end{align*}
and the bounds of Theorems~\ref{thm:stationary_1} and \ref{thm:stationary_2} hold for $\eF_\beta(P_X; \mu^*_\beta)$ and $\eG_\beta(P_X; \mu^*_\beta)$.
\end{remark}

\noindent When we drop the assumption of stationarity, the rate-distortion function $R_\mu(\epsilon^2)$ (recall Definition~\ref{def:RDF}) emerges as the natural counterpart of $\log \sN(T,\epsilon)$, as already observed by \citet{liuSimpleSharpGeneralization2025}. Our main result for the nonstationary, finite $T$ case is as follows:

\begin{theorem}\label{thm:nonstarionary_1} Let $X = (X_t)_{t \in T}$ be a centered Gaussian process on a finite index set $T$. Then
\begin{align*}
\sup_{\mu \in \eP(T)}\eF_\beta(P_X;\mu) \asymp \sup_{\mu \in \eP(T)}\eG_\beta(P_X;\mu) 
\asymp \sup_{\mu\in \eP(T)}\int_0^\Delta \Big(\sqrt{R_{\mu}(\epsilon^2)}\wedge \beta \epsilon\Big) 
\dif \epsilon.
\end{align*}
\end{theorem}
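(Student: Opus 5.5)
Three comparisons have to be established: $\sup_\mu\eF_\beta\asymp\sup_\mu\eG_\beta$, an upper bound $\sup_\mu\eF_\beta \lesssim \sup_\mu \int_0^\Delta (\sqrt{R_\mu(\epsilon^2)}\wedge\beta\epsilon)\,\dif\epsilon$, and the matching lower bound.

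\emph{Comparing $\eF$ and $\eG$.} Since $\partial_\beta(\beta\Phi_\beta)=\langle X\rangle_{\beta,\mu}$ and $\beta\mapsto\Phi_\beta(X;\mu)$ is nondecreasing, we get $\eF_\beta\le\eG_\beta$. Convexity of $\beta\mapsto\beta\Phi_\beta$ gives
\begin{align*}
\eG_\beta \le \frac{2\eF_{2\beta}\cdot 2\beta-\beta\eF_\beta}{\beta}\le 4\eF_{2\beta}.
\end{align*}
The right-hand side $\int(\sqrt{R}\wedge\beta\epsilon)$ changes by at most a factor $2$ when $\beta\mapsto 2\beta$. So it suffices to prove the two-sided bound for $\sup_\mu\eF_\beta$ with the lower bound, and for $\sup_\mu\eG_\beta$ with the upper bound. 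Alternatively, we can use the fixed point $\overline{\sm}_{\beta,\mu^*}=\mu^*$ from Lemma~\ref{lem:optimal_mu} to handle both at once.

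\emph{Upper bound.} I would use the Gibbs variational principle,
\begin{align*}
\Phi_\beta(X;\mu)=\sup_{\nu}\big\{\langle\nu,X\rangle-\beta^{-1}D(\nu\|\mu)\big\},
\end{align*}
so that $\eG_\beta\le\Ex X_{\uptau}$ with $\uptau=\uptau_{\beta,\mu}$, and $\Ex D(\sm_{\beta,\mu}\|\mu)\ge I(X;\uptau)$. Following Liu's tensorization idea, I would build a chain of quantizations $\uptau=\tau_0\to\tau_1\to\dots\to\tau_K$ with $\tau_K$ independent of $X$, where the $k$-th link has distortion $\epsilon_k^2$, $\epsilon_k=\Delta 2^{-k}$. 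Write $\Ex X_\uptau=\sum_k\Ex[X_{\tau_k}-X_{\tau_{k+1}}]$. Each increment is controlled by the Donsker--Varadhan inequality together with Gaussian sub-Gaussianity:
\begin{align*}
\Ex[X_{\tau_k}-X_{\tau_{k+1}}]\le \sqrt{2\,\epsilon_{k}^2\, I(X;\tau_k,\tau_{k+1})}.
\end{align*}
The information at scale $k$ is at most both $R_{\mu}(\epsilon_k^2)$ (choosing the optimal test channel from $\mu$, via the fixed point) and the total information $I(X;\uptau)$. The $\beta$-truncation then comes from optimizing the trade-off $\langle\nu,X\rangle-\beta^{-1}D$: a scale only contributes while $\epsilon_k\sqrt{R}\lesssim\beta\epsilon_k^2$. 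Equivalently, $\sqrt{ab}\le (a/\beta+\beta b)/2$ absorbs the information cost into the entropy penalty $\beta^{-1}I$ and leaves $\beta\epsilon_k^2$. Summing dyadically gives the integral.

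\emph{Lower bound.} For a given $\mu$, I would pick the optimal rate-distortion couplings and use a Sudakov/Gibbs argument at each scale. Restricting to the bounded-information regime, a direct Gaussian computation as in Lemma~\ref{lem:iid} should give $\eF_\beta\gtrsim\sum_k\epsilon_k(\sqrt{R_\mu(\epsilon_k^2)}\wedge\beta\epsilon_k)$. Concretely, the tensorization trick would be reversed: construct a tree (product) subprocess of nearly independent increments whose soft-max is explicitly computable, and compare via Slepian/Sudakov--Fernique applied to the convex-type functional $\Phi_\beta$ (Gaussian comparison holds for $\Phi_\beta$ since its Hessian has the right sign structure).

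\emph{Main obstacle.} I expect the lower bound to be the hardest step. It requires converting the rate-distortion quantity into a packing-type multiscale structure, a soft analogue of the majorizing measure lower bound, while tracking $\beta$. My first attempt would use the duality
\begin{align*}
R_\mu(\epsilon^2)=\sup_{\lambda}\Big\{-\lambda\epsilon^2-\langle\mu,\log\langle\nu,e^{-\lambda d^2}\rangle\rangle\Big\}
\end{align*}
to extract well-separated mass at each scale, and then apply the i.i.d.\ estimate of Lemma~\ref{lem:iid} to those clusters.
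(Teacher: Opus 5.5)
\textbf{The lower bound is a genuine gap.} Your lower-bound paragraph lists tools but gives no argument, and in the nonstationary setting those tools do not produce $R_\mu$. This is the substantive direction. Since $\sup_\mu\eF_\beta(P_X;\mu)\le\Ex\sup_t X_t$, letting $\beta\to\infty$ in it gives $\Ex\sup_t X_t\gtrsim\sup_\mu\int_0^\Delta\sqrt{R_\mu(\epsilon^2)}\,\dif\epsilon$, which is the majorizing-measure direction of Liu's characterization.

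$R_\mu(\epsilon^2)$ is an average-distortion quantity. Its interpretation as a log-covering number only appears for type classes in $T^N$ as $N\to\infty$. On a single copy of $T$, neither an RD-optimal coupling nor the dual formula gives you a homogeneous family of about $e^{R_\mu(\epsilon^2)}$ points at separation $\asymp\epsilon$ that you could feed into Proposition~\ref{prop:one_step_low} and Lemma~\ref{lem:iid} scale after scale. Comparison with a tree or product process gives a lower bound only if that process has smaller increments than $X$. Building one across scales is exactly what iterating Proposition~\ref{prop:one_step_low} does, but without stationarity each step leaves the worst ball $\min_i\sup_{\mu\in\eP(B(s_i,\sigma))}\eF_\beta(X;\mu)$, and this forces Talagrand's greedy partitioning.

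The paper's one-shot version of that scheme (Theorem~\ref{thm:one-shot}) holds only up to an additive $\Delta$, and is stated in terms of $\sM^*$ rather than $R_\mu$. The additive $\Delta$ is fatal here: the target is at most $\int_0^\Delta\beta\epsilon\,\dif\epsilon=\beta\Delta^2/2$, which is far below $\Delta$ when $\beta\Delta\ll1$.

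The missing idea is to run the multiscale lower bound where stationarity is available:
\begin{enumerate}
\item lift to $\bm{X}^N$ on $T^N$ (Proposition~\ref{prop:sup_tensor});
\item restrict to a type class $\cC_{N,\mu}$, on which $\bm{X}^N$ is stationary under coordinate permutations, and apply the two-sided bound of Theorem~\ref{thm:stationary_2};
\item sandwich $\frac1N\log\sN(\cC_{N,\mu},\sqrt{N}\epsilon)$ between $R_\mu(\epsilon^2)$ and $R_\mu(\epsilon^2/16)$ using Liu's covering estimate;
\item return to $X$ through $\eF_\beta(X;\mu)=\sup_\nu\{\bm{\phi}_\beta(\nu)-\beta^{-1}D(\nu\|\mu)\}$ (Lemma~\ref{lem:var_rep}) and the continuity in Lemma~\ref{lem:limit_via_rational}.
\end{enumerate}
Errors of order $\sqrt{N}\Delta$ disappear after dividing by $N$. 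The result is $\eF_\beta(X;\mu)\ge\bm{\phi}_\beta(\mu)\gtrsim\int_0^\Delta(\sqrt{R_\mu(\epsilon^2)}\wedge\beta\epsilon)\,\dif\epsilon$, which is exactly the per-$\mu$ estimate you were aiming for.

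\textbf{Your upper bound takes a different route from the paper's and can be made rigorous.} By the Gibbs variational principle and Lemma~\ref{lm:identities}, $\eF_\beta(X;\mu)\le\Ex X_{\uptau}-\beta^{-1}I(X;\uptau)$ with $\uptau=\uptau_{\beta,\mu}$. Combine this with Liu's bound $\Ex X_\uptau\lesssim\int_0^\Delta\sqrt{R_{\overline{\sm}_{\beta,\mu}}(\epsilon^2)\wedge I(X;\uptau)}\,\dif\epsilon$ (Theorem~\ref{thm:lift_upper}) and AM--GM. This gives $\eF_\beta(X;\mu)\lesssim\int_0^\Delta(\sqrt{R_{\overline{\sm}_{\beta,\mu}}(\epsilon^2)}\wedge\beta\epsilon)\,\dif\epsilon$. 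It is the paper's proof of Theorem~\ref{thm:lift_lower} run backwards, and it needs no type classes in this direction. As written, though, four points need repair.
\begin{enumerate}
\item Draw the $\tau_k$ from $\uptau$ conditionally independently through RD-optimal channels at distortion $\epsilon_k^2$, as in the proof of Theorem~\ref{thm:one-shot}. A Markov chain of successive quantizations would require successive refinability. With conditional independence, $I(X;\tau_k,\tau_{k+1})\le I(X;\uptau)\wedge\big(R(\epsilon_k^2)+R(\epsilon_{k+1}^2)\big)$.
\item Distortion is controlled only in mean, so apply Donsker--Varadhan conditionally on $(\tau_k,\tau_{k+1})$ and then Cauchy--Schwarz. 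This gives the increment bound $\sqrt{2\,\Ex d^2(\tau_k,\tau_{k+1})\,I(X;\tau_k,\tau_{k+1})}$ with $\sqrt{\Ex d^2(\tau_k,\tau_{k+1})}\le\epsilon_k+\epsilon_{k+1}$.
\item The penalty $\beta^{-1}I(X;\uptau)$ can be spent only once, not once per scale. The scales with $\sqrt{R(\epsilon_k^2)}>\beta\epsilon_k$ form a tail $k\ge k_0$. Their total contribution $C'\epsilon_{k_0}\sqrt{I}$ is at most $C\beta\epsilon_{k_0}^2+\beta^{-1}I$.
\item The penalty lives in $\eF_\beta$, not $\eG_\beta$. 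So prove the upper bound for $\eF_\beta$ and transfer it with $\eG_\beta\le 2\eF_{2\beta}$; this is the reverse of the assignment in your reduction step.
\end{enumerate}
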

\begin{remark}
  Similar to Remark~\ref{rmk:fix_pt}, the above bounds can be expressed as follows: 
  \begin{align*}
  \eF_{\beta}(P_X;\mu_{\beta}^*) \asymp \eG_{\beta}(P_X;\mu_{\beta}^*) \asymp \int_0^\Delta \Big(\sqrt{R_{\mu_{\beta}^*}(\epsilon^2)}\wedge \beta \epsilon\Big)
  \end{align*}
\end{remark}

\paragraph{Main results: the case of countable $T$.} When the index set $T$ is countably infinite, the supremum of $\mu \mapsto \eF_\beta(P_X;\mu)$ may not be achieved. The following theorem is the counterpart of Theorem~\ref{thm:nonstarionary_1} for the countably infinite case (we assume that $\Delta$, the diameter of $(T,d)$, is finite):

\begin{theorem}\label{thm:countable_softmmt}
Let $X = (X_t)_{t\in T}$ be a centered Gaussian process indexed by a countable set $T$, such that $\Delta < \infty$. Then
\begin{align*}
\sup_{\mu\in \eP(T)}\eG_\beta(X;\mu)
\asymp \sup_{\mu\in \eP(T)} \eF_\beta(X; \mu)
\asymp
\sup_{\mu\in \eP(T)}\int_0^\Delta \Big(\sqrt{R_{\mu}(\epsilon^2)}\wedge \beta \epsilon\Big)  \dif \epsilon.
\end{align*}
\end{theorem}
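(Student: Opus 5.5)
The plan is to reduce to the finite case, Theorem~\ref{thm:nonstarionary_1}, by exhausting $T$ with an increasing sequence of finite sets $T_1 \subseteq T_2 \subseteq \cdots$ with $\bigcup_m T_m = T$. The constants in Theorem~\ref{thm:nonstarionary_1} are universal, so they do not depend on $|T|$ or on the process. It therefore suffices to establish three limit/comparison statements.
\begin{enumerate}[(i)]
\item $\sup_{\mu\in\eP(T)}\eF_\beta(X;\mu) = \lim_m \sup_{\mu\in\eP(T_m)}\eF_\beta(X|_{T_m};\mu)$, and likewise for $\eG_\beta$.
\item The corresponding statement for the rate-distortion functional $J(\mu)\deq\int_0^\Delta (\sqrt{R_\mu(\epsilon^2)}\wedge\beta\epsilon)\dif\epsilon$.
\item Each finite-$m$ quantity is monotone in $m$.
\end{enumerate}
Monotonicity is immediate, since any $\mu\in\eP(T_m)$ is also in $\eP(T_{m+1})$ and all three functionals are unchanged under this embedding.

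For (i), the direction ``$\ge$'' is trivial, because measures on $T_m$ are measures on $T$. For ``$\le$'', fix $\mu\in\eP(T)$ and let $\mu_m$ be its conditioning on $T_m$, i.e.\ $\mu_m = \mu(\cdot\cap T_m)/\mu(T_m)$. The sums $\inner{\mu 1_{T_m}}{e^{\beta X}}$ increase to $\inner{\mu}{e^{\beta X}}$ almost surely. Their logarithms are bounded below by $\log\mu(t_0) + \beta X_{t_0}$ for any fixed $t_0\in T_1$, which is integrable, and bounded above by $\log\inner{\mu}{e^{\beta X}}$. The upper bound is integrable because $\Ex\sup_t X_t<\infty$ is not assumed, but Jensen gives $\Ex\log\inner{\mu}{e^{\beta X}}\le\log\inner{\mu}{\Ex e^{\beta X}}\le \beta^2\sup_t\Ex X_t^2/2$. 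That is finite provided $\sup_t\Ex X_t^2<\infty$, which follows from $\Delta<\infty$ after subtracting $X_{t_0}$; this subtraction changes neither $\eF$ nor $\eG$, since $\Ex X_{t_0}=0$. Monotone convergence then gives $\eF_\beta(X;\mu_m)\to\eF_\beta(X;\mu)$.

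For $\eG_\beta$ I would avoid a direct limit. Instead I would use the comparison $\eG_\beta\gtrsim\eF_\beta$ type relation implicit in the finite theorem together with the identity $\eG_\beta(X;\mu)=\partial_\beta(\beta\eF_\beta(X;\mu))$. Convexity of $\beta\mapsto\beta\Phi_\beta$ gives $\eF_\beta\le\eG_\beta$ directly. For the reverse bound $\eG_\beta\lesssim\sup_\mu J$, the simplest route is to redo the finite proof's upper bound for general $\mu$. Alternatively, one can note that $\eG_\beta(X;\mu)=\lim_m\eG_\beta(X;\mu_m)$, using dominated convergence with the domination $|\langle X\rangle_{\beta,\mu_m}|\le \sum_t \sm_{\beta,\mu_m}(t)|X_t|$, controlled via Gaussian integration by parts: $\Ex\langle X\rangle = \beta\,\Ex\langle \tfrac12 d^2(\tau,\tau')\rangle\le\beta\Delta^2/2$ after centering. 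This is the step I expect to be the main technical obstacle. The clean fallback is the upper bound $\eG_\beta(X;\mu)\le \frac{1}{\beta'-\beta}(\beta'\eF_{\beta'}-\beta\eF_\beta)$ with $\beta'=2\beta$, from convexity. This reduces $\eG$ to $\eF$ at a comparable temperature, and $J$ at $2\beta$ is at most twice $J$ at $\beta$.

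For (ii), again take $\mu_m$ as the conditioning of $\mu$ on $T_m$. The goal is $J(\mu)\lesssim\liminf_m J(\mu_m)$, or at least $\sup_\mu J(\mu)\le\sup_m\sup_{\nu\in\eP(T_m)}J(\nu)$. The approach is to show $R_{\mu_m}(\epsilon^2)\to R_\mu(\epsilon^2)$ at each continuity point, or just lower semicontinuity: take near-optimal channels for $\mu_m$ and pass to the limit. A cleaner route is truncation. Given any test channel $P_{\tau'|\tau}$ for $\mu$, restricting to $\tau\in T_m$ changes the mutual information by an amount controlled by $\mu(T_m^c)\to0$ plus a binary entropy term. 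Combined with bounded distortion ($\le\Delta^2$), this gives $R_{\mu}(\epsilon^2)\le \liminf_m R_{\mu_m}(\epsilon^2+o(1))+o(1)$. I would also have to handle the outputs $\tau'\notin T_m$ by projecting to a nearest point of $T_m$, which costs distortion; to keep this under control, either let the distortion slack absorb it or note that $R$ can be defined with outputs in $T$, which the finite theorem permits if its proof does not use finiteness of the reproduction alphabet. Dominated convergence in $\epsilon$, using the integrand bound $\beta\epsilon$, then yields the needed inequality. Combining (i), (ii), and Theorem~\ref{thm:nonstarionary_1} applied on each $T_m$ gives the claim.
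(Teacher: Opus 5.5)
Your overall route is the paper's. You exhaust $T$ by finite sets and show $\eF_\beta(X;\mu_m)\to\eF_\beta(X;\mu)$ for a truncation $\mu_m$ of $\mu$; conditioning plus monotone convergence works as well as the paper's ``move the tail mass to $t_0$'' plus dominated convergence. You then compare the rate-distortion functions of $\mu$ and $\mu_m$, and handle $\eG_\beta$ via $\eF_\beta\le\eG_\beta\le 2\eF_{2\beta}$ together with the fact that the truncated integral at $2\beta$ is at most twice the one at $\beta$. This is exactly how the paper uses \eqref{eq:equiv_soft}.

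The one step that fails as written is the rate-distortion comparison. Restricting a test channel for $\mu$ to $\{\tau\in T_m\}$ gives a channel for $\mu_m$, hence an upper bound on $R_{\mu_m}$ in terms of $R_\mu$. That only yields $\limsup_m J(\mu_m)\le J(\mu)$, which the theorem never uses. The lower bound $\sup_{\mu\in\eP(T)}J(\mu)\lesssim\sup_m\sup_{\nu\in\eP(T_m)}J(\nu)$ needs the opposite comparison. So you must start from a near-optimal channel $Q_{\hat Z|Z_m}$ for $\mu_m$ and extend it. Let $Z_m=Z$ on $\{Z\in T_m\}$ and an independent $\mu_m$-draw otherwise (or send $T_m^{\rm c}$ to $t_0$, as the paper does), so that $Z\to Z_m\to\hat Z$ is Markov. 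Data processing gives $I(Z;\hat Z)\le I(Z_m;\hat Z)$ with no entropy term, and the distortion increases by at most $\Delta^2\mu(T_m^{\rm c})$, so
\begin{align*}
R_\mu\big(\epsilon^2+\Delta^2\mu(T_m^{\rm c})\big)\le R_{\mu_m}(\epsilon^2).
\end{align*}
Dominated convergence (dominating function $\beta\epsilon$), together with continuity of the monotone map $D\mapsto R_\mu(D)$ off a countable set, then gives $J(\mu)\le\liminf_m J(\mu_m)$. The inequality you wrote down is the right target; only the construction you describe runs in the wrong direction.

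Your concern about the reproduction alphabet is legitimate: Theorem~\ref{thm:nonstarionary_1} applied on $T_m$ takes outputs in $T_m$, and the paper's proof passes over this. However, it is not absorbed by an $o(1)$ distortion slack. Instead, for $\nu\in\eP(T_m)$, replace $\hat\tau\in T$ by a nearest point of $T_m$. Since $\tau\in T_m$, this at most doubles $d(\tau,\hat\tau)$, and it does not increase the mutual information. Hence the $T_m$-alphabet rate-distortion function at distortion $4\epsilon^2$ is at most $R_\nu(\epsilon^2)$. The substitution $\epsilon\mapsto 2\epsilon$ then shows the two truncated integrals agree up to a factor $4$. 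The same remark repairs your claim in (iii) that $J$ is ``unchanged'' under the embedding $\eP(T_m)\subset\eP(T_{m+1})$; it is unchanged only up to this constant.
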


\paragraph{Recovering the expected supremum in the zero-temperature limit.} As $\beta \to \infty$, all of the quantities displayed above converge to the correct expressions for the expected supremum of $X$. For example, taking $\beta \to \infty$ in the setting of Theorem~\ref{thm:stationary_2}, we recover the well-known Dudley--Fernique characterization of the expected supremum of a stationary Gaussian process:
\begin{align*}
	\Ex\Big[\sup_{t \in T}X_t \Big] \asymp \int^\Delta_0 \sqrt{\log \sN(T,\epsilon)}\dif \epsilon.
\end{align*}
Similarly, taking $\beta \to \infty$ in Theorem~\ref{thm:nonstarionary_1} (for the nonstationary, finite $T$ case) or in Theorem~\ref{thm:countable_softmmt} (for the countably infinite $T$ case) gives the rate-distortion integral characterization due to \citet{liuSimpleSharpGeneralization2025}:
\begin{align*}
	\Ex\Big[\sup_{t \in T}X_t \Big] \asymp \sup_{\mu \in \eP(T)}\int^\Delta_0 \sqrt{R_\mu(\epsilon^2)}\dif \epsilon.
\end{align*}

\paragraph{Organization.} The remainder of the paper is organized as follows. Section~\ref{sec:prelims} is devoted to various preliminary results that are needed for the subsequent development. Section~\ref{sec:finite_T_stationary} presents the proofs of Theorems~\ref{thm:stationary_1} and \ref{thm:stationary_2} for stationary Gaussian processes indexed by a finite set, as well as an alternative derivation of an intermediate result using a symmetrization argument. Section~\ref{sec:finite_T_nonstationary} is devoted to the nonstationary, finite $T$ setting. In particular, we prove Theorem~\ref{thm:nonstarionary_1} via recourse to the tensorization technique of \citet{liuSimpleSharpGeneralization2025}, which we first adapt to the ``soft maximum'' $(\beta < \infty)$ setting. Some additional results for nonstationary processes are also proved in Section~\ref{sec:finite_T_nonstationary}, including a lower bound for a ``soft" Fernique functional of a Gaussian process and related bounds that rely on an information-theoretic concept of code-measure correspondence \citep{kontoyiannis_arbitrary_2002}. Theorem~\ref{thm:countable_softmmt} is proved in Section~\ref{sec:countable_T}, along with some related results on the soft Fernique functional. Finally, Section~\ref{sec:SK} contains an application of some of our results to a nonasymptotic analysis of the Sherrington--Kirkpatrick (SK) model \citep{panchenko_sherrington-kirkpatrick_2013}. In particular, we show that the so-called \textit{Parisi formula}, which describes the limiting behavior of the scaled quenched free energy in the SK model as the system size tends to infinity, holds up to multiplicative constants for each finite system size. Proofs of two auxiliary results from Section~\ref{sec:finite_T_nonstationary} are relegated to  Appendix~\ref{app:proofs}, and Appendix~\ref{app:misc} contains miscellaneous technical results that are invoked in a few places in the main text.

\section{Preliminaries}
\label{sec:prelims}

\subsection{Subgaussian random variables and processes}

A random variable $Z$ with $\Ex Z < \infty$ is \emph{$\sigma$-subgaussian} if the following inequality holds for any $\lambda\in \bR$:
\begin{align*}
  \Ex e^{\lambda(Z-\Ex Z)} \le e^{\lambda^2 \sigma^2 / 2}
\end{align*}
A process $X = (X_t)_{t \in T}$ indexed by the elements of a metric space $(T,d)$ is subgaussian if it satisfies the following increment condition:
\begin{align*}
	\Pr\Big[|X_s - X_t| \ge u d(s,t)\Big] \le 2e^{-u^2}, \qquad \text{for all } s,t \in T, u > 0.
\end{align*}
We state a simple but useful maximal inequality for subgaussian variables:
\begin{proposition}[subgaussian maximal inequality]
If $X_1,\ldots,X_n$ are $\sigma$-subgaussian random variables defined on a common probability space, then 
\begin{align*}
  \Ex [\max_{i\in [n]} X_i] \le \max_{i\in [n]} \Ex X_i + \sigma\sqrt{2 \log n}
\end{align*}
\end{proposition}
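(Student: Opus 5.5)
The plan is the standard exponential-moment argument. Set $Y_i \deq X_i - \Ex X_i$ and $M \deq \max_{i \in [n]} \Ex X_i$. Then $\max_i X_i \le M + \max_i Y_i$ pointwise, so it suffices to show
\begin{align*}
\Ex\big[\max_{i\in[n]} Y_i\big] \le \sigma\sqrt{2\log n}.
\end{align*}
No independence is needed, because only a union-type bound (sum of exponentials) will be used.

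For any $\lambda > 0$, Jensen's inequality applied to the convex function $x \mapsto e^{\lambda x}$, followed by bounding the maximum of nonnegative terms by their sum, gives
\begin{align*}
e^{\lambda \Ex \max_i Y_i} \le \Ex e^{\lambda \max_i Y_i} = \Ex \max_i e^{\lambda Y_i} \le \sum_{i=1}^n \Ex e^{\lambda Y_i} \le n e^{\lambda^2\sigma^2/2}.
\end{align*}
The last step uses the $\sigma$-subgaussian assumption for each $Y_i$. Taking logarithms and dividing by $\lambda$ yields
\begin{align*}
\Ex \max_i Y_i \le \frac{\log n}{\lambda} + \frac{\lambda \sigma^2}{2}.
\end{align*}

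We then optimize over $\lambda$. The choice $\lambda = \sqrt{2\log n}/\sigma$ gives exactly $\sigma\sqrt{2\log n}$. Combined with the first reduction, this proves the claimed bound $\Ex[\max_i X_i] \le M + \sigma\sqrt{2\log n}$.

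There is no real obstacle; only two degenerate cases need a remark. When $n = 1$ the bound reads $\Ex X_1 \le \Ex X_1$ and holds trivially. When $\sigma = 0$, each $Y_i = 0$ almost surely, so $\max_i Y_i = 0$ and the bound again holds; in both cases no optimization over $\lambda$ is needed. Integrability of $\max_i Y_i$ is not an issue: it is bounded above by $\sum_i |Y_i|$, which is integrable by the subgaussian tails.
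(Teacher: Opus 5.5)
Your proof is correct. It is the standard exponential-moment argument, and you handle the degenerate cases $n=1$ and $\sigma=0$ properly. One small point on integrability: use $|\max_i Y_i| \le \sum_i |Y_i|$, not only the upper bound.

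The paper states this proposition without proof as a standard fact. Your argument is the one implicit in how the paper uses it. The step $\max_i Y_i \le \frac{1}{\lambda}\log\sum_i e^{\lambda Y_i}$ is the soft-max bound \eqref{eq:softmax_bounds} with $\mu=\su_n$. Jensen plus the subgaussian moment bound is essentially the step in the first part of the proof of Lemma~\ref{lem:iid}.
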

\noindent The following result, see e.g. \citet{panchenko_sherrington-kirkpatrick_2013}, will be used repeatedly:
\begin{lemma}
  \label{lem:concentration} Let $X = (X_t)_{t \in T}$ be a centered Gaussian process with a discrete index set $T$, such that
  $$
 \sup_{t \in T} \Var(X_t) \le \sigma^2 < \infty
 $$
 for some $\sigma > 0$. Let a finite measure $m$ on $T$ be given. Then the random variable
  \begin{align*}
  Z \deq \log \inner{m}{e^X}
  \end{align*}
  is $(\sqrt{2}\sigma)$-subgaussian, i.e., 
  \begin{align*}
  \Ex e^{\lambda(Z-\Ex Z)} \le e^{\lambda^2 \sigma^2 }, \qquad \text{for all } \lambda \in \Reals.
  \end{align*}
\end{lemma}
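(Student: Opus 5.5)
The plan is to prove the lemma first for finite $T$, using Gaussian concentration for Lipschitz functions, and then pass to countable $T$ by a monotone limiting argument. Throughout we may assume $m(T)>0$; otherwise there is nothing to prove.

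\emph{Finite $T$.} Write $T=\{1,\dots,N\}$ and let $K$ be the covariance matrix of $X$. Choose $A\in\Reals^{N\times N}$ with $AA^\top=K$, so that $X \overset{d}{=} Ag$ with $g$ a standard Gaussian vector in $\Reals^N$. We may discard points with $m(t)=0$. Set
\begin{align*}
f(x)\deq \log \sum_{t} m(t)\, e^{(Ax)_t},
\end{align*}
so that $Z\overset{d}{=}f(g)$. A direct computation gives $\nabla f(x)=A^\top p(x)$, where $p(x)$ is the Gibbs probability vector $p_t(x)\propto m(t)e^{(Ax)_t}$. Hence
\begin{align*}
\norm{\nabla f(x)}^2 = p^\top K p = \sum_{s,t}p_sp_t\Cov(X_s,X_t)\le \Big(\sum_t p_t\sqrt{\Var X_t}\Big)^2\le \sigma^2,
\end{align*}
where the middle step uses Cauchy--Schwarz for covariances. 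So $f$ is $\sigma$-Lipschitz. The Gaussian log-Sobolev inequality combined with the Herbst argument then gives
\begin{align*}
\Ex e^{\lambda(f(g)-\Ex f(g))}\le e^{\lambda^2\sigma^2/2}\le e^{\lambda^2\sigma^2}\qquad\text{for all }\lambda\in\Reals,
\end{align*}
which is the claim. We avoid the Maurey--Pisier interpolation here, because its constant $\pi^2/8>1$ would not give the stated bound.

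\emph{Countable $T$.} This is the step needing the most care; it is routine but requires some integrability checks. Enumerate $T$ and let $T_N$ be the first $N$ points, with $N$ large enough that $m(T_N)>0$. Set $Z_N\deq\log\inner{m\mathbf 1_{T_N}}{e^X}$. Then $Z_N\uparrow Z$ pointwise, and $Z<\infty$ a.s., since Jensen's inequality and Tonelli's theorem give
\begin{align*}
\Ex Z\le \log \Ex\inner{m}{e^X}\le \log m(T)+\sigma^2/2.
\end{align*}
For a fixed $t_0$ with $m(t_0)>0$ we also have the lower bound $Z_N\ge \log m(t_0)+X_{t_0}$, which is integrable. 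By monotone convergence, $\Ex Z_N\to\Ex Z$, and this limit is finite. Consequently $\lambda(Z_N-\Ex Z_N)\to\lambda(Z-\Ex Z)$ a.s. Applying Fatou's lemma to the finite-$T$ bound, which holds uniformly in $N$, yields
\begin{align*}
\Ex e^{\lambda(Z-\Ex Z)}\le \liminf_N \Ex e^{\lambda(Z_N-\Ex Z_N)}\le e^{\lambda^2\sigma^2}.
\end{align*}

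The only genuinely nontrivial input is the sharp Lipschitz concentration inequality for Gaussian measures. If a self-contained route were preferred, one could instead run Panchenko's interpolation $X^{(s)}=\sqrt{s}X+\sqrt{1-s}X'$ together with Gaussian integration by parts. That argument yields a constant of exactly $\sigma^2$ in the exponent, matching the form stated in the lemma.
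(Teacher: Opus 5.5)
Your proof is correct, but it takes a different route from the one the paper relies on.

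The paper does not prove the lemma; it cites Panchenko. His argument interpolates between two independent copies of $X$, of the form $\sqrt{s}X+\sqrt{1-s}X^{j}$ for $j=1,2$. Gaussian integration by parts then shows that the logarithmic derivative of the moment generating function of the difference of the two interpolated log-partition functions is bounded below by $-\lambda^2\sigma^2$. This works because that derivative is $-\lambda^2$ times an average of covariances $\Cov(X_{t^1},X_{t^2})$ under the two Gibbs measures. The result is $\Ex e^{\lambda(Z-Z')}\le e^{\lambda^2\sigma^2}$ for an independent copy $Z'$, and Jensen's inequality in $Z'$ gives the lemma.

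That proof is elementary, needing only integration by parts, and stays inside the Gibbs-measure formalism. However, the symmetrization step from $Z-\Ex Z$ to $Z-Z'$ costs a factor of $2$ in the variance proxy. This is exactly why the statement reads $e^{\lambda^2\sigma^2}$ rather than $e^{\lambda^2\sigma^2/2}$.

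You instead isolate the one structural fact that matters: the gradient of $x\mapsto\log\langle m,e^{Ax}\rangle$ is $A^\top p(x)$ with $p(x)$ a probability vector, so by Cauchy--Schwarz its norm is at most $\sigma$. You then invoke the Gaussian log-Sobolev inequality with Herbst's argument as a black box. That is a heavier input, but it buys the sharp bound $e^{\lambda^2\sigma^2/2}$. So $Z$ is in fact $\sigma$-subgaussian, and the $\sqrt{2}$ in the lemma is an artifact of the cited proof. Your remark that the Maurey--Pisier constant $\pi^2/8$ would not suffice is also right.

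Your passage from finite to countable $T$ is sound: monotone convergence with the integrable minorant $\log m(t_0)+X_{t_0}$, then Fatou with the $N$-uniform finite-$T$ bound. Two small tidy-ups:
\begin{itemize}
\item List $t_0$ first in the enumeration, so the minorant applies to every $Z_N$.
\item Note that $Z<\infty$ a.s.\ follows directly from $\Ex\langle m,e^X\rangle\le m(T)e^{\sigma^2/2}<\infty$, before any appeal to Jensen.
\end{itemize}
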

\noindent We will repeatedly use the concentration property of $\Psi_\beta$ and $\Phi_\beta$, derived from the above lemma.

\subsection{Gibbs measures}

Let $X = (X_t)_{t \in T}$ be a centered Gaussian process with a finite index set $T$. Let a probability measure $\mu \in \eP(T)$ and an inverse temperature $\beta > 0$ be given. Let $\uptau_{\beta,\mu}$ be a random element of $T$ whose conditional probability law given $X$ is equal to the random Gibbs measure $\sm_{\beta,\mu}$, and let $\overline{\sm}_{\beta,\mu}$ denote the marginal probability law of $\uptau_{\beta,\mu}$. (In fact, the definitions of $\Phi_\beta(X;\mu)$, $\langle X \rangle_{\beta,\mu}$, etc.\ extend straightforwardly to the case when $X$ is a random element of $\Reals^T$, such that every $X_t$ is centered and subgaussian.)

\begin{lemma}\label{lm:identities}
	The following identities hold:
	\begin{align}
	  \begin{split}\label{eq:identities}
	  D(\sm_{\beta,\mu}\|\mu) &= \beta(\langle X \rangle_{\beta,\mu} - \Phi_\beta(X ; \mu)), \\
	  \Ex D(\sm_{\beta,\mu}\|\mu)
	  &= I(X ; \uptau_{\beta,\mu}) + D(\overline{\sm}_{\beta, \mu} \| \mu).
	  \end{split}
	\end{align}
\end{lemma}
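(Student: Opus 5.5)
The plan is to prove both identities by direct computation from the definition of the Gibbs measure. For the second one, we first disintegrate the expected relative entropy over the joint law of $(X,\uptau_{\beta,\mu})$.

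\emph{First identity.} Since $\frac{\dif \sm_{\beta,\mu}}{\dif \mu}(t) = e^{\beta X_t}/\inner{\mu}{e^{\beta X}}$ is strictly positive on the support of $\mu$, we have $\sm_{\beta,\mu} \ll \mu$, so that
\begin{align*}
D(\sm_{\beta,\mu}\|\mu) = \Big\langle \sm_{\beta,\mu}, \beta X - \log \inner{\mu}{e^{\beta X}}\Big\rangle = \beta \langle X\rangle_{\beta,\mu} - \beta\,\Phi_\beta(X;\mu).
\end{align*}
Here we use that $\sm_{\beta,\mu}$ is a probability measure, so the constant $\log\inner{\mu}{e^{\beta X}}$ integrates to itself. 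Finiteness is not an issue: $T$ is finite, and in the subgaussian extension all quantities are integrable.

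\emph{Second identity.} We use the chain rule (the ``golden formula'') for mutual information. Write $P_{X\uptau}$ for the joint law of $(X,\uptau_{\beta,\mu})$, which equals $P_X \otimes \sm_{\beta,\mu}$ in the sense of a kernel, and note that $P_{\uptau} = \overline{\sm}_{\beta,\mu}$. Then
\begin{align*}
\Ex D(\sm_{\beta,\mu}\|\mu) = \Ex\Big\langle \sm_{\beta,\mu}, \log\frac{\dif \sm_{\beta,\mu}}{\dif \overline{\sm}_{\beta,\mu}}\Big\rangle + \Ex\Big\langle \sm_{\beta,\mu}, \log\frac{\dif \overline{\sm}_{\beta,\mu}}{\dif \mu}\Big\rangle.
\end{align*}
The first term is $D(P_{X\uptau}\|P_X\otimes P_\uptau) = I(X;\uptau_{\beta,\mu})$. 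In the second term the integrand depends only on $t$, and the mixture $\Ex \sm_{\beta,\mu}$ equals $\overline{\sm}_{\beta,\mu}$, so this term is $D(\overline{\sm}_{\beta,\mu}\|\mu)$.

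The only point needing care is justifying the splitting of the logarithm. We need $\sm_{\beta,\mu} \ll \overline{\sm}_{\beta,\mu} \ll \mu$ almost surely. The first holds because $\overline{\sm}_{\beta,\mu}(t) = \Ex \sm_{\beta,\mu}(t) > 0$ whenever $\mu(t)>0$. We also need each term to be finite, or nonnegative, so that linearity of expectation applies. For finite $T$ every term is bounded by $\log(1/\min_{t:\mu(t)>0}\mu(t))$, since $\sm_{\beta,\mu}(t)\le 1$ and the densities are bounded, so no $\infty-\infty$ issue arises. In the countable subgaussian extension one would instead use the nonnegativity of both terms together with Tonelli's theorem. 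I expect this integrability bookkeeping to be the only (mild) obstacle.
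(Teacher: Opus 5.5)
Your proof is correct and is essentially the paper's argument: the paper likewise writes $I(X;\uptau_{\beta,\mu})$ as $\Ex\langle \sm_{\beta,\mu}, \log\frac{\dif\sm_{\beta,\mu}}{\dif\overline{\sm}_{\beta,\mu}}\rangle$, splits the logarithm through $\mu$, and uses $\Ex\langle\sm_{\beta,\mu},h\rangle = \langle\overline{\sm}_{\beta,\mu},h\rangle$ to identify the leftover term as $D(\overline{\sm}_{\beta,\mu}\|\mu)$. One small correction to your integrability remark: the terms are not uniformly bounded by $\log(1/\min_{t\in S_\mu}\mu(t))$, since, for instance, $\log\frac{\overline{\sm}_{\beta,\mu}(t)}{\mu(t)}$ can be more negative than $-\log(1/\min_{t\in S_\mu}\mu(t))$. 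They are, however, bounded by a finite constant depending on $\min_{t\in S_\mu}\overline{\sm}_{\beta,\mu}(t)>0$ and $\min_{t\in S_\mu}\mu(t)>0$, which is all you need.
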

\begin{proof} We will use the shorthand notation $\uptau$, $\sm$, $\overline{\sm}$, etc. The formula for $D(\sm\|\mu)$ follows from definitions:
	\begin{align*}
		D(\sm\|\mu) &= \inner{\sm}{ \log \frac{\dif\sm}{\dif \mu}} \\
		&= \inner{\sm}{\beta X - \beta \Phi_{\beta}(X;\mu)} \\
		&= \beta  \left(\langle X \rangle_{\beta,\mu} -  \Phi_{\beta}(X;\mu)\right).
	\end{align*}
	For $\Ex D(\sm\|\mu)$, we compute the mutual information between $X$ and $\uptau$:
	\begin{align*}
		I(X; \uptau) &= \Ex \left[ \frac{\dif P_{X,\uptau}}{\dif\,(P_X \otimes P_{\uptau})}\right] \\
		&= \Ex \left[\log \frac{\dif P_{\uptau|X}}{\dif P_{\uptau}}\right] \\
		&= \int_{\Reals^T} \dif P_X \int_T \dif \sm \log \frac{\dif \sm}{\dif \overline{\sm}} \\
		&= \int_{\Reals^T} \dif P_X \left(\int_T \dif \sm \log \frac{\dif \sm}{\dif \mu} - \int_T \dif \sm \log \frac{\dif \overline{\sm}}{\dif \mu}\right) \\
		&= \Ex D(\sm \| \mu) - D(\overline{\sm} \| \mu),
	\end{align*}
	where the last step follows from the fact that $\overline{\sm}$ is the marginal law of $\uptau$ so that the identity
	\begin{align*}
		\Ex\left[\inner{\sm}{h}\right] = \inner{\overline{\sm}}{h}
	\end{align*}
	holds for any measurable function $h : T \to \Reals$.
\end{proof}

\begin{lemma}[Gibbs variational principle]\label{lm:Gibbs_VP} Let $\mu \in \eP(T)$ be given. Then, for any $\beta > 0$,
	\begin{align}\label{eq:Gibbs_VP}
		\Phi_\beta(X; \mu) = \langle \sm_{\beta,\mu}, X \rangle - \frac{1}{\beta}D(\sm_{\beta,\mu}\|\mu) = \sup_{\nu \in \eP(T)} \left[\langle \nu, X \rangle - \frac{1}{\beta}D(\nu \| \mu)\right].
	\end{align}
\end{lemma}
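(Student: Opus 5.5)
The plan is to prove the Gibbs variational principle directly for finite (or countable) $T$, using the identity for $D(\sm_{\beta,\mu}\|\mu)$ in Lemma~\ref{lm:identities} together with nonnegativity of relative entropy.

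\textbf{Step 1: the first equality.} The first equality in \eqref{eq:Gibbs_VP} is simply a rearrangement of the first identity in \eqref{eq:identities}. That identity gives $D(\sm_{\beta,\mu}\|\mu) = \beta(\langle X\rangle_{\beta,\mu} - \Phi_\beta(X;\mu))$. Dividing by $\beta$ and solving for $\Phi_\beta(X;\mu)$ yields the claim.

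\textbf{Step 2: the variational inequality.} Fix $\nu \in \eP(T)$. If $D(\nu\|\mu)=+\infty$, the bracket equals $-\infty$ and nothing needs to be shown, so assume $\nu \ll \mu$. Since $\sm_{\beta,\mu}$ and $\mu$ are mutually absolutely continuous (the density $e^{\beta X}/\inner{\mu}{e^{\beta X}}$ is strictly positive), we can write
\begin{align*}
D(\nu\|\mu) = \Big\langle \nu, \log\frac{\dif \nu}{\dif \sm_{\beta,\mu}}\Big\rangle + \Big\langle \nu, \log \frac{\dif \sm_{\beta,\mu}}{\dif\mu}\Big\rangle = D(\nu\|\sm_{\beta,\mu}) + \beta\langle \nu, X\rangle - \beta\Phi_\beta(X;\mu).
\end{align*}
Rearranging gives
\begin{align*}
\langle \nu, X\rangle - \frac{1}{\beta}D(\nu\|\mu) = \Phi_\beta(X;\mu) - \frac{1}{\beta}D(\nu\|\sm_{\beta,\mu}) \le \Phi_\beta(X;\mu),
\end{align*}
with equality if and only if $\nu = \sm_{\beta,\mu}$.

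\textbf{Step 3: conclusion.} Taking the supremum over $\nu$ in Step 2 shows the supremum is at most $\Phi_\beta(X;\mu)$. By Step 1 it is attained at $\nu=\sm_{\beta,\mu}$, so the two expressions agree.

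\textbf{Main obstacle.} The only delicate point is integrability of $\langle \nu, X\rangle$ when $T$ is countable. If $\langle \nu, |X|\rangle=\infty$, the decomposition in Step 2 must be interpreted with care. I would handle this by first proving the identity for finitely supported $\nu$, where every sum is finite. For general $\nu$, I would use the Donsker--Varadhan form $\langle \nu, f\rangle \le D(\nu\|\mu) + \log\inner{\mu}{e^f}$ with $f=\beta X$, which bounds $\langle \nu, X_+\rangle$. Since $\inner{\mu}{e^{\beta X}}<\infty$ almost surely, $\langle\nu,X\rangle$ is well defined in $[-\infty,\infty)$, and the inequality of Step 2 still holds. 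For finite $T$, which is the stated setting here, this issue does not arise.
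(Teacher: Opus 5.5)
Your proof is correct and takes essentially the same route as the paper. The first equality is the rearranged first identity of Lemma~\ref{lm:identities}, which the paper calls a ``straightforward computation''. The variational part uses the same decomposition $D(\nu\|\mu)-\beta\langle \nu, X\rangle = D(\nu\|\sm_{\beta,\mu}) - \beta\Phi_\beta(X;\mu)$ together with nonnegativity of relative entropy; your integrability remark for countable $T$ is not needed in the lemma's finite setting but is harmless.
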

\begin{proof} This is a standard result in statistical physics; we give the proof for completeness. The first equality in \eqref{eq:Gibbs_VP} is verified by a straightforward computation. For the second one, take any $\nu \in \eP(T)$. We can assume without loss of generality that $\nu \ll \mu$; otherwise, $D(\nu \| \mu) = +\infty$, so we can restrict the maximization only to those $\nu$ that are absolutely continuous with respect to $\mu$. We have
	\begin{align*}
		D(\nu \| \mu) - \langle \nu, \beta X \rangle &= \left\langle \nu, \log \frac{\dif\nu}{\dif \mu} - \beta X \right\rangle \\
		&= \left \langle \nu, \log \frac{\dif \nu}{\dif \sm_{\beta,\mu}} \right\rangle - \log \langle \mu, e^{\beta X} \rangle \\
		&= D(\nu \| \sm_{\beta,\mu}) - \beta \Phi_\beta(X; \mu) \\
		& \ge - \beta \Phi_\beta(X; \mu),
	\end{align*}
	where equality holds iff $\nu = \sm_{\beta,\mu}$.
\end{proof}

\begin{lemma}[Comparison of free energies and Gibbs averages]\label{lm:equiv_soft} The functions $\beta \mapsto \langle X \rangle_{\beta,\mu}$ and $\beta \mapsto \eG_\beta(X;\mu)$ are monotonically increasing. Moreover, the following inequality holds:
	\begin{align}\label{eq:equiv_soft}
	\eF_\beta(X;\mu) \le \eG_\beta(X;\mu)
	\le 2 \eF_{2\beta} (X;\mu).
	\end{align}
\end{lemma}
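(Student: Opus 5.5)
We prove the monotonicity claims by differentiating in $\beta$, and the two inequalities in \eqref{eq:equiv_soft} by combining the identities of Lemma~\ref{lm:identities} with convexity of $\beta \mapsto \beta\Phi_\beta$. Throughout, the quantities involved are finite because $T$ is finite, or because the $X_t$ are subgaussian with bounded variance, so differentiation under $\Ex$ is justified by dominated convergence.

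\emph{Monotonicity.} For fixed $X$, set $\Lambda(\beta) \deq \log\inner{\mu}{e^{\beta X}} = \beta\Phi_\beta(X;\mu)$. A direct computation gives
\begin{align*}
\Lambda'(\beta) = \langle X\rangle_{\beta,\mu}, \qquad \Lambda''(\beta) = \frac{\dif}{\dif\beta}\langle X\rangle_{\beta,\mu} = \inner{\sm_{\beta,\mu}}{X^2} - \inner{\sm_{\beta,\mu}}{X}^2 \ge 0,
\end{align*}
since the last expression is a variance under the Gibbs measure. Hence $\beta \mapsto \langle X\rangle_{\beta,\mu}$ is nondecreasing pathwise. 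Taking expectations shows that $\beta \mapsto \eG_\beta(X;\mu)$ is nondecreasing as well.

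\emph{Lower inequality.} The first identity of Lemma~\ref{lm:identities} gives
\begin{align*}
\langle X\rangle_{\beta,\mu} - \Phi_\beta(X;\mu) = \beta^{-1} D(\sm_{\beta,\mu}\|\mu) \ge 0 .
\end{align*}
The same fact follows from Lemma~\ref{lm:Gibbs_VP}. Taking expectations yields $\eF_\beta \le \eG_\beta$.

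\emph{Upper inequality.} Because $\Lambda$ is convex with $\Lambda(0)=0$, its slope at $\beta$ is bounded by its chord slope over $[\beta, 2\beta]$:
\begin{align*}
\beta\,\langle X\rangle_{\beta,\mu} = \beta\Lambda'(\beta) \le \Lambda(2\beta) - \Lambda(\beta).
\end{align*}
Substituting $\Lambda(2\beta) = 2\beta\,\Phi_{2\beta}$ and $\Lambda(\beta) = \beta\,\Phi_\beta$ and dividing by $\beta$ gives
\begin{align*}
\langle X\rangle_{\beta,\mu} \le 2\Phi_{2\beta}(X;\mu) - \Phi_\beta(X;\mu).
\end{align*}
It remains to drop the term $-\Phi_\beta$, which requires $\Ex\Phi_\beta(X;\mu) \ge 0$. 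Jensen's inequality, applied to the concave function $\log$ under $\mu$, gives $\Phi_\beta(X;\mu) \ge \inner{\mu}{X}$. Taking expectations, $\Ex\Phi_\beta(X;\mu) \ge \inner{\mu}{\Ex X} = 0$, since the process is centered. Taking expectations in the displayed inequality therefore yields $\eG_\beta \le 2\eF_{2\beta} - \eF_\beta \le 2\eF_{2\beta}$.

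\emph{Main obstacle.} The only delicate point is the centering step $\eF_\beta \ge 0$. It fails pathwise, so the upper bound holds only after taking expectations. For countable $T$, the interchange of expectation with the sum $\inner{\mu}{X}$ is justified by $\sup_t \Ex|X_t| < \infty$.
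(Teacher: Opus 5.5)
Your proof is correct and follows the paper's argument: the Gibbs-variance derivative gives monotonicity, the identity $D(\sm_{\beta,\mu}\|\mu)=\beta(\langle X\rangle_{\beta,\mu}-\Phi_\beta(X;\mu))\ge 0$ gives the lower bound, and the upper bound uses that $\beta\mapsto\beta\Phi_\beta(X;\mu)$ has nondecreasing derivative $\langle X\rangle_{\beta,\mu}$ together with $\eF_\beta\ge 0$ from Jensen and centering. The only difference is cosmetic: you apply the convexity chord-slope bound pathwise and then take expectations, whereas the paper writes $2\eF_{2\beta}-\eF_\beta=\frac{1}{\beta}\int_\beta^{2\beta}\eG_\alpha\,\dif\alpha$ and uses monotonicity of $\eG$, which is the same inequality.
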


\begin{proof} For $\beta > 0$,
	\begin{align*}
		\frac{\dif}{\dif \beta} \langle X \rangle_{\beta,\mu} &= \inner{\sm_{\beta,\mu}}{X^2} - (\inner{\sm_{\beta,\mu}}{X})^2 \ge 0,
	\end{align*}
This proves the monotonicity of $\beta \mapsto \langle X \rangle_{\beta,\mu}$; monotonicity of $\beta \mapsto \eG_\beta(X;\mu)$ follows upon taking expectations. Moreover,
	\begin{align*}
		\frac{\dif}{\dif\beta} \big(\beta \Phi_\beta(X;\mu)\big) &= \frac{\inner{\mu}{Xe^{\beta X}}}{\inner{\mu}{e^{\beta X}}} = \langle X \rangle_{\beta,\mu}
	\end{align*}
Using these facts and the fundamental theorem of calculus, we get
	\begin{align*}
	2\eF_{2\beta} (X; \mu)
	\ge 2\eF_{2\beta}(X;\mu)  - \eF_\beta(X;\mu)
	= \frac{1}{\beta}\int_\beta^{2\beta} \eG_\alpha(X;\mu)\dif \alpha
	\ge \eG_\beta(X;\mu). 
	\end{align*}
The nonnegativity of $\eF_\beta(X;\mu)$ follows from concavity:
\begin{align*}
	\eF_\beta(X;\mu) &= \frac{1}{\beta}\Ex \log \inner{\mu}{e^{\beta X}} \\
& \ge  \Ex \left[\inner{\mu}{X}\right] \\
&= 0,
\end{align*}
where the last step uses Fubini's theorem and the fact that $X$ is centered. This proves the second inequality in \eqref{eq:equiv_soft}. Finally, 
\begin{align*}
	0 \le \Ex D(\sm_{\beta,\mu} \| \mu) = \beta (\eG_\beta(X;\mu) - \eF_\beta(X;\mu)),
\end{align*}
which gives first inequality in \eqref{eq:equiv_soft}.
\end{proof}

Since the functional $\mu \mapsto \eF_\beta(P_X ; \mu)$ is concave and $\eP(T)$ is compact for finite $T$, there exists
a global maximizer $\mu_\beta^* \in \eP(T)$.
Moreover, any such maximizer satisfies a ``fixed-point'' relation:
\begin{lemma}
  \label{lem:optimal_mu}
For any $\mu_\beta^*$ 
that achieves the supremum of $\eF_\beta(P_X;\mu)$ over $\mu \in \eP(T)$,
\begin{align*}
\mu_\beta^* = \overline{\sm}_{\beta, \mu_\beta^*}.
\end{align*}
\end{lemma}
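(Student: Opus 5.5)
We prove the fixed-point relation by writing down the first-order optimality (KKT) conditions for maximizing the concave functional $\mu \mapsto \eF_\beta(P_X;\mu)$ over the simplex $\eP(T)$.

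\textbf{Step 1: directional derivatives.} Fix a maximizer $\mu^* = \mu^*_\beta$ and any $\nu \in \eP(T)$. For $\theta \in [0,1]$ set $\mu_\theta \deq (1-\theta)\mu^* + \theta \nu$. We have
\begin{align*}
\beta\,\eF_\beta(P_X;\mu_\theta) = \Ex \log\big( (1-\theta)\inner{\mu^*}{e^{\beta X}} + \theta \inner{\nu}{e^{\beta X}}\big).
\end{align*}
Set $Z \deq \inner{\mu^*}{e^{\beta X}}$ and $W \deq \inner{\nu}{e^{\beta X}}$. Then $\theta \mapsto \log((1-\theta)Z + \theta W)$ is concave, so its difference quotients at $\theta = 0$ increase to the right derivative $W/Z - 1$ as $\theta \downarrow 0$. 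Each difference quotient is bounded above by its value at $\theta = 1$, namely $\log W - \log Z$. This bound is integrable, since $T$ is finite and $X$ is Gaussian. Monotone convergence therefore lets us differentiate under the expectation:
\begin{align*}
\frac{\dif}{\dif \theta}\Big|_{\theta=0^+} \beta\,\eF_\beta(P_X;\mu_\theta) = \Ex\left[\frac{\inner{\nu}{e^{\beta X}}}{\inner{\mu^*}{e^{\beta X}}}\right] - 1.
\end{align*}
Note that $\Ex[W/Z] < \infty$. Indeed, on the support of $\mu^*$ we have $e^{\beta X_t}/Z \le 1/\mu^*(t)$. Off the support, the ratio is bounded by $e^{\beta X_t}/(\mu^*(s)e^{\beta X_s})$ for a fixed $s$ in the support, which is integrable.

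\textbf{Step 2: optimality conditions.} Taking $\nu = \delta_t$, maximality of $\mu^*$ forces the right derivative to be $\le 0$. This gives
\begin{align*}
h(t) \deq \Ex\left[\frac{e^{\beta X_t}}{\inner{\mu^*}{e^{\beta X}}}\right] \le 1 \qquad \text{for all } t \in T.
\end{align*}
Averaging against $\mu^*$ gives $\sum_t \mu^*(t) h(t) = \Ex[Z/Z] = 1$. Since each $h(t) \le 1$, it follows that $h(t) = 1$ for every $t$ with $\mu^*(t) > 0$.

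\textbf{Step 3: identification.} By definition, $\overline{\sm}_{\beta,\mu^*}(t) = \Ex\, \sm_{\beta,\mu^*}(t) = \mu^*(t)\, h(t)$. For $t \in \mathrm{supp}\,\mu^*$, Step 2 gives $h(t) = 1$, so $\overline{\sm}_{\beta,\mu^*}(t) = \mu^*(t)$. For $t \notin \mathrm{supp}\,\mu^*$, both sides vanish, since $\sm_{\beta,\mu^*} \ll \mu^*$ almost surely. Hence $\overline{\sm}_{\beta,\mu^*} = \mu^*$.

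The only delicate point is Step 1: justifying the interchange of derivative and expectation, and handling directions that leave the support of $\mu^*$. The concavity/monotone-convergence argument above takes care of both. Note that we only ever need the one-sided derivative in admissible directions, so no interior-point assumption on $\mu^*$ is required.
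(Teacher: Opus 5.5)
Your argument is correct and is essentially the paper's proof: the paper applies the KKT conditions to minimizing $-\Ex\log\langle \mu, e^{\beta X}\rangle$ over the simplex, and its stationarity, dual feasibility and complementary slackness encode exactly your conditions $h(t)\le 1$ for all $t$, with equality on the support of $\mu^*_\beta$; your one-sided directional derivatives simply make explicit the differentiation under $\Ex$ and the boundary case that the KKT invocation leaves implicit. One small slip: by concavity the difference quotients are bounded \emph{below}, not above, by their value $\log W-\log Z$ at $\theta=1$, which is exactly the integrable minorant that monotone convergence for an increasing family requires (alternatively, your integrable bound $W/Z-1$ is a majorant, so dominated convergence also works).
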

\begin{proof} The proof makes use of the Karush--Kuhn--Tucker (KKT) optimality conditions \citep[Sec.~5.5.3]{boyd_vanderberghe_cvx} for the finite-dimensional convex minimization problem
\begin{align*}
\text{minimize } &\phi(\mu) \deq - \Ex\,{\log \sum_{t \in T} \mu(t) e^{\beta X_t}}\\
\text{subject to }
& -\mu(t)  \le 0 \text{ for all }t \in T\\
& \sum_{t \in T} \mu(t)-1 = 0.
\end{align*}
The Lagrangian for this problem has form
\begin{align*}
	\eL(\mu,v,u) \deq \phi(\mu) - \sum_{t \in T} v(t) \mu(t) + u \Big(\sum_{t \in T}\mu(t) - 1\Big),
\end{align*}
where $v \in \Reals^T$ is the vector of Lagrange multipliers for the inequality constraints and $u \in \Reals$ is the Lagrange multiplier for the equality constraint. The KKT conditions for this problem assert that $\mu^*_\beta \in \Reals^T$ is primal optimal and $(v,u) \in \Reals^T \times \Reals$ is dual optimal if and only if the following conditions are satisfied:
\begin{itemize}
	\item[(C1)] stationarity:
\begin{align*}
 \frac{\partial \phi}{\partial \mu(t)}(\mu_\beta^*) + u - v(t) = 0 \text{ for all } t
  \iff \Ex\Bigg[\frac{e^{\beta X_t}}{\sum_s 
  \mu_{\beta}^*(s) e^{\beta X_s}}\Bigg] = u - v(t) \text{ for all } t.
 \end{align*}
 \item[(C2)] primal feasibility:
\begin{align*}
	\sum_t \mu_\beta^*(t) = 1 \quad \text{and} \quad \mu_\beta^*(t) \ge 0 \text{ for all } t.
 \end{align*}
 \item[(C3)] dual feasibility: $v(t) \ge 0$ for all $t$.
\item[(C4)] complementary slackness: $v(t)\mu^*_\beta(t) = 0$ for all $t$.
\end{itemize}
The following is a consequence of (C1)--(C4):
\begin{align*}
1 = \sum_{t \in T}\Ex\Bigg[\frac{\mu^*_\beta(t) e^{\beta X_t}}{\sum_s 
  \mu_{\beta}^*(s) e^{\beta X_s}}\Bigg] = u - \sum_{t \in T}\mu^*_\beta(t)v(t) = u.
\end{align*}
Hence, the equality
\begin{align*}
	\overline{\sm}_{\beta,\mu^*_\beta}(t) &= \Ex\Bigg[\frac{\mu^*_\beta(t) e^{\beta X_t}}{\sum_s 
  \mu_{\beta}^*(s) e^{\beta X_s}}\Bigg] = \mu^*_\beta(t)u = \mu^*_\beta(t)
\end{align*}
holds for every $t \in T$.\end{proof}

\noindent When $X$ is exchangeable, i.e., the joint law of $(X_t)_{t\in T}$ is invariant with respect to permutations of $T$, it is clear that $\mu_\beta^*=\su_{T}$, the uniform probability measure on $T$. For later use, we record some additional consequences of symmetry and concavity.

\begin{proposition}
  \label{prop:measure_sym}
Let $T$ be a finite set and let ${\mathbb G}$ be a finite group acting transitively on $T$. Let $f$ be a concave functional
on $\eP(T)$ which is ${\mathbb G}$-invariant in the sense that $f(\mu\circ g^{-1})= f(\mu)$ for any $\mu \in \eP(T)$ and any $g \in {\mathbb G}$. Then, for any $\mu\in \eP(T)$, we have
\begin{align*}
  f(\mu) \le f(\su_{T}).
\end{align*}
\end{proposition}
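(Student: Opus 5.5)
The plan is to symmetrize $\mu$ over the group and then use concavity together with $\mathbb G$-invariance. Fix $\mu \in \eP(T)$ and define the averaged measure
\begin{align*}
\bar\mu \deq \frac{1}{|\mathbb G|}\sum_{g \in \mathbb G} \mu\circ g^{-1}.
\end{align*}
Each $\mu \circ g^{-1}$ lies in $\eP(T)$, so $\bar\mu$ is a convex combination of points of $\eP(T)$ and hence lies in $\eP(T)$.

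Concavity of $f$, applied via the finite form of Jensen's inequality to this uniform convex combination, gives
\begin{align*}
f(\bar\mu) \ge \frac{1}{|\mathbb G|}\sum_{g\in\mathbb G} f(\mu\circ g^{-1}).
\end{align*}
By $\mathbb G$-invariance, every term on the right equals $f(\mu)$, so $f(\bar\mu)\ge f(\mu)$.

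It remains to show that $\bar\mu = \su_T$; this is the only step that uses transitivity. First, $\bar\mu$ is itself $\mathbb G$-invariant. For $h\in\mathbb G$ we have $\bar\mu\circ h^{-1} = \frac{1}{|\mathbb G|}\sum_g \mu\circ (hg)^{-1}$, and reindexing $g\mapsto hg$ permutes $\mathbb G$, so this equals $\bar\mu$. Hence $\bar\mu(h(t)) = \bar\mu(t)$ for all $h \in \mathbb G$ and $t\in T$. Since the action is transitive, for any $s,t$ there is some $h$ with $h(t)=s$, so $\bar\mu$ is constant on $T$. Being a probability measure, it must be $\su_T$.

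Combining the two steps gives $f(\mu)\le f(\bar\mu)=f(\su_T)$. I do not expect a genuine obstacle. The only care needed is the convention $(\mu\circ g^{-1})(A)=\mu(g^{-1}(A))$, together with checking that the reindexing in the invariance step is consistent with it. Under this convention, $(\mu\circ g^{-1})\circ h^{-1} = \mu\circ(hg)^{-1}$, as required.
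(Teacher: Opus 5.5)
Your proposal is correct and matches the paper's proof: average $\mu$ over $\mathbb G$, use concavity and $\mathbb G$-invariance to get $f(\mu)\le f(\bar\mu)$, and use transitivity with the reindexing $g\mapsto hg$ to show $\bar\mu=\su_T$. The only cosmetic difference is that you first show $\bar\mu$ is $\mathbb G$-invariant and then invoke transitivity, whereas the paper does the reindexing directly when computing $\bar\mu(t)$.
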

\begin{proof}
  Define $\mu^*$ by 
  \begin{align*}
    \mu^*(A) = \frac{1}{\abs{{\mathbb G}}}\sum_{g \in {\mathbb G}} \mu  (g^{-1}(A)) 
  \end{align*}
  for any measurable $A\subset T$ where $g^{-1}(A)= \set{g^{-1}(t): t\in A}$.
  By the concavity of $f$, for any $\mu\in \eP(T)$,
  \begin{align*}
  f(\mu) \le f (\mu^*). 
  \end{align*}
  It is readily verified that $\mu^*$ is uniform. Indeed, since ${\mathbb G}$ acts transitively on $T$, for any $s,t\in T$  we can find $h\in {\mathbb G}$ so that $h(t)=s$. Then
  \begin{align*}
    \mu^*(t)&= \frac{1}{\abs{{\mathbb G}}} \sum_{g\in {\mathbb G}} \mu(g^{-1}(t))
            =\frac{1}{\abs{{\mathbb G}}} \sum_{g\in {\mathbb G}} 
            \mu(g^{-1}\circ h^{-1}(s))
            = \frac{1}{\abs{{\mathbb G}}} \sum_{\mathfrak{f}\in {\mathbb G}} \mu(\mathfrak{f}^{-1}(s))
            = \mu^*(s). 
  \end{align*}
  Since $s$ and $t$ were arbitrary, it follows that $\mu^* = \su_T$. \end{proof}
  
\noindent Now let $X = (X_t)_{t \in T}$ be a stationary Gaussian process (recall Definition~\ref{def:stationary}). A key consequence of stationarity is that \begin{align}
    \label{eq:eq_distr}
    \set{X_t-X_s : t\in B(s,\epsilon)}\stackrel{{\rm d}}{=} \set{X_t-X_{s'}: t\in B(s',\epsilon)} \quad\text{for all } s, s'\in T, \epsilon>0
  \end{align}
  where $\stackrel{{\rm d}}{=}$ denotes equality of probability laws and where $B(s,\epsilon) \deq \{ t \in T: d(s,t) \le \epsilon \}$ is the (closed) ball of radius $\epsilon$ centered at $t$.
  To see this, consider finitely many points $t_1,\ldots,t_n\in B(s,\epsilon)$. Then for any $i,j \le n$
  \begin{align*}
  2\Cov(X_{t_i}-X_s, X_{t_j}-X_s) = 2\Ex[(X_{t_i}-X_s)(X_{t_j}-X_s)]
  = d^2(t_i,s)+ d^2(t_j,s)-d^2(t_i,t_j),
  \end{align*}
 which means that the probability law of the increments of a Gaussian process is determined by the natural metric $d$.
  Then \eqref{eq:eq_distr} follows since, by transitivity, there exists some $g \in {\mathbb G}$ such that $s' = g(s)$ and, by translation invariance, $g$ maps $B(s,\epsilon)$ isometrically onto $B(s',\epsilon)$.  Moreover, since $(X_t)$ is centered, the process $Y = (Y_t)_{t \in T}$ defined for any fixed $t_0 \in T$ by $Y_t= X_t-X_{t_0}$ satisfies
  \begin{align*}
 \eF_\beta(P_X; \mu) = \eF_\beta(P_Y,\mu).
  \end{align*}
 Therefore, using \eqref{eq:eq_distr} with $\epsilon = \diam(T)$, we see that 
  $\mu \mapsto \Phi(X ; \mu)$ is ${\mathbb G}$-invariant,
  which further implies the following:
  \begin{proposition}
    \label{prop:unif_sup}
Let a centered stationary Gaussian process $(X_t)_{t\in T}$ be given. Then, for any $\mu \in \eP(T)$,
  \begin{align*}
\eF_\beta(P_X;\mu) \le \eF_\beta(P_X; \su_T),
  \end{align*}
  where $\su_T$ is the uniform probability measure on $T$.
  \end{proposition}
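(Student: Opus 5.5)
The plan is to reduce the statement to Proposition~\ref{prop:measure_sym}. We take $f(\mu) \deq \eF_\beta(P_X;\mu)$ on $\eP(T)$, where $T$ is finite and ${\mathbb G}$ is the group from Definition~\ref{def:stationary}. Only the permutation of $T$ induced by each $g$ matters, so we may replace ${\mathbb G}$ by its image in the finite symmetric group of $T$. This image is still a finite group acting transitively and isometrically. Two hypotheses of Proposition~\ref{prop:measure_sym} then remain to be checked: concavity of $f$, and ${\mathbb G}$-invariance of $f$.

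Concavity is immediate. For each realization of $X$, the map $\mu \mapsto \log\inner{\mu}{e^{\beta X}}$ is the logarithm of a linear function of $\mu$, hence concave. Taking expectations and multiplying by $1/\beta>0$ preserves concavity. Integrability is not an issue, since $\beta\min_t X_t \le \log\inner{\mu}{e^{\beta X}} \le \beta \max_t X_t$ and $T$ is finite.

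For invariance, fix $t_0\in T$ and set $Y_t = X_t - X_{t_0}$. Since $\inner{\mu}{e^{\beta X}} = e^{\beta X_{t_0}}\inner{\mu}{e^{\beta Y}}$ and $\Ex X_{t_0}=0$, we get $\eF_\beta(P_X;\mu)=\eF_\beta(P_Y;\mu)$. The law of $(Y_t)_{t\in T}$ is centered Gaussian with covariance $\tfrac12\big(d^2(t,t_0)+d^2(s,t_0)-d^2(s,t)\big)$, so it is determined by $d$.

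Now take $g\in{\mathbb G}$ and compute
\begin{align*}
\inner{\mu\circ g^{-1}}{e^{\beta X}} = \sum_{t}\mu(g^{-1}(t))e^{\beta X_t} = \sum_{s}\mu(s)e^{\beta X_{g(s)}}.
\end{align*}
Hence $f(\mu\circ g^{-1}) = \eF_\beta(P_{X\circ g};\mu)$, where $(X\circ g)_s = X_{g(s)}$. The process $X\circ g$ is centered Gaussian with canonical metric $d(g(s),g(t)) = d(s,t)$. Applying the same recentering to it, at the point $g^{-1}(t_0)$, gives $\eF_\beta(P_{X\circ g};\mu)=\eF_\beta(P_{Y'};\mu)$, where $Y'_s = X_{g(s)} - X_{t_0}$. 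This increment process has the same law as $Y$ by \eqref{eq:eq_distr} with $\epsilon=\diam(T)$ (equivalently, by the covariance formula together with isometry). Therefore $f(\mu\circ g^{-1}) = f(\mu)$.

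With both hypotheses verified, Proposition~\ref{prop:measure_sym} gives $f(\mu)\le f(\su_T)$, which is the claim. The only step requiring care is the invariance, specifically bookkeeping the direction of $g$ versus $g^{-1}$ and noting that a centered Gaussian law is pinned down by increments only after subtracting a base point. The recentering trick handles this cleanly, so I expect no real obstacle.
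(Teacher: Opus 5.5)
Your route is the paper's: recentering at a base point plus \eqref{eq:eq_distr} gives ${\mathbb G}$-invariance of $\mu\mapsto\eF_\beta(P_X;\mu)$, and Proposition~\ref{prop:measure_sym} finishes. Your reduction to the finite image of ${\mathbb G}$ in the symmetric group of $T$, and your explicit concavity check, fill in points the paper leaves implicit.

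However, the invariance step, the one you flagged yourself, contains a false claim. Your recentred process $Y'_s = X_{g(s)} - X_{t_0}$ is just $Y\circ g$, and it does \emph{not} have the law of $Y$. Since $d(g(s),t_0) = d(s,g^{-1}(t_0))$, isometry gives
\begin{align*}
2\,\Ex[Y'_sY'_u] = d^2\big(s,g^{-1}(t_0)\big) + d^2\big(u,g^{-1}(t_0)\big) - d^2(s,u),
\end{align*}
so $Y' \stackrel{{\rm d}}{=} (X_s - X_{g^{-1}(t_0)})_{s\in T}$, not $Y$. Concretely, $Y'_{t_0} = X_{g(t_0)} - X_{t_0}$ has variance $d^2(g(t_0),t_0)$, which is positive whenever $d(g(t_0),t_0)>0$, whereas $Y_{t_0}=0$. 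Neither \eqref{eq:eq_distr} nor the covariance formula gives $Y'\stackrel{{\rm d}}{=}Y$. What they say is that increments of $X\circ g$ from a base index $p$ have the law of increments of $X$ from the \emph{same} base index $p$. You used base index $g^{-1}(t_0)$ for $X\circ g$ but $t_0$ for $X$, which is exactly the $g$ versus $g^{-1}$ slip you warned about.

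The repair is one line. Recenter $X\circ g$ at the index $t_0$ instead, i.e.\ take $Y'_s = X_{g(s)} - X_{g(t_0)}$. Its covariance $\frac12\big(d^2(g(s),g(t_0)) + d^2(g(u),g(t_0)) - d^2(g(s),g(u))\big)$ equals that of $Y$ by isometry; this is \eqref{eq:eq_distr} with $s=t_0$, $s'=g(t_0)$. Alternatively, keep your $Y'$ and note that $\eF_\beta\big(P_{(X_s - X_{g^{-1}(t_0)})_s};\mu\big) = \eF_\beta(P_X;\mu)$ by the same recentering argument with base point $g^{-1}(t_0)$. Either way $f(\mu\circ g^{-1}) = f(\mu)$, and the rest of your proof stands.
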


An important special case of i.i.d.\ Gaussian processes is analyzed next. Let $G = (G_1,\ldots,G_n)$ be a standard Gaussian random vector in $\Reals^n$. We will view $G$ as a centered Gaussian process with the index set $T = [n] \deq \{1,\dots,n\}$. Let $\su_n$ denote the uniform probability distribution (normalized counting measure) on $[n]$, and define the following quantities:
\begin{align}\label{eq:G_quantities}
	\begin{split}
\phi_n(\beta) &\deq \eF_\beta(G; \su_n) ,\\
g_n(\beta) &\deq \eG_\beta(G; \su_n) ,\\
r_n(\beta) &\deq \Ex\lVert {\sm}_{\beta, \su_n}  \rVert_2^2 ,\\
Z_n(\beta) &\deq \sum_{i=1}^n e^{\beta G_i}
\end{split}
\end{align}
where $\lVert \cdot \rVert_2$ denotes the Euclidean $(\ell^2)$ norm on $\Reals^n$.
\begin{lemma} We have the following:
	\begin{enumerate}
		\item The functions $\beta \mapsto \phi_n(\beta)$ and $\beta \mapsto r_n(\beta)$ are monotonically increasing.
		\item The function $n \mapsto \phi_n(\beta)$ is monotonically increasing.
		\item The inequality
		\begin{align}\label{eq:phi_vs_g}
			\phi_n(\beta) \ge \frac{1}{2}g_n(\beta) =  \frac{\beta}{2}(1-r_n(\beta))
		\end{align}
		holds for all $n \ge 1$ and all $\beta \ge 0$.
	\end{enumerate}
\end{lemma}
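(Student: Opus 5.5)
Item 1 follows from the derivative computations already made in the proof of Lemma~\ref{lm:equiv_soft}. For $\phi_n$, we write $\phi_n(\beta) = \frac{1}{\beta}\Ex\log \inner{\su_n}{e^{\beta G}}$. The map $\beta \mapsto \log \inner{\su_n}{e^{\beta G}}$ is convex (a log-moment generating function) and vanishes at $\beta=0$, so its slope from the origin, $\Phi_\beta(G;\su_n)$, is nondecreasing in $\beta$ pathwise; taking expectations gives monotonicity of $\phi_n$. For $r_n$, we write $\lVert \sm_{\beta,\su_n}\rVert_2^2 = Z_n(2\beta)/Z_n(\beta)^2$ and use Gaussian integration by parts, which yields the identity in item 3 below. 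That identity gives $r_n(\beta) = 1 - g_n(\beta)/\beta$, so monotonicity of $r_n$ is equivalent to $g_n(\beta)/\beta$ being nonincreasing. I would attempt this by differentiating $g_n(\beta)/\beta$, applying integration by parts once more to express $g_n'$ through overlaps of two and three replicas. The resulting sign claim would be $\frac{d}{d\beta}\Ex\lVert\sm\rVert_2^2 = 2\beta\,\Ex[\text{variance-type term}]\ge 0$, via the exchangeable replica computation $\Ex[\sum_i \sm_i^2(G_i - \langle G\rangle)] $. This is the step I expect to be hardest. As a fallback, I would differentiate directly: $\frac{d}{d\beta}\sum_i \sm_i^2 = 2\sum_i \sm_i^2(G_i-\langle G\rangle)$, and then integrate by parts in $G$ to obtain $2\beta\,\Ex[\sum_i\sm_i^2 - 3\sum_i\sm_i^3\cdots]$. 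Positivity should then follow from a Cauchy--Schwarz inequality of the form $(\sum\sm_i^2)^2\le\sum\sm_i^3$.

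Item 2 uses exchangeability and concavity. The uniform measure $\su_{n}$ on $[n]$ (viewed as a measure on $[n+1]$ with zero mass at $n+1$) satisfies $\eF_\beta(G_{[n+1]};\cdot)$ at that measure equal to $\phi_n(\beta)$. The functional $\mu\mapsto\eF_\beta(G_{[n+1]};\mu)$ is concave and permutation invariant, so Proposition~\ref{prop:measure_sym} with the symmetric group gives $\phi_n(\beta)\le\phi_{n+1}(\beta)$.

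Item 3 combines Gaussian integration by parts with Lemma~\ref{lm:equiv_soft}. Applying $\Ex[G_i F(G)] = \Ex[\partial_i F]$ to $F = e^{\beta G_i}/Z_n(\beta)$ gives
\begin{align*}
g_n(\beta) = \sum_i \Ex\Big[\beta \sm_i - \beta\sm_i^2\Big] = \beta(1-r_n(\beta)).
\end{align*}
For the inequality $\phi_n\ge g_n/2$, I would integrate: $\beta\phi_n(\beta) = \int_0^\beta g_n(\alpha)\dif\alpha = \int_0^\beta \alpha(1-r_n(\alpha))\dif\alpha$. Since $r_n$ is nondecreasing by item 1, this is at least $(1-r_n(\beta))\beta^2/2 = \beta g_n(\beta)/2$. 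Dividing by $\beta$ yields \eqref{eq:phi_vs_g}.
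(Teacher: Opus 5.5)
\textbf{Gap: the monotonicity of $r_n$.} Your treatment of the monotonicity of $\phi_n$ is fine; the slope-from-the-origin argument is equivalent to the paper's $\phi_n'(\beta)=\beta^{-2}\Ex D(\sm_\beta\|\su_n)\ge 0$. Item~2 via Proposition~\ref{prop:measure_sym}, the identity $g_n=\beta(1-r_n)$, and the final integration step are also fine and match the paper. The gap is the monotonicity of $r_n$, on which \eqref{eq:phi_vs_g} rests.

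Your fallback for it would fail. Carrying out the integration by parts on $\Ex\sum_i\sm_i^2(G_i-\langle G\rangle)$ gives, with $S_k\deq\sum_i\sm_i^k$,
\begin{align*}
r_n'(\beta)=2\beta\,\Ex\big[S_2-4S_3+3S_2^2\big].
\end{align*}
The integrand is not pathwise nonnegative. For the Gibbs measure $(0.9,0.1)$ it equals $0.82-2.92+2.0172<0$, and for $\sm=(1-\epsilon,\epsilon)$ it equals $-2\epsilon+O(\epsilon^2)$. Every strictly positive probability vector arises as a Gibbs measure for some $G$, so the integrand is negative on a set of positive probability. Hence no pointwise inequality can close the argument. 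In any case, the Cauchy--Schwarz bound $S_2^2\le S_3$ you invoke points the wrong way: it only bounds the $+3S_2^2$ term from above. Your primary route, differentiating $g_n/\beta=1-r_n$, is the same computation, and the ``variance-type term'' is never actually exhibited.

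\textbf{The fix.} No integration by parts is needed, because the pathwise derivative you already wrote down is nonnegative. Since $\sm_i^2\propto e^{2\beta G_i}$, the ratio $\sum_i\sm_i^2G_i/\sum_i\sm_i^2$ is the Gibbs average of $G$ at inverse temperature $2\beta$. With $\Lambda_n\deq\log Z_n$,
\begin{align*}
\frac{\dif}{\dif\beta}\lVert\sm_\beta\rVert_2^2=2\sum_i\sm_i^2\big(G_i-\langle G\rangle\big)=2\lVert\sm_\beta\rVert_2^2\big(\Lambda_n'(2\beta)-\Lambda_n'(\beta)\big)\ge 0 .
\end{align*}
The inequality holds by convexity of $\Lambda_n$, equivalently by the monotonicity of Gibbs averages in $\beta$ from Lemma~\ref{lm:equiv_soft}. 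So $\beta\mapsto\lVert\sm_\beta\rVert_2^2$ is nondecreasing for every realization of $G$, and taking expectations gives the monotonicity of $r_n$. With that in place, your proof of item~3 goes through as written.
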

\begin{proof} Let $\sm_\beta$ denote the random Gibbs measure $\sm_{\beta,\su_n}$, and let $\Lambda_n(\beta) \deq \log Z_n(\beta)$. The monotonicity of $\beta \mapsto \phi_n(\beta)$ is verified by direct differentiation: For $\beta > 0$,
	\begin{align}
		\phi'_n(\beta) &= \frac{1}{\beta}\big(g_n(\beta)-\phi_n(\beta)\big) = \frac{1}{\beta^2} \Ex D(\sm_\beta\|\su_n) \ge 0.
	\end{align}
For $r_n(\beta)$, we first use the definition of $\sm_\beta$ to calculate
	\begin{align*}
		\frac{\dif}{\dif \beta}\|\sm_{\beta}\|^2_2 &= 2 \sum^n_{i=1} \sm_\beta(i)\frac{\dif}{\dif \beta}\sm_\beta(i) \\
		&= 2 \sum^n_{i=1}\sm^2_\beta(i)\left(G_i  - \Lambda'_n(\beta)\right) \\
		&= 2 \|\sm_\beta\|^2_2 \left(\frac{1}{\|\sm_\beta\|^2_2}\sum^n_{i=1}  G_i \sm^2_\beta(i) - \Lambda'_n(\beta)\right) \\
		&= 2 \|\sm_\beta\|^2_2 \left(\Lambda'_n(2\beta) - \Lambda'_n(\beta)\right) \\
		&\ge 0,
	\end{align*}
	where the inequality follows from the fact that $\Lambda'_n(\beta)$ is nondecreasing, as $\Lambda_n(\beta)$ is convex. The monotonicity of $\beta \mapsto r_n(\beta)$ follows by taking expectations. To show monotonicity of $n \mapsto \phi_n(\beta)$, apply Proposition~\ref{prop:unif_sup} to $T = [n]$, $X = G$, and $\mu = \frac{1}{n-1}\sum^{n-1}_{i=1}\delta_i$.
	
Next, we obtain an expression for $g_n(\beta)$ in terms of $r_n(\beta)$.  For each $i \in [n]$, we can view $\sm(i)$ as a function of $G$:
	\begin{align*}
		\sm(i) = f_i(G) \deq \frac{e^{\beta G_i}}{\sum^n_{j=1}e^{\beta G_j}},
	\end{align*}
	with
	\begin{align*}
		\partial_jf_i(G) &= \beta\left(\frac{e^{\beta G_j}\boldsymbol{1}\{i = j\}}{\sum^n_{k=1}e^{\beta G_{k}}} - \frac{e^{\beta G_i}e^{\beta G_j}}{\left(\sum^n_{k=1} e^{\beta G_{k}}\right)^2}\right) \\
		&= \beta\big(\delta_{ij} \sm(i) - \sm(i)\sm(j)\big).
	\end{align*}
Gaussian integration by parts \citep[Lemma~2.2.4]{Adler2007} then gives
\begin{align*}
	\Ex[G_i \sm(i)] &= \sum^n_{j=1} \Ex[G_i G_j] \Ex[\partial_jf_i(G)] \\
	&= \beta\, \Ex[\sm(i)-\sm(i)^2].
\end{align*}
Summing over all $i \in [n]$, we obtain $g_n(\beta) = \beta(1-r_n(\beta))$. 

Since $g'_n(\beta) = \beta \phi_n(\beta)$, we have by the fundamental theorem of calculus
\begin{align*}
	\beta\phi_n(\beta)  &= \int^\beta_0 g_n(\alpha)\dif \alpha \\
	&= \int^\beta_0 \alpha(1-r_n(\alpha))\dif \alpha \\
	&= \frac{\beta^2}{2} - \int^\beta_0 \alpha r_n(\alpha)\dif \alpha.
\end{align*}
Integrating by parts and using the monotonicity of $r_n(\cdot)$, we obtain
\begin{align*}
	 \int^\beta_0 \alpha r_n(\alpha)\dif \alpha = \frac{\beta^2}{2}r_n(\beta) - \int^\beta_0 r'_n(\alpha) \dif \alpha \le \frac{\beta^2}{2}r_n(\beta).
\end{align*}
The inequality $\phi_n(\beta) \ge \frac{1}{2}g_n(\beta)$ follows by rearranging.
\end{proof}

\noindent Let $X = (X_i)_{1 \le i \le n}$ be a centered i.i.d.\ process. It follows by symmetry that $\overline{\sm}_{\beta,\su_n} = \su_n$, and the second identity of Lemma~\ref{lm:identities} simplifies to
\begin{align}
	\Ex D(\sm_{\beta,\su_n} \| \su_n) = I(X; \uptau_{\beta,\su_n}).
\end{align}
Sharp estimates for Gibbs averages and free energies of  Gaussian random vectors were obtained in an earlier work by the authors in terms of mutual information between $X$ and $\uptau_{\beta,\mu}$:

\begin{lemma}[\citet{chu_raginsky_alt}]\label{lem:iid_mi}
  Let $X = (X_1,\ldots,X_n)$ be a random vector of centered $\sigma$-subgaussian variables. Then, for any $\mu \in \eP([n])$ and any $\beta > 0$,
\begin{align*}
\eG_\beta(X;\mu)
  \le\sigma\sqrt{2I(X; \uptau_{\beta,\mu})}.
\end{align*}
Let $G = (G_1,\ldots,G_n)$ be a standard Gaussian random vector in $\Reals^n$. Then
\begin{align*}
 \eG_\beta(\sigma G, \su_n)
  \gtrsim \sigma \sqrt{I (\sigma G ; \uptau_{\beta, \su_n})}
\end{align*}
\begin{proof} We only prove the upper bound and refer the reader to \citet{chu_raginsky_alt} for the proof of the lower bound. Let $\sm$ and $\overline{\sm}$ denote $\sm_{\beta,\mu}$ and $\overline{\sm}_{\beta,\mu}$, respectively. By the variational representation of the relative entropy (the same result that underlies the Gibbs variational principle), for any $f : T \to \Reals$
	\begin{align*}
		- \log \langle \overline{\sm}, e^{-f} \rangle &= \inf_{\mu \in \cP(T)} \set{\langle \mu, f \rangle + D(\mu \| \overline{\sm})} \\
		&\le \langle \sm, f \rangle + D(\sm \| \overline{\sm}).
	\end{align*}
Applying this to the (random) function $f(t) = -\lambda X_t$ for some $\lambda > 0$ (to be chosen later) gives
\begin{align*}
	- \log \Bigg(\sum_{t \in T}\overline{\sm}(t)e^{\lambda X_t}\Bigg) \le -\lambda \langle \sm, X \rangle + D(\sm \| \overline{\sm}).
\end{align*}
Rearranging, taking expectations w.r.t.\ $X$, and using the fact that $\Ex D(\sm \| \overline{\sm}) = I(X; \uptau_{\beta,\mu})$, we obtain
\begin{align*}
	\eG_\beta(X;\mu) = \Ex \langle \sm, X \rangle \le \frac{1}{\lambda}\Bigg[I(X; \uptau_{\beta,\mu}) + \Ex \log \Bigg(\sum_{t \in T}\overline{\sm}(t)e^{\lambda X_t}\Bigg)\Bigg].
\end{align*}
Using Jensen's inequality and fact that the $X_t$'s are centered $\sigma$-subgaussian random variables, we can further upper-bound the second term on the right-hand side as follows:
\begin{align*}
	 \Ex \log \Bigg(\sum_{t \in T}\overline{\sm}(t)e^{\lambda X_t}\Bigg) \le \frac{\lambda^2 \sigma^2}{2}.
\end{align*}
Since $\lambda > 0$ was arbitrary, we obtain
\begin{align*}
	\eG_\beta(X; \mu) \le \inf_{\lambda > 0} \Bigg(\frac{1}{\lambda} I(X; \uptau_{\beta,\mu}) + \frac{\lambda \sigma^2}{2}\Bigg) = \sigma \sqrt{2 I(X; \uptau_{\beta,\mu})}.
\end{align*}
as claimed. \end{proof}

\end{lemma}
\begin{remark} The above bounds also hold for the quenched free energy. For the upper bound, we use the first inequality of Lemma~\ref{lm:equiv_soft}. For the lower bound, we use the inequality \eqref{eq:phi_vs_g}.
\end{remark}

\subsection{A key comparison inequality}

Recall the definition of $\eta(n,\beta)$ in \eqref{eq:eta}. The following lemma highlights the key role this quantity plays in controlling the behavior of the quenched free energy $\eF_\beta(P_X;\mu)$.

\begin{lemma}\label{lem:iid}
  Let $X = (X_1,\ldots,X_n)$ be a random vector in $\Reals^n$, such that each $X_i$ is a  centered $\sigma$-subgaussian random variable for some $\sigma>0$. Then for any $\mu \in \eP([n])$ and any $\beta > 0$,
  \begin{align*}
 \eF_\beta(X;\mu)
     \lesssim \sigma \sqrt{\eta(n,\sigma\beta)} \lesssim \eF_\beta(\sigma G; \su_n).
  \end{align*}
\end{lemma}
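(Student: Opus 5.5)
The plan is to prove the two inequalities separately, reducing the second to estimates for $\phi_n(\beta) = \eF_\beta(G;\su_n)$. By the scaling identity $\eF_\beta(\sigma G;\su_n) = \sigma\,\phi_n(\sigma\beta)$, both claims come down to comparing $\phi_n(\beta)$ and general subgaussian free energies with $\sqrt{\beta^2\wedge\log n}$.

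\emph{Upper bound.} I would prove two bounds, one for each regime of $\eta$.

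\begin{itemize}
\item[(a)] By Jensen's inequality and $\sigma$-subgaussianity,
\begin{align*}
\eF_\beta(X;\mu) \le \frac{1}{\beta}\log \sum_t \mu(t)\,\Ex e^{\beta X_t} \le \frac{\sigma^2\beta}{2}.
\end{align*}
This handles the regime $\sigma\beta \le \sqrt{\log n}$.
\item[(b)] Using $\Phi_\beta(X;\mu)\le \max_i X_i$ from \eqref{eq:softmax_bounds} and the subgaussian maximal inequality,
\begin{align*}
\eF_\beta(X;\mu) \le \Ex\max_i X_i \le \sigma\sqrt{2\log n}.
\end{align*}
\end{itemize}

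Combining the two gives $\eF_\beta(X;\mu) \lesssim \sigma\big(\sigma\beta \wedge \sqrt{\log n}\big) = \sigma\sqrt{\eta(n,\sigma\beta)}$.

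\emph{Lower bound.} After scaling it suffices to show $\phi_n(\beta)\gtrsim \beta\wedge\sqrt{\log n}$. I would use \eqref{eq:phi_vs_g}, which gives $\phi_n(\beta)\ge \frac{\beta}{2}(1-r_n(\beta))$, together with the monotonicity of $\phi_n$ in $\beta$ and in $n$. The key step is to show $r_n(\beta)\le 1/2$ whenever $\beta\le c\sqrt{\log n}$ for a small constant $c$, so that $\phi_n(\beta)\ge \beta/4$ in that range.

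For this I would write $\|\sm_\beta\|_2^2 = Z_n(2\beta)/Z_n(\beta)^2$.
\begin{itemize}
\item The numerator has $\Ex Z_n(2\beta) = n e^{2\beta^2}$, so Markov's inequality controls it.
\item For the denominator, $\Ex Z_n(\beta)=ne^{\beta^2/2}$ and $\Var Z_n(\beta)\le ne^{2\beta^2}$. Chebyshev's inequality then gives $Z_n(\beta)\ge \frac12 ne^{\beta^2/2}$ with probability at least $1-4e^{\beta^2}/n$, which is close to $1$ when $\beta^2\le \frac12\log n$.
\item On that event, $\|\sm_\beta\|_2^2\le 4Z_n(2\beta)/(n^2e^{\beta^2})$, whose expectation is $4e^{\beta^2}/n$, which is small.
\item Off that event, use the trivial bound $\|\sm_\beta\|_2^2\le 1$, contributing at most $4e^{\beta^2}/n$.
\end{itemize}
Together these give $r_n(\beta)\le 8e^{\beta^2}/n \le 1/2$ once $\beta^2 \le \log n - \log 16$. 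This is valid for $n$ larger than an absolute constant; for bounded $n\ge 2$, direct comparison with the case $n=2$ via monotonicity in $n$ suffices, since $\phi_2(\beta)\gtrsim \beta\wedge 1$ by the same identity. The case $n=1$ is trivial because $\eta=0$.

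Finally, for $\beta > c\sqrt{\log n}$, monotonicity in $\beta$ gives $\phi_n(\beta)\ge \phi_n(c\sqrt{\log n})\gtrsim \sqrt{\log n}$. This completes $\phi_n(\beta)\gtrsim\sqrt{\eta(n,\beta)}$.

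The main obstacle is the second-moment estimate on $r_n$. The plain Chebyshev bound may be too weak near the breakpoint, but since we only need some constant $c$, shrinking the range to $\beta^2\le \frac12\log n$ should be enough to absorb the losses.
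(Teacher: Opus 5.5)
Your proposal is correct and follows the paper's proof essentially step for step. The upper bound is the same Jensen/maximal-inequality argument. The lower bound uses the same reduction to $\phi_n(\beta)\ge\frac{\beta}{2}(1-r_n(\beta))$, a second-moment bound on $Z_n(\beta)$ for large $n$, monotonicity in $n$ to reduce bounded $n$ to $n=2$, and monotonicity in $\beta$ past the breakpoint.

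The only differences are cosmetic:
\begin{itemize}
\item You use Chebyshev in place of the paper's Paley--Zygmund step, and your bound $r_n(\beta)\le 8e^{\beta^2}/n$ checks out.
\item For $n=2$, ``by the same identity'' should be made explicit as $1-r_2(\beta)\ge 1-r_2(1)>0$ for $\beta\le 1$. This uses the monotonicity of $r_n$ in $\beta$ and the fact that $r_2(1)<1$. The paper instead uses the elementary bound $\log\cosh x\ge x^2/2-x^4/12$.
\end{itemize}
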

\begin{remark} Taking $X = \sigma G$ and $\mu = \su_n$, we immediately see that $\eF_\beta(\sigma G; \su_n) \asymp \sigma \sqrt{\eta(n,\sigma\beta)}$.
\end{remark}

\begin{proof} Since $X$ is centered, a simple scaling argument shows that
\begin{align*}
  \Phi_\beta(X ; \mu) = \sigma \Phi_{\sigma\beta}(X / \sigma, \mu).
\end{align*}
Thus, it suffices to consider the case $\sigma=1$. By Jensen's inequality and the subgaussian property, 
\begin{align*}
\eF_\beta(X;\mu) &\le \frac{1}{\beta} \log \sum^n_{i=1} \mu(i) \Ex e^{\beta X_i} \le \frac{\beta}{2}.
\end{align*}
Alternatively, we have
\begin{align*}
\eF_\beta(X;\mu) \le \Ex\Big[\max_{1 \le i \le n} X_i\Big] \le \sqrt{2\log n}.
\end{align*}
Combining the two estimates yields
\begin{align*}
	\eF_\beta(X;\mu) \le \frac{\beta}{2} \wedge \sqrt{2 \log n} \lesssim \sqrt{\eta(n,\beta)},
\end{align*}
which proves the first inequality.

We now turn to the second inequality. Recall the definitions of various quantities associated with $G$ given in \eqref{eq:G_quantities}.  We start by making a useful observation: there exists a universal constant $c>0$, such that
\begin{align}
  \label{eq:high_temp1}
  \phi_n(\beta) \ge c \beta \quad \text{for all } n \ge 2,  \beta \le 1.
\end{align}
Since $n \mapsto \phi_n(\beta)$ is an increasing function, it suffices to show that the inequality \eqref{eq:high_temp1} holds for $n =2$. From definitions, we have
\begin{align*}
\phi_2(\beta)
&= \frac{1}{\beta} \Ex\Bigg[ \log \frac{e^{\beta G_1}+ e^{\beta G_2}}{2}\Bigg]
= \frac{1}{\beta} \Ex\Bigg[\log 
\Bigg ( \frac{e^{\frac{\beta (G_1 + G_2)}{2}} \big (
e^{\frac{\beta (G_1 - G_2)}{2}} + e^{\frac{\beta (G_2 - G_1)}{2} } \big)}{2}\Bigg)\Bigg]\\
&= \frac{1}{\beta} \Ex\Bigg[\log 
\frac{e^{\frac{\beta (G_1 - G_2)}{2}} + e^{\frac{\beta (G_2 - G_1)}{2} }}{2}\Bigg]
=\frac{1}{\beta} \Ex\Bigg[ \log \cosh \Bigg(\frac{\beta G_1}{\sqrt{2}}\Bigg)\Bigg],
\end{align*}
where the last step follows from the fact that $\frac{G_1 - G_2}{2} \stackrel{{\rm d}}{=} \frac{G_1}{\sqrt{2}}$. Then, using the elementary inequality
\begin{align*}
\log \cosh x \ge \frac{x^2}{2} - \frac{x^4}{12},\quad \text{for all } x \in \bR 
\end{align*}
we have
\begin{align*}
\phi_2(\beta) 
\ge \frac{1}{\beta} \Ex\Bigg[\frac{\beta^2 G^2_1}{4} - \frac{\beta^4 G^4_1}{48}\Bigg]
= \frac{1}{\beta} \Bigg(\frac{\beta^2}{4} - \frac{\beta^4}{16}\Bigg)
\ge \frac{3 \beta}{16},
\end{align*}
 where the final inequality holds since $\beta \le 1$. This establishes \eqref{eq:high_temp1}. Next, we will show that the following ``high-temperature'' lower bound holds with some constant $c > 0$:
 \begin{align}
   \label{eq:high_temp2}
 \phi_n(\beta) \ge c \beta, \qquad \text{for all } n \ge 2 \text{ and all } 1 \le \beta \le \sqrt{\frac{1}{2}\log n}.
 \end{align}
Then, since the function $\beta \mapsto \phi_n(\beta)$ is monotonically increasing, in the ``low-temperature regime" $\beta > \sqrt{\frac{1}{2}\log n}$ we will automatically have
\begin{align*}
\phi_n(\beta) \ge \phi_n\left(\sqrt{\frac{1}{2} \log n}\right) \ge c \sqrt{\frac{1}{2} \log n}.
\end{align*}
Combining the bounds in both regimes will then give the lower bound $\phi_n(\beta) \gtrsim \sqrt{\eta(n,\beta)}$.
 
%Also, notice that as in the proof of lower bound on thermodynamic limit of REM, it suffices to show the desired lower bound in high-temperature regime.

%By monotonocity of $\beta \mapsto\phi_n(\beta)$, in the low-temperature regime $\beta > \sqrt{\frac{1}{2} \log n}$ we have

To show \eqref{eq:high_temp2}, we consider small $n$ and large $n$ cases separately.
For $2 \le n < 10^4$, we need to show
\begin{align*}
\phi_n(\beta) \ge c \beta \qquad \text{for all } 1 \le \beta \le \sqrt{\frac{1}{2} \log n} < \sqrt{2 \log 10}.
\end{align*}
This follows from the monotonicity of $\beta \mapsto \phi_n(\beta)$ and from \eqref{eq:high_temp1}:
\begin{align*}
\phi_n(\beta) \ge \phi_n(1) \ge c \ge c \frac{\beta}{ \sqrt{2 \log 10}} .
\end{align*}
Now we consider $n \ge 10^4$. It is a matter of straightforward computation to show that 
\begin{align*}
  \Ex Z_n(\beta)
&= n e^{\beta^2 / 2}\\
  \Ex Z_n^2(\beta) 
&= \Ex\Big[\sum_{i,j \le n} e^{\beta (G_i + G_j)}\Big]
=\Ex\Big[\sum^n_{i=1}e^{\beta 2 G_i}\Big] + \Ex\Big[\sum^n_{i=1} \sum_{j \neq i}e^{\beta \sqrt{2} G_i}\Big]
=n e^{2 \beta^2} + (n^2-n) e^{\beta^2}
\end{align*}
For some $\theta\in (0,1)$,
consider the event $E = \set{Z_n(\beta) > \theta \bfE Z_n(\beta)}$. 
By the Paley--Zygmund inequality, 
\begin{align*}
  \Pr[E^c] 
  &= \Pr[Z_n(\beta)  \le \theta \bfE Z_n(\beta)] 
  \le 1 - (1-\theta)^2 \frac{(\bfE Z_n(\beta))^2}{\bfE Z_n^2(\beta)} \\
  &= 1- (1-\theta)^2  \frac{1}{\frac{e^{\beta^2}}{n}+ 1-\frac{1}{n}}
  \le 1- (1-\theta)^2  \frac{1}{\frac{e^{\beta^2}}{n}+1 }
\end{align*}
Using the identity
\begin{align*}
	r_n(\beta) &= \Ex \Big[\frac{Z_n(2\beta)}{Z^2_n(\beta)}\Big]
\end{align*}
along with the easily established fact that $Z_n(2\beta)\le Z_n^2(\beta)$, we have
\begin{align*}
  r_n(\beta)
  &= \Ex\Big[\frac{Z_n(2\beta)}{Z_n^2(\beta)}{\bf 1}_E\Big] 
  +\Ex\Big[\frac{Z_n(2\beta)}{Z_n^2(\beta)}{\bf 1}_{E^c}\Big]\\
  &\le \frac{ \bfE Z_n(2\beta)}{\theta^2(\bfE Z_n(\beta))^2} + \bfP[E^c]\\
  &\le \frac{1}{\theta^2}\frac{e^{\beta^2}}{ n} + 1-(1-\theta)^2 
  \frac{1}{\frac{e^{\beta^2}}{n}+1}.
\end{align*}
For each fixed $\theta$,
$x \mapsto \frac{1}{\theta^2} x + 1- (1-\theta)^2 \frac{1}{x +1}$
is evidently an increasing function on $[0,\infty)$.
With $\theta=\frac{1}{2}$, if $\frac{e^{\beta^2}}{n} \le \frac{1}{100}$ (which is equivalent to  $\beta^2 \le \log (\frac{n}{100})$), then $r_n(\beta) \le 0.792 < \frac{4}{5}$. This, together with the inequality \eqref{eq:phi_vs_g},  implies that
\begin{align}\label{eq:low_quad}
  \phi_n(\beta) \ge \frac{\beta}{10} \quad \text{for } \beta^2 \le \log\left(\frac{n}{100}\right). 
\end{align}
The proof is complete by the fact $\log(\frac{n}{100}) \ge \frac{1}{2}\log n$ for $n \ge 10^4$. \end{proof}

\begin{corollary}
Let $X = (X_1,\ldots,X_n)$ be a random vector of centered $\sigma$-subgaussian variables for some $\sigma>0$. For any $\mu \in \eP([n])$,
\begin{align*}
 \eF_\beta(X; \mu) \lesssim \eF_\beta(\sigma G; \su_{[n]}) \quad \text{and} \quad \eG_\beta(X; \mu) \lesssim \eG_\beta(\sigma G; \su_{[n]}).
\end{align*}
\end{corollary}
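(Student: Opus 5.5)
The free-energy inequality follows directly from Lemma~\ref{lem:iid}, which already gives
\begin{align*}
\eF_\beta(X;\mu) \lesssim \sigma\sqrt{\eta(n,\sigma\beta)} \lesssim \eF_\beta(\sigma G;\su_{[n]}).
\end{align*}
So for $\eF$ nothing new is needed beyond chaining these two estimates. The plan for the Gibbs average is to reduce to free energies using Lemma~\ref{lm:equiv_soft} in both directions, and then compare at doubled and original temperatures through the explicit form of $\sqrt{\eta}$.

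For the upper bound on $\eG$, I first apply the second inequality of Lemma~\ref{lm:equiv_soft} to get $\eG_\beta(X;\mu) \le 2\eF_{2\beta}(X;\mu)$. Lemma~\ref{lem:iid} then bounds this by a constant times $\sigma\sqrt{\eta(n,2\sigma\beta)}$. Since $\eta(n,2\sigma\beta) = 4\sigma^2\beta^2\wedge\log n \le 4\,\eta(n,\sigma\beta)$, this is at most a constant times $\sigma\sqrt{\eta(n,\sigma\beta)}$. By the second half of Lemma~\ref{lem:iid}, that is in turn $\lesssim \eF_\beta(\sigma G;\su_{[n]})$. Finally, the first inequality of Lemma~\ref{lm:equiv_soft} gives $\eF_\beta(\sigma G;\su_{[n]}) \le \eG_\beta(\sigma G;\su_{[n]})$, which closes the chain.

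The only subtle point is the temperature doubling, and it is handled by the elementary scaling $\eta(n,2b)\le 4\eta(n,b)$. One could instead use the identity $\phi_n \ge \tfrac12 g_n$ from \eqref{eq:phi_vs_g}, but it is not needed here. The case $n=1$ is trivial, since then both sides reduce to $\Ex X_1 = 0$.
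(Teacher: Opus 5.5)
Your proof is correct and follows essentially the same route as the paper. For $\eF$ you chain the two halves of Lemma~\ref{lem:iid}, and for $\eG$ you sandwich via $\eF_\beta \le \eG_\beta \le 2\eF_{2\beta}$ from Lemma~\ref{lm:equiv_soft}, using $\eta(n,2b)\le 4\eta(n,b)$. Your chain is slightly more direct than the paper's, which first passes through $\eG_{2\beta}(\sigma G;\su_{[n]})$ and $\eF_{2\beta}(\sigma G;\su_{[n]})$ before applying the free-energy comparison at temperature $2\beta$.
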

\begin{remark}
 Using a more general result of \citet{vanhandel_subgaussian}, the first inequality can be strengthened to
  \begin{align*}
 \eF_\beta(X ; \mu) \lesssim \eF_\beta(\sigma G ; \mu), \qquad \text{for all } \mu \in \eP([n])
  \end{align*}
by recognizing
that $X \mapsto \Phi_\beta(X; \mu)$ is convex.
\end{remark}
\begin{proof}
  Combining the upper and lower bounds in Lemma~\ref{lem:iid} yields the first inequality.
  Since $\beta \mapsto \eG_\beta(X; \mu)$ is non-decreasing
  for any $\mu\in \eP([n])$, we can use \eqref{eq:equiv_soft} to get
  \begin{align*}
 \eG_\beta(\sigma G; \su_{[n]})
  &\gtrsim \sigma \sqrt{\eta(n,\sigma \beta)}
  \ge \frac{1}{2}\sigma \sqrt{\eta(n,2\sigma\beta)}
  \gtrsim \eG_{2\beta}(\sigma G; \su_{[n]})\\
  &\ge \eF_{2\beta}(\sigma G; \su_{[n]})
  \gtrsim \eF_{2\beta}(X; \mu)
  \gtrsim \eG_{\beta}(X;\mu)
  \end{align*}
 and complete the proof.
 \end{proof}

\section{The case of finite $T$: stationary processes}
\label{sec:finite_T_stationary}

Let $X = (X_t)_{t \in T}$ be a centered stationary Gaussian process with a finite index set $T$, on which we define the canonical metric $d(s,t) = (\Ex[(X_s-X_t)^2])^{1/2}$. This section is mainly devoted to the proof of Theorems~\ref{thm:stationary_1} and \ref{thm:stationary_2}. We also establish a related result that arrives at some of the same conclusions by an alternative route. 

The first step is to obtain a multiscale version of 
Lemma~\ref{lem:iid}.
The argument is in the spirit of the approach using labeled trees and 
packing trees \citep{guedon_supremum_2003}: we obtain one-step 
chaining estimates and then apply them recursively. Owing to the homogeneity 
of stationary Gaussians, the proofs for upper and lower bounds admit
similar structure. 

\begin{proposition}\label{prop:one_step_up} Let $X = (X_t)_{t \in T}$ be a centered  Gaussian process with a finite index set $T$. Let a subset $A \subset T$ with $\diam(A) \le a$ be given, and let $\cA = \set{ A_1,\ldots,A_n}$ be a partition of $A$. For any $\mu \in \eP(T)$ supported on $A$, let  $\mu^i \deq \mu(\cdot|A_i)$ be the 
conditional probability measure given $A_i$. Then, for any $\beta \ge 0$ we have
\begin{align*}
 \eF_\beta(X;\mu) \le \max_{i \le n} \eF_\beta(X; \mu^i)
  + C a \sqrt{\eta(n,a\beta)},
\end{align*}
where $C > 0$ is a universal constant.
\end{proposition}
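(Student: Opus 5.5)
The plan is to decompose the partition function along the partition $\cA$ and reduce to a free energy over an $n$-point index set. Write $w_i \deq \mu(A_i)$, so that $\mu = \sum_i w_i \mu^i$ and
\begin{align*}
\Phi_\beta(X;\mu) = \frac{1}{\beta}\log \sum_{i=1}^n w_i \inner{\mu^i}{e^{\beta X}} = \frac{1}{\beta}\log \sum_{i=1}^n w_i e^{\beta \Phi_\beta(X;\mu^i)}.
\end{align*}
Next we center the inner free energies. Set $Y_i \deq \Phi_\beta(X;\mu^i) - \eF_\beta(X;\mu^i)$, so that $\Phi_\beta(X;\mu^i) \le \max_j \eF_\beta(X;\mu^j) + Y_i$. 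Substituting and taking expectations gives
\begin{align*}
\eF_\beta(X;\mu) \le \max_{i\le n}\eF_\beta(X;\mu^i) + \eF_\beta(Y; w),
\end{align*}
where $Y=(Y_1,\dots,Y_n)$ is a centered random vector and $w\in\eP([n])$. It therefore suffices to show that each $Y_i$ is centered and $Ca$-subgaussian, and then to invoke the first inequality of Lemma~\ref{lem:iid}, which gives $\eF_\beta(Y;w)\lesssim a\sqrt{\eta(n,a\beta)}$ for any $w$ and $\beta$. That lemma requires only centered subgaussian coordinates, with no independence, so the dependence among the $Y_i$ causes no difficulty.

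The main step is the subgaussianity of $Y_i$. The issue is that $\Var(X_t)$ need not be bounded by $a^2$, since only $\diam(A)\le a$ is assumed, so Lemma~\ref{lem:concentration} cannot be applied directly to $X$. We avoid this by recentering. Fix $t_0\in A$ and write $X_t = X_{t_0} + (X_t - X_{t_0})$. Then
\begin{align*}
\Phi_\beta(X;\mu^i) = X_{t_0} + \Phi_\beta(X - X_{t_0};\mu^i), \qquad \eF_\beta(X;\mu^i) = \eF_\beta(X-X_{t_0};\mu^i),
\end{align*}
using $\Ex X_{t_0}=0$. The process $X_t - X_{t_0}$, restricted to $A$ (which contains the support of every $\mu^i$), has variance at most $a^2$. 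Lemma~\ref{lem:concentration} applied to $\beta(X-X_{t_0})$ with $m=\mu^i$ then shows that $\Phi_\beta(X-X_{t_0};\mu^i)$ is $\sqrt2 a$-subgaussian around its mean. This leaves the common term $X_{t_0}$, which is Gaussian with possibly large variance and does not fit this bound.

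To remove $X_{t_0}$ we use that it appears identically in every $Y_i$. Define $Y_i' \deq \Phi_\beta(X-X_{t_0};\mu^i) - \eF_\beta(X;\mu^i)$, so that $Y_i = X_{t_0} + Y_i'$. Then $\frac1\beta\log\sum_i w_i e^{\beta Y_i} = X_{t_0} + \frac1\beta\log\sum_i w_ie^{\beta Y'_i}$, and taking expectations kills $X_{t_0}$, giving $\eF_\beta(Y;w)=\eF_\beta(Y';w)$. Each $Y'_i$ is centered and $\sqrt2a$-subgaussian, so Lemma~\ref{lem:iid} (with $\sigma=\sqrt2 a$, and $\eta(n,\sqrt2a\beta)\le 2\eta(n,a\beta)$) yields $\eF_\beta(Y';w)\le Ca\sqrt{\eta(n,a\beta)}$, completing the argument.

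Finally, the case $\beta=0$ is read as the limit, and clusters with $w_i=0$ are dropped, which only decreases both $n$ and the maximum. I expect the only delicate point to be this handling of the common shift $X_{t_0}$ before invoking concentration; the remaining steps are direct.
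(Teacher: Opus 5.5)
Your proof is correct, and it takes a genuinely different and more elementary route than the paper's.

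The paper first decouples the blocks. It builds an auxiliary process $Y_t = X^{(i)}_t - X^{(i)}_{t_i} + \frac{a}{\sqrt{2}}G_i$ from independent copies of $X$ and an independent Gaussian vector $G$. It checks that the increments of $Y$ dominate those of $X$ and invokes Chatterjee's Gaussian interpolation to get $\eF_\beta(X;\mu)\le\eF_\beta(Y;\mu)$. It then splits $\eF_\beta(Y;\mu)$ into two pieces: a free energy of the recentered block log-partition functions under a \emph{random} tilted measure, handled by Jensen and Lemma~\ref{lem:iid}, plus a separate term $\eF_\beta(\frac{a}{\sqrt{2}}G;\hat{\mu})$.

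You instead work with the original, correlated blocks through the exact identity $\Phi_\beta(X;\mu)=\frac{1}{\beta}\log\sum_i w_i e^{\beta\Phi_\beta(X;\mu^i)}$. You correctly observe that the upper half of Lemma~\ref{lem:iid} needs no independence, so no decoupling is required. The one delicate point, the common shift $X_{t_0}$ with possibly large variance, you dispose of correctly by translation invariance of $\Phi_\beta$ and $\Ex X_{t_0}=0$.

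What your route buys:
\begin{itemize}
\item No comparison inequality and no auxiliary process.
\item No random tilted measure and no additional error term from the injected Gaussian.
\item The only Gaussian input is the concentration in Lemma~\ref{lem:concentration}. So your argument applies verbatim to any centered process whose recentered block free energies $\Phi_\beta(X-X_{t_0};\mu^i)$ are $O(a)$-subgaussian. The paper's comparison step, by contrast, uses Gaussianity structurally.
\end{itemize}

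What the paper's route buys is parallelism with Proposition~\ref{prop:one_step_low}. There the same decoupled process is used with the comparison running in the opposite direction, and comparison is genuinely needed to manufacture the independent $\sigma G_i$ term behind the $\sqrt{\eta}$ lower bound. Your argument shows that this machinery is superfluous for the upper direction.
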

\begin{proof}
For each $X_t$, consider $n$ i.i.d.\ copies $(X_t^{(i)})_{i\le n}$.
Let $t_i$ be an arbitrary point in $A_i$ for each $i$. 
Let $G = (G_i)_{i\le n}$ be a standard Gaussian random vector independent of everything else. 
Define a process $(Y_t)_{t\in A}$ by setting, for each $i$,
\begin{align*}
  Y_t \deq X_t^{(i)} - X_{t_i}^{(i)} + \frac{a}{\sqrt{2}} G_i, \qquad t \in A_i.
\end{align*}
Note that, if two points $s,t$ are in the same partition subset $A_i$,
then $\Ex{(Y_t - Y_s)^2}= \Ex{(X_t-X_s)^2}$. Otherwise if $s\in A_i$ and $t\in A_j$ for some $i\neq j$,
\begin{align*}
  \Ex{(Y_t-Y_s)^2} = \Ex{(X_t^{(i)}-X_{t_i}^{(i)})^2}
  + \Ex{(X_s^{(j)}-X_{s_j}^{(j)})^2} + \frac{a^2}{2} \Ex{(G_i-G_j)^2} \ge a^2 
  \ge \Ex{(X_t-X_s)^2}.
\end{align*}
Then a Gaussian interpolation argument due to Chatterjee (see, e.g., the proof of Theorem~2.2.5 in \citet{Adler2007}) can be used to show that
\begin{align*}
\eF_\beta(X; \mu)
&\le \eF_\beta(Y; \mu) =\frac{1}{\beta} \Ex\Bigg[\log\sum_{i\le n} \mu(A_i)\sum_{t\in A_i} \mu^i(t) 
e^{\beta(X_t^{(i)}-X_{t_i}^{(i)}+a/\sqrt{2} G_i)}\Bigg].
\end{align*}
Let $\hat{\mu} =(\hat{\mu}_1,\ldots,\hat{\mu}_n)
$ with $\hat{\mu}_i \deq \mu(A_i)$, and define $Z\deq(Z_1,\ldots,Z_n)$ by
\begin{align*}
Z_i=\frac{1}{\beta} \log \sum_{t\in A_i}\mu^i(t) e^{\beta (X_t^{(i)}-X_{t_i}^{(i)})}.
\end{align*}
By Lemma~\ref{lem:concentration}, each $Z_i$ is $\sqrt{2}a$-subgaussian.
Denoting by $\nu_i \propto \hat{\mu}_i \exp (\beta a G_i/\sqrt{2})$ 
the tilted probability measure obtained from $\hat{\mu}$,
the above inequality can be written as 
\begin{align*}
 \eF_\beta(X;\mu) \le \eF_\beta(Z;\nu) 
  + \eF_\beta\left(\frac{a}{\sqrt{2}}G ; \hat{\mu}\right).
\end{align*}
For the first term, let $\overline{\nu} \deq \bfE \nu$ and 
$\tilde{\nu}_i \propto \overline{\nu}_i \exp (\beta \bfE Z_i)$. By Jensen's inequality,
\begin{align*}
 \eF_\beta(Z;\nu)
  &\le \frac{1}{\beta} \Ex{\log \sum_i \overline{\nu}_i e^{\beta Z_i}}\\
  &= \frac{1}{\beta}\Ex\Big[\log \sum_i \tilde{\nu}_i e^{\beta (Z_i - \bfE Z_i)}\Big]
  + \frac{1}{\beta} \Ex\Big[\log \sum_i \overline{\nu}_i e^{\beta \bfE Z_i}\Big]\\
  &\le \sqrt{2}a \sqrt{\eta(n,\sqrt{2}a \beta)} + \max_i \bfE Z_i\\
  &\le 2a \sqrt{\eta(n,a \beta)} + \max_i \eF_\beta(X ;  \mu^i),
\end{align*}
where the second inequality uses Lemma~\ref{lem:iid}.
Applying Lemma~\ref{lem:iid} again to the term $\eF_\beta(aG/\sqrt{2}; \hat{\mu})$ 
gives the result.
\end{proof}
\noindent The complementary lower bound is, essentially, a super-Sudakov inequality from \citet{chu_raginsky_alt} with minor modifications:
\begin{proposition}\label{prop:one_step_low}
  Let $A \subset T$ be given, and let $S=\{s_1,\ldots,s_n\} \subseteq A$ be a $4\sigma$-packing of $A$, i.e., $d(s_i,s_j) \ge 4\sigma$ for all distinct $i,j$. Then the following inequality holds with some universal constant $c<1$:
  \begin{align*}
    \sup_{\mu \in \eP(A)} \eF_\beta(X;\mu)
    \ge c \sigma \sqrt{\eta(n,\sigma \beta)} + 
    \min_{i\le n} \sup_{\mu \in \eP(B(s_i,\sigma))} 
    \eF_\beta(X;\mu).
  \end{align*}

\end{proposition}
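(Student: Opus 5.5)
We construct a test measure that mixes near-optimal measures on the small balls around the packing points, and compare the process with a "decoupled" one by Gaussian comparison, mirroring Proposition~\ref{prop:one_step_up} with the inequalities reversed.

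\textbf{Setup.} Fix $\epsilon>0$. For each $i\le n$ pick $\mu^i\in\eP(B(s_i,\sigma))$ with $\eF_\beta(X;\mu^i)\ge \sup_{\mu\in\eP(B(s_i,\sigma))}\eF_\beta(X;\mu)-\epsilon$. Put $\mu\deq\frac1n\sum_i\mu^i$. The balls $B_i\deq B(s_i,\sigma)$ are disjoint and at mutual distance at least $2\sigma$, since $d(s_i,s_j)\ge 4\sigma$. So $\mu\in\eP(A)$ if we interpret $\eP(A)$ as measures on the $\sigma$-neighborhood, or if the balls are intersected with $A$; I will take the balls inside $A$, as the statement implicitly does.

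\textbf{Comparison process.} Let $(X^{(i)})_{i\le n}$ be i.i.d.\ copies of $X$ and let $G$ be a standard Gaussian vector, all independent. For $t\in B_i$ define
\begin{align*}
Y_t\deq X^{(i)}_t-X^{(i)}_{s_i}+\tfrac{\sigma}{\sqrt2}G_i .
\end{align*}
Within a ball, the increments of $Y$ coincide with those of $X$. Across balls, with $s\in B_i$, $t\in B_j$, $i\ne j$,
\begin{align*}
\Ex(Y_t-Y_s)^2\le \sigma^2+\sigma^2+\sigma^2=3\sigma^2\le 4\sigma^2\le d(s,t)^2 .
\end{align*}
Chatterjee's interpolation (as in Proposition~\ref{prop:one_step_up}, now in the opposite direction) then gives $\eF_\beta(X;\mu)\ge\eF_\beta(Y;\mu)$. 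Centering by $X_{s_i}$ is harmless here, because $\eF_\beta$ depends only on increments when every $X_t$ is centered and $\mu$ is a probability measure.

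\textbf{Lower-bounding $\eF_\beta(Y;\mu)$.} Write
\begin{align*}
\beta\,\eF_\beta(Y;\mu)=\Ex\log\frac1n\sum_i e^{\beta(Z_i+\sigma G_i/\sqrt2)},\qquad
Z_i\deq \Phi_\beta\bigl(X^{(i)}-X^{(i)}_{s_i};\mu^i\bigr).
\end{align*}
The $Z_i$ are independent of each other and of $G$, with $\Ex Z_i=\eF_\beta(X;\mu^i)\ge m\deq\min_i \eF_\beta(X;\mu^i)$. The natural move is to pull out the mean. By concavity of $\log\sum$ in the base measure, or directly via the Gibbs variational principle (Lemma~\ref{lm:Gibbs_VP}) with the test measure $\nu=$ the Gibbs measure of $\frac{\sigma}{\sqrt2}G$ alone, we get
\begin{align*}
\eF_\beta(Y;\mu)\ge \Ex\langle\nu, Z\rangle+\eF_\beta\bigl(\tfrac{\sigma}{\sqrt2}G;\su_n\bigr).
\end{align*}
Since $\nu$ depends only on $G$, which is independent of $Z$, we have $\Ex\langle\nu,Z\rangle=\sum_i\Ex\nu_i\,\Ex Z_i\ge m$. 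This is the key trick: it avoids any concentration loss for $Z$. Lemma~\ref{lem:iid} then gives
\begin{align*}
\eF_\beta\bigl(\tfrac{\sigma}{\sqrt2}G;\su_n\bigr)\gtrsim\sigma\sqrt{\eta(n,\sigma\beta/\sqrt2)}\ge c\,\sigma\sqrt{\eta(n,\sigma\beta)} .
\end{align*}
Combining these bounds and letting $\epsilon\to0$ yields the claim.

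\textbf{Main obstacle.} The delicate step is the Gaussian comparison. We must check that the interpolation yields $\eF_\beta(X;\mu)\ge\eF_\beta(Y;\mu)$ when the increment inequality runs the way shown above. This requires $\Ex(Y_t-Y_s)^2\le\Ex(X_t-X_s)^2$ for all pairs. Within balls this holds with equality. Across balls it holds by the $3\sigma^2\le4\sigma^2$ margin, which is exactly why the packing scale is $4\sigma$. If the Chatterjee argument needs variances rather than increments, we can instead add an independent common Gaussian to equalize variances, which changes neither free energy.
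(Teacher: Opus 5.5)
Your proposal is correct and follows essentially the same route as the paper: the same decoupled process built from independent copies $X^{(i)}_t - X^{(i)}_{s_i}$ plus an independent Gaussian $G_i$ on $\bigcup_i B(s_i,\sigma)$, the same uniform mixture of (near-)optimal $\mu^i$, Chatterjee's interpolation giving $\eF_\beta(X;\mu)\ge\eF_\beta(Y;\mu)$, and Lemma~\ref{lem:iid} at the end. The interpolation needs only the increment inequality, because the Hessian of the log-sum-exp has zero row sums, so your worry about variances is moot.

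The differences are cosmetic. You use $\sigma G_i/\sqrt{2}$ where the paper uses $\sigma G_i$; both fit under $d^2(s,t)\ge 4\sigma^2$. You also make explicit, via the Gibbs variational principle and the independence of $Z$ and $G$, the step that splits off $\min_i \eF_\beta(X;\mu^i)$, which the paper leaves implicit. The right justification for that step is convexity of the log-sum-exp in $Z$; concavity in the base measure, your other suggested route, would give the inequality in the wrong direction.
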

\begin{proof}
Let $(X_t^{(i)})_{t \in T}$ be i.i.d.\ copies of $(X_t)$, and let $G = (G_i)_{1 \le i \le n}$ be 
standard Gaussian independent 
of everything else. Let $A' \deq \bigcup_{i\le n}B(s_i, \sigma)$. For $t \in B(s_i,\sigma)$, let
\begin{align*}
  Y_t = X_t^{(i)} - X_{s_i}^{(i)} + \sigma G_i.
\end{align*}
Let $\mu^i\in \eP(B(s_i, \sigma))$ be such that
\begin{align*}
 \eF_\beta(X ; \mu^i)
  =\sup_{\mu\in \eP(B(s_i, \sigma))} \eF_\beta(X ; \mu),
\end{align*}
where $\eP(A)$, for $A \subseteq T$, denotes the set of all $\mu \in \eP(T)$ whose support is contained in $A$. Let $\nu\in \eP(A')$ be the composition of $\su_n$ and $(\mu^i)$, i.e., 
\begin{align*}
  \nu(t) = \frac{1}{n} \sum_{i\le n} 
  \mathbf{1}_{\{t\in B(s_i,\sigma)\}} \mu^i(t) \quad\text{for } t \in A'.
\end{align*}
By the fact that $A' \subset A$ and Gaussian interpolation,
\begin{align*}
    \sup_{\mu \in \eP(A)} \eF_\beta(X ;\mu)
    &\ge \eF_\beta(X ; \nu) \\
    &\ge \frac{1}{\beta}\Ex\Big[\log \sum_i \frac{1}{n} 
    \sum_{t\in B(s_i,\sigma)}\mu^i(t) e^{\beta(X_t^{(i)}-X_{s_i}^{(i)} + \sigma G_i)}\Big]\\
    &\ge \eF_\beta(\sigma G ;  \su_n) 
    +  \min_{i\le n}  \eF_\beta(X ; \mu^i)\\
    &\ge c \sigma \sqrt{\eta(n,\sigma\beta)} 
    +  \min_{i\le n}  \eF_\beta(X ; \mu^i),
\end{align*}
where the last step uses Lemma~\ref{lem:iid}.
\end{proof}

\subsection{The proof of Theorem~\ref{thm:stationary_1}}

Without loss of generality, we assume that $\Delta = 1$. We prove the upper and lower bounds for $\eF_\beta(P_X; \mu)$ first. For any $A \subset T$, define the quantity
  \begin{align*}
  \zeta_\beta(A)\deq \sup_{\mu\in \eP(A)} \eF_\beta(X ; \mu),
  \end{align*}
  which is evidently monotone with respect to set inclusion: if $A \subseteq A'$, then $\zeta_\beta(A) \le \zeta_\beta(A')$. Let $\beta_k \deq \beta r^{-k}$. 
Fix an arbitrary $t_0\in T$. Let $\{t_1,\ldots,t_{n_{k}}\}$ be a minimum ${r^{-k}}$-cover for $B(t_0, r^{-k+1} )$. Let $\{A_1,\ldots,A_{n_k}\}$ be the partition induced by these
centers so that $A_i \subset B(t_i, {r^{-k}})$.
By Proposition~\ref{prop:one_step_up}, for any $\mu\in \eP(B(t_0,r^{-k+1}))$,
\begin{align*}
\eF_\beta(X ;\mu)
&\le C r^{-k+1} \sqrt{\eta(n_k,\beta_{k-1})}  + \max_{i\le n_k} \zeta_\beta(A_i) \\
&\le C r^{-k+1} \sqrt{\eta(n_k,\beta_{k-1})}  
+ \max_{i\le n_k} \zeta_\beta(B(t_i, {r^{-k}})).
\end{align*}
Since $(X_t)$ is stationary, the value of $\zeta_\beta(B(t,r^{-k}))$ does not depend on $t$. Hence,
\begin{align*}
\zeta_\beta(B(t_0, r^{-k+1})) 
&\le C r^{-k+1} \sqrt{\eta(n_k,\beta_{k-1})}  
+  \zeta_\beta(B(t_0, {r^{-k}}))\\
&\le C r^{-k} \sqrt{\eta(n_k,\beta_{k})}  
+  \zeta_\beta(B(t_0, {r^{-k}}))
. 
\end{align*}
Applying this iteratively on $T= B(t_0,1)$ yields the upper bound.
For the lower bound, let $\set{t_1,\ldots,t_{m_k}}$ be a maximal $r^{-k}$-packing 
of $B(t_0,r^{-k+1})$.
Using duality between packing and covering numbers, we have $m_k \ge n_k$. 
Then, by Proposition~\ref{prop:one_step_low},
\begin{align*}
  \zeta_\beta(B(t_0, r^{-k+1})) 
  &\ge c r^{-k} \sqrt{\eta(m_k,\beta_k)} 
  + \zeta_\beta(B(t_0, \frac{r^{-k}}{4} ))\\
  &\ge c r^{-k} \sqrt{\eta(n_k,\beta_k)} 
  + \zeta_\beta(B(t_0, r^{-k-1} )).
\end{align*}
Since $2\zeta_\beta(T)\ge \zeta_\beta(B(t_0,1))+ \zeta_\beta(B(t_0,r^{-1}))$,
applying the above iteratively on $B(t_0,1)$ and $B(t_0,r^{-1})$ gives the
lower bound.

The same upper and lower bounds hold for $\eG_\beta(P_X;\mu)$, with possibly different multiplicative constants, by the equivalence of quenched free energy and quenched Gibbs average in Eq.~\eqref{eq:equiv_soft}.

\subsection{The proof of Theorem~\ref{thm:stationary_2}}

  Using Proposition~\ref{prop:disc_to_int} to write the integral in the form of discrete sum,
  it suffices to show 
  \begin{align}\label{eq:sum_station_global}
  \sup_{\mu\in \eP(T) }\eF_\beta(X; \mu)\asymp \sum_{k \ge 1} r^{-k} 
  \sqrt{\eta(N_k,\beta_k)}
  \end{align}
   where $N_k=\sN(T,r^{-k})$ and $\beta_k =\beta r^{-k}$.
For the upper bound, it is clear that $n_k \le N_k$.
We prove the lower bound. By definition of the covering number, for each $k$
\begin{align*}
N_k \le N_{k-1} n_k.
\end{align*}
Then 
using the inequality $\sqrt{a+b}\le \sqrt{a} + \sqrt{b}$ and the inequality $(x+y) \wedge z \le (x \wedge z) + (y \wedge z)$ that holds for all $x,y,z \ge 0$,
we have
\begin{align*}
  \rI &\deq \sum_{k \ge 1} r^{-k}\sqrt{\log N_k \wedge \beta_k^2}
      \le \sum_{k \ge 1} r^{-k}\sqrt{(\log n_k + \log N_{k-1}) \wedge \beta_k^2}\\
      &\le \sum_{k \ge 1} r^{-k} \sqrt{\eta(n_k,\beta_k)} 
      + \sum_{k \ge 1} r^{-k} \sqrt{\log N_{k-1} \wedge \beta_k^2}\\
      &\le \sum_{k \ge 1} r^{-k} \sqrt{\eta(n_k,\beta_k)} 
      + r^{-1}\rI =: {\rm II} + r^{-1}\rI
\end{align*}
which implies that $\rI \lesssim {\rm II}$.

\subsection{An alternative approach via symmetrization}

We conclude the analysis of the stationary case by providing an alternative proof 
for the upper bound in \eqref{eq:sum_station_global} without invoking 
Proposition~\ref{prop:one_step_up}.
The idea is to construct an auxiliary Gaussian process so that its 
free energy factorizes into various scales and then use the Gaussian comparison inequality.
\begin{proposition}
 We have 
\begin{align*}
\sup_{\mu \in \eP(T)} \eF_\beta(X;\mu) \lesssim \sum_{k \ge 1} r^{-k} 
\sqrt{\eta(N_k,\beta r^{-k})}.
\end{align*}
\end{proposition}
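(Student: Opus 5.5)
By Proposition~\ref{prop:unif_sup}, it suffices to bound $\eF_\beta(X;\su_T)$. The plan is to build an auxiliary Gaussian process $Y$ on $T$ whose increments dominate those of $X$ and whose free energy splits into a sum of independent contributions, one per scale. Sudakov--Fernique-type Gaussian interpolation (as in the proof of Proposition~\ref{prop:one_step_up}) will then give $\eF_\beta(X;\su_T)\le\eF_\beta(Y;\su_T)$ plus possibly a harmless constant.

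\emph{Step 1: nested cells.} For each scale $k\ge 0$ take a minimal $r^{-k}$-cover of $T$ of size $N_k$, and let $\cA_k$ be the induced partition. Replace $\cA_k$ by the common refinement of $\cA_0,\dots,\cA_k$, so that the partitions are nested. The refined partition may have up to $\prod_{j\le k}N_j$ cells, but I will only use the number of children of a cell at level $k-1$, which is at most $N_k$. Cells of $\cA_k$ have diameter at most $2r^{-k}$. Write $A_k(t)$ for the cell of $\cA_k$ containing $t$.

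\emph{Step 2: the auxiliary process.} Attach independent standard Gaussians $G_{k,A}$ to the cells $A\in\cA_k$, and set $Y_t=\sum_{k\ge 1} c\,r^{-k+1}G_{k,A_k(t)}$. Take $k$ up to the finest scale, at which the cells are singletons since $T$ is finite. Now let $s\ne t$, and let $k$ be the first level at which they fall into different cells. Then $\Ex(Y_s-Y_t)^2\ge 2c^2r^{-2k+2}$. On the other hand $s,t$ share a cell at level $k-1$, so $d(s,t)\le 2r^{-k+1}$. Choosing $c=\sqrt2$ gives $\Ex(Y_s-Y_t)^2\ge d^2(s,t)$. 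The interpolation lemma then yields $\eF_\beta(X;\mu)\le\eF_\beta(Y;\mu)$. This needs centered processes and only increment comparison, because only differences matter after recentering at $t_0$, exactly as in the proof of Proposition~\ref{prop:one_step_up}.

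\emph{Step 3: factorization.} Under the uniform measure, or more generally any $\mu$, the free energy of $Y$ decomposes along the tree. Expand $\log\inner{\mu}{e^{\beta Y}}$ recursively: at a node $A\in\cA_{k-1}$ the sum over children has the form $\sum_{B}\mu(B|A)e^{\beta c r^{-k+1}G_{k,B}}Z_B$, where $Z_B$ is the subtree partition function. Repeating the Jensen/tilting argument from the proof of Proposition~\ref{prop:one_step_up}, using Lemma~\ref{lem:concentration} for the subtree free energies and Lemma~\ref{lem:iid} for the Gaussian level with at most $N_k$ children, peels off one level at a time. Each level contributes at most $C r^{-k}\sqrt{\eta(N_k,\beta r^{-k})}$, after absorbing the factor $r$ into constants; for this I use that $\eta(n,\lambda\beta)\le\lambda^2\eta(n,\beta)$ for $\lambda\ge1$, or simply monotonicity. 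Summing over $k$ gives the claim.

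The main obstacle is Step 3. The subtree free energies $Z_B$ are random and correlated with the deeper Gaussians. The concentration term from Lemma~\ref{lem:concentration} has variance proxy $\sum_{j>k} r^{-2j+2}\lesssim r^{-2k}$, which is comparable to the scale-$k$ term, and I must make sure it is charged with the correct $\eta(N_k,\cdot)$ truncation rather than adding an untruncated $\beta r^{-2k}$ piece. I would first try to handle it exactly as in Proposition~\ref{prop:one_step_up}: treat the centered subtree free energies as $\sqrt2\cdot O(r^{-k})$-subgaussian variables indexed by at most $N_k$ children, and bound them via Lemma~\ref{lem:iid}. This yields exactly $r^{-k}\sqrt{\eta(N_k,\beta r^{-k})}$ per level, while the means telescope into the next level's maximum.
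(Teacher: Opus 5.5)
Your proof is correct, but Step~3 follows a genuinely different route from the paper's.

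\textbf{The paper's route.} The paper also interpolates to a multiscale process, $Y_t=\sum_k\sigma_k G^{(k)}_{\pi_k(t)}$ with $\sigma_k=r^{-k+2}$. Its independent Gaussians are attached to the points of $r^{-k}$-nets $S_k$ of size $N_k$, which need not be nested, and it uses the same first-separating-level comparison as your Step~2. It then embeds $T$ into the product $\bm{T}=\prod_k S_k$ via $t\mapsto(\pi_1(t),\dots,\pi_K(t))$, on which $Y$ is stationary under $\prod_k\mathrm{Sym}(S_k)$. Proposition~\ref{prop:unif_sup} replaces the pushforward of $\mu$ by the uniform measure on $\bm{T}$. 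Under that measure the free energy splits \emph{exactly} as $\sum_k\eF_\beta(\sigma_kG;\su_{S_k})$, and Lemma~\ref{lem:iid} finishes. So the paper needs no nestedness, no tilting/Jensen step, and no concentration of subtree free energies. That is the whole point of the subsection, which is presented as a proof that avoids Proposition~\ref{prop:one_step_up}.

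\textbf{Your route.} Your Step~3 is the proof of Proposition~\ref{prop:one_step_up} rerun level by level on a tree process. It does close:
\begin{itemize}
\item Lemma~\ref{lem:iid} requires no independence.
\item The centered subtree free energies below level $k$ are $O(r^{-k})$-subgaussian and indexed by at most $N_k$ children, so they are charged with the $\eta(N_k,\cdot)$ truncation.
\item Their means pass to the next level.
\end{itemize}
The price is a second term of the same order at each level.

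\textbf{Redundant steps.} Once you accept that mechanism, your Step~2 is redundant. Proposition~\ref{prop:one_step_up} holds for any centered Gaussian process and any partition, and it performs the interpolation to a one-level tree internally. You can therefore apply it directly to $X$ along your nested partitions: a level-$(k-1)$ cell has diameter at most $2r^{-k+1}$ and at most $N_k$ children. This already gives $\eF_\beta(X;\mu)\le C\sum_k 2r^{-k+1}\sqrt{\eta(N_k,2\beta r^{-k+1})}$ for every $\mu$. Your opening reduction to $\su_T$ is also unnecessary, since neither argument uses stationarity of $X$.
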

\begin{proof}
For each $k$, let $\sigma_k=r^{-k+2}$, let $S_k\subset T$ be a minimal $r^{-k}$-net of $T$ 
with cardinality $\sN(T,\sigma_k)$, and let $\pi_k: T \to S_k$ be the projection map so that 
$d(\pi_k(t),t)=\min_{s\in S_k} d(s,t)$ for any $t\in T$.
Let $(G_s^{(k)})_{s \in S_k}$ be a collection of i.i.d.\ standard Gaussians.
Define a Gaussian process $(Y_t)_{t\in T}$ by
\begin{align*}
Y_t \deq \sum_{k \ge 1} \sigma_k G_{\pi_k(t)}.
\end{align*}
Consider two distinct points $t,s \in T$ and let
$\overline{k} \deq \overline{k}(t,s) = \min \set{k \ge 1: \pi_k(t) \neq \pi_k(s)}$.
Then
\begin{align*}
\Ex{(Y_t-Y_s)^2} = 2 \sum_{k: \pi_k(t)\neq \pi_k(s)} \sigma_k^2 \ge 2 \sigma_{\overline{k}}^2
\end{align*}
and
\begin{align*}
\sqrt{\Ex{(X_t-X_s)^2}}
&\le d(t, \pi_{\overline{k}}(t)) + d(\pi_{\overline{k}(t)},\pi_{\overline{k}(s)}) 
+ d(\pi_{\overline{k}}(s),s)\\
&\le r^{-\overline{k}} + 2 r^{-\overline{k}+1} + r^{-\overline{k}}
\le \sqrt{2}r^{-\overline{k}+2}\le \sqrt{\Ex{(Y_t-Y_s)^2}}
\end{align*}
By Gaussian interpolation,
\begin{align*}
\eF_\beta(X ; \mu) \le \eF_\beta(Y ;\mu).
\end{align*}
Now we embed $T$ into $\bm{T} \deq \prod_{k = 1}^K S_k$ by means of the map $\iota : T \to \bm{T}$ given by
\begin{align*}
\iota(t) \deq (\pi_1(t),\ldots,\pi_K(t)).
\end{align*}
For each $k$, let ${\rm Sym}(S_k)$ be the symmetric group of $S_k$ (i.e., the group of bijective maps $S_k \to S_k$). Then $(Y_{\bm{t}})_{\bm{t}\in \bm{T}}$ 
is a stationary Gaussian process with respect to the group action $\prod^K_{k=1} {\rm Sym}(S_k)$.
Thus, by Proposition~\ref{prop:unif_sup} and the fact that $\iota(T) \subset \bm{T} $
\begin{align*}
\eF_\beta(Y ; \mu)
&= \eF_\beta(Y ; \iota_\#\mu)\\
&\le \eF_\beta( Y ;  \su_{\bm{T}}  )
= \frac{1}{\beta}\Ex\Big[\log \sum_{\bm{t}\in \bm{T}} 
\su_{\bm{T}}(\bm{t}) e^{\beta Y_{\bm{t}}}\Big]  \\
&= \frac{1}{\beta}
\Ex\,{\log \bigg( \frac{1}{N_1\ldots N_k} \sum_{s_1 \in S_1}\ldots
     \sum_{s_K \in S_K} \prod_{k \ge 1} e^{\sigma_k G_{s_k}}\bigg)}\\
&= \frac{1}{\beta} \sum_{k \ge 1} \Ex\Big[\log \sum_{s \in S_k} 
\frac{1}{N_k} e^{\beta \sigma_k G_{s}}\Big]
= \sum_{k \ge 1} \eF_\beta(\sigma_k G ; \su_{S_k}) \\
&\lesssim \sum_{k \ge 1} r^{-k} \sqrt{\eta(N_k,\beta r^{-k})},
\end{align*}
where the last step uses Lemma~\ref{lem:iid}.
\end{proof}

\section{The case of finite $T$: nonstationary processes}
\label{sec:finite_T_nonstationary}

We now drop the assumption of stationarity. To handle this case, we will make fundamental use of the tensorization (or lifting) technique introduced by \citet{liuSimpleSharpGeneralization2025}. In the following, we use bold symbols to denote the elements of the product space $T^N$ for $N \in \bN$: $\bm{t} = (t_1,\ldots,t_N)$. Given a Gaussian process $X = (X_t)_{t \in T}$, we define another Gaussian process $\bm{X}^N = (\bm{X}^N_{\bm{t}})_{\bm{t} \in T^N}$ by
\begin{align*}
  \bm{X}^N_{\bm{t}}  =   \sum_{i=1}^N X_{t_i}^{(i)} 
\end{align*}
where $X^{(i)} = (X_t^{(i)})_{t \in T}$ are i.i.d.\ copies of $X$. 
The natural metric associated with $\bm{X}^N$ is 
\begin{align*}
  d_N^2(\bm{t},\bm{s}) = \sum_{i} d^2(t_i, s_i).
\end{align*}
It is readily verified that, for any $\mu\in \eP(T)$, 
\begin{align*}
 \eF_\beta(X ; \mu) = \frac{1}{N} \eF_\beta(\bm{X}^N ; \mu^{\otimes N}).
\end{align*}
In fact, more can be said:
\begin{proposition}\label{prop:sup_tensor}
For any $N \in \bN$,
\begin{align*}
\sup_{\mu\in \eP(T)} \eF_\beta(X ; \mu)
= \frac{1}{N} \sup_{\bm{\mu}\in \eP(T^N)} \eF_\beta(\bm{X}^N ;  \bm{\mu}).
\end{align*}
%Equivalently,
%\begin{align*}
%\Ex{\Phi_\beta(\bm{X}^N ;  (\mu_\beta^*)^{\otimes N})}
% =\sup_{\bm{\mu}\in \cP(T^N)} \Ex{\Phi_\beta(\bm{X}^N ; \bm{\mu})}
%\end{align*}
\end{proposition}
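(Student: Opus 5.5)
The inequality ``$\le$'' is immediate. Indeed, for any $\mu \in \eP(T)$ the identity $\eF_\beta(X;\mu) = \frac{1}{N}\eF_\beta(\bm{X}^N;\mu^{\otimes N})$ holds, because $\inner{\mu^{\otimes N}}{e^{\beta \bm{X}^N}} = \prod_{i=1}^N \inner{\mu}{e^{\beta X^{(i)}}}$ factorizes, so the logarithm splits into a sum of $N$ i.i.d.\ terms. Taking the supremum over $\mu$ on the left and bounding $\mu^{\otimes N}$ by the supremum over all $\bm{\mu} \in \eP(T^N)$ on the right gives the claim.

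For ``$\ge$'' I will show that $\eF_\beta(\bm{X}^N;\bm{\mu}) \le \sum_{i=1}^N \sup_{\mu} \eF_\beta(X;\mu)$ for every $\bm{\mu} \in \eP(T^N)$, by peeling off one coordinate at a time with the chain rule. Write $\bm{\mu} = \mu_1 \otimes \mu_{2|1} \otimes \cdots$, where $\mu_{k|1:k-1}(\cdot \mid t_1,\ldots,t_{k-1})$ are the conditional kernels. Then
\begin{align*}
\log \inner{\bm{\mu}}{e^{\beta \bm{X}^N}} = \log \sum_{t_1} \mu_1(t_1) e^{\beta X^{(1)}_{t_1}} \sum_{t_2} \mu_{2|1}(t_2|t_1) e^{\beta X^{(2)}_{t_2}} \cdots .
\end{align*}
I will induct from the innermost coordinate outward. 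Conditioning on $X^{(1)},\ldots,X^{(N-1)}$ and using the independence of $X^{(N)}$, I define for each prefix $(t_1,\ldots,t_{N-1})$ the random weight
\begin{align*}
W_{N-1}(t_{1:N-1}) = \beta^{-1}\log \sum_{t_N}\mu_{N|1:N-1}(t_N|t_{1:N-1})e^{\beta X^{(N)}_{t_N}}.
\end{align*}
This weight is a function of $X^{(N)}$ alone, and for each fixed prefix its mean is $\eF_\beta(X;\mu_{N|\cdot}) \le \sup_\mu \eF_\beta(X;\mu) \eqd S$. The difficulty is that the outer logarithm couples the prefix and $W_{N-1}$ nonlinearly, so Jensen's inequality cannot simply be applied in $X^{(N)}$. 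The outer expression is a log-sum-exp of $\beta(\bm{X}^{N-1}_{t_{1:N-1}} + W_{N-1}(t_{1:N-1}))$, and log-sum-exp is convex, not concave, in $W$, so Jensen's inequality goes the wrong way.

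To get around this, I will use the Gibbs variational principle (Lemma~\ref{lm:Gibbs_VP}) on $T^N$ together with the chain rule for relative entropy. This gives
\begin{align*}
\Phi_\beta(\bm{X}^N;\bm{\mu}) = \sup_{\bm{\nu}} \sum_i \left[ \Ex_{\bm{\nu}}\big[X^{(i)}_{t_i}\big] - \beta^{-1}\Ex_{\bm{\nu}}\, D\big(\nu_{i|1:i-1}\,\big\|\,\mu_{i|1:i-1}\big)\right].
\end{align*}
Bounding each summand by $\Phi_\beta(X^{(i)};\mu_{i|1:i-1}(\cdot|t_{1:i-1}))$ pointwise is still problematic, because the optimal $\bm{\nu}$ for the prefix depends on $X^{(i)}$. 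This step is the main obstacle, and I would handle it with the concavity of $\mu \mapsto \eF_\beta(P_X;\mu)$ and the maximizer $\mu^*_\beta$ of Lemma~\ref{lem:optimal_mu}. Set $\Psi(F) \deq \beta^{-1}\log\inner{\mu^*_\beta}{e^{\beta F}}$. By concavity and optimality, the directional derivative of $\eF_\beta$ at $\mu^*_\beta$ toward any $\delta_t$ is nonpositive, and by the KKT condition (C1) this reads
\begin{align*}
\Ex\big[e^{\beta X_t}/\inner{\mu^*_\beta}{e^{\beta X}}\big] \le 1 \quad \text{for all } t.
\end{align*}
Hence the change of measure $\Pr \to \Pr^{(t)}$ with density $e^{\beta X_t - \beta\Psi(X)}$ is a sub-probability. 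Iterating this bound coordinate by coordinate gives
\begin{align*}
\Ex\Big[ e^{\beta \bm{X}^N_{\bm{t}} - \beta\sum_i \Psi(X^{(i)})}\Big] \le 1 \quad \text{for every } \bm{t} \in T^N,
\end{align*}
using the independence of the copies. Averaging over $\bm{t}\sim\bm{\mu}$ yields $\Ex\, \inner{\bm{\mu}}{e^{\beta \bm{X}^N}} e^{-\beta\sum_i\Psi(X^{(i)})} \le 1$. Applying Jensen's inequality to the logarithm then gives
\begin{align*}
\eF_\beta(\bm{X}^N;\bm{\mu}) \le \sum_i \Ex\Psi(X^{(i)}) = N\,\eF_\beta(X;\mu^*_\beta),
\end{align*}
which is the desired bound. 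The only points to verify are the KKT inequality for $t$ outside the support of $\mu^*_\beta$, where it holds with $v(t)\ge 0$, and the finiteness of $T$.
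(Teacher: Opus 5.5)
Your argument is correct, and it takes a genuinely different route from the paper's. The paper stays on the primal, information-theoretic side. It lifts the Gibbs variational principle (Lemma~\ref{lm:Gibbs_VP}) to a supremum over Markov kernels $Q_{\bm{\tau}|\bm{X}}$ and writes $\Ex D(Q_{\bm{\tau}|\bm{X}}\|\bm{\mu}) = I(\bm{X};\bm{\tau}) + D(Q_{\bm{\tau}}\|\bm{\mu})$, dropping the marginal term. It then uses superadditivity of mutual information over the independent copies, $I(X^{(1)},\dots,X^{(N)};\bm{\tau}) \ge \sum_i I(X^{(i)};\tau_i)$. Finally, it re-identifies each one-coordinate problem $\sup_{Q_{\tau|X}}\{\Ex\inner{Q_{\tau|X}}{X} - \beta^{-1}I(X;\tau)\}$ with $\sup_\mu \eF_\beta(X;\mu)$ through $I(X;\tau)=\inf_\mu \Ex D(Q_{\tau|X}\|\mu)$.

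You instead work on the dual side. First-order optimality of $\mu^*_\beta$ gives the certificate $\sup_t \Ex e^{\beta X_t-\beta\Phi_\beta(X;\mu^*_\beta)}\le 1$, which tensorizes by independence, and Jensen closes the argument. In effect you prove the weak duality $\eF_\beta(X;\mu)\le\Ex\Psi(X)$ for every $\Psi$ with $\sup_t\Ex e^{\beta(X_t-\Psi(X))}\le 1$, with the KKT condition supplying a tight $\Psi$.

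What your route buys: it is shorter, needs no mutual information or kernels, and directly shows that $(\mu^*_\beta)^{\otimes N}$ maximizes over all of $\eP(T^N)$. That is exactly the form invoked later in the proof of Theorem~\ref{thm:asymptotic}. Its cost is reliance on attainment and on a first-order condition at a possibly boundary point. For finite $T$ both are fine. If you prefer not to lean on Lemma~\ref{lem:optimal_mu}, set $R_t \deq e^{\beta X_t}/\inner{\mu^*_\beta}{e^{\beta X}}$. Then $\lambda^{-1}\Ex\log(1+\lambda(R_t-1))\le 0$, and the integrand increases to $R_t-1$ as $\lambda\downarrow 0$ while being bounded below by the integrable $\log R_t$. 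Monotone convergence then gives $\Ex R_t\le 1$.

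What the paper's route buys: it never needs a maximizer, so it would survive where the supremum is not attained (e.g.\ countable $T$). It also meshes with the mutual-information and rate-distortion machinery used afterwards.

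The exploratory first half of your proposal (peeling conditional kernels, the chain rule for relative entropy) is not used in the final argument and can be dropped.
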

\begin{proof}
  Let $\phi(\mu) \deq \eF_\beta(X ; \mu)$ 
  and $\phi_N (\bm{\mu})= \eF_\beta (\bm{X}^N ; \bm{\mu})$. The ``$\le$'' part follows immediately from the fact that $\phi_N(\mu^{\otimes N}) = N \phi(\mu)$. We now show the other direction. 
Let $\bm{\sm}_{\beta,\bm{\mu}}$ be the random Gibbs measure defined by $\frac{\dif\bm{\sm}_{\beta,\bm{\mu}}}{\dif\bm{\mu}} \propto e^{\beta \bm{X}^N}$. By the Gibbs variational principle (Lemma~\ref{lm:Gibbs_VP}),
\begin{align*}
	\inner{\bm{\sm}_{\beta, \bm{\mu}}}{\bm{X}}
	  -\frac{1}{\beta}D(\bm{\sm}_{\beta, \bm{\mu}}\| \bm{\mu}) = \sup_{Q\ \in \eP(T^N)}
 \left\{\inner{Q}{\bm{X}}
  -\frac{1}{\beta}D(Q\| \bm{\mu})\right\}.
\end{align*}
We can move the supremum outside of the expectation as follows:
\begin{align*}
  \phi_N(\bm{\mu})
  &= \Ex\Big[\inner{\bm{\sm}_{\beta, \bm{\mu}}}{\bm{X}}
  -\frac{1}{\beta}D(\bm{\sm}_{\beta, \bm{\mu}}\| \bm{\mu})\Big]
  = \sup_{Q_{\bm{\tau}|\bm{X}}}
  \set[\Big]{\Ex\Big[\inner{Q_{\bm{\tau}|\bm{X}}}{\bm{X}}
  -\frac{1}{\beta}D(Q_{\bm{\tau}|\bm{X}}\| \bm{\mu})\Big]},
  \end{align*}
where the supremum is now over all regular conditional probability laws (Markov kernels) $Q_{\bm{\tau}|\bm{X}}$ from $\Reals^{T^N}$ to $T^N$. For each choice of $Q_{\bm{\tau}|\bm{X}}$, we can express the expectation $\Ex D(Q_{\bm{\tau}|\bm{X}}\|\bm{\mu})$ in terms of the mutual information $I(\bm{X};\bm{\tau})$, with $Q_{\bm{\tau}}$ denoting the marginal law of $\bm{\tau} = (\tau_1,\dots,\tau_N)$. We note for later that $(X^{(1)},\dots,X^{(N)})$ and $\bm{\tau}$ are conditionally independent given $\bm{X}$ and that, for each $i$, $X^{(i)}$ and $\tau_i$ are conditionally independent given $\bm{\tau}$.

By the above invocation of the Gibbs variational principle and by the additive structure of $\bm{X}$, we have
  \begin{align*}
&\sup_{Q_{\bm{\tau}|\bm{X}}}
  \set[\Big]{\Ex\Big[\inner{Q_{\bm{\tau}|\bm{X}}}{\bm{X}}
  -\frac{1}{\beta}D(Q_{\bm{\tau}|\bm{X}}\| \bm{\mu})\Big]} \\
  &=\sup_{Q_{\bm{\tau}|\bm{X}}}
  \set[\Big]{\Ex{\inner{Q_{\bm{\tau}|\bm{X}}}{\bm{X}}}
  - \frac{I(\bm{X};\bm{\tau})+D(Q_{\bm{\tau}}\| \bm{\mu})}{\beta}}\\ 
  &\le\sup_{Q_{\bm{\tau}|\bm{X}}} 
  \set[\Big]{\Ex{\inner{Q_{\bm{\tau}|\bm{X}}}{\bm{X}}}- \frac{1}{\beta} I(\bm{X};\bm{\tau})} \\
  &=\sup_{Q_{\bm{\tau}|\bm{X}}} 
  \set[\Big]{\Ex\Big[\sum_{i=1}^N \inner{Q_{\tau_i|X^{(i)}}}{X^{(i)}}\Big]
  -\frac{1}{\beta} I(\bm{X};\bm{\tau})}\\ 
  &\le \sup_{Q_{\bm{\tau}|\bm{X}}} 
  \set[\Big]{\sum_{i=1}^N\big(\Ex{ \inner{Q_{\tau_i|X^{(i)}}}{X^{(i)}}}
  - \frac{1}{\beta}I(X^{(i)};\tau_i)\big)}\\
  &\le N \sup_{Q_{\tau|X}} \set{\Ex{\inner{Q_{\tau|X}}{X}-\frac{1}{\beta}I(X ; \tau)}}  \\
  &= N \sup_{\mu} \sup_{Q_{\tau|X}} 
  \set[\Big]{\Ex \inner{Q_{\tau|X}}{X}
  -\frac{I(X ; \tau)+D(Q_\tau \| \mu)}{\beta}} \\
  &= N \sup_{\mu} \sup_{Q_{\tau|X}} \set[\Big]{\Ex\Big[\inner{Q_{\tau|X}}{X}
  -\frac{1}{\beta}D(Q_{\tau|X }\|\mu)\Big]} \\
 & = N \sup_{\mu\in \eP(T)} \phi(\mu)
\end{align*}
where the first inequality uses nonnegativity of the relative entropy and the second inequality follows from the following chain of information-theoretic inequalities:
\begin{align*}
	I(\bm{X}; \bm{\tau}) &\ge I(X^{(1)},\dots,X^{(N)}; \bm{\tau}) \\
	&\ge \sum^N_{i=1} I(X^{(i)}; \bm{\tau}) \\
	&\ge \sum^N_{i=1} I(X^{(i)}; \tau_i).
\end{align*}
In the above, the first step is by the data processing inequality and conditional independence, the second step uses the fact that the $X^{(i)}$'s are independent (see, e.g., \cite[Theorem~6.1]{polyanskiyInformationTheoryCoding2024}), and the third step is again by the data processing inequality and conditional independence applied termwise.
\end{proof}

\subsection{Some information-theoretic preliminaries}
\label{ssec:info_th_prelims}

For $N \in \bN$, let $\cP_N \deq \{\mu\in \eP(T)
   : N \mu(t)\in \bZ_+ \text{ for all } t \in T\}$; that is,  $\mu \in \cP_N$ whenever the $\mu$-probability of each $t \in T$ is an integer multiple of $1/N$. In the information theory literature, the elements of $\cP_N$ are referred to as \textit{types} of length $N$ \citep{csiszarInformationTheoryCoding2011}. For $\mu\in \cP_N$, define the set
   \begin{align*}
	   \cC_{N,\mu} \deq \{\bm{t} = (t_1,\dots,t_N)\in T^N : \hat{P}_{\bm{t}}=\mu\}
	\end{align*}
where $\hat{P}_{\bm{t}}=\frac{1}{N}\sum_{i=1}^N \delta_{t_i}$ 
is the empirical measure of $\bm{t}\in T^N$. 
Let $\eP_{\mu}(T)\deq \set{\nu\in \eP(T): \nu\ll \mu}$ and $\cP_{N,\mu}\deq \cP_N \cap \eP_{\mu}(T)$.
Let $S_\mu \deq \set{t\in T: \mu(t) >0} $. Let $\eP_+(T) \deq \set{\nu\in \eP(T): S_\nu = T}$. 
Let $\eP_{\bQ}(T)\subset \eP(T)$ be the set of \textit{rational} probability measures, where 
we say that $\mu$ is rational if there exists $N\in \bN$ so that $\mu \in \cP_N$. For any $\rho \in \eP(T)$, we can find a rational probability measure $\hat{\rho}\in \cP_N$
so that $\lVert\rho-\hat{\rho}\rVert_{\infty} \le \frac{1}{N}$ by first flooring
each $(N\rho(t))_{t\in T}$ and then choosing arbitrary $N \abs{T} - \sum_t\lfloor N \rho(t)\rfloor$ 
coordinates to increase the weight by $\frac{1}{N}$
in order to make sure that the constructed rational measure is a probability measure. Thus,
\begin{align}
  \label{eq:round}
\lVert\rho-\hat{\rho}\rVert_{{\rm TV}}\le \frac{|T|}{N}.
\end{align}
For any $\mu\in \eP(T)$, let $\cC_{N,\mu, \epsilon}= 
\{\bm{t}\in T^N: \lVert\hat{P}_{\bm{t}} - \mu\rVert_{\rm TV}\le \epsilon\}$,
where 
\begin{align*}
	\lVert\mu-\rho\rVert_{\rm TV} \deq \frac{1}{2}\lVert\nu-\rho\rVert_1 
= \frac{1}{2}\sum_{t\in T}|\nu(t) - \rho(t)|
\end{align*} is the total variation distance between  any $\mu, \rho\in \eP(T)$. Another useful result is a relation between matching and coupling in 
\cite[lemma~3]{liuSimpleSharpGeneralization2025}:
\begin{lemma}
  \label{lem:match_couple}
  For any $\theta \in \cP_N, \bm{t}\in T^N$,
  \begin{align*}
    \lVert\hat{P}_{\bm{t}}- \theta\rVert_{\rm TV } 
    = \frac{1}{N} \min_{\bm{s}\in \cC_{N,\theta}} d_{\rm H} (\bm{t}, \bm{s}),
  \end{align*}
  where $d_{\rm H}(\bm{t},\bm{s}) \deq \sum^N_{i=1}\bm{1}_{\{t_i \neq s_i\}}$ is the Hamming distance between $\bm{t}$ and $\bm{s}$.
\end{lemma}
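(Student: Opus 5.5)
The identity is a finite assignment problem. I would prove the two inequalities ``$\le$'' and ``$\ge$'' separately by an explicit counting/matching argument. Fix $\theta\in\cP_N$ and $\bm{t}\in T^N$. Let $a(u)\deq N\hat P_{\bm t}(u)$ and $b(u)\deq N\theta(u)$; both are nonnegative integers with $\sum_u a(u)=\sum_u b(u)=N$. Then
\begin{align*}
N\lVert \hat P_{\bm t}-\theta\rVert_{\rm TV}=\tfrac12\sum_u|a(u)-b(u)| = N-\sum_u a(u)\wedge b(u) = \sum_u (a(u)-b(u))_+,
\end{align*}
using $\tfrac12|x-y| = \tfrac12(x+y) - x\wedge y$ and $\sum_u a(u)=\sum_u b(u)=N$.

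\emph{Lower bound.} Let $\bm s\in\cC_{N,\theta}$ be arbitrary. For each $u\in T$, the number of indices $i$ with $t_i=s_i=u$ is at most $a(u)\wedge b(u)$. This is because at most $a(u)$ coordinates of $\bm t$ and at most $b(u)$ coordinates of $\bm s$ equal $u$. Summing over $u$, the number of agreements is at most $\sum_u a(u)\wedge b(u)$. Hence
\begin{align*}
d_{\rm H}(\bm t,\bm s)\ge N-\sum_u a(u)\wedge b(u)=N\lVert\hat P_{\bm t}-\theta\rVert_{\rm TV}.
\end{align*}

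\emph{Upper bound.} I construct $\bm s\in\cC_{N,\theta}$ attaining equality. For each $u$, pick $a(u)\wedge b(u)$ of the indices $i$ with $t_i=u$ and set $s_i\deq u$ there. This leaves $K\deq N-\sum_u a(u)\wedge b(u)$ unassigned indices. The unmet demand is $\sum_u (b(u)-a(u))_+$, and it equals $K$ because the two sums of $a$ and $b$ agree. Fill the free indices arbitrarily so that each $u$ receives exactly $(b(u)-a(u))_+$ further copies. Then $\hat P_{\bm s}=\theta$, and $\bm s$ disagrees with $\bm t$ at no more than $K$ indices. This gives $\min_{\bm s}d_{\rm H}\le K$.

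Combining the two bounds and dividing by $N$ yields the lemma. The only point needing care is the bookkeeping that the leftover index count equals the unmet demand $\sum_u(b(u)-a(u))_+$, which follows from the equal totals. I do not expect a genuine obstacle. The identity is the integral (type-class) version of the optimal-coupling characterization of total variation.
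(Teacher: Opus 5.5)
Your proof is correct and complete. The identity $\tfrac12\sum_u|a(u)-b(u)| = N-\sum_u a(u)\wedge b(u)$ holds. The per-symbol bound on agreements gives the lower bound. In the greedy construction, the leftover slots $K$ exactly match the leftover demand $\sum_u (b(u)-a(u))_+$, which gives the upper bound. The construction also shows $\cC_{N,\theta}\neq\emptyset$, so the minimum is attained.

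Your route differs from the one the paper points to. The paper writes out no argument of its own and defers to Lemma~3 of Liu, which is a matching-versus-coupling argument. In that argument, the minimal cost of matching $\bm t$ to an element of the type class,
\begin{align*}
\frac1N\min_{\bm s\in\cC_{N,\theta}}\sum_{i=1}^N c(t_i,s_i),
\end{align*}
is identified with the optimal transport cost $\min_{\pi}\langle \pi, c\rangle$ over couplings $\pi$ of $\hat P_{\bm t}$ and $\theta$. This step typically uses integrality of the transportation polytope with integer marginals, i.e.\ a Birkhoff--von Neumann type argument. With the $0$--$1$ cost $c(u,v)=\bm{1}_{\{u\neq v\}}$, the optimal transport cost equals $\lVert \hat P_{\bm t}-\theta\rVert_{\rm TV}$ by the maximal-coupling characterization of total variation.

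That route is cost-agnostic, which is exactly what Liu needs for the quadratic cost $d^2$. For the Hamming cost, however, it imports two nontrivial facts. Your argument instead exploits the fact that only the diagonal matters for the $0$--$1$ cost. Your lower bound is the easy half of the coupling inequality $\pi(\{u\neq v\})\ge\lVert P-Q\rVert_{\rm TV}$, specialized to the empirical coupling of $(\bm t,\bm s)$. Your greedy filling is an explicit integral maximal coupling, so no integrality theorem is needed. The result is shorter, self-contained, and produces an explicit minimizer; the cost is that it does not extend to other transport costs.
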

\noindent
The proof is identical to the one by \citet{liuSimpleSharpGeneralization2025} 
except the Hamming distance is used here instead of 
the $\ell_2$-distance for the transport cost. Finally, we will need the following result on the variation of the Shannon entropy 
\begin{align*} 
H(\mu) \deq - \sum_{t \in T} \mu(t) \log \mu(t), \qquad \mu \in \eP(T)
\end{align*}
on neighborhoods with respect to the total variation distance:

\begin{proposition}
  \label{prop:cont_H_eps}
For any $\nu\in \eP(T)$, let
\begin{align*}
	H_{\epsilon}(\nu)\deq \sup_{\rho\in B_{{\rm TV}}(\nu,\epsilon) }H(\rho)
\end{align*}
where
\begin{align}
	B_{{\rm TV}}(\nu,\epsilon) \deq \left\{ \mu \in \eP(T): \lVert \mu - \nu \rVert_{{\rm TV}} \le \epsilon \right\}
\end{align}
is the closed ball of radius $\epsilon$ with respect to total variation distance centered at $\nu$.
  For any $\epsilon \in [0,1-\frac{1}{\abs{T}}]$, let $\omega(\epsilon)
  \deq \epsilon \log (\abs{T}-1) + h(\epsilon) $
  where $h(\epsilon) \deq -\epsilon \log \epsilon - (1-\epsilon)\log (1-\epsilon)$ is the binary entropy function.
 \begin{enumerate}
   \item 
     Given any $\nu\in \eP(T)$ and $\epsilon, \delta \in [0, 1-\frac{1}{\abs{T}}]$, 
     \begin{align*}
      H_{\epsilon}(\nu) \le H_{\epsilon+\delta}(\nu)\le H_{\epsilon}(\nu) + \omega(\delta)
     \end{align*}
  \item For any $\epsilon,\delta \in [0, 1-\frac{1}{\abs{T}}]$,
    \begin{align*}
    \sup_{\rho,\nu:\, \lVert \rho-\nu \rVert_{\rm TV}\le \delta} 
    \abs{H_\epsilon(\rho)- H_{\epsilon}(\nu)} \le \omega(\delta)
    \end{align*}
 
 \end{enumerate}

\end{proposition}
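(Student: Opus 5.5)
The plan is to get both parts from the standard uniform continuity of entropy in total variation (Fano-type bound). Throughout, write $n = \abs{T}$.

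\textbf{The continuity estimate.} The claim is that for $\rho,\rho' \in \eP(T)$ with $\lVert \rho - \rho' \rVert_{\rm TV} \le \delta \le 1 - \frac{1}{n}$,
\begin{align*}
\abs{H(\rho) - H(\rho')} \le \omega(\delta).
\end{align*}
To prove it, take a maximal coupling $(U,V)$ of $(\rho,\rho')$, so that $\Pr[U \neq V] = \lVert \rho - \rho' \rVert_{\rm TV} \le \delta$. Then
\begin{align*}
H(U) \le H(V) + H(U \mid V),
\end{align*}
and Fano's inequality gives
\begin{align*}
H(U \mid V) \le h(p) + p \log(n-1), \qquad p = \Pr[U \neq V].
\end{align*}
The function $p \mapsto h(p) + p\log(n-1)$ is increasing on $[0, 1 - \frac{1}{n}]$: its derivative is $\log\frac{(1-p)(n-1)}{p}$, which is nonnegative there. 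So $p$ may be replaced by $\delta$. Exchanging the roles of $\rho$ and $\rho'$ gives the two-sided bound.

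\textbf{Part 1.} The first inequality $H_\epsilon(\nu) \le H_{\epsilon+\delta}(\nu)$ is immediate, since the balls are nested.

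For the second inequality, take $\rho \in B_{\rm TV}(\nu, \epsilon+\delta)$. The key geometric step is to find $\rho' \in B_{\rm TV}(\nu,\epsilon)$ with $\lVert \rho - \rho' \rVert_{\rm TV} \le \delta$. Since $\eP(T)$ is convex and TV is a norm distance, use the convex combination
\begin{align*}
\rho' = (1-\lambda)\rho + \lambda \nu, \qquad \lambda = \frac{\delta}{\epsilon+\delta}.
\end{align*}
This is the main thing to verify, and it checks out:
\begin{align*}
\lVert \rho' - \nu \rVert_{\rm TV} &= (1-\lambda)\lVert \rho - \nu \rVert_{\rm TV} \le \epsilon, \\
\lVert \rho - \rho' \rVert_{\rm TV} &= \lambda \lVert \rho - \nu \rVert_{\rm TV} \le \delta.
\end{align*}
If $\lVert \rho - \nu \rVert_{\rm TV} \le \epsilon$ already, simply take $\rho' = \rho$. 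The continuity estimate then gives $H(\rho) \le H(\rho') + \omega(\delta) \le H_\epsilon(\nu) + \omega(\delta)$. Taking the supremum over $\rho$ finishes Part 1.

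One subtlety remains: $\epsilon + \delta$ may exceed $1 - \frac{1}{n}$. This is harmless. The ball is still well defined, and the estimate only uses $\delta \le 1 - \frac{1}{n}$.

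\textbf{Part 2.} Suppose $\lVert \rho - \nu \rVert_{\rm TV} \le \delta$. By the triangle inequality, $B_{\rm TV}(\rho,\epsilon) \subseteq B_{\rm TV}(\nu, \epsilon+\delta)$, so
\begin{align*}
H_\epsilon(\rho) \le H_{\epsilon+\delta}(\nu) \le H_\epsilon(\nu) + \omega(\delta)
\end{align*}
by Part 1. By symmetry the same holds with $\rho$ and $\nu$ swapped, which yields the absolute-value bound.

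The only nontrivial ingredients are the Fano/coupling continuity bound and the convex-combination construction in Part 1; everything else is bookkeeping.
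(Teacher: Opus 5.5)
Your proof is correct and follows the paper's route. You use the entropy continuity bound $\lvert H(\rho)-H(\rho')\rvert\le\omega(\delta)$, move each $\rho\in B_{\rm TV}(\nu,\epsilon+\delta)$ toward $\nu$ along a segment to land in $B_{\rm TV}(\nu,\epsilon)$ within distance $\delta$, and get Part~2 from the triangle inequality. The only differences are cosmetic: you derive the continuity bound from a maximal coupling plus Fano's inequality instead of citing Sason, and your single homothety with $\lambda=\delta/(\epsilon+\delta)$ is slightly cleaner than the paper's case split (radial projection onto the $\epsilon$-sphere).
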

\begin{proof}
  By e.g., Theorem~3 of \citet{sasonEntropyBoundsDiscrete2013},
  for $\epsilon \le 1-\frac{1}{\abs{T}}$,
  \begin{align}
    \label{eq:cont_ent}
  \sup_{\rho,\nu: \, \lVert \rho-\nu \rVert_{\rm TV} \le \epsilon} \abs{H(\nu)-H(\rho)} 
  \le \omega(\epsilon). 
  \end{align}
 For the first statement, we first show that, for any $\theta \in B_{{\rm TV}}(\nu,\epsilon+\delta)$, there exists
  $\rho_{\theta} \in B_{{\rm TV}}(\nu,\epsilon) \cap B_{{\rm TV}}(\theta,\delta)$. We distinguish two cases:
  \begin{enumerate}
    \item $\theta \in B_{{\rm TV}}(\nu,\epsilon)$. If $\theta \not\in B_{{\rm TV}}(\nu,\delta)$, then take
	\begin{align*}\rho_\theta = \theta + \frac{\delta}{\lVert \theta - \nu \rVert_{{\rm TV}}}(\nu - \theta) = \left(1 - \frac{\delta}{\lVert \theta - \nu \rVert_{{\rm TV}}} \right) \theta + \frac{\delta}{\lVert \theta - \nu \rVert_{{\rm TV}}}\nu.
	\end{align*}
Then
\begin{align*}
	\lVert \rho_\theta - \nu \rVert_{{\rm TV}} = \lVert \theta - \nu \rVert_{{\rm TV}} - \delta \le \lVert \theta - \nu \rVert_{{\rm TV}} \le \epsilon \quad\text{ and } \quad
	\lVert \rho_\theta - \theta \|_{{\rm TV}} = \delta.
\end{align*}
If $\theta \in B_{{\rm TV}}(\nu,\delta)$, then $\lVert \theta - \nu \rVert_{{\rm TV}} \le \epsilon \wedge \delta$. Let $\rho_\theta$ be any  convex combination of $\theta$ and $\nu$, e.g., $\rho_\theta = \frac{1}{2}(\theta + \nu)$. In that case,
\begin{align*}
	\lVert \rho_\theta - \nu \rVert_{{\rm TV}} = \lVert \rho_\theta - \theta \rVert_{{\rm TV}} = \frac{1}{2}\lVert \theta - \nu \rVert_{{\rm TV}} < \epsilon \wedge \delta.
\end{align*}
    \item $\theta \not\in B_{{\rm TV}}(\nu,\epsilon)$. 
      Then take
	  \begin{align*}
      \rho_{\theta} = \nu +  \frac{\epsilon}{\lVert \theta-\nu \rVert_{\rm TV}}(\theta-\nu) = \left(1 -  \frac{\epsilon}{\lVert \theta-\nu \rVert_{\rm TV}}\right) \nu + \frac{\epsilon}{\lVert \theta-\nu \rVert_{\rm TV}} \theta,
	  \end{align*}
which gives
\begin{align*}
	\lVert \rho_\theta - \nu \rVert_{{\rm TV}} = \epsilon \quad \text{ and } \quad \lVert \rho_\theta - \theta \rVert_{{\rm TV}} = \lVert \theta - \nu \rVert_{{\rm TV}} - \epsilon \le \delta.
\end{align*}
  \end{enumerate}
Thus, for any $\theta \in B_{{\rm TV}}(\nu,\epsilon + \delta)$ it follows from the above claim and from \eqref{eq:cont_ent} that
  \begin{align*}
  H(\theta) \le H(\rho_{\theta}) + \omega(\delta) \le H_{\epsilon}(\nu) + \omega(\delta).
  \end{align*}
  Taking the supremum over $B_{{\rm TV}}(\nu,\epsilon+\delta)$ yields the second inequality in the first statement of the proposition. The first inequality is obvious.
  
For the second statement, let $\rho,\nu$ with $\rho \in B_{{\rm TV}}(\nu,\delta)$ be given. Consider an arbitrary $\theta \in B_{{\rm TV}}(\rho,\epsilon)$. Then
  \begin{align*}
  \lVert \theta-\nu \rVert_{{\rm TV}} \le \lVert \theta-\rho \rVert_{{\rm TV}} 
  + \lVert \rho-\nu \rVert_{{\rm TV}} \le \epsilon + \delta.
  \end{align*}
  Thus, $\theta \in B_{{\rm TV}}(\nu,\epsilon+\delta)$ and
  \begin{align*}
  H(\theta) \le H_{\epsilon+\delta}(\nu) \le H_{\epsilon}(\nu) + \omega_1(\delta).
  \end{align*}
  Taking supremum over $\theta \in B_{{\rm TV}}(\rho,\epsilon)$ gives
  \begin{align*}
  H_{\epsilon}(\rho) - H_{\epsilon}(\nu) \le \omega(\delta).
  \end{align*}
Repeating the same argument again for $\theta \in B_{{\rm TV}}(\nu,\epsilon)$ gives the other direction.
\end{proof}

\subsection{Tensorized estimates}

The following construction lies at the heart of Liu's tensorization technique. 
Fix some $M,N \in \bN$ and $\mu \in \cP_M$ and consider the set
\begin{align*}
	\cC^{(M)}_{N,\mu} \deq \set[\Big]{\bm{t} = (t_1,\dots,t_{MN})\in T^{MN}: \hat{P}_{\bm{t}} = \mu}.
\end{align*}
In other words, $\cC^{(M)}_{N,\mu}$ is the set of tuples $\bm{t} \in T^{MN}$, where each $t \in T$ occurs exactly $MN\mu(t)$ times. It is then easy to see that the process $(\bm{X}_{\bm{t}})_{\bm{t} \in \cC^{(M)}_{N,\mu}}$ is stationary with respect to the natural action of the symmetric group $S_{MN}$ of permutations of $[MN]$ on the set $\cC^{(M)}_{N,\mu}$:
\begin{align*}
	\pi(\bm{t}) \deq (t_{\pi(1)},\dots,t_{\pi(MN)}), \qquad \text{for all }\pi \in S_{MN}, \bm{t} = (t_1,\dots,t_{MN}) \in \cC^{(M)}_{N,\mu}.
\end{align*}
This group action is transitive by virtue of the definition of $\cC^{(M)}_{N,\mu}$, and it evidently leaves the canonical metric $d_{MN}(\cdot,\cdot)$ invariant. One can then apply the methods of Section~\ref{sec:finite_T_stationary} to the sequence of such processes as $M,N \to \infty$.

In preparation for the use of Liu's tensorization in the main proofs, we first obtain estimates for 
$\eF_\beta(X;\mu)$ via asymptotic quantities defined on permutation-symmetric 
sets. The first result is a variational representation of the quenched free energy for probability 
measures via certain limiting quantities.
\begin{lemma}
  \label{lem:var_rep}
 For any $\mu \in \eP(T)$,
\begin{align*}
 \eF_\beta(X ; \mu) = \sup_{\nu \in \eP(T)} 
  \set[\Bigg]{\bm{\phi}_\beta(\nu) - \frac{1}{\beta}D(\nu \| \mu)},
\end{align*}
where, for any $\mu\in \eP(T)$, 
\begin{align*}
  \bm{\phi}_\beta(\mu)
  \deq \lim_{\epsilon\downarrow 0} \bm{\phi}_{\beta,\epsilon}(\mu)
  \deq \lim_{\epsilon\downarrow 0}\lim_{N \to \infty} 
  \frac{1}{N}\eF_\beta(\bm{X}^N ; \su_{\cC_{N,\mu,\epsilon}}),
\end{align*}
with $\su_{\cC_{N,\mu,\epsilon}}$ denoting the uniform probability measure on $\cC_{N,\mu,\epsilon}$. In particular, for any $\mu_\beta^*$ that achieves the supremum
$\sup_{\mu\in \eP(T)}\eF(X;\mu)$,
\begin{align*}
  \sup_{\mu\in \eP(T)}\eF_{\beta}(X;\mu)=\eF_\beta(X ; \mu_\beta^*) = \bm{\phi}_\beta(\mu_\beta^*)=\sup_{\mu\in \eP(T)}\bm{\phi}_{\beta}(\mu).
\end{align*}
\end{lemma}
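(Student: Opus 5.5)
The plan is to lift to $T^N$ via the tensorized process $\bm{X}^N$, decompose the product measure $\mu^{\otimes N}$ according to empirical types, and let $N\to\infty$ with a method-of-types argument. First I will check that the limit defining $\bm{\phi}_{\beta,\epsilon}(\mu)$ exists, or work with $\limsup/\liminf$ throughout. Writing $\mu^{\otimes N}=\sum_{\theta\in\cP_N}\mu^{\otimes N}(\cC_{N,\theta})\,\su_{\cC_{N,\theta}}$ gives
\begin{align*}
\Phi_\beta(\bm{X}^N;\mu^{\otimes N}) = \frac{1}{\beta}\log\sum_{\theta\in\cP_N} e^{-N D(\theta\|\mu)+O(|T|\log N)}\,\inner{\su_{\cC_{N,\theta}}}{e^{\beta\bm{X}^N}},
\end{align*}
using the standard type estimate $\mu^{\otimes N}(\cC_{N,\theta}) = e^{-ND(\theta\|\mu)+O(|T|\log N)}$.

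Since $|\cP_N|\le (N+1)^{|T|}$, the logarithm of the sum equals the maximum of the logarithms of the summands up to $O(|T|\log N)$. For the lower bound, fix $\nu$ and keep only the types in $B_{\rm TV}(\nu,\epsilon)$. The mixture of their uniform measures is comparable to $\su_{\cC_{N,\nu,\epsilon}}$, because $|\cC_{N,\theta}|=e^{NH(\theta)+O(\log N)}$. Proposition~\ref{prop:cont_H_eps} and continuity of $D(\cdot\|\mu)$ then control the entropy and divergence mismatch by $\omega(\epsilon)$-type terms. Dividing by $N$ gives $\eF_\beta(X;\mu)\ge \frac1N\eF_\beta(\bm{X}^N;\su_{\cC_{N,\nu,\epsilon}})-\frac1\beta D(\nu\|\mu)-o_\epsilon(1)-o_N(1)$, and letting $N\to\infty$ and then $\epsilon\downarrow0$ yields the ``$\ge$'' direction. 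For the upper bound I need to exchange expectation and maximum over types. The plan is to use Lemma~\ref{lem:concentration}: each $\frac1\beta\log\inner{\su_{\cC_{N,\theta}}}{e^{\beta\bm{X}^N}}$ has fluctuations of order $\sqrt{N}\Delta$, which is $o(N)$. There are only polynomially many types, so
\begin{align*}
\frac1N\Ex\max_\theta(\cdots)\le \max_\theta\frac1N\Ex(\cdots)+O\Big(\Delta\sqrt{|T|\log N/N}\Big).
\end{align*}

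Next I cover $\eP(T)$ by finitely many TV-balls of radius $\epsilon$. Every type lies in some $\cC_{N,\nu_j,\epsilon}$, so its term is bounded by the corresponding $\cC_{N,\nu_j,\epsilon}$-term, with an entropy-ratio loss again controlled by Proposition~\ref{prop:cont_H_eps}. This gives $\eF_\beta(X;\mu)\le\max_j\{\bm{\phi}_{\beta,\epsilon'}(\nu_j)-\frac1\beta\inf_{B(\nu_j,\epsilon)}D(\cdot\|\mu)\}+o(1)$. Letting $\epsilon\to0$ and using compactness with lower semicontinuity of $D$ yields ``$\le$''.

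The main obstacle is this upper direction. It needs upper semicontinuity of $\nu\mapsto\bm{\phi}_\beta(\nu)$, or a uniform-in-$\epsilon$ comparison between $\bm{\phi}_{\beta,\epsilon}$ and its limit, and the interchange of the two limits. I would try to handle it via monotonicity of $\bm{\phi}_{\beta,\epsilon}$ in $\epsilon$: shrinking a TV-ball only changes the uniform measure by a factor $e^{N\omega(\epsilon)}$ relative to its sub-balls.

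For the ``in particular'' claim, I take $\nu=\mu$ in the variational formula to get $\bm{\phi}_\beta(\mu)\le\eF_\beta(X;\mu)\le\sup\eF_\beta$. Conversely, for the optimizer $\mu^*_\beta$ the formula gives $\sup\eF_\beta=\eF_\beta(X;\mu^*_\beta)\le\sup_\nu\bm{\phi}_\beta(\nu)$. Hence $\sup\bm{\phi}_\beta=\sup\eF_\beta$. Finally, $\bm{\phi}_\beta(\mu^*_\beta)=\eF_\beta(X;\mu^*_\beta)$ follows because the maximizing $\nu$ in the variational formula must satisfy $D(\nu\|\mu^*_\beta)=0$, i.e.\ $\nu=\mu^*_\beta$; otherwise $\bm{\phi}_\beta(\nu)>\sup\eF_\beta$, a contradiction.
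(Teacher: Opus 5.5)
Your overall architecture matches the paper's. You decompose $\mu^{\otimes N}$ by types, swap $\Ex$ and $\max$ using Lemma~\ref{lem:concentration} and $|\cP_N|\le (N+1)^{|T|}$, cover $\eP(T)$ by finite TV-nets, and derive the ``in particular'' part from the variational formula.

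\textbf{Gap in the lower bound.} The argument fails when $\mu$ does not have full support. You keep the types $\theta\in B_{\rm TV}(\nu,\epsilon)$ and invoke continuity of $D(\cdot\|\mu)$. But if $S_\mu\neq T$, every TV-ball of positive radius contains types charging $T\setminus S_\mu$, and for these $\mu^{\otimes N}(\cC_{N,\theta})=0$. Your retained mixture therefore vanishes on $\cC_{N,\nu,\epsilon}\setminus S_\mu^N$ and cannot dominate any multiple of $\su_{\cC_{N,\nu,\epsilon}}$. Moreover, $D(\cdot\|\mu)$ is only lower semicontinuous there, which is the wrong direction for a lower bound. Nothing in your plan controls the contribution of those tuples to $\bm{\phi}_{\beta,\epsilon}(\nu)$.

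The paper handles this in two steps. It first proves the lower bound only for $\mu\in\eP_+(T)$, where $\langle\theta,\log\mu\rangle\ge\langle\nu,\log\mu\rangle-2\epsilon\norm{\log\mu}_\infty$ on the ball. It then applies this to $\mu_\epsilon=(1-\epsilon)\mu+\epsilon\rho$ with $\rho\in\eP_+(T)$ and lets $\epsilon\downarrow 0$, using continuity of $\mu\mapsto\eF_\beta(X;\mu)$ and $D(\nu\|\mu_\epsilon)\to D(\nu\|\mu)$ for $\nu\ll\mu$. You need this step.

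\textbf{Upper bound: a valid, different route.} Your upper bound does close, and more simply than the paper's. Write $\bm{\psi}_{\beta,\epsilon}(\nu)=\lim_N\frac{1}{N}\Ex\Psi_\beta(\bm{X}^N;\cC_{N,\nu,\epsilon})=\bm{\phi}_{\beta,\epsilon}(\nu)+\frac{1}{\beta}H_\epsilon(\nu)$. Because the paper's $\delta$-net changes with $\delta$, it must prove a modulus $\bm{\psi}_{\beta,\delta^+}(\nu)\le\bm{\psi}_\beta(\nu)+r(\delta)$ uniform in $\nu$. It does so by covering $\cC_{N,\nu,\epsilon+\delta}$ with Hamming balls of radius $\floor{2N\delta}$ and applying a Gaussian maximal inequality.

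Your compactness idea needs no uniformity, provided you work with the unnormalized $\bm{\psi}$, which is exactly monotone in $\epsilon$. Take $\epsilon_k\downarrow 0$, a maximizing net center $\nu^{(k)}$, and a limit point $\bar\nu$. Since $\cC_{N,\nu^{(k)},\epsilon_k}\subset\cC_{N,\bar\nu,\eta_k}$ with $\eta_k\deq\epsilon_k+\norm{\nu^{(k)}-\bar\nu}_{\rm TV}\to 0$, you get $\limsup_k\bm{\psi}_{\beta,\epsilon_k}(\nu^{(k)})\le\bm{\psi}_\beta(\bar\nu)$. Combined with $0\le H_\epsilon-H\le\omega(\epsilon)$, continuity of $H$, and lower semicontinuity of $D$, this gives $\eF_\beta(X;\mu)\le\bm{\phi}_\beta(\bar\nu)-\frac{1}{\beta}D(\bar\nu\|\mu)$.

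The two routes buy different things. The paper's Hamming-ball device also yields continuity of $\bm{\phi}_\beta$, which it relies on later (Lemma~\ref{lem:limit_via_rational}). Yours shows that $\bar\nu$ attains the supremum. You should state this explicitly, because your final step speaks of ``the maximizing $\nu$'' without justifying that it exists; the paper works with near-maximizers instead.

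\textbf{Existence of limits.} The existence of the $N$-limits is part of the statement, not optional. It follows from superadditivity of $N\mapsto\Ex\Psi_\beta(\bm{X}^N;\cC_{N,\nu,\epsilon})$ (by concatenation, using convexity of TV-balls) and Fekete's lemma, together with $\frac{1}{N}\log|\cC_{N,\nu,\epsilon}|\to H_\epsilon(\nu)$.
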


\begin{proof}
  Without loss of generality, we can restrict the supremum to the set $\eP_{\mu}(T)\deq\set{\nu\in \eP(T): \nu \ll \mu}$,
  since for $\nu$ with $D(\nu\|\mu)=\infty$ we can never achieve the supremum value.
  The proofs of some auxiliary results used below are given in Appendix~\ref{app:proofs}. For a process $Z = (Z_s)_{s \in S}$ with a finite index set $S$, any $A \subseteq S$, and $\beta \ge 0$, we define
\begin{align*}
	\Psi_\beta(Z;A) \deq \frac{1}{\beta}\log \sum_{s \in A} e^{\beta Z_s}.
\end{align*}
First, we obtain preliminary estimates for $\eF_\beta(X ; \mu)$
  via tensorized processes:
\begin{proposition}\label{prop:bd_tensor}
For any $\mu\in \eP(T)$,
\begin{align*}
\eF_\beta(X ; \mu)
 \le  \liminf_{\epsilon\downarrow 0} \liminf_{N \to \infty}
 \sup_{\nu\in \eP_\mu(T)}\set[\Bigg]{\frac{1}{N}\Ex{\Psi_{\beta}(\bm{X}^N; \cC_{N,\nu,\epsilon} )} 
   +  \frac{\langle\nu, \log \mu\rangle}{\beta}} .
\end{align*}
For $\mu \in \eP_+(T)$,
\begin{align*}
\eF_\beta(X ; \mu)
&\ge \limsup_{\epsilon\downarrow 0}\limsup_{N \to \infty} \sup_{\nu\in \eP(T)}
\set[\Bigg]{\frac{1}{N} \Ex{\Psi_\beta(\bm{X}^N ; \cC_{N, \nu, \epsilon})}
  + \frac{\langle \nu, \log \mu \rangle}{\beta} 
+ O(\epsilon)}.
\end{align*}
\end{proposition}
\noindent Then we prove the existence of the limits of relevant asymptotic quantities (see Section~\ref{ssec:info_th_prelims} for relevant information-theoretic definitions):
\begin{proposition}
  \label{prop:limit}
  For any $\nu\in \eP(T), \epsilon >0$, the following limits exist:
\begin{align*}
  &  \lim_{N \to \infty} \frac{1}{N} 
  \Ex{\Psi_\beta(\bm{X}^N; \cC_{N,\nu, \epsilon})}\eqd\bm{\psi}_{\beta, \epsilon}(\nu)\\
  & \lim_{\epsilon \downarrow 0} \bm{\psi}_{\beta,\epsilon}(\nu)\eqd\bm{\psi}_\beta(\nu)\\ 
  & \lim_{N\to \infty} 
  \frac{1}{N} \log \abs{\cC_{N,\nu, \epsilon}}=H_{\epsilon}(\nu)\\
  &\lim_{\epsilon \downarrow 0}H_{\epsilon}(\nu)=H(\nu)  \\
  & \lim_{N \to \infty} \frac{1}{N} 
  \Ex{\Phi_\beta(\bm{X}^N;  \su_{\cC_{N,\nu, \epsilon}})}
  \eqd\bm{\phi}_{\beta,\epsilon}(\nu) 
  = \bm{\psi}_{\beta,\epsilon} (\nu) - \frac{1}{\beta} H_\epsilon(\nu)\\
  &\lim_{\epsilon \downarrow 0} \bm{\phi}_{\beta,\epsilon}(\nu)
  \eqd \bm{\phi}_\beta(\nu) = \bm{\psi}_\beta(\nu) -\frac{1}{\beta} H(\nu) 
\end{align*}
\end{proposition}
It remains to show we can exchange the limit as $\epsilon \downarrow 0$ and the supremum over $\nu \in \eP(T)$ without incurring non-vanishing error terms.
For the lower bound, Propositions~\ref{prop:bd_tensor} and \ref{prop:limit} give, for $\mu\in \eP_+(T)$,
\begin{align*}
\eF_\beta(X;\mu)
&\ge \limsup_{\epsilon\downarrow 0}\limsup_{N \to \infty} \sup_{\nu\in \eP(T)}
\set[\Bigg]{\frac{1}{N} \Ex{\Psi_\beta(\bm{X}^N ; \cC_{N, \nu, \epsilon})}
  + \frac{\langle \nu, \log \mu \rangle}{\beta} 
+ O(\epsilon)}\\
&\ge \sup_{\nu\in \eP(T)}
\set[\Bigg]{\limsup_{\epsilon \downarrow 0} \limsup_{N \to \infty}
  \Big({\frac{1}{N} \Ex{\Psi_\beta(\bm{X}^N ; \cC_{N, \nu, \epsilon})}
  + \frac{\langle \nu, \log \mu \rangle}{\beta} 
+ O(\epsilon)}\Big)}\\
&=\sup_{\nu\in \eP(T)}\set[\Bigg]{\bm{\psi}_\beta(\nu)+ \frac{\inner{\nu}{\log \mu}}{\beta}}.
\end{align*}
Since $\bm{\psi}_\beta(\nu) = \bm{\phi}_\beta(\nu) + \frac{1}{\beta}H(\nu)$, we obtain 
\begin{align}
  \label{eq:P_plus}
	\eF_\beta(X;\mu) \ge \sup_{\nu\in\eP(T)} \set[\Bigg]{\bm{\phi}_\beta(\nu) - \frac{1}{\beta}D(\nu \| \mu)}.
\end{align}
For $\mu\in \eP(T)$, consider $\mu_{\epsilon} = (1-\epsilon) \mu + \epsilon \rho $ for some $\rho \in \eP_+(T)$.
Fix an arbitrary $\nu\in \eP(T)$ such that $\nu\ll \mu$.
By \eqref{eq:P_plus},
\begin{align*}
	\eF_\beta(X;\mu_{\epsilon}) \ge  \bm{\phi}_\beta(\nu) - \frac{1}{\beta}D(\nu \| \mu_{\epsilon}).
\end{align*}
Notice that on $S_\nu$, $D(\nu\| \mu_{\epsilon})\to D(\nu\| \mu)$ as $\epsilon\downarrow 0$. Since 
$\mu\mapsto \eF_\beta(X;\mu)$ is continuous, 
taking $\epsilon \downarrow 0$, we have the lower bound
\begin{align*}
  \eF_\beta(X;\mu) \ge \sup_{\nu\in \eP_\mu(T)} \set[\Big]{\bm{\phi}_\beta(\nu)- \frac{1}{\beta}D(\nu \| \mu)} 
\end{align*}

We turn to the upper bound next. For any $\delta \in (0, \frac{1}{4})$, let $\cS_\delta$ be a
$\delta$-net for $\eP_{\mu}(T)$ with respect to $\lVert \cdot \rVert_{\rm TV}$, 
which is guaranteed to exist since $\eP(T)$ is compact.
Then for any $\nu \in \eP_{\mu}(T)$, 
$\min_{\rho \in \cS_\delta} \lVert \rho-\nu \rVert_{\rm TV} \le \delta $ .
Let
\begin{align*}
  \psi_{N}(\nu, \epsilon) \deq \frac{1}{N} \Ex{\Psi_\beta(\bm{X}^N; \cC_{N,\nu, \epsilon})}.
\end{align*}
Then
\begin{align*}
&\sup_{\nu\in \eP_{\mu}(T)} \left\{ \psi_N(\nu, \epsilon) 
+ \frac{\langle \nu, \log \mu \rangle}{\beta}\right\} \\
&= \max_{\rho\in \cS_{\delta}}\sup_{\nu\in B_{{\rm TV}}(\rho,\delta)} 
\left\{ \psi_N(\nu, \epsilon)-\psi_N(\rho, \epsilon) + \psi_N(\rho, \epsilon)
 + \frac{\langle \nu-\rho, \log\mu \rangle}{\beta} 
 + \frac{\langle \rho, \log \mu \rangle}{\beta}\right\}\\
&\le \max_{\rho \in \cS_\delta} \left\{\psi_N(\rho, \epsilon) 
+ \frac{\langle \rho, \log \mu \rangle}{\beta} \right\}
+ \max_{\rho \in \cS_{\delta}}\sup_{\nu\in B_{{\rm TV}}(\rho,\delta)} 
\left\{\psi_N(\nu, \epsilon) - \psi_N(\rho, \epsilon)\right\} \\
& \qquad \qquad + \sup_{\rho,\nu \in \eP_{\mu}(T) : \lVert \rho-\nu  \rVert_{{\rm TV}}\le \delta} 
\frac{\abs{\langle \nu-\rho, \log \mu \rangle}}{\beta}\\
&\le \max_{\rho \in \cS_\delta} \left\{\psi_N(\rho, \epsilon) 
+ \frac{\langle \rho, \log \mu \rangle}{\beta}\right\} 
+ \max_{\rho \in \cS_{\delta}} \set{\psi_N(\rho, \epsilon+\delta) - \psi_N(\rho, \epsilon)} \\
& \qquad \qquad + \sup_{\rho,\nu \in \eP_{\mu}(T) : \lVert \rho-\nu  \rVert_{{\rm TV}}\le \delta} 
\frac{\abs{\langle \nu-\rho , \log \mu\rangle}}{\beta},
\end{align*}
where in the last inequality we have used the fact that
\begin{align*}
	\sup_{\nu \in B_{{\rm TV}}(\rho,\delta)} \psi_N(\nu,\epsilon) \le \psi_N(\rho,\epsilon+\delta).
\end{align*}
This can be seen as follows: For any $\rho,\nu$ with $\lVert \rho - \nu \rVert_{{\rm TV}} \le \delta$ and any $\bm{t} \in \cC_{N,\nu,\epsilon}$,
\begin{align*}
  \lVert\hat{P}_{\bm{t}}- \rho\rVert_{{\rm TV}}
  \le \lVert\hat{P}_{\bm{t}}- \nu\rVert_{{\rm TV}} + \lVert \rho-\nu \rVert_{{\rm TV}} \le \epsilon + \delta,
\end{align*}
which implies $ \cC_{N,\nu, \epsilon} \subset \cC_{N,\rho, \epsilon+ \delta}$.
Note that, since $\epsilon \mapsto \psi_N(\nu,\epsilon+\delta)$ is increasing
as $\epsilon$ approaches $0$ from the right with $\delta > 0$ fixed, 
the right limit of $\bm{\psi}_\beta(\nu,\epsilon+\delta)$ exists:  
\begin{align*}
  \bm{\psi}_{\beta,\delta^+}(\nu) \deq \lim_{\epsilon \downarrow 0} 
  \bm{\psi}_{\beta}(\nu, \epsilon+\delta).
\end{align*}
Therefore, by Proposition~\ref{prop:limit},
\begin{align*}
&\limsup_{\epsilon\downarrow 0}\limsup_{N \to \infty} \sup_{\nu \in \eP_{\mu}(T)}
\left\{\psi_N(\nu, \epsilon) + \frac{\langle \nu, \log \mu \rangle}{\beta}\right\}\\
&\le \max_{\rho \in \cS_\delta} \left\{\bm{\psi}_\beta(\rho)
+ \frac{\langle \rho, \log \mu \rangle}{\beta}\right\} 
+ \max_{\rho \in \cS_\delta} \left\{\bm{\psi}_{\beta,\delta^+}(\rho)-\bm{\psi}_\beta(\rho)\right\}
+ \sup_{\rho,\nu \in \eP_{\mu}(T) : \lVert \rho-\nu \rVert_{{\rm TV}}\le \delta} 
  \frac{\abs{\langle \nu-\rho, \log \mu\rangle}}{\beta}\\
&\le \sup_{\nu \in \eP_{\mu}(T)} \left\{\bm{\psi}_\beta(\rho)+ \frac{\langle \nu, \log \mu \rangle}{\beta}\right\} 
+ \sup_{\nu \in \eP(T)} \left\{\bm{\psi}_{\beta,\delta^+}(\nu)-\bm{\psi}_\beta(\nu)\right\}
+ \sup_{\rho,\nu \in \eP_{\mu}(T) : \lVert \rho-\nu \rVert_{{\rm TV}}\le \delta} 
\frac{\abs{\langle \nu-\rho, \log \mu \rangle}}{\beta}
\end{align*}
where we have used the fact that $\cS_\delta$ is finite to interchange the order of limsup and max in the first inequality. 

We next show that
the second term vanishes as $\delta \to 0$.
Consider any $\nu \in \eP(T)$.
As shown in the proof of Proposition~\ref{prop:cont_H_eps},
for any $\theta \in B_{{\rm TV}}(\nu,\epsilon+\delta)$ there exists $\rho \in B_{{\rm TV}}(\nu,\epsilon)$,
such that $\lVert \rho-\theta \rVert_{\rm TV} \le \delta$.
Thus, for any $\bm{t} \in \cC_{N,\nu, \epsilon + \delta}$
there exists some $\rho \in B_{{\rm TV}}(\nu,{\epsilon})$,
so that $\lVert\hat{P}_{\bm{t}}- \rho\rVert_{\rm TV} \le \delta$.
Then we find $\bm{s}\in T^N$ so that $\lVert\hat{P}_{\bm{s}}- \rho\rVert_{\rm TV}
\le \frac{\abs{T}}{N}$, where the existence
of such an $\bm{s}$ can be shown as in \eqref{eq:round}. 
For sufficiently large $N$ such that $\abs{T}/N \le \delta \wedge \epsilon $ ,
\begin{align*}
  &\lVert\hat{P}_{\bm{s}}-\nu\rVert_{{\rm TV}} \le \lVert\hat{P}_{\bm{s}}-\rho\rVert_{\rm TV} 
  + \lVert \rho-\nu \rVert_{\rm TV}\le \epsilon + \frac{\abs{T}}{N} \le 2 \epsilon\\
  &\lVert\hat{P}_{\bm{s}}-\hat{P}_{\bm{t}}\rVert_{\rm TV}\le 
  \lVert\hat{P}_{\bm{t}}-\rho\rVert_{\rm TV} 
  + \lVert\rho-\hat{P}_{\bm{s}}\rVert_{\rm TV} \le \delta + \frac{\abs{T}}{N} \le 2\delta. 
\end{align*}
By Lemma~\ref{lem:match_couple}, we can find 
$\hat{\bm{t}}\in \cC_{N,\hat{P}_{\bm{s}}}$ such that 
\begin{align*}
  \frac{1}{N} d_{\rm H}(\hat{\bm{t}}, \bm{t}) 
  = \lVert\hat{P}_{\bm{t}}- \hat{P}_{\bm{s}}\rVert_{\rm TV} \le 2 \delta.
\end{align*}
Since $\hat{\bm{t}}\in \cC_{N,\hat{P}_{\bm{s}}}$, 
\begin{align*}
  \lVert\hat{P}_{\hat{\bm{t}}}- \nu\rVert_{\rm TV} 
  = \lVert\hat{P}_{\bm{s}}- \nu\rVert_{\rm TV}  \le 2\epsilon
  \implies \hat{\bm{t}} \in \cC_{N, \nu, 2\epsilon}.
\end{align*}
Thus, for any $\bm{t}\in \cC_{N, \nu, \epsilon+\delta}$,
we can find some $\hat{\bm{t}}\in \cC_{N, \nu, 2\epsilon}$ such that $d_{\rm H}(\bm{t}, \hat{\bm{t}}) \le 2N \delta$, implying that
\begin{align*}
  \cC_{N,\nu, \epsilon + \delta} \subset 
  \bigcup_{\hat{\bm{t}}\in \cC_{N,\nu, 2\epsilon}} B_{{\rm H}}(\hat{\bm{t}},\floor{2N\delta})
\end{align*}
where the $\floor{2N\delta}$-ball is with respect to the Hamming distance on $T^N$.
Then
\begin{align*}
\psi_N(\nu, \epsilon + \delta) 
&\le \frac{1}{N\beta} \Ex\, {\log \sum_{\bm{s}\in \cC_{N, \nu, 2 \epsilon}} 
       \sum_{\bm{t}\in B_{{\rm H}}(\bm{s},\floor{2N \delta})} 
e^{\beta (\bm{X_t}^N-\bm{X_s}^N + \bm{X_s}^N)}}\\
&\le \frac{1}{N\beta} \Ex\,{\log \max_{\bm{s}\in \cC_{N,\nu, 2\epsilon} } 
\sum_{\bm{t} \in B_{{\rm H}}(\bm{s},\floor{2N\delta})} e^{\beta (\bm{X_t}^N-\bm{X_s}^N)} }
+ \frac{1}{N\beta} \Ex\,{\log \sum_{\bm{t}\in \cC_{N,\nu, 2\epsilon}} e^{\beta \bm{X_t}^N}}\\
&\le \frac{1}{N} \Ex{\max_{\bm{s} \in \cC_{N, \nu, 2\epsilon} } 
\max_{\bm{t}\in B_{{\rm H}}(\bm{s},\floor{2N\delta})} (\bm{X_t}^N-\bm{X_s}^N)} 
+ \max_{\bm{s}\in \cC_{N, \nu, 2\epsilon}} 
\frac{\log \abs{B_{{\rm H}}(\bm{s},\floor{2N\delta})}}{N\beta} + \psi_N(\nu, 2\epsilon).
\end{align*}
Notice that, for each $(\bm{s},\bm{t})\in \bm{A}\deq 
\set{(\bm{s}, \bm{t}): \bm{s}\in 
\cC_{N,\nu, \epsilon}, \bm{t}\in B_{{\rm H}}(\bm{s},\floor{2N\delta})}$,
$(\bm{X_t}^N-\bm{X_s}^N)$ is a centered Gaussian random variable with 
\begin{align*}
  \Ex{(\bm{X_t}^N-\bm{X_s}^N)^2} 
  = \sum_{i \le n} d^2(t_i, s_i) \le \Delta^2 \floor{2N\delta} .
\end{align*}
By the maximal inequality for Gaussians, 
\begin{align*}
\psi_N(\nu, \epsilon + \delta) 
&\le 2 \Delta 
\sqrt{ \frac{\delta}{N}\log \abs{\cC_{N, \nu, 2\epsilon}}
+  \delta h(2\delta)}
+ \frac{h(2\delta)}{\beta} + \psi_N(\nu, 2\epsilon),
\end{align*}
where we have used the binomial estimates in Proposition~\ref{prop:binom_est} to upper-bound the cardinality of $\bm{A}$. Taking $N \to \infty$ and $\epsilon\downarrow 0$ 
and using Proposition~\ref{prop:limit}, we obtain for any $\nu\in \eP(T)$
\begin{align*}
 \bm{\psi}_{\beta,\delta^+}(\nu)
&\le  \bm{\psi}_\beta(\nu) + r(\delta),
\end{align*}
where $r(\delta) \to 0$ as $\delta \downarrow 0$. Therefore, by the estimate above,
\begin{align*}
&\limsup_{\epsilon \downarrow 0}\limsup_{N\to \infty} 
\sup_{\nu\in \eP_{\mu}(T)} \left\{ \psi_N(\nu, \epsilon) 
+ \frac{\langle \nu, \log \mu \rangle}{\beta}\right\} \\
& \qquad \le \sup_{\nu \in \eP_{\mu}(T)} \left\{ \bm{\psi}_\beta(\nu) 
+ \frac{\langle \nu, \log \mu \rangle}{\beta}\right\}
+r(\delta)
+ \frac{2 \delta \sup_{t\in S_{\mu}}\abs{\log \mu(t)}}{\beta}, \qquad \text{for all } \delta < \frac{1}{4}.
\end{align*}
Taking $\delta $ to $0$ 
and using the fact $\bm{\psi}_\beta(\nu) = \bm{\phi}_\beta(\nu) + 
\frac{1}{\beta} H(\nu)$ gives
\begin{align*}
	\limsup_{\epsilon \downarrow 0}\limsup_{N\to \infty} 
	\sup_{\nu\in \eP_{\mu}(T)} \set[\Bigg]{ \psi_N(\nu, \epsilon) 
	+ \frac{\langle \nu, \log \mu \rangle}{\beta}} \le \sup_{\nu \in \eP_{\mu}(T)}\set[\Bigg]{\bm{\phi}_\beta(\nu) - \frac{1}{\beta}D(\nu \| \mu)}.
\end{align*}
Combining this with the upper bound on $\eF_\beta(X;\mu)$ from Proposition~\ref{prop:bd_tensor} leads to
\begin{align*}
	\eF_\beta(X; \mu) \le  \sup_{\nu \in \eP_{\mu}(T)}\set[\Bigg]{\bm{\phi}_\beta(\nu) - \frac{1}{\beta}D(\nu \| \mu)}.
\end{align*}

It remains to prove the equality for $\mu^*_\beta$.
Let $\phi_\beta(\mu) \deq \eF_\beta(X;\mu)$.
First observe that
\begin{align*}
  \sup_{\mu\in \eP(T)}\phi_\beta(\mu) = \phi_\beta(\mu^*_\beta)
  = \sup_{\nu\in \eP(T)}\set[\Bigg]{ \bm{\phi}_\beta(\nu)- \frac{1}{\beta}D(\nu\|\mu^*_\beta)} 
  = \sup_{\nu \in \eP(T)} \bm{\phi}_\beta(\nu).
\end{align*}
Indeed, 
\begin{align*}
  \sup_{\nu} \bm{\phi}_\beta(\nu)
  \ge \sup_{\nu} \set[\Bigg]{ \bm{\phi}_\beta(\nu)- \frac{1}{\beta}D(\nu\|\mu^*_\beta)}
  = \phi_\beta(\mu^*_\beta)
  = \sup_{\nu} \phi_\beta(\nu) \ge \sup_{\nu}\bm{\phi}_\beta(\nu)
\end{align*}
where the last inequality uses the fact that
\begin{align*}
	\phi_\beta(\nu) = \sup_{\nu'} \set[\Bigg]{\bm{\phi}_\beta(\nu') - \frac{1}{\beta} D(\nu' \| \nu)} \ge \bm{\phi}_\beta(\nu).
\end{align*}
Now assume for the sake of contradiction that, for an arbitrary small $\epsilon>0$,
there exists $\nu^*\in \eP(T)$ 
so that $D(\nu^*\|\mu^*_\beta)> \epsilon\beta$ and
\begin{align*}
  \bm{\phi}_\beta(\nu^*) - \frac{1}{\beta} D(\nu^*\|\mu^*_\beta) > \sup_{\nu}\set[\Bigg]{\bm{\phi}_\beta(\nu)- \frac{1}{\beta}D(\nu\|\mu^*_\beta)} - \epsilon.
 \end{align*}
Since $\sup_{\nu}\set{\bm{\phi}_\beta(\nu) - \beta^{-1} D(\nu \| \mu^*_\beta)} = \sup_\nu \bm{\phi}_\beta(\nu)$, this would imply that
 \begin{align*}
 \bm{\phi}_\beta(\nu^*) - \frac{1}{\beta} D(\nu^*\|\mu^*_\beta) &> \sup_{\nu} \bm{\phi}_\beta(\nu) - \epsilon  \ge \bm{\phi}_\beta(\nu^*) - \epsilon,
\end{align*}
contradicting the assumption that $D(\nu^*\|\mu^*_\beta) > \epsilon\beta$. Thus the claim follows.
\end{proof}

\noindent
Notice that $\bm{\phi}_\beta$ is defined as a right limit of 
$\bm{\phi}_{\beta, \epsilon}$. For $\epsilon>0$, no matter how small,
$\cC_{N,\mu,\epsilon}$ is not an exact $\mu$-type class, and thus 
$(\bm{X_t})_{\bm{t}\in \cC_{N, \mu, \epsilon}}$ is not stationary.
It remains to establish a continuity property of $\bm{\phi}_\beta$
in order to reduce the analysis to the stationary Gaussian case. For each $\mu\in \eP_{\bQ}(T)$, let $\cI_\mu$ be a subsequence of $\bN$ 
so that the probability masses of $N\mu$ are integers for each $N\in \cI_\mu$,
and let $\displaystyle\lim_{\cI_\mu\ni N \to \infty} (\dots)$ denote the subsequential limit along $\cI_\mu$. Recalling our discussion in the beginning of this section, we see that the process $(\bm{X}_{\bm{t}})_{\bm{t} \in \cC_{N,\mu}}$ is stationary for each $N \in \cI_\mu$.

\begin{lemma}
  \label{lem:limit_via_rational}
For $\mu\in \eP_{\bQ}(T)$, the limit 
\begin{align*}
    \lim_{\cI_\mu\ni N \to \infty} 
  \frac{1}{N}\eF_\beta(\bm{X}^N, \su_{\cC_{N,\mu}}) \eqd\bm{\varphi}_\beta(\mu)
\end{align*}
exists and $\bm{\varphi}_\beta$ is locally uniformly continuous.
Thus, $\bm{\varphi}_\beta$ extends uniquely to a continuous function 
$\tilde{\bm{\varphi}}_\beta$ on $\eP(T)$.
  Moreover, for any $\mu \in \eP(T)$ and small $\epsilon>0$,
  \begin{align*}
    \bm{\phi}_{\beta, \epsilon}(\mu) 
    = \sup_{\nu\in \eP_{\bQ}(T)\cap B_{{\rm TV}}(\mu, \epsilon)} \bm{\varphi}_\beta(\nu)
  \end{align*}
  and $\bm{\phi}_\beta = \tilde{\bm{\varphi}}_\beta $.
\end{lemma}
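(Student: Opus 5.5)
The plan has three parts. First show that the limit defining $\bm{\varphi}_\beta$ exists along $\cI_\mu$. Then prove a Hamming-type continuity estimate that gives local uniform continuity. Finally, identify $\bm{\phi}_{\beta,\epsilon}$ as a supremum of $\bm{\varphi}_\beta$ over a TV-ball by decomposing $\cC_{N,\mu,\epsilon}$ into exact type classes.

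\textbf{Existence of the limit.} For $\mu\in\cP_{N_0}$, set $a_N \deq \eF_\beta(\bm{X}^{N};\su_{\cC_{N,\mu}})$ for $N\in\cI_\mu$. I would argue that $(a_N)$ is superadditive along $\cI_\mu$. If $\bm{t}\in\cC_{N,\mu}$ and $\bm{s}\in\cC_{M,\mu}$, the concatenation $(\bm{t},\bm{s})$ lies in $\cC_{N+M,\mu}$, and $\bm{X}^{N+M}$ restricted to concatenations equals $\bm{X}^N_{\bm{t}}+\bm{X}'^M_{\bm{s}}$ with independent summands. The process on $\cC_{N+M,\mu}$ is stationary, so Proposition~\ref{prop:unif_sup} applies. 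It gives that the uniform measure on $\cC_{N+M,\mu}$ dominates the pushforward of $\su_{\cC_{N,\mu}}\otimes\su_{\cC_{M,\mu}}$, and the latter free energy factorizes as $a_N+a_M$. We also have $a_N\le N\eF_\beta$-type bounds: $\frac1N a_N\le \Delta\sqrt{2\log|T|}$ by the maximal inequality applied coordinatewise. Fekete's lemma along the arithmetic progression $N_0\bN$ then gives existence. The limit does not depend on which progression is used, since any two are comparable through common multiples.

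\textbf{Local uniform continuity.} Take $\mu,\nu\in\eP_\bQ(T)$ with $\lVert\mu-\nu\rVert_{\rm TV}\le\delta$ and $N$ a common multiple. By Lemma~\ref{lem:match_couple}, every $\bm{t}\in\cC_{N,\mu}$ has some $\bm{s}\in\cC_{N,\nu}$ with $d_{\rm H}(\bm{t},\bm{s})\le N\delta$, and vice versa. I would compare $\frac1N\Ex\Psi_\beta$ over the two type classes exactly as in the proof of Lemma~\ref{lem:var_rep}: cover one class by Hamming balls around the other, then use the Gaussian maximal inequality over pairs at Hamming distance $\le N\delta$ (variance $\le \Delta^2 N\delta$) and the binomial count of Proposition~\ref{prop:binom_est}. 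This gives
$$\Big|\tfrac1N\Ex\Psi_\beta(\bm X^N;\cC_{N,\mu})-\tfrac1N\Ex\Psi_\beta(\bm X^N;\cC_{N,\nu})\Big|\le 2\Delta\sqrt{\delta(\log|T|+h(\delta))}+\tfrac{h(\delta)+\delta\log|T|}{\beta}.$$
The normalization terms $\frac1N\log|\cC_{N,\mu}|\to H(\mu)$ are handled by the continuity of entropy in \eqref{eq:cont_ent}. Together these give a modulus $r(\delta)\to0$ that is uniform in $\mu$, hence a unique continuous extension $\tilde{\bm\varphi}_\beta$.

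\textbf{Identification of $\bm\phi_{\beta,\epsilon}$.} Decompose $\cC_{N,\mu,\epsilon}=\bigsqcup_{\theta\in\cP_N\cap B_{\rm TV}(\mu,\epsilon)}\cC_{N,\theta}$. The number of types is at most $(N+1)^{|T|}$, which is subexponential, so the usual log-sum-exp bound on the sum of these pieces gives
$$\tfrac1N\Ex\Psi_\beta(\bm X^N;\cC_{N,\mu,\epsilon})=\max_\theta\tfrac1N\Ex\Psi_\beta(\bm X^N;\cC_{N,\theta})+o(1)$$
once the concentration of Lemma~\ref{lem:concentration} lets us pass the maximum through the expectation. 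Concentration costs only $O(\Delta\sqrt{N\log N})/N$ after applying the subgaussian maximal inequality to $\mathrm{poly}(N)$ centered terms. The maximizing $\theta$ have denominators that grow with $N$, so the uniform continuity from the previous step is needed to replace them by fixed rationals. Subtracting $\frac1{N\beta}\log|\cC_{N,\mu,\epsilon}|\to H_\epsilon(\mu)/\beta$ should yield $\sup_{\nu}\bm\varphi_\beta(\nu)$. The main obstacle is exactly here: the entropy normalizations differ, with $H(\theta)$ for a single type versus $H_\epsilon(\mu)$ for the ball. I expect this to require checking that the maximizer of $\Psi$ is attained near the entropy-maximizing type, or else reinterpreting the identity via $\bm\psi$. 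I would first try to prove the analogous statement for $\bm\psi_{\beta,\epsilon}$ and then convert to $\bm\phi$. Letting $\epsilon\downarrow0$ and using continuity then gives $\bm\phi_\beta=\tilde{\bm\varphi}_\beta$.
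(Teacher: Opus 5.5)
Your first two steps are sound. For existence, you get superadditivity of the normalized quantity $a_N=\eF_\beta(\bm X^N;\su_{\cC_{N,\mu}})$ directly from stationarity on $\cC_{N+M,\mu}$ together with Proposition~\ref{prop:unif_sup}. The paper instead proves superadditivity of the unnormalized $\Ex\Psi_\beta(\bm X^N;\cC_{N,\mu})$ by concatenation, as in Proposition~\ref{prop:limit}, and then subtracts $\frac1N\log|\cC_{N,\mu}|\to H(\mu)$. Both routes work, and yours avoids the type-counting asymptotics. Your continuity step is the paper's Hamming-ball argument; your volume count, which keeps the $\delta\log|T|$ term, is if anything more careful.

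The gap is in the third step, and it cannot be closed: the obstacle you flagged is real, and the fixed-$\epsilon$ identity is false in general. Let $\bm{\psi}^{\rm ex}_\beta(\nu)\deq\bm{\varphi}_\beta(\nu)+\frac{1}{\beta}H(\nu)$ denote the limit of $\frac1N\Ex\Psi_\beta(\bm X^N;\cC_{N,\nu})$ along $\cI_\nu$. Your type decomposition, with set inclusion for the reverse bound, gives $\bm{\psi}_{\beta,\epsilon}(\mu)=\sup_{\nu}\bm{\psi}^{\rm ex}_\beta(\nu)$ over $\nu\in\eP_{\bQ}(T)\cap B_{\rm TV}(\mu,\epsilon)$. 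The inclusion step is legitimate here because $A\mapsto\Psi_\beta(\bm X^N;A)$ is monotone. Hence
\[
\bm{\phi}_{\beta,\epsilon}(\mu)=\sup_{\nu}\Big[\bm{\varphi}_\beta(\nu)+\tfrac{1}{\beta}H(\nu)\Big]-\tfrac{1}{\beta}H_\epsilon(\mu)\le\sup_{\nu}\bm{\varphi}_\beta(\nu),
\]
and the inequality is typically strict.

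To see strictness, take $\mu=\su_T$. Then $H_\epsilon(\mu)=\log|T|$ is attained only at $\nu=\mu$, so a maximizing-sequence argument shows that equality would force $\tilde{\bm{\varphi}}_\beta(\su_T)=\sup_{B_{\rm TV}(\su_T,\epsilon)}\bm{\varphi}_\beta$. Now take $T=\{1,2,3\}$, $X_1=X_2=0$, $X_3=\sigma G$ (perturb slightly if you want $d$ to be a metric).
\begin{itemize}
\item The multinomial count of the positions of the $1$'s and $2$'s cancels against $|\cC_{N,\nu}|$, so $\bm{\varphi}_\beta(\nu)$ depends only on $c=\nu(3)$.
\item As a function of $c$, it is strictly increasing on $[0,\frac12]$. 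For small $\beta\sigma$ it is approximately $\frac{\beta\sigma^2}{2}c(1-c)$. In general, compare the Legendre transforms of $\lambda\mapsto\Ex\log(1+e^{\lambda+\beta\sigma G})$ and $\lambda\mapsto\log(1+e^{\lambda})$.
\end{itemize}
So for every $\epsilon<\frac16$ the stated identity fails at $\su_T$. This also means no refinement of your first idea (locating the $\Psi$-maximizing type at the entropy maximizer) can rescue it.

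The paper's own proof obtains the missing ``$\ge$'' from the inclusion $\cC_{N,\nu}\subset\cC_{N,\mu,\epsilon}$. However, $A\mapsto\eF_\beta(\bm X^N;\su_A)$ is not monotone under inclusion, since the normalization $-\frac{1}{N\beta}\log|A|$ moves the other way. That step has exactly the hole you found.

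Your fallback, working with $\bm{\psi}$ and converting only at the end, is the right repair, and it recovers everything the paper later uses. Start from the corrected formula above. Your Hamming estimate gives uniform continuity of $\bm{\psi}^{\rm ex}_\beta$, since it is stated for exactly this unnormalized quantity. Combined with $H_\epsilon(\mu)\to H(\mu)$, letting $\epsilon\downarrow0$ gives
\[
\bm{\phi}_\beta(\mu)=\tilde{\bm{\psi}}^{\rm ex}_\beta(\mu)-\tfrac{1}{\beta}H(\mu)=\tilde{\bm{\varphi}}_\beta(\mu).
\]
This identity, together with continuity, is all the proof of Theorem~\ref{thm:nonstarionary_1} needs. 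So state the ``moreover'' part in this corrected form rather than trying to prove the fixed-$\epsilon$ version.
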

\begin{proof}
The existence of the limit can be shown as in Proposition~\ref{prop:limit}.
It suffices to show the continuity of 
$\mu \mapsto \lim_{\cI_\mu\ni N \to \infty}
\frac{1}{N} \Ex \Psi_\beta(\bm{X}; \cC_\mu) $ 
due to \eqref{eq:cont_ent}.
For any $\mu, \nu\in \eP_{\bQ}(T)$ such that $\norm{\mu-\nu}_{\rm TV}\le \epsilon$,
choose $N\in \cI_{\mu}\cap \cI_{\nu}$ so that $\epsilon N \le \frac{1}{2}$.
Then $\mu, \nu\in \cP_N$. 
By Lemma~\ref{lem:match_couple},
for any $\bm{t}\in \cC_{N,\mu}$ there exists some $\bm{s} \in \cC_{N, \nu} $,
such that $d_{\rm H}(\bm{t}, \bm{s}) \le N \epsilon$. In other words,
\begin{align*}
  \cC_{N,\mu}\subset \bigcup_{\bm{s} \in \cC_{N,\nu}}B_{{\rm H}}(\bm{s},\floor{\epsilon N} ).
\end{align*}
Then
\begin{align*}
 & \frac{1}{N} \Ex \Psi_\beta(\bm{X}^N ;\cC_{N,\mu}) \\
  &\le \frac{1}{N\beta}  \Ex \Big[ \log \sum_{\bm{s} \in \cC_{N,\nu} } 
  \sum_{t\in B_{{\rm H}}(\bm{s}, \floor{N \epsilon})} 
    e^{\beta (\bm{X_t}-\bm{X_s}+\bm{X_s})}\Big]\\
  &\le \frac{1}{N} \Ex \Psi_\beta(\bm{X}^N ; \cC_{N,\nu}) + \frac{1}{N\beta}\Ex \Big[ \max_{\bm{s}\in \cC_{N,\nu}} \log 
  \sum_{\bm{t}\in B_{{\rm H}}(\bm{s},\floor{N \epsilon})}e^{\beta (\bm{X_t}-\bm{X_s})}\Big]\\
  &\le \frac{1}{N} \Ex \Psi_\beta(\bm{X}^N ; \cC_{N,\nu}) + \frac{1}{N} \Ex \Big[ \max_{\bm{s}\in \cC_{N,\nu}} 
  \max_{\bm{t}\in B_{{\rm H}}(\bm{s},\floor{N \epsilon})} (\bm{X_t}-\bm{X_s})\Big] 
  + \max_{\bm{s}\in \cC_{N,\nu}}\frac{\log \abs{B_{{\rm H}}(\bm{s},\floor{N\epsilon})}}{N\beta}\\
  &\le \frac{1}{N} \Ex \Psi_\beta(\bm{X}^N ; \cC_{N,\nu}) +  2 \Delta 
  \sqrt{\frac{\epsilon\log \abs{\cC_{N,\nu}}}{N}+ \epsilon h(\epsilon)} 
  + \frac{h(\epsilon)}{\beta},
\end{align*}
where the last inequality uses the maximal inequality for Gaussians 
and the volume estimates of the Hamming ball in Proposition~\ref{prop:binom_est}. 
Using the estimate on the $\Psi$ functional, we deduce 
\begin{align}\label{eq:Phi_var}
  \frac{1}{N} \eF_\beta(\bm{X}^N, \su_{\cC_{N,\mu}})
  \le 
  \frac{1}{N} \eF_\beta(\bm{X}^N,  \su_{\cC_{N,\nu}}) 
+  \omega_N(\epsilon),
\end{align}
where
\begin{align*}
	\omega_N(\epsilon) \deq \frac{\log \abs{\cC_{N,\nu}}-\log \abs{\cC_{N,\mu}}}{N \beta} +2 \Delta 
	  \sqrt{\frac{\epsilon\log \abs{\cC_{N,\nu}}}{N}+ \epsilon h(\epsilon)} 
	  + \frac{h(\epsilon)}{\beta}.
\end{align*}
Taking the limit along 
$\cI_{\mu}\cap \cI_{\nu}$ yields 
\begin{align*}
  \bm{\varphi}_\beta(\mu) \le \bm{\varphi}_\beta(\nu) + \overline{\omega}(\epsilon),
\end{align*}
where 
\begin{align*}
	\overline{\omega}(\epsilon) \deq   \frac{\omega(\epsilon)}{\beta}+ 2\Delta 
	  \sqrt{\epsilon \log \abs{T}+ \epsilon h(\epsilon)} 
	  + \frac{h(\epsilon)}{\beta}
\end{align*}
is independent of $\mu,\nu$ and converges to $0$ as $\epsilon \downarrow 0$.
By similar arguments, we have
\begin{align}\label{eq:varphi_var}
  \sup_{\eP_{\bQ}(T)\ni\mu, \nu:\,\norm{\mu-\nu}_{\rm TV} \le \epsilon} 
  \abs{\bm{\varphi}_\beta(\mu)-\bm{\varphi}_\beta(\nu)}
  \le \overline{\omega}(\epsilon).
\end{align}
Therefore, $\bm{\varphi}_\beta$ 
is locally uniformly continuous on $\eP_{\bQ}(T)$ 
and its continuous extension $\tilde{\bm{\varphi}}_\beta$ is locally uniformly continuous 
on $\eP(T)$.

Next we show the relation between $\bm{\phi}_\beta$ and $\bm{\varphi}_\beta$.
  Fix a small $\epsilon>0$. 
  For a rational measure $\nu\in B_{{\rm TV}}(\mu, \epsilon)$, we consider $N\in \cI_{\nu}$.
  By set inclusion
  $\cC_{N,\nu}\subset \cC_{N,\mu, \epsilon}$, we have 
  \begin{align*}
  \frac{1}{N}\eF_\beta(\bm{X}^N; \su_{\cC_{N,\mu,\epsilon}})  
  \ge  \frac{1}{N}  \eF_\beta(\bm{X}^N; \su_{\cC_{N,\nu}}).
  \end{align*}
  Taking $N \to \infty$ along $\cI_{\nu}$ gives the lower bound.
  For the reverse inequality, we fix a large $M \in \bN$ and consider $N\in M \bN$.
  we have
  \begin{align*}
  \frac{1}{N}\eF_\beta(\bm{X}^N; \su_{\cC_{N, \mu, \epsilon}})  
  &= \frac{1}{N\beta} \Ex \Big[\log \sum_{\bm{t}\in \cC_{N,\mu,\epsilon}} 
   \frac{1}{\abs{\cC_{N,\mu,\epsilon}}}e^{\beta \bm{X_t}}\Big]\\
  &= \frac{1}{N\beta} \Ex \Big[\log \sum_{\nu\in \cP_N \cap B_{{\rm TV}}(\mu, \epsilon)} 
    \sum_{\bm{t}\in \cC_{N,\nu}} 
   e^{\beta \bm{X_t}}\Big]
 -\frac{\log \abs{\cC_{N,\mu,\epsilon}}}{N\beta}\\
  &\le \frac{1}{N\beta} \Ex \Big[ \max_{\nu\in \cP_N \cap B_{{\rm TV}}(\mu, \epsilon)} 
  \Phi_\beta(\bm{X}^N;\su_{\cC_{N,\nu}})\Big]+ \frac{\log\abs{\cP_N}}{N\beta}\\
  &\le \max_{\nu\in\cP_N \cap B(\mu, \epsilon)} 
  \frac{1}{N}\eF_\beta(\bm{X}^N; \su_{\cC_{N,\nu}}) + \frac{2 \Delta}{\beta} 
  \sqrt{\frac{\log \abs{\cP_N}}{N}} + \frac{\log\abs{\cP_N}}{N\beta}
  \end{align*}
  where the last step uses the subgaussian maximal inequality.
  Similar to \eqref{eq:round}, for any $\nu\in \cP_N \cap B_{{\rm TV}}(\mu, \epsilon)$,
  we find $\hat{\nu}\deq \hat{\nu}(\nu)\in \cP_M\subset \cP_N$ so that 
  $\norm{\hat{\nu}-\nu}_{\rm TV}\le \frac{\abs{T}}{M}\eqd \delta_M$
  and $\norm{\hat{\nu}-\mu}_{\rm TV}\le \epsilon+\delta_M$.
  In other words, $\set{\hat{\nu}(\nu): \nu\in \cP_N \cap B_{{\rm TV}}(\mu, \epsilon)}
  \subset \cP_M \cap B_{{\rm TV}}(\mu, \epsilon+\delta_M)$. 
  Applying the estimate \eqref{eq:Phi_var} for each pair $(\nu,\hat{\nu})$ and taking 
 the maximum, we have
  \begin{align*}
   \max_{\nu\in\cP_N \cap B_{{\rm TV}}(\mu, \epsilon)} 
  \frac{1}{N}\eF_\beta(\bm{X}^N; \su_{\cC_{N,\nu}}) 
  \le
   \max_{\rho\in\cP_M \cap B_{{\rm TV}}(\mu, \epsilon+\delta_M)} 
  \frac{1}{N}\eF_\beta(\bm{X}^N; \su_{\cC_{N,\rho}})  + \omega_N(\delta_M).
  \end{align*}
  Since $M\bN$ is a subsequence of $\cI_\rho$ for any 
  $\rho\in \cP_M \cap B_{{\rm TV}}(\mu, \epsilon+\delta_M)$, taking $N \to \infty$ along 
  $M\bN$ yields
  \begin{align*}
    \bm{\phi}_{\beta, \epsilon}(\mu) 
    \le \max_{\nu \in \cP_M \cap B_{{\rm TV}}(\mu, \epsilon+ \delta_M)}
    \bm{\varphi}_\beta(\nu) + \overline{\omega}(\delta_M).
  \end{align*}
  As in the proof of Proposition~\ref{prop:cont_H_eps}, for sufficiently large 
  $M$ (e.g. $\delta_M \le \epsilon/2$), we can find a 
  $\rho_{\nu}\in B_{{\rm TV}}(\mu, \epsilon-\delta_M)$ for each 
  $\nu\in B_{{\rm TV}}(\mu, \epsilon+ \delta_M)$, such that 
  $\norm{\rho_\nu-\nu}_{\rm TV}\le 2\delta_M$.
  Each $\rho_\nu$ can be rounded to type $\hat{\rho}_\nu\in \cP_M$ so that 
  $\norm{\hat{\rho}_\nu-\rho_\nu}_{\rm TV}\le \delta_M$,
  $\norm{\hat{\rho}_\nu-\mu}_{\rm TV}\le \epsilon$,
  and $\norm{\hat{\rho}_\nu-\nu}_{\rm TV}\le 3 \delta_M$. 
  It follows that 
  \begin{align*}
    \cP_M \cap B_{{\rm TV}}(\mu, \epsilon+\delta_M) 
    \subset \bigcup_{\rho \in B_{{\rm TV}}(\mu,\epsilon)\cap \cP_M}
       (B_{{\rm TV}}(\rho, 3\delta_M)\cap \cP_M)
  \end{align*}
  and
  \begin{align*}
  \bm{\phi}_{\beta, \epsilon}(\mu) 
  \le \sup_{\rho \in B_{{\rm TV}}(\mu, \epsilon)\cap \eP_{\bQ}(T)}
  \sup_{\nu\in B_{{\rm TV}}(\rho,3\delta_M)\cap\eP_{\bQ}(T)} \bm{\varphi}_{\beta}(\nu) + \overline{\omega}(\delta_M).
  \end{align*}
  By \eqref{eq:varphi_var}, the variation of $\bm{\varphi}_\beta$ on rational measures
  within $3\delta_M$ is controlled by $\overline{\omega}(3\delta_M)$, implying that 
  \begin{align*}
  \bm{\phi}_{\beta, \epsilon}(\mu) 
  \le \sup_{\rho \in B_{{\rm TV}}(\mu, \epsilon)\cap \eP_{\bQ}(T)}
   \bm{\varphi}_{\beta}(\rho) + \tilde{\omega}(\delta_M),
  \end{align*}
  where $\tilde{\omega}(\delta) \to 0$ as $\delta \downarrow 0$.  Since $\delta_M$ can be arbitrarily small, we prove the reverse inequality.

Finally, we prove the statement that $\bm{\phi}_\beta = \tilde{\bm{\varphi}}_\beta$. It is clear that, by definition, 
  $\bm{\phi}_{\beta, \epsilon}(\mu) \ge \tilde{\bm{\varphi}}_\beta(\mu)$ 
  for any $\mu\in \eP(T)$. Taking $\epsilon\downarrow 0$ gives the lower bound. 
  For the upper bound, we consider 
  a sequence of rational measures $(\mu_k)_{k}\subset\eP_{\bQ}(T)$ that converges to 
  $\mu\in \eP(T)$.  For any $\nu \in \eP_{\bQ}(T) \cap B_{\rm TV}(\mu,\epsilon)$, the estimate
  \begin{align*}
	  \bm{\varphi}_\beta(\nu) \le \bm{\varphi}_\beta(\mu_k) + \overline{\omega}(\epsilon + \lVert \mu_k - \mu \rVert_{\rm TV})
\end{align*}
holds for all sufficiently large $k$, by \eqref{eq:varphi_var}. Taking the supremum of both sides over all such $\nu$, we obtain
	  \begin{align*}
		  \bm{\varphi}_{\beta,\epsilon}(\mu) \le \bm{\varphi}_\beta(\mu_k) + \overline{\omega}(\epsilon + \lVert \mu_k - \mu \rVert_{\rm TV})
	\end{align*}
Taking $k\to \infty$ and then letting 
  $\epsilon\downarrow 0$ yields the desired lower bound and completes the proof.
\end{proof}

\subsection{The proof of Theorem~\ref{thm:nonstarionary_1}}

We are now in a position to prove our first main result on nonstationary Gaussian processes. By the same reasoning as in the proof of Theorem~\ref{thm:stationary_1}, it suffices to show the upper and lower bounds
  for the quenched free energy. We first consider rational measures.
For any $\mu\in \eP_{\bQ}(T)$ and any $N \in \cI_\mu$, applying Theorem~\ref{thm:stationary_2} to the stationary process $\bm{X}^N = (\bm{X_t}^N)_{\bm{t} \in \cC_{N,\mu}}$ and scaling,
we have
\begin{align*}
  \frac{1}{N}\eF_\beta(\bm{X}^N;\su_{\cC_{N,\mu}})
  \asymp \int_0^\Delta 
    \Bigg(\sqrt{\frac{\log \sN(\cC_{N,\mu}, \sqrt{N} \epsilon)}{N}}
         \wedge \beta \epsilon\Bigg) \dif \epsilon.
\end{align*} 
Since $\sqrt{\log \sN(\cC_{N,\mu}, \sqrt{N}\epsilon)/N}\le \sqrt{\log \abs{T}}$, 
we have absolute integrability. Upon taking the limit along $\cI_\mu$, 
we obtain 
\begin{align*}
  \bm{\varphi}_\beta(\mu) \asymp \int_0^\Delta 
  \Big(\sqrt{R_{\mu}(\epsilon^2)}\wedge \beta \epsilon\Big)\dif \epsilon,
\end{align*}
where we use dominated convergence theorem for the upper bound,
Fatou's lemma for the lower bound, and the following estimate, which immediately follows from Lemma~6 and Eq.~(26) in
\citet{liuSimpleSharpGeneralization2025}:
\begin{lemma}
Let $\mu$ be rational. For any $\epsilon>0$, we have 
\begin{align*}
&\limsup_{\cI_{\mu}\ni N\to \infty} 
\frac{1}{N} \log \sN(\cC_{N,\mu}, \sqrt{N}\epsilon) 
\le R_{\mu}\left(\frac{\epsilon^2}{16}\right)\\
&\liminf_{\cI_{\mu}\ni N\to \infty} 
\frac{1}{N} \log \sN(\cC_{N,\mu}, \sqrt{N}\epsilon)
\ge R_\mu(\epsilon^2).
\end{align*}
\end{lemma}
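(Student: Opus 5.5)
The plan is to prove the two inequalities separately. The lower bound will come from a converse (information-inequality) argument applied to an arbitrary covering. The upper bound will come from a random-coding (type covering) construction. Throughout, $\mu$ is rational, $N\in\cI_\mu$, and $\cC_{N,\mu}\subset T^N$ carries the metric $d_N$. We use two facts from the method of types: $\log\abs{\cC_{N,\mu}} = NH(\mu) - O(\abs{T}\log N)$, and if $\tau$ is uniform on $\cC_{N,\mu}$ then each coordinate $\tau_i$ has law exactly $\mu$.

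\emph{Lower bound.} Let $\{c_1,\dots,c_M\}$ be centers of a minimal covering of $\cC_{N,\mu}$ by $d_N$-balls of radius $\sqrt N\epsilon$, so $M = \sN(\cC_{N,\mu},\sqrt N\epsilon)$. Let $\bm\tau$ be uniform on $\cC_{N,\mu}$ and let $\bm\tau'$ be a (measurably chosen) nearest center. The steps are as follows.
\begin{itemize}
\item[(1)] Since $\bm\tau'$ takes at most $M$ values, $\log M \ge H(\bm\tau') \ge I(\bm\tau;\bm\tau')$.
\item[(2)] Since $\bm\tau$ is uniform on the type class, $I(\bm\tau;\bm\tau') = \log\abs{\cC_{N,\mu}} - H(\bm\tau\mid\bm\tau')$.
\item[(3)] By subadditivity of conditional entropy, $H(\bm\tau\mid\bm\tau') \le \sum_i H(\tau_i\mid\tau'_i)$.
\item[(4)] Combining (2) and (3) with $H(\tau_i)=H(\mu)$ gives
\[
I(\bm\tau;\bm\tau') \ge \sum_{i=1}^N I(\tau_i;\tau'_i) - O(\abs{T}\log N).
\]
Note that we do not appeal to superadditivity of mutual information, since the $\tau_i$ are not independent.
\item[(5)] Each pair $(\tau_i,\tau_i')$ is admissible in Definition~\ref{def:RDF} with distortion $D_i \deq \Ex d^2(\tau_i,\tau'_i)$, and $\sum_i D_i = \Ex d_N^2(\bm\tau,\bm\tau') \le N\epsilon^2$.
\item[(6)] By convexity and monotonicity of $R_\mu$ (time sharing), $\sum_i R_\mu(D_i) \ge N R_\mu(\epsilon^2)$.
\end{itemize}
Dividing by $N$ and taking $\liminf$ along $\cI_\mu$ gives the second inequality.

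\emph{Upper bound.} Fix $\delta>0$ and choose a kernel $P_{\tau'|\tau}$ with $\tau\sim\mu$, $\Ex d^2(\tau,\tau')\le\epsilon^2/16$, and $I(\tau;\tau')\le R_\mu(\epsilon^2/16)+\delta$. We invoke the type covering lemma (Csiszár--Körner), or equivalently run random coding directly. Draw $e^{N(I(\tau;\tau')+2\delta)}$ codewords i.i.d.\ from $P_{\tau'}^{\otimes N}$. For a fixed $\bm t\in\cC_{N,\mu}$, a single codeword is jointly typical with $\bm t$ with probability at least $e^{-N(I+\delta)}$. Hence $\bm t$ fails to be covered by a jointly typical codeword with probability at most $\exp(-e^{N\delta})$. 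This is doubly exponentially small, so a union bound over at most $\abs{T}^N$ points of $\cC_{N,\mu}$ shows that some codebook works for every $\bm t$ simultaneously.

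Joint typicality with respect to $\mu\otimes P_{\tau'|\tau}$ forces $d_N^2(\bm t,\bm c)\le N(\epsilon^2/16+\delta')$, where $\delta'\to0$ as the typicality tolerance shrinks. This uses only that $d\le\Delta$ on the finite set $T$. So the codebook covers $\cC_{N,\mu}$ with balls of radius $\sqrt N\epsilon/2$ for small $\delta'$. The centers need not lie in $\cC_{N,\mu}$. To fix this, replace each useful center by a point of $\cC_{N,\mu}$ inside its ball, and double the radius by the triangle inequality, which gives radius $\sqrt N\epsilon$. This is exactly why the slack factor $16=4^2$ is there. Taking $\limsup_N$ and then $\delta\downarrow0$ yields the first inequality.

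The main obstacle is the uniform covering in the upper bound: we need every sequence of the type class to be covered, not merely most of them. The double-exponential failure bound from random coding is precisely what makes the union bound affordable, so I would cite the type covering lemma rather than redo the typicality estimates.
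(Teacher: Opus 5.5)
Your proposal is correct. The paper gives no argument for this lemma: it only asserts that it follows from Lemma~6 and Eq.~(26) of \citet{liuSimpleSharpGeneralization2025}. Your write-up is therefore a self-contained proof where the paper has a citation, and both halves check out.

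In the converse, you correctly identify the key issue. The coordinates of $\bm\tau$ are not independent, so you cannot use superadditivity of mutual information; instead you use $H(\bm\tau)=\log\abs{\cC_{N,\mu}}\ge NH(\mu)-\abs{T}\log(N+1)$. Exchangeability of the uniform law on the type class gives $\tau_i\sim\mu$ exactly, and convexity plus monotonicity of $R_\mu$ finish the argument. This actually yields the non-asymptotic bound $\log\sN(\cC_{N,\mu},\sqrt N\epsilon)\ge NR_\mu(\epsilon^2)-\abs{T}\log(N+1)$ for every $N\in\cI_\mu$. It holds whether or not the covering centers must lie in $\cC_{N,\mu}$.

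The achievability via the type covering lemma followed by recentring is also fine. Your explanation of the constant, however, is off by a square. Recentring only doubles the radius, which costs a factor $4$ in squared distortion, not $16$. With distortion target $\epsilon^2/16$, your jointly typical codewords already lie within radius about $\sqrt N\epsilon/4$, not $\sqrt N\epsilon/2$.

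If you run the same construction with target $\epsilon'^2$ for any $\epsilon'<\epsilon/2$, you get $\limsup_N\frac1N\log\sN(\cC_{N,\mu},\sqrt N\epsilon)\le R_\mu(\epsilon'^2)$. Since $R_\mu$ is finite and convex, hence continuous on $(0,\infty)$, this gives $\le R_\mu(\epsilon^2/4)$. The stated bound with $R_\mu(\epsilon^2/16)$ then follows by monotonicity.

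So nothing is broken. Compared with the paper's one-line citation, your route costs an appeal to the type covering lemma, but it buys a sharper constant and explicit finite-$N$ error terms.
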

\noindent
Using the continuity of $\mu \mapsto {\bm{\phi}}_\beta(\mu)$ (Lemma~\ref{lem:limit_via_rational}) and $\mu \mapsto R_\mu(\eps^2)$ \citep{palaiyanur_sahai_2008}
and Lemma~\ref{lem:var_rep},
we obtain
\begin{align*}
   \int_0^\Delta 
   \Big(\sqrt{R_{\mu_{\beta}^*}(\epsilon^2)}\wedge \beta \epsilon\Big)\dif \epsilon\asymp \eF_{\beta}(X;\mu_{\beta}^*) 
   & =\sup_{\mu\in \eP(T)}\eF_\beta(X;\mu)\\
  &= \sup_{\mu\in \eP(T)}\bm{\phi}_\beta(\mu) \asymp \sup_{\mu\in \eP(T)} \int_0^\Delta 
  \Big(\sqrt{R_{\mu}(\epsilon^2)}\wedge \beta \epsilon\Big)\dif \epsilon.
\end{align*}

\subsection{The soft Fernique functional}

Let $X$ be a random element of $\Reals^T$ with probability law $P_X$ and let $\mu \in \eP(T)$ be given. The following functional of $P_X$ and $\mu$ was introduced by \citet{Fernique_1975,Fernique_1978} in the context of his analysis of expected suprema of Gaussian processes:
\begin{align*}
	F(P_X,\mu) \deq \sup_{(X,\tau) \atop X \sim P_X, \tau \sim \mu} \Ex X_\tau.
\end{align*}
Here, the supremum is taken over all couplings of $P_X$ and $\mu$. When we consider a \textit{particular} coupling $(X,\tau)$ of $P_X$ and $\mu$, we will refer to the corresponding expected value $\Ex X_\tau$ as a \textit{soft} Fernique functional. For subgaussian processes, Liu obtains an upper bound
on the soft Fernique functional via a mutual-information 
truncated rate-distortion integral:

\begin{theorem}[\citet{liuSimpleSharpGeneralization2025}]\label{thm:lift_upper}
Let $X = (X_t)_{t\in T}$ be a centered subgaussian process
indexed by a finite set $T$. Let $\tau$ be a random element of
 $T$ jointly distributed with $X$, such that $P_\tau=\nu$.
Then 
\begin{align*}
  \Ex X_\tau \lesssim \int_0^\Delta 
  \sqrt{R_{\nu}(\epsilon^2)\wedge I(X;\tau)} \dif \epsilon.
\end{align*}
\end{theorem}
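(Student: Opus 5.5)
The plan is to follow Liu's tensorization scheme and combine it with a chaining argument whose entropy at each scale is charged in two ways: to the rate-distortion function, and to the mutual information $I(X;\tau)$. We pass to the lifted process $\bm{X}^N_{\bm{t}}=\sum_i X^{(i)}_{t_i}$ and the coupling $(\bm{X}^N,\bm{\tau})$ with $\bm{\tau}=(\tau_1,\dots,\tau_N)$, where $(X^{(i)},\tau_i)$ are i.i.d.\ copies of $(X,\tau)$. Then $\Ex \bm{X}^N_{\bm{\tau}} = N\,\Ex X_\tau$ and $I(\bm{X}^N;\bm{\tau})\le N I(X;\tau)$. By the law of large numbers, $\bm{\tau}$ lies with high probability in a TV-neighbourhood $\cC_{N,\nu,\epsilon}$ of the type class of $\nu$. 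On the complementary event we bound the contribution by Cauchy--Schwarz and a Gaussian maximal bound. This costs $o(N)$ after dividing by $N$ and letting $N\to\infty$, $\epsilon\downarrow 0$, exactly as in the proof of Lemma~\ref{lem:var_rep}.

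On the (approximate) type class we run chaining at dyadic scales $\epsilon_k=\Delta 2^{-k}$ in the metric $d_N$ rescaled by $\sqrt N$. Using the Liu covering lemma quoted above, we fix at each scale a net $\cN_k$ of $\cC_{N,\nu}$ at radius $\sqrt N\epsilon_k$ with $\log|\cN_k|\lesssim N R_\nu(\epsilon_k^2/16)+o(N)$, and we let $\pi_k(\bm{\tau})$ be the nearest net point. Writing $\bm{X}_{\bm{\tau}}=\sum_k (\bm{X}_{\pi_k\bm{\tau}}-\bm{X}_{\pi_{k-1}\bm{\tau}})$, each increment has $d_N$-variance at most $N\epsilon_k^2$ up to constants. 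Each link is then bounded in two ways.

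\textbf{Bound via the net.} A maximal inequality over the at most $|\cN_k||\cN_{k-1}|$ possible links gives a contribution $\lesssim \sqrt N\epsilon_k\sqrt{\log|\cN_k|}\approx N\epsilon_k\sqrt{R_\nu(\epsilon_k^2)}$.

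\textbf{Bound via mutual information.} We apply the decoupling inequality from the proof of Lemma~\ref{lem:iid_mi}, i.e.\ Donsker--Varadhan with the subgaussian increment, to the random index $(\pi_{k-1}\bm{\tau},\pi_k\bm{\tau})$. Since this index is a function of $\bm{\tau}$, data processing gives mutual information at most $I(\bm{X}^N;\bm{\tau})\le NI(X;\tau)$. The term is therefore $\lesssim\sqrt N\epsilon_k\sqrt{2NI(X;\tau)}$.

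Taking the minimum of the two bounds at each scale, summing over $k$, dividing by $N$, and converting the dyadic sum into an integral via Proposition~\ref{prop:disc_to_int} yields the claimed bound $\int_0^\Delta\sqrt{R_\nu(\epsilon^2)\wedge I(X;\tau)}\,\dif\epsilon$.

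The main obstacle is the mutual-information bound for a link. The Donsker--Varadhan step needs the increment $\bm{X}_{\bm{s}}-\bm{X}_{\bm{s}'}$ to be centered subgaussian under the reference (product) law of the index, and $\bm{s},\bm{s}'$ range over pairs that are coupled through $\bm{\tau}$ rather than independent of $\bm{X}$. My first attempt will use the product measure $P_{\bm{X}}\otimes P_{(\pi_{k-1}\bm{\tau},\pi_k\bm{\tau})}$ as reference. Under it each increment is centered with variance $\lesssim N\epsilon_k^2$, so the Jensen step bounding $\Ex\log\langle\overline{\sm},e^{\lambda X}\rangle$ by $\lambda^2\sigma^2/2$ goes through verbatim. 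A secondary technical point is that projections of points of $\cC_{N,\nu,\epsilon}$ rather than of the exact type class must stay near the nets. I expect to handle this by the Hamming-ball comparison already used in Lemma~\ref{lem:limit_via_rational}, at a cost $O(\sqrt{\delta})$ that vanishes in the limit.
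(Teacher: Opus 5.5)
Your argument is correct in outline, but it takes a different route from the paper's. The paper quotes this result from Liu. It then re-derives it in Theorem~\ref{thm:asymptotic} via code--measure correspondence, in two steps.

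First, it proves the one-shot bound of Theorem~\ref{thm:one-shot}. There the chain uses \emph{randomized} projections: $\tau_k$ is drawn from an arbitrary $\mu_k$ conditioned on $B(\tau,r^{-k})$. Each link is controlled either by the decorrelation lemma (Proposition~\ref{prop:decorr}), at cost $r^{-k}\sqrt{\log(1/\mu_k(B_k(\tau)))}$ plus an additive $r^{-k}$, or by the mutual-information bound. This gives $\Ex X_\tau\lesssim\Delta+\int_0^\Delta(\sM^*_\nu(\epsilon)\wedge\sqrt{I(X;\tau)})\,\dif\epsilon$. Second, it applies this to $P_{X\tau}^{\otimes N}$. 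The additive term becomes $\sqrt N\Delta=o(N)$, and Kontoyiannis--Zhang achievability (Lemma~\ref{lem:measure_RD}) turns $N^{-1/2}\sM^*_{\nu^{\otimes N}}(\sqrt N\epsilon)$ into $\sqrt{R_\nu(\epsilon^2)}$. No type classes, atypical events, or rationality of $\nu$ are involved, since $\sM^*_{\nu^{\otimes N}}$ already averages over $\bm\tau\sim\nu^{\otimes N}$.

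You instead tensorize first and run deterministic nearest-point chaining on approximate type classes. The entropy comes from the type-class covering lemma, and links are controlled by the ordinary finite maximal inequality.

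Your worry about the mutual-information step is unfounded, for the same reason the step works in the paper's one-shot proof. The marginal law of $(\pi_{k-1}\bm\tau,\pi_k\bm\tau)$ is supported on pairs at $d_N$-distance $\lesssim\sqrt N\epsilon_k$, up to your Hamming-ball slack. So under $P_{\bm X}\otimes P_{(\pi_{k-1}\bm\tau,\pi_k\bm\tau)}$ the increment is centered with the right variance proxy.

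Your route costs bookkeeping that the paper avoids, though none of it is a real gap:
\begin{enumerate}
\item The covering lemma is stated only for rational $\nu$ along $\cI_\nu$. You need a rational approximation of $\nu$ together with continuity of $\nu\mapsto R_\nu$, e.g.\ \citet{palaiyanur_sahai_2008}.
\item The $o(N)$ in that lemma is not uniform across scales, while the depth needed to reach singletons grows like $\log\sqrt N$. So stop the chain at a fixed depth $k_0$ and bound the remainder crudely by order $N\epsilon_{k_0}\sqrt{\log|T|}$. Then let $N\to\infty$, $\delta\downarrow 0$, and finally $k_0\to\infty$.
\item On the atypical event, redefine all projections to a fixed $\bm t_0$, so the chain telescopes without indicators and starts from $\Ex\bm X_{\bm t_0}=0$. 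The leftover term on that event is handled by your Cauchy--Schwarz bound.
\end{enumerate}

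In exchange, your argument stays close to Liu's original lifting. It also makes transparent that $R_\nu$ enters as the normalized metric entropy of type classes. The paper's route instead yields a genuinely one-shot, majorizing-measure-type bound (at the price of the additive $\Delta$). That bound also has a matching lower-bound counterpart via Talagrand's partition scheme.
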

\noindent
We show that, as an immediate consequence of Theorem~\ref{thm:nonstarionary_1},
the bound above is tight and can be reversed when 
$X = (X_t)_{t\in T}$ is a Gaussian process and  when
$\tau=\uptau_{\beta,\mu}$ with 
reference measure $\mu$.
(Thus,
$P_{\uptau_{\beta,\mu}}=\overline{\sm}_{\beta,\mu}$.)
\begin{theorem}\label{thm:lift_lower}
Let $X = (X_t)_{t\in T}$ be a centered Gaussian process with a finite index set $T$.
Then we have
\begin{align*}
  \Ex X_{\uptau_{\beta,\mu}} \gtrsim \int_0^\Delta 
  \sqrt{R_{\overline{\sm}_{\beta,\mu}}(\epsilon^2)\wedge I(X;\uptau_{\beta,\mu})} \dif \epsilon .
\end{align*}
\end{theorem}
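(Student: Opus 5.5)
The plan is to reduce the statement to the per-measure two-sided estimate hidden inside the proof of Theorem~\ref{thm:nonstarionary_1}, combined with the information identities of Lemma~\ref{lm:identities}. Write $\sm=\sm_{\beta,\mu}$, $\overline{\sm}=\overline{\sm}_{\beta,\mu}$ and $I=I(X;\uptau_{\beta,\mu})$, and note that $\Ex X_{\uptau_{\beta,\mu}}=\eG_\beta(X;\mu)$.

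\textbf{Step 1: bound $I$ by the Gibbs average.} By Lemma~\ref{lm:identities},
\begin{align*}
I \le I + D(\overline{\sm}\|\mu) = \Ex D(\sm\|\mu) = \beta\big(\eG_\beta(X;\mu)-\eF_\beta(X;\mu)\big) \le \beta\,\eG_\beta(X;\mu),
\end{align*}
where the last step uses $\eF_\beta\ge 0$ (Lemma~\ref{lm:equiv_soft}). Hence $I/\beta\le \eG_\beta(X;\mu)$.

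\textbf{Step 2: split the integrand.} I will use the elementary bound
\begin{align*}
\sqrt{R\wedge I} \le \big(\sqrt{R}\wedge \beta\epsilon\big) + \sqrt{I}\,\mathbf{1}\{\beta\epsilon<\sqrt{I}\},
\end{align*}
with $R=R_{\overline{\sm}}(\epsilon^2)$. Integrating over $\epsilon\in[0,\Delta]$, the second term contributes at most $\sqrt{I}\cdot\sqrt{I}/\beta=I/\beta\le\eG_\beta(X;\mu)$ by Step 1. It therefore suffices to show that the truncated rate-distortion integral is controlled:
\begin{align*}
\int_0^\Delta\big(\sqrt{R_{\overline{\sm}}(\epsilon^2)}\wedge\beta\epsilon\big)\dif\epsilon \lesssim \eG_\beta(X;\mu).
\end{align*}

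\textbf{Step 3: pass to the limiting quantity $\bm{\phi}_\beta$.} The proof of Theorem~\ref{thm:nonstarionary_1} gives more than the supremum statement. Through Lemma~\ref{lem:limit_via_rational}, the rational approximation, and Liu's covering-number estimates, it shows that
\begin{align*}
\bm{\phi}_\beta(\nu)\asymp\int_0^\Delta\big(\sqrt{R_\nu(\epsilon^2)}\wedge\beta\epsilon\big)\dif\epsilon
\end{align*}
for every $\nu\in\eP(T)$. Apply this with $\nu=\overline{\sm}$. Lemma~\ref{lem:var_rep} with base measure $\overline{\sm}$ (taking $\nu=\overline{\sm}$ in the supremum) gives $\bm{\phi}_\beta(\overline{\sm})\le\eF_\beta(X;\overline{\sm})$. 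So the whole problem reduces to the comparison
\begin{align*}
\eF_\beta(X;\overline{\sm}_{\beta,\mu})\lesssim \eG_\beta(X;\mu) + (\text{terms already bounded by } I/\beta).
\end{align*}

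\textbf{Step 4: compare free energies under $\mu$ and $\overline{\sm}$ (main obstacle).} This is where I expect the difficulty. When $\mu$ is a maximizer, Lemma~\ref{lem:optimal_mu} gives $\overline{\sm}=\mu$, and then $\eF_\beta(X;\mu)\le\eG_\beta(X;\mu)$ by Lemma~\ref{lm:equiv_soft}, which finishes the argument. For general $\mu$ there are two candidate routes.

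The first route is concavity of $\mu\mapsto\eF_\beta$ combined with $\log x\le x-1$. This gives
\begin{align*}
\eF_\beta(X;\overline{\sm}) \le \eF_\beta(X;\mu) + \beta^{-1}\chi^2(\overline{\sm}\|\mu),
\end{align*}
which is too crude because the $\chi^2$ term is not controlled by the available quantities.

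So I would instead try the Gibbs variational principle directly. The idea is to run the tensorization argument of Lemma~\ref{lem:var_rep} with the Markov kernel $\sm$ itself, using the identity $\Ex D(\sm\|\overline{\sm})=I$. This should yield
\begin{align*}
\bm{\phi}_\beta(\overline{\sm}) \lesssim \Ex\langle\sm,X\rangle + \beta^{-1}I.
\end{align*}
Concretely, the plan is to lower bound $\eG_\beta(X;\mu)$ by evaluating the stationary type-class process on $\cC_{N,\overline{\sm}}$ with the coupling induced by i.i.d.\ copies of $(X,\uptau)$. Concentration of the empirical type (Proposition~\ref{prop:limit}) would replace $\overline{\sm}$-typical sequences. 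If this route works, Steps 1 to 3 combine to give the theorem.
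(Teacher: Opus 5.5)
Your Steps 1 and 2 are correct; the splitting inequality in Step 2 is a valid substitute for the paper's layer-cake computation. The problem is Step 3, which steers you to a statement that is false.

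Applying Lemma~\ref{lem:var_rep} with base measure $\overline{\sm}=\overline{\sm}_{\beta,\mu}$ only gives $\bm{\phi}_\beta(\overline{\sm})\le\eF_\beta(X;\overline{\sm})$. Since $I/\beta\le\eG_\beta(X;\mu)$, your reduced target becomes $\eF_\beta(X;\overline{\sm})\lesssim\eG_\beta(X;\mu)$, and this fails in general. Take $T=\{0\}\cup[n]$ with $X_0\equiv 0$ and $X_i=G_i$ i.i.d.\ standard Gaussian, and set $\mu(0)=1-p$, $\mu(i)=p/n$. For fixed $\beta,p$ and $n\to\infty$, the law of large numbers gives $\overline{\sm}([n])\to q\deq pe^{\beta^2/2}/(1-p+pe^{\beta^2/2})$, $\eG_\beta(X;\mu)\to\beta q$, and $\eF_\beta(X;\overline{\sm})\to\beta^{-1}\log(1-q+qe^{\beta^2/2})$. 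With $p=e^{-3\beta^2/4}$ you get $\tfrac12 e^{-\beta^2/4}\le q\le e^{-\beta^2/4}$. Hence $\eG_\beta(X;\mu)\lesssim\beta e^{-\beta^2/4}$, while $\eF_\beta(X;\overline{\sm})\ge\beta/4-\beta^{-1}\log 2$, a ratio of order $e^{\beta^2/4}$.

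The loss occurs in $\bm{\phi}_\beta(\overline{\sm})\le\eF_\beta(X;\overline{\sm})$. Relative to the base $\overline{\sm}$, further tilting toward $[n]$ is cheap, and the fact that $\overline{\sm}$ arose from $\mu$ has been thrown away. Your alternative target in Step 4, $\bm{\phi}_\beta(\overline{\sm})\lesssim\eG_\beta(X;\mu)+\beta^{-1}I$, is true, but the sketch does not prove it. Inserting the kernel $\sm$ (or i.i.d.\ copies of $(X,\uptau_{\beta,\mu})$) into a Gibbs variational principle produces \emph{lower} bounds on free energies, whereas you need an \emph{upper} bound on $\bm{\phi}_\beta(\overline{\sm})$.

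The missing idea is to apply Lemma~\ref{lem:var_rep} with the original base measure $\mu$ and test measure $\nu=\overline{\sm}$, which gives
\[
\bm{\phi}_\beta(\overline{\sm})\le \eF_\beta(X;\mu)+\tfrac{1}{\beta}D(\overline{\sm}\,\|\,\mu).
\]
The penalty here is exactly the term you discarded in Step 1: by Lemma~\ref{lm:identities}, $\eF_\beta(X;\mu)+\beta^{-1}\big(D(\overline{\sm}\|\mu)+I\big)=\eG_\beta(X;\mu)$. Combine this with the per-measure bound $\int_0^\Delta(\sqrt{R_{\overline{\sm}}(\epsilon^2)}\wedge\beta\epsilon)\dif\epsilon\le c^{-1}\bm{\phi}_\beta(\overline{\sm})$ and your Step 2 to get
\[
\int_0^\Delta\sqrt{R_{\overline{\sm}}(\epsilon^2)\wedge I}\,\dif\epsilon\le c^{-1}\Big(\eF_\beta(X;\mu)+\tfrac{1}{\beta}D(\overline{\sm}\,\|\,\mu)\Big)+\tfrac{I}{\beta}\le (c^{-1}\vee 1)\,\eG_\beta(X;\mu).
\]
This is the paper's proof. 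No comparison of free energies under $\mu$ and $\overline{\sm}$ is needed, and no new tensorization argument either.
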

\begin{proof}
  Denote $\overline{u}\deq \sqrt{I(X;\uptau_{\beta,\mu})}$. 
  Let ${\rm Leb}$ be the Lebesgue measure.
  By Lemma~\ref{lem:var_rep}, the proof of Theorem~\ref{thm:nonstarionary_1}, and layer-cake representation, 
  there exists some
  universal constant $c>0$ so that
  \begin{align*}
    \Ex X_{\uptau_{\beta,\mu}} 
    &\ge \eF_\beta(X;\mu)\\
  &\ge \bm{\phi}_\beta(\overline{\sm}_{\beta,\mu})-\frac{1}{\beta}D(\overline{\sm}_{\beta,\mu}\|\mu)\\
  &\ge c \int_0^\Delta 
  \Big(\sqrt{R_{\overline{\sm}_{\beta,\mu}}(\epsilon^2)}\wedge \beta \epsilon\Big)\dif \epsilon -\frac{1}{\beta}D(\overline{\sm}_{\beta,\mu}\|\mu)\\
 & \ge c\int_0^\Delta 
  \Big(\sqrt{R_{\overline{\sm}_{\beta,\mu}}(\epsilon^2)}
    \wedge \beta \epsilon \wedge \overline{u} \Big)\dif \epsilon-\frac{1}{\beta}D(\overline{\sm}_{\beta,\mu}\|\mu)\\
  &= c\int_0^\infty 
  {\mathrm{Leb}}\big[\set{\epsilon>0:\sqrt{R_{\overline{\sm}_{\beta,\mu}}(\epsilon^2)}
    \wedge \beta \epsilon \wedge \overline{u}> u}  \big] \dif u -\frac{1}{\beta}D(\overline{\sm}_{\beta,\mu}\|\mu) \\
  &= c\int_0^{\overline{u}}
  {\mathrm{Leb}}\big[\set{\epsilon>0:\sqrt{R_{\overline{\sm}_{\beta,\mu}}(\epsilon^2)}
    \wedge \beta \epsilon> u}  \big] \dif u -\frac{1}{\beta}D(\overline{\sm}_{\beta,\mu}\|\mu)\\
  &\ge c\int_0^{\overline{u}}
    {\mathrm{Leb}}\big[\set{\epsilon:\sqrt{R_{\overline{\sm}_{\beta,\mu}}(\epsilon^2)}> u}  \big] \dif u 
  -c\int_0^{\overline{u}}\frac{u}{\beta} \dif u -\frac{1}{\beta}D(\overline{\sm}_{\beta,\mu}\|\mu)\\
  &=c \int_0^\Delta \sqrt{R_{\overline{\sm}_{\beta,\mu}}(\epsilon^2)\wedge I(X ; \uptau_{\beta,\mu})} 
  \dif \epsilon - c\frac{I(X;\uptau_{\beta,\mu})}{2\beta}-\frac{1}{\beta}D(\overline{\sm}_{\beta,\mu}\|\mu)
  \end{align*}
  By \eqref{eq:identities}, it follows that
\begin{align*}
  \Big(1+ \frac{c}{2} \vee 1\Big) \Ex X_{\uptau_{\beta,\mu}} 
  &\ge \Ex X_{\uptau_{\beta,\mu}} 
  + \Big(\frac{c}{2}\vee 1\Big)\frac{I(X;\uptau_{\beta,\mu})+D(\overline{\sm}_{\beta,\mu}\| \mu)}{\beta} \\
  &\gtrsim \int_0^\Delta \sqrt{R_{\overline{\sm}_{\beta,\mu}}(\epsilon^2)\wedge I(X ; \uptau_{\beta,\mu})} 
\dif \epsilon.
\end{align*}
This completes the proof.
\end{proof}

\subsection{An alternative approach via code-measure correspondence}

In the preceding sections, we used the symmetry
of type classes to reduce the analysis of the general nonstationary case to the stationary case after tensorization.
In this section, we give an alternative combinatorial argument for the rate-distortion integral bound. 
It can be observed that the rate-distortion function is a natural quantity after tensorization due to code-measure correspondence \citep{kontoyiannis_arbitrary_2002}. This is a concept from  information theory that, for our purposes, states that $\log \frac{1}{\mu(B(t,\eps))}$ is (roughly) proportional to the number of bits one would use to localize $t \in T$ to a ball of radius $\eps$ relative to a `prior' probability measure $\mu$ on $T$.
However, we can only recover the lower bound for $\Ex X_{\uptau_{\beta}^*}$, where $\uptau_{\beta}^* \deq \uptau_{\beta,\mu_\beta^*}$,
mainly because applying Talagrand's partition scheme
requires the objective functional to be monotone with respect to set inclusion.

We will prove a one-shot version and an asymptotic version of upper and lower bounds.
For a random element $\tau$ of $T$ with distribution $\nu$ and for $\epsilon>0$, let
\begin{align*}
\sM_\nu(\mu,\epsilon) 
&\deq \Ex_{\tau\sim \nu} \sqrt{\log \frac{1}{\mu(B(\tau,\epsilon))}}\\ 
\sM_\nu^*(\epsilon) &\deq \inf_{\mu\in \eP(T)} \sM_\nu(\mu, \epsilon)
\end{align*}
\begin{theorem}[one-shot]
  \label{thm:one-shot}
Let $X = (X_t)_{t\in T}$ be a centered subgaussian process
with a finite index set $T$. Let $\tau$ be a random element of $T$ jointly distributed with $X$, such that $P_\tau=\nu$.
Then 
\begin{align*}
\Ex{X_\tau} \lesssim \Delta + \int_0^\Delta 
\Big(\sM_{\nu}^*(\epsilon) \wedge \sqrt{I(X ; \tau) }\Big) \dif \epsilon,
\end{align*}
where, as before, $\Delta$ denotes the diameter of $(T,d)$. If $X$ is a Gaussian process, 
then there exists a probability measure $\mu\in \eP(T)$, such that 
\begin{align*}
\eF_\beta(X;\mu_\beta^*) + \Delta
&\gtrsim \sup_{t\in T}\int_0^\Delta 
 \Big(\sM_{\delta_t}(\mu,\epsilon)\wedge \beta \epsilon\Big) \dif \epsilon
\end{align*}
In particular,
\begin{align*}
\Ex X_{\uptau_\beta^*}+ \Delta \gtrsim 
\sup_{\nu\in\eP(T)}\int_0^\Delta 
\Big(\sM_\nu^*(\epsilon)\wedge
\sqrt{I(X ; \uptau_\beta^*) }\Big) \dif \epsilon.
\end{align*}
\end{theorem}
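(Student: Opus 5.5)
For the upper bound I will run a one-shot, measure-based version of the chaining argument behind Liu's Theorem~\ref{thm:lift_upper}, with $\log\frac{1}{\mu(B(\tau,\epsilon))}$ standing in for the rate-distortion function. Fix the scales $\epsilon_k = \Delta 2^{-k}$ and an arbitrary $\mu\in\eP(T)$. The ``quantizer'' at scale $k$ is chosen by a random-coding step: draw $\sigma_k\sim\mu(\cdot\mid B(\tau,\epsilon_k))$, i.e.\ the conditional law of $\mu$ on the ball around $\tau$. Then $d(\tau,\sigma_k)\le\epsilon_k$, and the information cost of the channel $\tau\mapsto\sigma_k$ relative to the prior $\mu$ is at most $\log\frac{1}{\mu(B(\tau,\epsilon_k))}$ pointwise.

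Next I telescope:
\[
\Ex X_\tau = \Ex X_{\sigma_0} + \sum_{k\ge1}\Ex(X_{\sigma_k}-X_{\sigma_{k-1}}) + \lim_{K\to\infty}\Ex(X_\tau - X_{\sigma_K}).
\]
Since $T$ is finite, the tail term vanishes. The first term is bounded by $\Delta$ up to a constant, because $\sigma_0$ may be taken to be a fixed point, which gives $\Delta\sqrt{I}\wedge$ trivial bound. Each increment I bound with the Donsker--Varadhan argument from the proof of Lemma~\ref{lem:iid_mi}, applied to the pair $(X,(\sigma_{k-1},\sigma_k))$ with reference law equal to the product of $P_X$ and the prior-induced law of the pair. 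This yields
\[
\Ex(X_{\sigma_k}-X_{\sigma_{k-1}}) \lesssim \epsilon_k\sqrt{\Ex\log\tfrac{1}{\mu(B(\tau,\epsilon_k))}} \wedge \epsilon_k\sqrt{I(X;\tau)}.
\]
The second branch holds because, by data processing, $I(X;\sigma_{k-1},\sigma_k)\le I(X;\tau)$, since the $\sigma$'s are generated from $\tau$ alone.

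The main obstacle is that this gives $\sqrt{\Ex\log(1/\mu(B))}$ rather than $\sM_\nu(\mu,\epsilon)=\Ex\sqrt{\log(1/\mu(B))}$. To fix this I will condition on $\tau$, or split $T$ into the level sets where $\log\frac1{\mu(B(\tau,\epsilon_k))}\approx 2^j$ and apply the subgaussian maximal bound on each piece. The extra cost is an additive geometric series, which I expect to be absorbed into the $\Delta$ term. Summing over $k$, converting the sum to an integral by Proposition~\ref{prop:disc_to_int}, and taking the infimum over $\mu$ gives the first inequality.

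For the Gaussian lower bound I will use Talagrand's partition scheme driven by the monotone functional $\zeta_\beta(A)=\sup_{\mu\in\eP(A)}\eF_\beta(X;\mu)$ from the proof of Theorem~\ref{thm:stationary_1}. Monotonicity under inclusion is exactly what makes this scheme work, which is why only a lower bound is available along this route. Proposition~\ref{prop:one_step_low} serves as the growth condition: a $4\sigma$-packing of size $n$ yields a gain of $c\sigma\sqrt{\eta(n,\sigma\beta)}$. The partition construction then produces an admissible-type family of nested partitions $\cA_k$ with
\[
\sup_t\sum_k r^{-k}\sqrt{\eta(N_k,\beta r^{-k})}\lesssim \zeta_\beta(T)+\Delta.
\]
From these partitions I define $\mu=\sum_k 2^{-k}\mu_k$, where $\mu_k$ places mass $N_k^{-1}$ on one representative per cell. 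This gives $\mu(B(t,\epsilon_k))\gtrsim 2^{-k}N_k^{-1}$, so that $\sM_{\delta_t}(\mu,\epsilon_k)\wedge\beta\epsilon_k\lesssim\sqrt{\eta(N_k,\beta\epsilon_k)}+\sqrt k$. The $\sqrt k$ terms sum to $O(\Delta)$. Finally $\zeta_\beta(T)=\eF_\beta(X;\mu^*_\beta)$.

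For the last display, I take $\nu$ arbitrary and use the inequality $\sM^*_\nu\le\sM_\nu(\mu,\cdot)\le\sup_t\sM_{\delta_t}(\mu,\cdot)$. I then trade the truncation $\beta\epsilon$ for $\sqrt{I(X;\uptau^*_\beta)}$ using the layer-cake computation from the proof of Theorem~\ref{thm:lift_lower}. That computation relies on $\Ex X_{\uptau^*_\beta}\ge\eF_\beta$ and on the identities \eqref{eq:identities} to absorb the $I/\beta$ error, together with $D(\overline{\sm}\|\mu^*_\beta)=0$, which holds by Lemma~\ref{lem:optimal_mu}.
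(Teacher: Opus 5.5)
\textbf{Gap in the Gaussian lower bound.} The problem is the step where you convert the partitions into a measure. Placing mass $N_k^{-1}$ on one representative per cell forces $N_k=|\cA_k|$ to be a global count. For $\mu(B(t,\epsilon_k))\gtrsim 2^{-k}N_k^{-1}$ to hold, the cells must have diameter at most $\epsilon_k$, so $N_k\ge \sN(T,\epsilon_k)$.

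Your inequality $\sup_t\sum_k r^{-k}\sqrt{\eta(N_k,\beta r^{-k})}\lesssim \zeta_\beta(T)+\Delta$ is then a soft Dudley bound, and the $\sup_t$ is vacuous. That is the content of Theorem~\ref{thm:stationary_2}, which needs stationarity; in general it is false. Take $X_n=G_n/\sqrt{\log n}$ for $3\le n\le M$. Then $\zeta_\beta(T)\le \Ex\max_n X_n=O(1)$ and $\Delta=O(1)$, but for large $\beta$ the left side grows like $\log\log M$.

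Letting $N_k$ be the local number of children, with uniform splitting, does not help. Proposition~\ref{prop:one_step_low} plus monotonicity of $\zeta_\beta$ only gives, for the $i$-th piece in the greedy order, $\zeta_\beta(A)\ge c\,r^{-k}\sqrt{\log i\wedge \beta^2 r^{-2k}}+\zeta_\beta(H_i)$, where $H_i$ is the small ball around the $i$-th greedy center. The first piece, which has the largest $\zeta_\beta$, gains nothing, so it cannot be charged $\log N$. In the example above, this is exactly the ``remaining points'' cell at every level.

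The missing idea is Talagrand's rank-based weighting. Give the $i$-th child mass proportional to $(i+1)^{-2}$, and refine multiplicatively: $\theta_k(B)=\theta_{k-1}(A)\,\theta^{\cA_k(A)}(B)$. Then $\log\frac{\theta_k(A_{k-1}(t))}{\theta_k(A_k(t))}\lesssim\log(i+1)$, which matches the growth term exactly. The shift from $i$ to $i+1$ costs $r^{-k}$ per level, which is the source of the additive $\Delta$. Iterating as in Theorem~\ref{thm:stationary_2} then controls the cumulative $\log\frac{1}{\theta(B(t,\epsilon))}$.

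\textbf{Upper bound: a viable alternative route.} The paper applies the decorrelation lemma (Proposition~\ref{prop:decorr}) conditionally on $\tau$. It uses reference measure $\mu_k\otimes\mu_{k-1}$ and the normalized increment $|X_u-X_v|/d(u,v)$. This gives $\Ex\sqrt{\log(1/\mu(B_k(\tau)))}$ directly plus an $O(r^{-k})$ remainder, with no Jensen loss.

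Your Donsker--Varadhan version can be made to work, with three adjustments:
\begin{enumerate}
\item The reference must be $P_X$ times the \emph{actual} marginal of $(\sigma_{k-1},\sigma_k)$. Under $\mu\otimes\mu$ the increment is only $\Delta$-subgaussian. You then use $I(X;\sigma_{k-1},\sigma_k)\le I(\tau;\sigma_{k-1},\sigma_k)\le \Ex\log\frac{1}{\mu(B_k(\tau))\mu(B_{k-1}(\tau))}$.
\item In the peeling, conditioning on a level set $E_j$ (which is correlated with $X$) adds $\log(1/\Pr(E_j))$ to the divergence. This is absorbed, because $\sum_j \Pr(E_j)\sqrt{\log(1/\Pr(E_j))}\le \sum_j \Pr(E_j)2^{j/2}+\sum_j e^{-2^j}2^{j/2}$.
\item Also, $\Ex X_{\sigma_0}=0$ exactly.
\end{enumerate}

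\textbf{Last display.} You cannot bound $\int\sup_t$ by $\sup_t\int$; the former is Dudley-like. Use monotonicity instead. If each $a_t(\epsilon)$ is nonincreasing and $\sup_t\int_0^\Delta (a_t\wedge\beta\epsilon)\,\dif\epsilon=M$, then every crossing point lies below $\sqrt{2M/\beta}$. Hence $\int_0^\Delta(\Ex a_\tau\wedge\beta\epsilon)\,\dif\epsilon\le 2M$. After that, your use of $D(\overline{\sm}_{\beta,\mu^*_\beta}\|\mu^*_\beta)=0$ and $\Ex X_{\uptau^*_\beta}=\zeta_\beta(T)+I(X;\uptau^*_\beta)/\beta$ closes the argument.
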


\begin{proof} Without loss of generality, assume $\Delta =1$.
Fix some $r\ge 2$. For each $t\in T$ and $k \ge 1$, let $B_k(t) \deq B(t,r^{-k})$.
Given $(X,\tau)$ with joint law $P_X \otimes P_{\tau|X}$,
we consider a tuple of random elements $(X,\tau,\tau_k,\tau_{k-1})$
and specify their joint law by 
\begin{align*}
P_{X \tau \tau_k \tau_{k-1}} = P_X \otimes P_{\tau|X} \otimes P_{\tau_k|\tau} \otimes P_{\tau_{k-1}|\tau},
\end{align*}
where
\begin{align*}
P_{\tau_{k}|\tau} (\cdot) = \frac{\mu_k (\cdot \cap B(\tau, r^{-k}))}{\mu_k(B(\tau, r^{-k}))}
\end{align*}
for some arbitrary $\mu_k \in \eP(T)$.
It is clear that $P_{\tau_0|\tau}=\mu_0$.
Since $T$ is finite, there exists some $K>0$, such that $P_{\tau_K|\tau}=\delta_\tau$. For each $k$,
\begin{align*}
\Ex[X_{\tau_k}-X_{\tau_{k-1}}\mid X]
&= \Ex\big[\Ex[X_{\tau_k}-X_{\tau_{k-1}}\mid \tau, X]\mid X\big].
\end{align*}
Applying Proposition~\ref{prop:decorr} to $\cX = T \times T$, $\mu = P_{\tau_k|\tau} \otimes P_{\tau_{k-1}|\tau}$, $\nu = \mu_k \otimes \mu_{k-1}$, $f(u,v) = d(u,v)$, and $g(u,v) = \frac{X_u - X_v}{d(u,v)}$, we obtain
\begin{align*}
&\Ex[X_{\tau_k}-X_{\tau_{k-1}}\mid \tau, X]
=\sum_{u,v\in T} (X_u-X_v)P_{\tau_k|\tau}( u)P_{\tau_{k-1}|\tau}(v)\\
&\le r^{-k+1} \sqrt{2 \log \Big(1+\frac{1}{\mu_k(B_k(\tau))\mu_{k-1}(B_{k-1}(\tau))}\Big)}
+ r^{-k+1}\sum_{u,v\in T} \Big(e^{\frac{\abs{X_u-X_v}^2}{d^2(u,v)}}-1\Big) 
\mu_k( u) \mu_{k-1}( v).
\end{align*}
Thus, using the increment condition for a subgaussian process, we have
\begin{align*}
  \Ex[X_{\tau_k}-X_{\tau_{k-1}}] \lesssim r^{-k+1} 
  \Ex{\sqrt{\log \frac{1}{\mu_k(B_k(\tau))}}
  +\Ex\sqrt{\log \frac{1}{\mu_{k-1}(B_{k-1}(\tau))}} +1}.
\end{align*}
Alternatively, by the mutual information bound (see, e.g., \citet{russo_controlling_2016,xu_raginsky_2017}), 
\begin{align*}
\Ex[X_{\tau_k}-X_{\tau_{k-1}}] \le r^{-k+1} \sqrt{2 I(X ; (\tau_k, \tau_{k-1}))} 
\le r^{-k+1}\sqrt{2 I(X ; \tau)}.
\end{align*}
Combining both estimates, we obtain
\begin{align*}
\Ex[X_{\tau_k}-X_{\tau_{k-1}}] 
\lesssim r^{-k+1} \left(\Ex{\sqrt{\log \frac{1}{\mu_k(B_\tau(\tau))}\wedge I(X ; \tau)}}
+\Ex{\sqrt{\log \frac{1}{\mu_{k-1}(B_{k-1}(\tau))}\wedge I(X ; \tau)}}+ 1\right).
\end{align*}
Summing over $k$ yields
\begin{align*}
\Ex{X_\tau} \lesssim
\Delta+ \sum_{k \ge 1} r^{-k+1} 
\Ex{\sqrt{\log \frac{1}{\mu_k(B_k(\tau))}\wedge I(X ; \tau)}}. 
\end{align*}
Since each $\mu_k$ can be chosen independently of others,
\begin{align*}
\Ex{X_\tau} 
&\lesssim
\Delta+ \sum_{k \ge 1} r^{-k+1} \inf_{\mu} 
\Ex{\sqrt{\log \frac{1}{\mu(B_k(\tau))}\wedge I(X ; \tau)}} \\
&\le \Delta + \int_0^\Delta \inf_\mu 
\Ex{\sqrt{\log \frac{1}{\mu(B(\tau, \epsilon))}\wedge I(X ; \tau)}} \dif \epsilon,
\end{align*}
where the last inequality is due to the fact that $\epsilon \mapsto \inf_\mu \Ex{\sqrt{\log \frac{1}{\mu(B(\tau, \epsilon))}}}$ is a decreasing function,
so that the discrete sum can converted to integral form as in \eqref{eq:sum_int}.

Now we prove the lower bound.
The proof follows closely the greedy construction of nested partitions by \citet{Talagrand_1992} .
Let $r \ge 4$.
Let $\cA_0 = \set{T}$. 
Let $\theta_0 \deq \theta^{\cA_0}\in \eP(\cA_0)$ be such that $\theta_0(T)=1$. 
Assume $\cA_0,\ldots,\cA_{k-1}$ is constructed and $\theta_0,\ldots,\theta_{k-1}$ 
be constructed. 
Choose an arbitrary $A\in \cA_{k-1}$.
We need to construct a partition for $A$ and a measure for $\cA_{k-1}(A)$.
Let 
\begin{align*}
\zeta_\beta (A) \deq \sup_{\mu\in \eP(A)}\eF_\beta(X; \mu).
\end{align*}
Let $B_0=A$. Choose $t_1\in B_0$ so that
\begin{align*}
\zeta_\beta (B_0 \cap B(t_1, r^{-k-1}/2))= \max_{t\in B_0}\zeta_\beta (B_0 \cap B(t, r^{-k-1}/2)).
\end{align*}
Let $A_1= B_0 \cap B(t, r^{-k}/2)$ and $B_1= B_0 \setminus A_1$. Now iterate this:
If, for $i \ge 1$,$B_{i-1} \neq \emptyset$, choose $t_i\in B_{i-1}$ so that
\begin{align*}
  \zeta_\beta(B(t_i, r^{-k-1}/2)\cap B_{i-1}) = \max_{t\in B_{i-1}}\zeta_\beta(B(t, r^{-k-1}/2)\cap B_{i-1}).
\end{align*}
Let $A_i = B(t_i, r^{-k/2})\cap B_{i-1}$ and $B_i = B_{i-1}\setminus A_i$ and continue. This procedure is guaranteed
to terminate. 
Repeat the construction for the other sets in $\cA_{k-1}$.
Thus, for $A\in \cA_{k-1}$, we have a partition $A_1,\ldots,A_m$ for $A$,
and packing points $t_1,\ldots,t_m$ since for $i\neq j$, $d(t_i, t_j) \ge r^{-k}/2$. 
Let $H_i \deq B(t_i, r^{-k-1}/2)\cap B_{i-1}$ and $\beta_k=\beta r^{-k}$.
Using Proposition~\ref{prop:one_step_low} with $\sigma=r^{-k}/8$, we have
\begin{align*}
\zeta_\beta(A) 
&\ge c r^{-k} \sqrt{ \eta_i (\beta_{k})} + \zeta_\beta(H_i)\\
&= c r^{-k} \sqrt{\log (i  \wedge \beta_k^2)} + \zeta_\beta(H_i) .
\end{align*}
Let $i_k(A_k(t))$ be the index inherited from the construction steps. The inequality above
can be written as:
\begin{align*}
\zeta_\beta(A_{k-1}(t))
\ge c r^{-k} \sqrt{\log i_k(A_k(t))\wedge \beta_{k}^2} + \zeta_\beta (A_{k+1}(t)).
\end{align*}
Using $1+a \wedge b \ge 1 \wedge b + a \wedge b \ge (a+1)\wedge b$, we obtain
\begin{align*}
\zeta_\beta(A_{k-1}(t)) + r^{-k} \ge c r^{-k} \sqrt{ \log (i_k(A_k(t))+1)^2 \wedge \beta_k^2} + \zeta_\beta(A_{k+1}(t)).
\end{align*}
Since $\sum_{i \ge 1} \frac{1}{(i+1)^2} <1$, there exists some probability measure $\theta^{\cA_k(A)}\in \eP(\cA_{k}(A))$
so that $\log (i+1)^2 \gtrsim \log \frac{1}{\theta^{\cA_k(A)}(A_i)}$ for $A_i\in \cA_k(A)$.
Then we can define $\theta_k\in \eP(\cA_k)$ so that
\begin{align*}
  \theta_k(B)= \theta_{k-1}(A) \theta^{\cA_k(A)}(B)\quad\text{for any } B \in \cA_k(A), A\in \cA_{k-1} .
\end{align*}
Notice that $\theta_k(A)=\theta_{k-1}(A)$, thus $\theta_k$ can be considered as a refinement for the previous level.
Replacing the index with conditional measure in the above inequality yields
\begin{align*}
\zeta_\beta(A_{k-1}(t)) + r^{-k} \ge c r^{-k} \sqrt{ \log \frac{\theta_k(A_{k-1}(t))}{\theta_k(A_k(t))} 
\wedge \beta_k^2} + \zeta_\beta(A_{k+1}(t)).
\end{align*}
Applying the above inequality iteratively on $\zeta_\beta(T)$ and $\zeta_\beta(A_1(t))$ 
and repeating the steps in the proof of Theorem~\ref{thm:stationary_2}, we conclude
that there exists a measure $\theta\in \eP(T)$ so that 
\begin{align*}
\sup_{\mu\in\eP(T)}\eF_\beta(X ; \mu) + \Delta
&\gtrsim \sup_{t\in T} \int_0^\Delta 
\sqrt{\log \frac{1}{\theta(B(t, \epsilon))}\wedge \beta^2 \epsilon^2}  \dif \epsilon.
\end{align*}
Thus, for any random element $\tau$ of $T$, 
\begin{align*}
 \sup_{\mu\in\eP(T)}\eF_\beta(X; \mu) + \Delta
&\gtrsim  \int_0^\Delta \inf_{\mu\in\eP(T)} 
\Ex{\sqrt{\log \frac{1}{\mu(B(\tau, \epsilon))}
    \wedge \beta^2 \epsilon^2}}  \dif \epsilon \\
&\ge  \int_0^\Delta \inf_{\mu\in\eP(T)} 
\Ex{\sqrt{\log \frac{1}{\mu(B(\tau, \epsilon))}
    \wedge \beta^2 \epsilon^2 \wedge I(X ; \uptau_\beta^*)}} 
\dif \epsilon.
\end{align*}
The proof is complete by applying a calculation similar to that in the proof of 
Theorem~\ref{thm:lift_lower}.
\end{proof}

The bounds of Theorem~\ref{thm:one-shot} give a sharp characterization for 
$\Ex{X_{\uptau_\beta^*}}+\Delta$ for Gaussian processes.
In the supremum case, the term $\Delta$ can be absorbed by $\Ex [\sup_{t\in T}X_t]$ 
as a multiplicative universal constant, thus recovering the majorizing measure theorem. However, for general $X_\tau$,
they are not comparable. 
One useful observation is that $\Delta$ vanishes in the $N \to \infty$ asymptotics;
thus, we can apply bounds on the tensorized processes 
together with the code-measure correspondence and some results form information theory on the fundamental limits of lossy compression. 
We adapt the converse and achievability bounds from lossy source coding theory as follows. 
\begin{lemma}
  \label{lem:measure_RD}
For any $\nu\in \eP(T)$ and $\bm{\mu}\in \eP(T^N)$,
\begin{align*}
  \liminf_{N\to \infty} \frac{1}{\sqrt{N}} \sM_{\nu^{\otimes N}}(\bm{\mu},\sqrt{N}\epsilon)
  \ge \sqrt{R_{\nu}(\epsilon^2)}.
\end{align*}
For any $\delta>0$ and $\nu\in \eP(T)$, we have 
\begin{align*}
\lim_{N\to \infty}\frac{1}{\sqrt{N}}\sM_{\nu^{\otimes N}}^*(\sqrt{N}\epsilon) 
\le \sqrt{R_{\nu}(\epsilon^2)}. 
\end{align*}
\end{lemma}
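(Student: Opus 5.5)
The plan is to treat the two statements as a converse and an achievability bound from lossy source coding with a quadratic distortion measure. The guiding idea is the code--measure correspondence: $\log\frac{1}{\bm{\mu}(B(\bm{t},\sqrt{N}\epsilon))}$ is an ideal codelength for describing $\bm{t}$ to within distortion $N\epsilon^2$.

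For the converse (first inequality) we work at the level of per-letter codelengths. Put $L_N(\bm{t}) \deq \log\frac{1}{\bm{\mu}(B(\bm{t},\sqrt{N}\epsilon,d_N))}$, where $\bm{t}\sim\nu^{\otimes N}$. The main tool is the strong converse for lossy compression in the form of Kontoyiannis' ``generalized AEP'' converse. For any sequence $\bm{\mu}=\bm{\mu}_N$ and any $\delta>0$,
\begin{align*}
\Pr\big[L_N(\bm{t}) \le N(R_\nu(\epsilon^2)-\delta)\big]\to 0 .
\end{align*}
To prove this, define the set $E$ of $\bm{t}$ with $\bm{\mu}(B(\bm{t},\sqrt N\epsilon))\ge e^{-N(R-\delta)}$. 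Use a greedy selection in $E$ to extract a code $\cC$ with $|\cC|\le e^{N(R-\delta)}$ and small extra distortion, so that $\cC$ covers $E$ within distortion roughly $N\epsilon^2$. Then invoke the strong converse of the rate-distortion theorem for finite alphabets: a code of rate below $R_\nu(\epsilon^2)$ covers a vanishing $\nu^{\otimes N}$-fraction at distortion $\epsilon^2$. The resulting slack in the distortion is absorbed because $R_\nu$ is continuous in its argument on the interior. Alternatively, one can use the method of types: restrict to the typical type class $\cC_{N,\hat\nu}$ and apply the covering-number lower bound from the preceding lemma (Liu's Lemma~6), together with $\bm{\mu}$-mass counting. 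Once this holds, we get
\begin{align*}
\frac{1}{\sqrt N}\Ex\sqrt{L_N}\ge \sqrt{R_\nu(\epsilon^2)-\delta}\;\Pr\big[L_N> N(R-\delta)\big]\to\sqrt{R_\nu(\epsilon^2)-\delta},
\end{align*}
and letting $\delta\downarrow0$ gives the liminf bound.

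For achievability (second inequality) it suffices to exhibit a single $\bm{\mu}_N$. Take $\bm{\mu}_N=(Q^*)^{\otimes N}$, where $Q^*$ is the output marginal of a near-optimal test channel for $R_\nu(\epsilon'^2)$ with $\epsilon'<\epsilon$. The quantity $-\frac1N\log Q^{*\otimes N}(B(\bm t,\sqrt N\epsilon))$ is a large-deviations functional. By the generalized AEP (Dembo--Kontoyiannis, Yang--Kieffer) it converges almost surely to $R(\nu,Q^*,\epsilon^2)\le I(\tau;\tau')\le R_\nu(\epsilon'^2)+\delta$. It is also uniformly bounded by $\max_t\log\frac{1}{Q^*(\cdot)}$, after restricting to $t$ where the ball has positive mass; the ball always contains the point $\bm{t}$ itself if we ensure $Q^*$ has full support by mixing in a small amount of the uniform measure. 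By bounded convergence, $\frac{1}{\sqrt N}\Ex\sqrt{L_N}\to$ the square root of the limit. Finally, let $\epsilon'\uparrow\epsilon$ and $\delta\downarrow0$, using continuity of $R_\nu$ in the distortion level (convexity gives this at every point except possibly the endpoint).

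I expect the main obstacle to be the converse. It must hold for arbitrary non-product $\bm{\mu}$, so the expectation cannot simply be handled by a law of large numbers. The key step is converting the ``large ball mass'' set $E$ into a small covering code, which requires a Kraft-type or greedy-cover argument with slightly inflated radius. Closing this requires continuity of $R_\nu$ in the distortion level, and if the strong converse is delicate, I would use instead the type-class covering bound of Liu together with the uniform boundedness of $L_N/N$.
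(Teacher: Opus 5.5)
Your achievability half is sound, and so is the reduction of the converse to $\Pr[L_N\le N(R_\nu(\epsilon^2)-\delta)]\to 0$. For achievability you use the generalized AEP for $Q_\eta^{\otimes N}$ plus bounded convergence; the paper uses Jensen's inequality plus Kontoyiannis--Zhang's Theorem~5. Either works.

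The gap is in the step you flag as key: converting $E=\{\bm t:\bm\mu(B(\bm t,\sqrt N\epsilon))\ge e^{-N(R-\delta)}\}$ into a small code. Suppose you greedily select points of $E$ whose balls $B(\bm t_i,\sqrt N\epsilon)$ are pairwise disjoint. Mass counting then does give $|\cC|\le e^{N(R-\delta)}$. But maximality only guarantees that every point of $E$ lies within $2\sqrt N\epsilon$ of $\cC$, i.e.\ at per-letter distortion $4\epsilon^2$. That is a constant-factor inflation, not a small one. The strong converse at that level only rules out rates below $R_\nu(4\epsilon^2)$, which is generally strictly smaller than $R_\nu(\epsilon^2)-\delta$, and continuity of $R_\nu$ cannot bridge this.

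You also cannot shrink the inflation by using a finer separation. You only control the $\bm\mu$-mass of balls of radius exactly $\sqrt N\epsilon$, so disjointness, and hence counting, is available only at separation $2\sqrt N\epsilon$. Your type-class alternative hits the same wall. A lower bound on $\sN(\cC_{N,\hat\nu},\sqrt N\epsilon)$ helps only after you turn ``every ball has $\bm\mu$-mass at least $e^{-N(R-\delta)}$'' into a cover at the same radius, and mass counting does not do that.

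The missing idea is the code--measure correspondence, i.e.\ random coding from $\bm\mu$ itself. This is exactly what the paper imports from Kontoyiannis--Zhang (Theorem~1) before applying Kieffer's sample converse. Draw $M=e^{N(R-\delta/2)}$ codewords i.i.d.\ from $\bm\mu$. A fixed $\bm t\in E$ has no codeword in $B(\bm t,\sqrt N\epsilon)$ with probability at most $(1-e^{-N(R-\delta)})^M\le\exp(-e^{N\delta/2})$. Hence some code of rate $R-\delta/2<R_\nu(\epsilon^2)$ covers all of $E$ at distortion exactly $\epsilon^2$, except for $\nu^{\otimes N}$-mass at most $\exp(-e^{N\delta/2})$. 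Equivalently, you can run a greedy set cover over reconstruction points of $T^N$, rather than a packing inside $E$. The finite-alphabet strong converse then gives $\nu^{\otimes N}(E)\to 0$, and the rest of your argument goes through unchanged.
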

\begin{proof}
  The lower bound follows from \citet[Theorem~1]{kontoyiannis_arbitrary_2002}
  and \citet{kiefferSampleConversesSource1991}.
  The upper bound follows from Jensen's inequality 
  and \citet[Theorem~5]{kontoyiannis_arbitrary_2002}. \end{proof}
\noindent 
With the estimates above, we can recover the fixed-rate truncated 
rate-distortion integral bounds in the previous subsection. 
\begin{theorem}[asymptotic]
  \label{thm:asymptotic}
  Using the same notation as in Theorem~\ref{thm:one-shot},
  for any subgaussian $(X_t)$, we have
  \begin{align*}
  \Ex{X_\tau} \lesssim \int_0^\Delta 
  \sqrt{R_\nu(\epsilon^2)\wedge I(X ; \tau)} \dif \epsilon.
  \end{align*}
  If $(X_t)$ is a Gaussian process,
  \begin{align*}
    \Ex{X_{\uptau_\beta^*}} 
    \gtrsim \sup_{\nu\in \eP(T)} 
    \int_0^\Delta \sqrt{R_{\nu}(\epsilon^2)\wedge I(X ; \uptau_\beta^*)} 
    \dif \epsilon.
  \end{align*}

\end{theorem}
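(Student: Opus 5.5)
We will obtain both bounds by applying Theorem~\ref{thm:one-shot} to the tensorized process $\bm{X}^N$ on $T^N$, whose canonical metric is $d_N$. We then divide by $N$ and let $N\to\infty$. The additive diameter term becomes negligible in this limit, and Lemma~\ref{lem:measure_RD} turns the code-measure functionals into the rate-distortion function.

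\emph{Upper bound.} Let $(X^{(i)},\tau^{(i)})_{i\le N}$ be i.i.d.\ copies of $(X,\tau)$ and set $\bm{\tau}=(\tau^{(1)},\dots,\tau^{(N)})$. Then $\Ex \bm{X}^N_{\bm{\tau}} = N\Ex X_\tau$, the law of $\bm{\tau}$ is $\nu^{\otimes N}$, and $I(\bm{X}^N;\bm{\tau})\le I(X^{(1)},\dots,X^{(N)};\bm{\tau}) = N I(X;\tau)$ by independence. The process $\bm{X}^N$ is again subgaussian for $d_N$, and its diameter is $\sqrt{N}\Delta$. The one-shot upper bound, followed by the substitution $\epsilon=\sqrt{N}\epsilon'$, gives
\begin{align*}
\Ex X_\tau \lesssim \frac{\Delta}{\sqrt{N}} + \int_0^\Delta \Big(\frac{1}{\sqrt{N}}\sM^*_{\nu^{\otimes N}}(\sqrt{N}\epsilon')\wedge \sqrt{I(X;\tau)}\Big)\dif\epsilon'.
\end{align*}
The integrand is bounded by $\sqrt{I(X;\tau)}$, so dominated convergence applies. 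Combining it with the second statement of Lemma~\ref{lem:measure_RD} yields $\Ex X_\tau\lesssim \int_0^\Delta \sqrt{R_\nu(\epsilon^2)\wedge I(X;\tau)}\,\dif\epsilon$.

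\emph{Lower bound.} We use the Gaussian half of Theorem~\ref{thm:one-shot} for $\bm{X}^N$. By Proposition~\ref{prop:sup_tensor}, $\sup_{\bm{\mu}}\eF_\beta(\bm{X}^N;\bm{\mu}) = N\eF_\beta(X;\mu^*_\beta)$. This produces a measure $\bm{\mu}_N\in\eP(T^N)$ with
\begin{align*}
N\eF_\beta(X;\mu^*_\beta)+\sqrt{N}\Delta \gtrsim \int_0^{\sqrt{N}\Delta}\Ex_{\bm{\tau}\sim\nu^{\otimes N}}\Big(\sqrt{\log \tfrac{1}{\bm{\mu}_N(B(\bm{\tau},\epsilon))}}\wedge\beta\epsilon\Big)\dif\epsilon
\end{align*}
for every $\nu$; here we average the pointwise supremum over $t$ against $\nu^{\otimes N}$. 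Next we rescale by $\epsilon=\sqrt{N}\epsilon'$ and divide by $N$. We pull the truncation $\beta\epsilon'$ outside the expectation using $\Ex(\sqrt{Y}\wedge a)\ge$ a suitable Fatou/Jensen-type bound. Then the first statement of Lemma~\ref{lem:measure_RD} (liminf along $N$, with Fatou in $\epsilon'$) gives
\begin{align*}
\eF_\beta(X;\mu^*_\beta) \gtrsim \int_0^\Delta\big(\sqrt{R_\nu(\epsilon^2)}\wedge\beta\epsilon\big)\dif\epsilon .
\end{align*}
This could also be read off directly from Theorem~\ref{thm:nonstarionary_1} as a consistency check. From here we follow the layer-cake computation in the proof of Theorem~\ref{thm:lift_lower}, with $\mu=\mu^*_\beta$. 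We truncate further at $\overline u=\sqrt{I(X;\uptau^*_\beta)}$ and remove the $\beta\epsilon$ cap at a cost of $I/(2\beta)$. Finally, Lemma~\ref{lm:identities} together with the fixed-point relation $\overline{\sm}_{\beta,\mu^*_\beta}=\mu^*_\beta$ of Lemma~\ref{lem:optimal_mu} makes $D(\overline{\sm}\|\mu^*_\beta)=0$. Hence $\Ex X_{\uptau^*_\beta}\ge \eF_\beta + I/\beta$, which absorbs the error terms. Taking the supremum over $\nu$ finishes the proof.

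\emph{Main obstacle.} The delicate step is the lower-bound limit. Lemma~\ref{lem:measure_RD} controls the untruncated $\sM_{\nu^{\otimes N}}$, but we need the functional truncated at $\sqrt{N}\beta\epsilon'$ inside the expectation. The function $\sqrt{N}(\sqrt{Y_N/N}\wedge\beta\epsilon')$ is concave in the quantity being bounded, so the truncation cannot simply be pulled out. To handle this, we would first try to use concentration of $\frac1N\log\frac{1}{\bm{\mu}_N(B(\bm\tau,\sqrt N\epsilon))}$. The pointwise (almost-sure) version of the Kontoyiannis–Kieffer converse gives $\liminf \ge R_\nu(\epsilon^2)$ almost surely, so Fatou's lemma applied inside the expectation handles the truncation directly. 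A second, minor concern is that $\bm{\mu}_N$ varies with $N$; the almost-sure converse holds for arbitrary sequences of measures, so this should not cause trouble.
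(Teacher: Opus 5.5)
Your proposal is correct and follows the paper's proof. You tensorize via i.i.d.\ copies of $(X,\tau)$, apply Theorem~\ref{thm:one-shot} to $\bm{X}^N$ (with Proposition~\ref{prop:sup_tensor} for the lower bound), rescale $\epsilon=\sqrt{N}\epsilon'$ so that the diameter term becomes $\Delta/\sqrt{N}$, pass to the limit with Lemma~\ref{lem:measure_RD}, and finish with the layer-cake computation of Theorem~\ref{thm:lift_lower}. The one place you are more careful than the paper is the lower-bound limit: the paper moves the cap $\beta\epsilon$ outside the $\nu^{\otimes N}$-average in a single line, which, because $\Ex(\sqrt{Y}\wedge a)\le \Ex\sqrt{Y}\wedge a$, holds only up to a constant factor recovered from the supremum over $\bm{t}$, whereas your almost-sure converse with Fatou inside the expectation handles this step directly.
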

\begin{proof}
Now we apply the bounds on tensorized processes.
Let $(\bm{X},\bm{\tau})\sim P_{X\tau}^{\otimes N} $. Then 
 \begin{align*}
	 \Ex{\bm{X}_{\bm{\tau}}}=\Ex{\sum_{i=1}^N \bm{X}^{(i)}_{\bm{\tau}_i}}
  = N \Ex X_\tau.
 \end{align*}
  After applying the upper bounds of Theorem~\ref{thm:one-shot} 
  on $\frac{1}{N}\Ex{\bm{X}_{\bm{\tau}}}$ and letting $N \to \infty$,
  the upper bound follows from the dominated convergence theorem and Lemma~\ref{lem:measure_RD}.
For the lower bound, we apply 
the first inequality in the lower bound of one-shot estimates to
$\eF_\beta (\bm{X}^N, \bm{\mu}_{\beta}^*)=
\sup_{\bm{\mu}\in \eP(T^N)}\eF_\beta(\bm{X}^N;\bm{\mu})$ to conclude that 
there exists a probability measure $\bm{\theta} \in \eP(T^N)$, such that, for any $\nu\in \eP(T)$,
\begin{align*}
 \eF_\beta(\bm{X}^N, \bm{\mu}_{\beta}^*)
  + \sqrt{N}\Delta 
  &\gtrsim 
  \sup_{\bm{t}\in T^N} \int_0^{\sqrt{N}\Delta} 
  \big(\sM_{\delta_{\bm{t}}}
  \big(\bm{\theta},\epsilon)\wedge \beta \epsilon \big)
  \dif \epsilon\\
  &\ge \int_0^\Delta \sqrt{N} 
  \big(\sM_{\nu^{\otimes N}}(\bm{\theta},\sqrt{N}\epsilon\big) 
  \wedge \beta \sqrt{N}\epsilon) \dif \epsilon. 
\end{align*}
Since $\bm{\mu}_\beta^*=(\mu_\beta^*)^{\otimes N}$ due to 
Proposition~\ref{prop:sup_tensor}, 
after multipling both sides with $1 / N$ and letting $N \to \infty$,
we deduce from Fatou's lemma and Lemma~\ref{lem:measure_RD} that,
for any $\nu\in \eP(T)$,
\begin{align*}
\eF_\beta(X; \mu_\beta^*) 
&\gtrsim \liminf_{N \to \infty} 
\int_0^\Delta \big(\sM_{\nu^{\otimes N}}(\bm{\theta}, \sqrt{N}\epsilon)/\sqrt{N}
\wedge \beta \epsilon\big)  \dif \epsilon\\
&\ge \int_0^\Delta \big(\sqrt{R_{\nu}(\epsilon^2)}\wedge \beta \epsilon \big)\dif \epsilon
\ge \int_0^\Delta \big(\sqrt{R_{\nu}(\epsilon^2) \wedge I(X;\uptau_\beta^*)}\wedge \beta \epsilon \big)\dif \epsilon.
\end{align*}
The desired lower bound follows by a calculation similar to the one in the proof of Theorem~\ref{thm:lift_lower}.
\end{proof}

\section{The case of countable $T$}
\label{sec:countable_T}

In this section, we extend Theorem~\ref{thm:nonstarionary_1}, \ref{thm:lift_upper}, and
\ref{thm:lift_lower} to countable index sets. Notice that 
when $T$ is countably infinite, the supremum of $\eF_\beta(X; \mu)$ over $\mu \in \eP(T)$ may not 
be attained. Therefore, we state the bounds for 
$\sup_{\mu\in \eP(T)} \eF_\beta(X;\mu)$ 
and $\sup_{\mu\in \eP(T)} \eG_\beta(X;\mu)$.

\subsection{The proof of Theorem~\ref{thm:countable_softmmt}}

  When $\beta = \infty$, the statement reduces to the characterization for 
  the expected supremum $\Ex [\sup_{t\in T} X_t]$ via the rate-distortion integral,
  which can be recovered from \citet{liuSimpleSharpGeneralization2025}.
  We consider $\beta< \infty$.
  Similar to the finite $T$ case, it suffices to establish the characterization for quenched the free energy 
  due to \eqref{eq:equiv_soft}. 
  Note that by Theorem~\ref{thm:nonstarionary_1}:
  for any finite $S \subset T$,
  \begin{align*}
  \sup_{\mu\in \eP(S)}\eF_\beta(X;\mu) 
  \asymp \sup_{\mu\in \eP(S)} \int_0^\infty \Big(\sqrt{R_{\mu}(\epsilon^2)}\wedge \beta \epsilon\Big) \dif \epsilon.
  \end{align*}
  We would like to show
  \begin{align*}
  \sup_{S \subset T} \sup_{\mu\in \eP(S)}\eF_\beta(X;\mu) 
    &= \sup_{\mu\in \eP(T)} \eF_\beta(X;\mu)\\
  \sup_{S \subset T} \sup_{\mu\in \eP(S)} 
    \int_0^\infty \Big(\sqrt{R_\mu(\epsilon^2)}\wedge \beta \epsilon\Big)\dif \epsilon 
    &=  \sup_{\mu\in \eP(T)}
    \int_0^\infty \Big(\sqrt{R_\mu(\epsilon^2)}\wedge \beta \epsilon\Big)\dif \epsilon.
  \end{align*}
  We use a truncation argument similar to the one by \citet{liuSimpleSharpGeneralization2025}.
  Since $T$ is countable, we enumerate it as $\set{t_0,t_1,\ldots}$ and let 
  $T_k= \set{t_0,\ldots,t_k} $.
  Let $Z$ be a random element of $T$ such that $\eL(Z)=\mu$.
  Define
  \begin{align*}
    Z_k \deq Z{\bf 1}_{\set{Z\in T_k}} + t_0 {\bf 1}_{\set{Z\notin T_k}}
  \end{align*}
  and denote $\mu_k \deq \eL(Z_k)$.
  It is clear that $2\norm{\mu_k-\mu}_{\rm TV}= \mu(T_k^c)\to 0$. 
  Let
  \begin{align*}
	  f(\mu_k)\deq \eF_\beta(X;\mu_k) \quad \text{and} \quad g(\mu_k)\deq \int_0^\infty (R_\mu(\epsilon^2)^{1 / 2} \wedge \beta \epsilon) \dif \epsilon.
\end{align*}
  It suffices to show $f(\mu_k)\to f(\mu)$ and $g(\mu_k)\to g(\mu)$ for any $\mu\in \eP(T)$.
  Indeed, if this is the case then, for each $k$,
  \begin{align*}
	  f(\mu_k) \le \sup_{S \subset T} \sup_{\nu\in \eP(S)} f(\nu);
	\end{align*}
	taking the limit as $k \to \infty$
  followed by the supremum  over $\mu \in \eP(T)$  yields
  \begin{align*}\sup_{\mu\in \eP(T)} f(\mu) \le \sup_{S \subset T} \sup_{\nu\in \eP(S)} f(\nu).
	 \end{align*}
  Then the equality follows since the reverse inequality is clear by definitions.
  Identical arguments apply to the function $g$.

  We show $\lim_{k\to \infty}f(\mu_k)=f(\mu)$. Without loss of generality, let $t_0$ be such that 
  $\mu(t_0) > 0$. Also, since 
  \begin{align*}
	  \Ex\Bigg[\log \sum_{t\in T} \mu(t) \exp(\beta X_t)\Bigg]=\Ex\Bigg[\log \sum_{t\in T} \mu(t) \exp(\beta (X_t-X_{t_0}))\Bigg],
	 \end{align*}
  we can assume $X_{t_0}=0$ a.s.. Let
  \begin{align*}
	  Y \deq \sum_{t\in T} \mu(t) \exp(\beta X_t)
	\end{align*}
  and
  \begin{align*}
	  Y_k \deq \sum_{t\in T_k} \mu(t)\exp(\beta X_t) + \exp(\beta X_{t_0})\mu(T_k^{\rm c}).
	\end{align*}
  Then $f(\mu_k) =\Ex \log Y_k$ and $f(\mu) =\Ex \log Y$. 
  Note that $\Ex Y \le \exp(\beta^2 \Delta^2 / 2)<\infty$, which implies that $Y<\infty$ a.s.\ since $Y > 0$. 
  Therefore, the tail sum
  \begin{align*}
	  \sum_{t\in T_k^{\rm c}}\mu(t) \exp(\beta X_t) \to 0 \text{ a.s.}
	 \end{align*}
	 and
  \begin{align*}
	  \abs{Y_k-Y} \le \sum_{t\in T_k^{\rm c}} \mu(t) \exp(\beta X_{t}) + \mu(T_k^{\rm c}) \to 0 \text{ a.s.}
	 \end{align*}
  Then $\log Y_k \to \log Y$ a.s.. 
  Notice that 
  $\log Y_k \le \log (Y_k + 1) \le Y_k \le Y + 1$
  and 
  $\log Y_k \ge \log \mu(t_0)$. Thus, 
  $\abs{\log Y_k} \le (Y+1) \vee \abs{\log \mu(t_0)} $.
  Then the dominated convergence theorem yields the desired limit.

  To show $\lim_{k\to \infty}g(\mu_k) = g(\mu)$, we establish
  \begin{align*}
    R_{\mu}(\epsilon^2 + \Delta^2 \mu(T_k^{\rm c}))
   \le R_{\mu_k}(\epsilon^2) \le R_{\mu}(\epsilon^2) + 2 h(\mu(T_k^{\rm c})).
  \end{align*}
  The upper bound can be obtained from Eq.~(137) in \citet{liuSimpleSharpGeneralization2025}.
  We prove the lower bound. Let $Q_{\hat{Z}|Z_k}$ be optimal channel (regular conditional probability law of $\hat{Z}$ given $Z_k$) that achieves 
  $R_{\mu_k}(\epsilon^2)$. Then 
  \begin{align*}
    \Ex d^2(Z, \hat{Z})
    &= \Ex [d^2(Z,\hat{Z}){\bf 1}_{\set{Z \in T_k}}]
    + \Ex [d^2(Z,\hat{Z}){\bf 1}_{\set{Z \not\in T_k}}]\\
    &\le \epsilon^2 + \Delta^2 \mu(T_k^{\rm c})
  \end{align*}
  By the data processing inequality, $I(\hat{Z};Z)\le I(\hat{Z}; Z_k)$, which, together with 
  the distortion estimate, gives the lower bound.
  Therefore, 
  \begin{align*}
  \limsup_{k\to \infty} g(\mu_k) \le g(\mu)
  \end{align*}
   by continuity of the binary entropy function 
   and 
   \begin{align*}
   \liminf_{k \to \infty} g(\mu_k)
   \ge \int_0^\Delta
   \Big(\sqrt{\liminf_{k\to \infty} R_\mu(\epsilon^2+ \Delta^2 \mu(T_k^{\rm c}))}
      \wedge \beta \epsilon\Big)\dif \epsilon
      = g(\mu)
   \end{align*}
   by Fatou's lemma and continuity of rate-distortion function except at most countably
   many points \citep{polyanskiyInformationTheoryCoding2024}. 
   
   \subsection{The soft Fernique functional in the countable case}

\begin{theorem}\label{thm:countable_mi_trunc}
Assume that $(X_t)_{t\in T}$ be a centered subgaussian process
indexed with a countable set $T$. Let $\tau$ be a random variable 
on $T$ with arbitrary correlation with $X$ such that $P_\tau=\nu$.
Then 
\begin{align*}
  \Ex X_\tau \lesssim \int_0^\Delta 
  \sqrt{R_{\nu}(\epsilon^2)\wedge I(X;\tau)} \dif \epsilon.
\end{align*}
If $(X_t)_{t\in T}$ is a Gaussian process then we have, for any $\mu\in \eP(T)$,
\begin{align*}
 \Ex X_{\uptau_{\beta,\mu}} 
\gtrsim 
\int_0^\Delta \sqrt{R_{\overline{\sm}_{\beta,\mu}}(\epsilon^2)\wedge I(X; \uptau_{\beta,\mu})} \dif \epsilon.
\end{align*}
\end{theorem}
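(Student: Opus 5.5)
The plan is to reduce both inequalities to the finite-$T$ results (Theorem~\ref{thm:lift_upper} and Theorem~\ref{thm:lift_lower}) via the truncation $Z\mapsto Z_k$ used in the proof of Theorem~\ref{thm:countable_softmmt}, then pass to the limit $k\to\infty$.

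For the upper bound, enumerate $T=\set{t_0,t_1,\dots}$ and let $T_k=\set{t_0,\dots,t_k}$. Define $\tau_k\deq \tau\mathbf{1}_{\set{\tau\in T_k}}+t_0\mathbf{1}_{\set{\tau\notin T_k}}$ with law $\nu_k$. Since $\tau_k$ is a function of $\tau$, the data processing inequality gives $I(X;\tau_k)\le I(X;\tau)$. Applying Theorem~\ref{thm:lift_upper} to the process restricted to the finite set $T_k$ (whose diameter is at most $\Delta$) gives
\begin{align*}
\Ex X_{\tau_k}\lesssim \int_0^\Delta \sqrt{R_{\nu_k}(\epsilon^2)\wedge I(X;\tau)}\dif\epsilon .
\end{align*}
The limits are handled separately on each side.
\begin{itemize}
\item[(i)] On the left, $\Ex X_{\tau_k}=\Ex[X_\tau\mathbf{1}_{\set{\tau\in T_k}}]+\Ex[X_{t_0}\mathbf{1}_{\set{\tau\notin T_k}}]$. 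Replacing $X$ by $X-X_{t_0}$ leaves $\Ex X_\tau$ unchanged because the process is centered; the $R$ terms do not change, and neither does $I(X;\tau)$ for the comparison, since $I(X-X_{t_0};\tau)\le I(X;\tau)$. After this replacement, $\Ex X_{\tau_k}\to\Ex X_\tau$ by dominated convergence, using $|X_\tau|\le\sup_t|X_t-X_{t_0}|$. This requires integrability of $\sup_t|X_t - X_{t_0}|$. If that fails, I would instead argue with $\Ex[X_\tau\mathbf{1}_{\set{\tau\in T_k}}]$ and the uniform integrability of $X_\tau$, which follows from subgaussianity together with $I(X;\tau)<\infty$ (the case $I=\infty$ is trivial).
\item[(ii)] On the right, use the bound $R_{\nu_k}(\epsilon^2)\le R_\nu(\epsilon^2)+2h(\nu(T_k^{\rm c}))$ established in the proof of Theorem~\ref{thm:countable_softmmt}. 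The integrand is bounded by $\sqrt{I(X;\tau)}$, so dominated convergence and continuity of $h$ at $0$ give the limit $\int_0^\Delta\sqrt{R_\nu(\epsilon^2)\wedge I(X;\tau)}\dif\epsilon$.
\end{itemize}

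For the lower bound, I will not try to re-run the tensorization. Instead I will use the same chain as in the proof of Theorem~\ref{thm:lift_lower}, whose only inputs are the following.
\begin{itemize}
\item[(a)] The identity $\Ex X_{\uptau_{\beta,\mu}}=\eF_\beta(X;\mu)+\beta^{-1}(I+D)$ from Lemma~\ref{lm:identities}. This holds verbatim for countable $T$, since $\Ex\sum_t\mu(t)e^{\beta X_t}<\infty$.
\item[(b)] The variational lower bound $\eF_\beta(X;\mu)\ge \bm{\phi}_\beta(\overline{\sm})-\beta^{-1}D(\overline{\sm}\|\mu)$ combined with $\bm{\phi}_\beta(\rho)\gtrsim\int(\sqrt{R_\rho}\wedge\beta\epsilon)$.
\end{itemize}
For countable $T$, I would replace (b) with its direct analogue: by the Gibbs variational principle applied in expectation, $\eF_\beta(X;\mu)\ge \sup_{\rho}\{\eF_\beta(X;\rho)\text{-type term}\}$. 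More concretely, I would prove $\eF_\beta(X;\mu)+\beta^{-1}D(\rho\|\mu)\gtrsim \int_0^\Delta(\sqrt{R_\rho(\epsilon^2)}\wedge\beta\epsilon)\dif\epsilon$ for $\rho=\overline{\sm}_{\beta,\mu}$ by approximation through finite sets:
\begin{itemize}
\item[(1)] Truncate to $\mu_k$ and $\rho_k$ on $T_k$, apply the finite-$T$ version of (b) obtained in the proof of Theorem~\ref{thm:lift_lower}.
\item[(2)] Pass to the limit using $f(\mu_k)\to f(\mu)$, proved in the proof of Theorem~\ref{thm:countable_softmmt}.
\item[(3)] Use lower semicontinuity $D(\rho\|\mu)\le\liminf D(\rho_k\|\mu_k)$; in fact under the joint truncation map $D(\rho_k\|\mu_k)\le D(\rho\|\mu)$ by data processing, which is the direction needed.
\item[(4)] Apply the Fatou argument for $R_{\rho_k}\ge R_\rho(\epsilon^2+\Delta^2\rho(T_k^{\rm c}))$, as in Theorem~\ref{thm:countable_softmmt}.
\end{itemize}
With (b) in hand, the layer-cake computation and the absorption of $\beta^{-1}(I+D)$ proceed exactly as in Theorem~\ref{thm:lift_lower}.

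The main obstacle is step (b) in the countable setting. The finite-$T$ chain uses $\bm{\phi}_\beta$, defined through type classes that do not exist for infinite $T$. I expect the truncation bookkeeping to work, because every quantity involved is monotone or continuous under the pushforward $\tau\mapsto\tau_k$. The delicate point is that truncation changes the Gibbs measure itself: $\overline{\sm}_{\beta,\mu_k}\ne(\overline{\sm}_{\beta,\mu})_k$. I would therefore state the finite bound for an arbitrary $\rho\ll\mu$ rather than for the Gibbs marginal, and plug in $\rho=\overline{\sm}_{\beta,\mu}$ only after the limit.
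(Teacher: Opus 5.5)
Your proposal is correct. The upper bound is the paper's argument; the lower bound reaches the same conclusion by a genuinely different and more economical route.

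\textbf{Upper bound.} You truncate $\tau\mapsto\tau_k$, apply Theorem~\ref{thm:lift_upper} on $T_k$ with $I(X;\tau_k)\le I(X;\tau)$, and use $R_{\nu_k}(\epsilon^2)\le R_\nu(\epsilon^2)+2h(\nu(T_k^{\rm c}))$. This matches the paper. Two small slips there:
\begin{itemize}
\item In (i) you do not need $\sup_t|X_t-X_{t_0}|$ to be integrable, and it need not be (take i.i.d.\ Gaussians indexed by $\bN$). After recentering, $\Ex X_{\tau_k}=\Ex[X_\tau\mathbf{1}_{\{\tau\in T_k\}}]$, and $|X_\tau|$ itself dominates. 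Integrability of $X_\tau$ follows from $I(X;\tau)<\infty$ by the Donsker--Varadhan bound together with subgaussian increments.
\item The case $I(X;\tau)=\infty$ is not trivial. The right-hand side then reduces to $\int_0^\Delta\sqrt{R_\nu(\epsilon^2)}\,\dif\epsilon$, which can be finite even when $H(\nu)=\infty$. The same truncation argument still works: in (ii), dominate by $\sqrt{R_\nu(\epsilon^2)}+\sqrt{2h(\nu(T_k^{\rm c}))}$ instead of $\sqrt{I(X;\tau)}$. Note also that (ii) only needs a $\limsup$, not a full limit.
\end{itemize}

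\textbf{Lower bound: comparison with the paper.} The paper applies the finite-$T$ chain to $\mu_k$ with its \emph{own} Gibbs marginal $\overline{\sm}_{\beta,\mu_k}$. It then has to prove three things:
\begin{itemize}
\item $\lVert\sm_{\beta,\mu_k}-\sm_{\beta,\mu}\rVert_{\rm TV}\to0$ a.s.;
\item $D(\overline{\sm}_{\beta,\mu_k}\|\mu_k)\to D(\overline{\sm}_{\beta,\mu}\|\mu)$, via the density-ratio bound $e^{\beta^2\Delta^2/2}/\mu(t_0)$ and dominated convergence;
\item tacitly, lower semicontinuity of $\rho\mapsto R_\rho(\epsilon^2)$ along $\overline{\sm}_{\beta,\mu_k}\to\overline{\sm}_{\beta,\mu}$. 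This sequence is not a truncation pushforward, so the comparison from Theorem~\ref{thm:countable_softmmt} does not apply to it directly.
\end{itemize}

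You instead fix $\rho=\overline{\sm}_{\beta,\mu}$ and push both $\rho$ and $\mu$ through the same truncation map. You use the finite bound $\eF_\beta(X;\mu_k)+\beta^{-1}D(\rho_k\|\mu_k)\gtrsim\int_0^\Delta(\sqrt{R_{\rho_k}(\epsilon^2)}\wedge\beta\epsilon)\,\dif\epsilon$. This is valid for arbitrary $\rho_k$ by Lemma~\ref{lem:var_rep} and the proof of Theorem~\ref{thm:nonstarionary_1}. Restricting the reproduction alphabet to $T_k$ only increases $R$, which is the favourable direction here. The remaining steps are:
\begin{itemize}
\item data processing gives $D(\rho_k\|\mu_k)\le D(\rho\|\mu)$, exactly the direction needed;
\item $\eF_\beta(X;\mu_k)\to\eF_\beta(X;\mu)$ is already established in the paper;
\item Fatou's lemma, with $R_{\rho_k}(\epsilon^2)\ge R_\rho(\epsilon^2+\Delta^2\rho(T_k^{\rm c}))$ and continuity of $R_\rho$ on $(0,\infty)$, closes the argument. $R_\rho$ is convex there and finite because the distortion is bounded.
\end{itemize}

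What each route buys: yours needs no convergence analysis of random Gibbs measures. As a by-product it gives the countable-$T$ inequality $\eF_\beta(X;\mu)+\beta^{-1}D(\rho\|\mu)\gtrsim\int_0^\Delta(\sqrt{R_\rho(\epsilon^2)}\wedge\beta\epsilon)\,\dif\epsilon$ for every $\rho$. The paper's route additionally yields convergence of the Gibbs measures under truncation, which this theorem does not require.

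The layer-cake step and the absorption of $\beta^{-1}(I+D)$ via Lemma~\ref{lm:identities} then go through as you describe. Both quantities are finite because $\eG_\beta(X;\mu)<\infty$.
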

\begin{proof}
  We adapt the notation from the proof of Theorem~\ref{thm:countable_softmmt}. 
  For the upper bound, 
  let $X^k=(X_{t_i})_{i \le k}$ and
  let $\tau_k = \tau {\bf 1}_{\set{\tau\in T_k}} + t_0{\bf 1}_{\set{\tau \not\in T_k}}$ 
  and denote $\nu_k = P_{\tau_k}$.
  We can assume $\int_0^\infty \Big(\sqrt{R_\nu(\epsilon^2)}\wedge I(X;\tau)\Big)\dif \epsilon<\infty$,
  otherwise there is nothing to prove. Following \citet{liuSimpleSharpGeneralization2025},
  it can be shown that 
  \begin{align*}
  \lim_{k \to \infty} \Ex X_{\tau_k} = \Ex X_\tau.
  \end{align*}
  With the same argument as in the proof of Theorem~\ref{thm:countable_softmmt}, we can obtain 
  \begin{align*}
  \lim_{k\to \infty} \int_0^\infty \sqrt{R_{\nu_k}(\epsilon^2)} \dif \epsilon 
  =\int_0^\infty \sqrt{R_{\nu}(\epsilon^2)} \dif \epsilon.
  \end{align*}
  Then applying Theorem~\ref{thm:lift_upper} on $\Ex X_{\tau_k}$ 
  using $I(X^k;\tau_k) \le I(X; \tau)$, and taking the limit on both sides yields the desired upper bound.
  Next, we prove the lower bound.
  For brevity, let $\sm \deq \sm_{\beta,\mu}$, $\sm_{k} \deq \sm_{\beta, \mu_k}$, and $\uptau \deq \uptau_{\beta,\mu}$.
  Note that 
  \begin{align*}
  \Ex X_{\uptau} \ge \eF_{\beta}(X;\mu) = \lim_{k\to \infty} \eF_{\beta}(X;\mu_k)
  \end{align*}
  where the equality is established in the proof of Theorem~\ref{thm:countable_softmmt}.
  Applying the arguments in Theorem~\ref{thm:lift_lower} 
  with the truncation rate $\overline{u}=\sqrt{I(X;\uptau)}$, we have
  \begin{align*}
     \eF_{\beta}(X;\mu_k) 
    \ge c \int_0^\infty \sqrt{R_{\overline{\sm}_k}(\epsilon^2)\wedge I(X;\uptau_{\beta,\mu})}  
    - c \frac{I(X;\uptau)}{2\beta} - \frac{1}{\beta}D(\overline{\sm}_k\|\mu_k).
  \end{align*}
  If we can show $\norm{\sm_k-\sm}_{\rm TV}\to 0$ a.s.
  and $D(\overline{\sm}_k\| \mu_k)\to D(\overline{\sm}\|\mu)$ as $k\to \infty$, then taking limit on both sides of the estimate 
  above and using \eqref{eq:identities} will complete the proof.
  As in the proof of Theorem~\ref{thm:countable_softmmt}, we assume $\mu(t_0)>0$ and $X_{t_0}=0$ a.s..
  Since
  \begin{align*}
  \sum_{t\in T}\abs{\sm_{k}(t)-\sm(t)}
  &= \abs[\Big]{\frac{(\mu(t_0)+\mu(T_k^{\rm c}))\exp(\beta X_{t_0})}{Y_k}-\frac{\mu(t_0)\exp(\beta X_{t_0})}{Y}}\\
  &\quad + \sum_{t\in T_k \setminus \set{t_0}}\mu(t)\exp(\beta X_t) \abs[\Big]{\frac{1}{Y_k}-\frac{1}{Y}} 
  + \frac{1}{Y}\sum_{t\in T_k^{\rm c}}\mu(t)\exp(\beta X_t)\\
  &\le \frac{\mu(T_k^{\rm c})}{Y_k} + \sum_{t\in T_k }\mu(t)\exp(\beta X_t) \abs[\Big]{\frac{1}{Y_k}-\frac{1}{Y}} 
  + \frac{1}{Y}\sum_{t\in T_k^{\rm c}}\mu(t)\exp(\beta X_t)\\
  &\le\frac{\mu(T_k^{\rm c})}{Y_k} + \frac{\abs{Y_k-Y}}{Y_k} 
  + \frac{1}{Y}\sum_{t\in T_k^{\rm c}}\mu(t)\exp(\beta X_t)
  \end{align*}
  and we have shown that $\mu(T_k^{\rm c})\to 0$, $\abs{Y_k-Y}\to 0$ a.s. 
  and $\sum_{t\in T_k^{\rm c}}\mu(t)\exp(\beta X_t)\to 0$ a.s. as $k\to \infty$,
  we have
  \begin{align*}
    \lim_{k\to \infty}\norm{\sm_{k}-\sm}_{\rm TV}= 0\quad \text{a.s.}.
  \end{align*}
It remains to prove the convergence of the relative entropy term.
  Observe that $\overline{\sm}_k(t)= \Ex \frac{\mu_k(t)\exp(\beta X_t)}{\sum_s \mu_k(s) \exp(\beta X_s)}\le \Ex \frac{\mu_k(t)\exp(\beta X_t)}{\mu(t_0)}$,
  thus $\frac{\overline{\sm}_k(t)}{\mu_k(t)}\le \frac{\Ex \exp(\beta X_t)}{\mu(t_0)}  \le \frac{\exp (\beta^2 \Delta^2 /2)}{ \mu(t_0)} $ for all $t\in T$.
  Since $x\mapsto x \log x$ is bounded on $[0,\frac{\exp (\beta^2 \Delta^2 /2)}{ \mu(t_0)} ]$, we can find a constant $L>0$ so that 
  \begin{align*}
    \sum_{t\in T}\mu_k(t) \abs[\bigg]{\frac{\overline{\sm}_k(t)}{\mu_k(t)} \log \frac{\overline{\sm}_k(t)}{\mu_k(t)}}
  \le L \sum_{t\in T} (\mu(t)+\delta_{t_0}(t)) < \infty.
  \end{align*}
  By the dominated convergence theorem, 
  \begin{align*}
  \lim_{k\to \infty} D(\overline{\sm}_k\|\mu_k) = D(\overline{\sm}\|\mu).
  \end{align*}
(Generally, the relative entropy is only weakly lower-semicontinuous \citep{polyanskiyInformationTheoryCoding2024}. However, in this particular case we can establish convergence by a direct argument making use of convergence in total variation and of uniform boundedness, see the article of \citet{dobrushin_1960} for more details.)
\end{proof}

\section{Example: The Sherrington--Kirkpatrick model}
\label{sec:SK}

As an illustrative example, we show how our results can be applied to the quenched free energy in the Sherrington--Kirkpatrick (SK) model \citep{panchenko_sherrington-kirkpatrick_2013}. For $N \in \{2,3,\dots\}$, let $(G_{ij})_{1 \le i,j \le N}$ be a collection of i.i.d.\ standard Gaussian random variables. Consider the Gaussian process $X  = (X_t)_{t\in T}$ indexed by 
$t = (t_1,\ldots,t_N) \in T = \set{-1, 1}^N$ and defined by
\begin{align*}
X_t \deq \frac{1}{\sqrt{N}} \sum_{1\le i, j \le N} G_{ij}t_i t_j.
\end{align*}
This is the (random) Hamiltonian for the SK model without an external field. For $s,t\in T$, denote the \textit{overlap} of $s,t$ as 
\begin{align*}
R(s,t) \deq \frac{1}{N} \sum_{i\le N} t_i s_i
\end{align*}
Then the covariance of $X$ is $\Ex [X_t X_s]= N R^2(s,t)$ and
the canonical distance induced by $(X_t)$ is 
$d^2(t,s) =2N(1-R^2(s,t)) $. It is a pseudometric since $d(t,-t)=0$. We will write $X^{(N)}$ whenever we need to indicate the dependence on $N$ explicitly.
Let $\bG = \set{-1,1}^N$ be a group under coordinate multiplication. We define an action of $\bG$ on $T$:
for $g=(g_1,\ldots,g_N)\in \bG$ and $t=(t_1,\ldots,t_N)\in T$,
\begin{align*}
g(t) \deq (g_1 t_1,\ldots,g_N t_N).
\end{align*}
It is clear that $(X_t)_{t\in T}$ is a stationary Gaussian process with respect to $\bG$.

Let $\eD$ be the set of cumulative distribution functions on $[0,1]$, i.e., monotone nondecreasing, right-continuous functions $\alpha : [0,1] \to [0,1]$ with $\lim_{u \downarrow 0} \alpha(u) = 0$ and $\lim_{u \uparrow 1}\alpha(u) = 1$. The elements of $\eD$ play the role of (functional) \textit{order parameters} for the SK model. For each $\alpha\in \eD$ and $\beta \ge 0$, let 
 $f_{\alpha,\beta}$ be the solution to the \textit{Parisi PDE},
\begin{align*}
\p_u f_{\alpha,\beta}(u,x) = - \beta^2(\p_{x x}f_{\alpha,\beta}(u,x)
+ \alpha(u)(f_{\alpha,\beta}(u,x))^2) 
\end{align*}
for $(u,x)\in [0,1]\times \bR$ with the terminal condition $f_{\alpha,\beta}(1,x)=\log \cosh x$. 
Define the \textit{Parisi functional} by
\begin{align*}
\sP(\alpha,\beta) \deq f_{\alpha, \beta}(0,0) - \beta^2 \int_0^1 \alpha(u)u \dif u.
\end{align*}
The minimum
\begin{align*}
	\sP(\beta) \deq \min_{\alpha \in \eD} \sP(\alpha,\beta)
\end{align*}
exists and is achieved at a unique $\alpha \in \eD$ \citep{auffinger_parisi_2015}. Let $\su^{(N)}$ denote the uniform probability distribution on $\{-1,+1\}^N$. The Parisi formula states that 
\begin{align*}
\lim_{N\to \infty}\frac{\beta}{N} \eF_\beta(X^{(N)} ; \su^{(N)}) 
= \sP(\beta).
\end{align*}
The upper bound in the Parisi formula
follows from Guerra's replica-symmetry-breaking interpolation, which
relies on the standard Gaussian comparison inequality and Gaussian integration-by-parts 
identities \citep{guerraBrokenReplicaSymmetry2003}.
The lower bound is much more delicate and was first proved by \citet{talagrand_parisi_2006}
in a general setting of mixed even $p$-spin models 
and later extended to mixed $p$-spin models that include odd $p$-spin interactions
by \citet{panchenko_parisi_2014}.
We state a Parisi formula in the finite setting. 
\begin{theorem}\label{thm:parisi}
  For any finite $N \ge 2$, we have
  \begin{align*}
  \frac{\beta}{N}\eF_\beta(X^{(N)}, \su^{(N)}) \asymp \sP(\beta).
  \end{align*}
\end{theorem}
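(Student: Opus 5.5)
The plan is to compute both sides separately and show that each is comparable to the same elementary function of $\beta$, namely $\beta^2 \wedge \beta$, with constants uniform in $N \ge 2$.

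\emph{Step 1 (finite $N$).} As noted above, $X^{(N)}$ is stationary under the action of $\bG=\{-1,1\}^N$. Its diameter is $\Delta_N=\sqrt{2N}$, attained at overlap $R=0$. Proposition~\ref{prop:unif_sup} identifies $\su^{(N)}$ as the maximizer of $\mu\mapsto\eF_\beta(X^{(N)};\mu)$, so Theorem~\ref{thm:stationary_2} (after rescaling by $\Delta_N$) gives
\begin{align*}
\eF_\beta(X^{(N)};\su^{(N)}) \asymp \int_0^{\sqrt{2N}} \sqrt{\log \sN(T,\epsilon)\wedge \beta^2\epsilon^2}\,\dif\epsilon .
\end{align*}
Substituting $\epsilon=\sqrt N\delta$ with $\delta\in[0,\sqrt2]$, the right-hand side becomes
\begin{align*}
N\int_0^{\sqrt2}\sqrt{c_N(\delta)\wedge\beta^2\delta^2}\,\dif\delta, \qquad c_N(\delta)\deq \tfrac1N\log\sN(T,\sqrt N\delta).
\end{align*}

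\emph{Step 2 (covering numbers).} The upper bound $c_N(\delta)\le\log 2$ is trivial. For a lower bound on a fixed range $\delta\le\delta_0$, I will use
\begin{align*}
d^2(s,t)=2N(1-R^2)\ge 2N(1-|R|)=\tfrac{4}{1}\min\{d_{\rm H}(s,t),\,N-d_{\rm H}(s,t)\}.
\end{align*}
A Gilbert--Varshamov packing gives $e^{cN}$ sign vectors whose pairwise Hamming distances all lie in $[N/4,3N/4]$. Their pairwise $d$-distances are then at least $\sqrt N$, so $c_N(\delta)\ge c$ for all $\delta\le\delta_0$. For very small $N$, where $e^{cN}$ may be below $2$, I instead use a single pair of non-equivalent points, which still gives $\sN\ge2$ and hence $c_N(\delta)\gtrsim 1$ since $N$ is bounded in that case.

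Plugging these bounds into Step 1 gives $\int_0^{\sqrt2}\sqrt{c_N\wedge\beta^2\delta^2}\,\dif\delta\asymp \beta\wedge 1$. The small-$\beta$ regime contributes $\int\beta\delta\,\dif\delta\asymp\beta$, and the large-$\beta$ regime saturates at $\asymp1$. Hence
\begin{align*}
\frac{\beta}{N}\eF_\beta(X^{(N)};\su^{(N)})\asymp \beta^2\wedge\beta \qquad\text{uniformly in } N\ge2 .
\end{align*}

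\emph{Step 3 (the Parisi side).} By the Parisi formula quoted above, $\sP(\beta)=\lim_N \frac{\beta}{N}\eF_\beta(X^{(N)};\su^{(N)})$. The two-sided estimate of Step 2 has constants independent of $N$, so it passes to the limit and gives $\sP(\beta)\asymp\beta^2\wedge\beta$. Combining this with Step 2 yields the theorem.

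The main obstacle is Step 2: the constants must be uniform in $N$, including the small-$N$ cases and the pseudometric identification $t\sim -t$. The lower bound on covering numbers over a constant-size interval of scales is the delicate part. The Guerra and Talagrand results are used only as a black box, through the existence and value of the limit.
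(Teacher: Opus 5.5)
Your argument is correct, but it takes a different route from the paper. You reduce both sides to the explicit profile $\beta^2\wedge\beta$.

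\emph{Free-energy side.} You combine Theorem~\ref{thm:stationary_2} with the trivial bound $\log\sN(T,\epsilon)\le N\log 2$. You then add a Gilbert--Varshamov packing at a single constant scale; the inequality $d^2\ge 4\min\{d_{\rm H},N-d_{\rm H}\}$ handles the identification $t\sim -t$ cleanly.

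\emph{Parisi side.} You pass these $N$-uniform bounds through the full limit theorem $\sP(\beta)=\lim_N\frac{\beta}{N}\eF_\beta(X^{(N)};\su^{(N)})$.

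The paper never uses that limit theorem. For one direction it relies only on Guerra's replica-symmetry-breaking bound $\frac{\beta}{N}\eF_\beta\le\sP(\beta)$, which holds at each fixed $N$. For the other direction it proceeds in three steps:
\begin{enumerate}
\item it builds an explicit order parameter $\alpha_D$ from Hamming-ball volume estimates at all scales;
\item it shows $\frac{\beta}{N}\eF_\beta\gtrsim\beta^2\int_0^1\alpha_D(u)u\,\dif u$;
\item it bounds $\sP(\alpha_D,\beta)$ from above via the Auffinger--Chen stochastic-control representation of $f_{\alpha,\beta}(0,0)$.
\end{enumerate}

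Your version is shorter and makes explicit something the paper leaves implicit: both sides are $\asymp\beta^2\wedge\beta$ uniformly in $N\ge 2$. However, it makes a finite-$N$ statement depend on the deepest ingredient of the theory, Talagrand's lower bound in the Parisi formula. The paper's argument is genuinely finite-$N$, and it ties the chaining functional to a concrete order parameter, which is the point of the example.

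You do not actually need Talagrand within your own framework.
\begin{itemize}
\item Guerra's bound at any single $N$ already gives $\sP(\beta)\ge\frac{\beta}{N}\eF_\beta\gtrsim\beta^2\wedge\beta$.
\item The reverse bound $\sP(\beta)\lesssim\beta^2\wedge\beta$ follows from $\sP(\beta)\le\sP(\alpha,\beta)$, for any admissible $\alpha$ with $\beta^2\int_0^1\alpha(u)\,\dif u\lesssim\beta^2\wedge\beta$. To see this, use the same estimate as in the paper, $f_{\alpha,\beta}(0,0)\le\Ex\log\cosh(\sqrt2\beta G)+\beta^2\int_0^1\alpha(u)\,\dif u$.
\end{itemize}

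A minor point: for odd $N$ the overlap $R=0$ is not attainable, so $\Delta=\sqrt{2N(1-N^{-2})}$ rather than $\sqrt{2N}$. This changes nothing in your argument.
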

\noindent
Since the replica-symmetry-breaking upper bound holds for any $N$ \citep{guerraBrokenReplicaSymmetry2003},
it suffices to prove a matching lower bound up to some universal constant.
The idea is to utilize the soft entropy integral bound for stationary Gaussian  processes
to construct an explicit order parameter $\alpha_{D} \in \eD$ and show that 
\begin{align*}
\frac{\beta}{N} \eF_\beta(X^{(N)};\su^{(N)}) \gtrsim \sP(\alpha_D, \beta).
\end{align*}
Below we will show that the following choice will suffice:
\begin{align*}
  \alpha_D (u) \deq 
  \sqrt{\frac{(d(\frac{1+u}{2}\|\frac{1}{2})-\frac{\log 2}{2})_+}{\beta^2(1-u^2)}} 
  \wedge 1 
  =\sqrt{\frac{(\frac{\log 2}{2} - h(\frac{1-u}{2}))_+}{\beta^2(1-u^2)}} 
  \wedge 1,
\end{align*}
where, for $p,q \in (0,1)$, $d(p \| q) \deq p \log \frac{p}{q} + (1-p)\log \frac{1-p}{1-q}$ is the relative entropy between ${\rm Bernoulli}(p)$ and ${\rm Bernoulli}(q)$ distributions on $\{0,1\}$.

\begin{proof} Fix an $N \ge 2$.
Notice that 
\begin{align*}
  NR(s,t)=\sum_{i=1}^N t_i s_i 
  = \sum_{\set{i: t_i=s_i}} t_i s_i
    +\sum_{\set{i: t_i\neq s_i}} t_i s_i = N-2  d_{\rm H}(t,s),
\end{align*}
where $d_{\rm H}(s,t) = \sum_{1\le i\le N}  \mathbf{1}_{\set{t_i \neq s_i}}$ is the Hamming distance on $T^N \deq \{-1,+1\}^N$.
Then 
\begin{align}\label{eq:hamming_cano}
N d^2(s,t) = 8 d_{\rm H}(s,t)(N - d_{\rm H}(s,t))
\end{align}
and $\Delta=\diam(T,d) = \sqrt{2N}$. By Theorem~\ref{thm:stationary_2},
\begin{align*}
\frac{1}{N}\eF_\beta(X^{(N)};\su^{(N)})
\gtrsim \frac{1}{N} \int_0^{\sqrt{2N}} 
\big(\sqrt{\log \sN(T, \epsilon, d)} \wedge \beta \epsilon\big) \dif \epsilon
\end{align*}
From Eq.~\eqref{eq:hamming_cano},
\begin{align*}
d(s,t) \le \epsilon \iff 
\Bigg[ d_{\rm H}(s,t) \le\frac{N}{2}-\frac{N}{2}\sqrt{1-\frac{\epsilon^2}{2N}} \quad\text{or}\quad 
d_{\rm H}(s,t) \ge \frac{N}{2}+\frac{N}{2}\sqrt{1-\frac{\epsilon^2}{2N}} \Bigg].
\end{align*}
In other words, 
denoting $\delta\deq\floor[\big]{\frac{N}{2}-\frac{N}{2}\sqrt{1-\frac{\epsilon^2}{2N}} }$, 
we have for any $t\in T $
\begin{align*}
B_d(t,\epsilon) = B_{\rm H}(t,\delta)
\cup B_{\rm H}(-t,\delta).
\end{align*}
Let $S$ be a minimal $\epsilon$-net of $T^N$ with respect to $d$, so that $\abs{S}=\sN(T, \epsilon,d)$.
Then 
\begin{align*}
T \subset \bigcup_{t\in S} B_d(t, \epsilon)
= \bigcup_{t\in S\cup (-S)} 
B_{\rm H}(t,\delta),
\end{align*}
which implies $\sN(T, \delta, d_{\rm H}) 
\le  2\sN(T, \epsilon,d)$. By covering and volume estimates,
\begin{align*}
{V_k}\sN(T, k, d_{\rm H})\ge 2^N
\end{align*}
for $0 \le k \le N$, where $V_k=\sum_{i=1}^k \binom{N}{i}$ is the volume of the Hamming ball of radius $k$.
We can deduce from Proposition~\ref{prop:binom_est} that 
\begin{align*}
\log \sN(T, \delta, d_{\rm H})-\log 2
&\ge N \log 2 - \log V_{\delta}
\ge N \log 2 - N h(\delta / N)\\
&\ge N \log 2 - N h \left(\frac{1}{2}- \frac{1}{2}\sqrt{1-\frac{\epsilon^2}{2N}}\right),
\end{align*}
where the last inequality is due to the fact that the binary entropy function $h$ is 
increasing on $[0,1 / 2]$.
Therefore, combining the estimates above and the fact that $N \ge 2$, we get
\begin{align*}
\frac{\beta}{N}\eF_\beta(X^{(N)}; \su^{(N)})
&\gtrsim \frac{\beta}{N}\int_0^{\sqrt{2N}} 
\Bigg(\sqrt{N\bigg(\frac{\log 2}{2}
      -  h\bigg(\frac{1}{2}- \frac{1}{2} \sqrt{1-\frac{\epsilon^2}{2N}}\bigg)
  \bigg)_+}
\wedge \beta \epsilon\Bigg) \dif \epsilon\\
&= \sqrt{2}\beta\int_0^1
\Bigg(\sqrt{\frac{(\frac{\log 2}{2}- h(\frac{1-u}{2}))_+}{1-u^2}} 
\wedge \sqrt{2} \beta\Bigg) u \dif u\\
&\gtrsim \beta^2 \int_0^1 \alpha_D(u) u \dif u,
\end{align*}
where the equality uses the change of variables $\epsilon=\sqrt{2N(1-u^2)}$.
(It is worth mentioning that this is exactly the relation between 
the canonical distance $d$ and the overlap function $R$.)

It remains to show that
\begin{align*}
\sP(\alpha_D, \beta) \lesssim \beta^2 \int_0^1 \alpha_D(u) u \dif u. 
\end{align*}
We first obtain an upper bound on $\sP(\alpha, \beta)$ for any choice of the order parameter $\alpha\in \eD$. To that end, we use a variational representation of $f_{\alpha,\beta}$ 
by \citet[Theorem~3]{auffinger_parisi_2015}:
\begin{align*}
f_{\alpha,\beta}(0,0)
=\max_{Y} \Ex 
 \Bigg[\log \cosh\left(\beta^2 \int_0^1 2\alpha(u)  Y_u \dif u 
 + \sqrt{2}\beta \int_0^1 \dif W_u\right)- \beta^2\int_0^1 \alpha(u) Y^2_u \dif u\Bigg]
\end{align*}
where the maximum is taken over all progressively measurable processes $Y = (Y_u)_{u \in [0,1]}$ with respect to 
the filtration generated by standard Brownian motion $W = (W_u)_{u \in [0,1]}$.
Using Lipschitz continuity of $\log \cosh$ and the inequality $2 \abs{Y} \le Y^2 + 1$,
we have
\begin{align*}
f_{\alpha,\beta}(0,0)
&\le \max_{Y} \Ex  
\Big[\log \cosh(\sqrt{2}\beta G) + \beta^2\int_0^1 2\alpha(u) \abs{Y_u} \dif u
- \beta^2 \int_0^1 \alpha(u) Y^2_u \dif u\Big]\\
&\le \Ex [\log \cosh(\sqrt{2}\beta G)] + \beta^2 \int_0^1 \alpha(u) \dif u,
\end{align*}
where $G$ is a standard Gaussian random variable. Since
\begin{align*}
\Ex[\log \cosh (\sqrt{2}\beta G)]=\beta\eF_\beta(2G ; \su_{[2]})
\lesssim \beta^2 \wedge \beta
\end{align*}
and
\begin{align*}
\beta^2 \int_0^1 \alpha(u)\dif u
\le 2\beta^2 \int_{\frac{1}{2}}^1 \alpha(u) \dif u 
\le 2\beta^2 \int_{\frac{1}{2}}^1 2\alpha(u) u \dif u 
\le 4 \beta^2 \int_0^1 \alpha(u) u \dif u,
\end{align*}
it suffices to show 
\begin{align*}
\beta^2 \int_0^1 \alpha_D(u) u \dif u \gtrsim \beta^2 \wedge \beta.
\end{align*}
Let $g(u) \deq \sqrt{\frac{1}{1-u^2}(d(\frac{1+u}{2}\|\frac{1}{2})-\frac{1}{2}\log 2)_{+}}$.
Then $\alpha_D(u)= \frac{g(u)}{\beta} \wedge 1 $. It is easy to calculate that $g(4 / 5) >0$.
Notice that
\begin{itemize}
  \item If $\beta < g(4 / 5)$,
    \begin{align*}
    \beta^2 \int_0^1 \alpha_D(u) u \dif u 
    \ge \beta^2 \int_{4 /5}^1 \left(\frac{g(u)}{\beta} \wedge 1\right)  u \dif u 
    \ge \beta^2 \int_{4 / 5}^1 u \dif u \gtrsim \beta^2 .
    \end{align*}
  \item If $\beta \ge g (4 / 5)$,
    \begin{align*}
      \beta^2 \int_0^1 \alpha_D(u) u \dif u
      \ge \beta^2 \int_{4 / 5}^1 \left(\frac{g(u)}{\beta} \wedge 1\right) u \dif u
      \ge \beta g(4 / 5) \int_{4 / 5}^1  u \dif u \gtrsim \beta.
    \end{align*}
\end{itemize}
Therefore, the proof can be concluded as follows:
\begin{align*}
\sP(\beta)
&\le \sP(\alpha_D, \beta) 
= f_{\alpha_D,\beta}(0,0) - \beta^2 \int_0^1 \alpha_D(u) u \dif u
\lesssim \beta^2 \wedge \beta +\beta^2 \int_0^1 \alpha_D(u) u \dif u \\
&\lesssim \beta^2 \int_0^1 \alpha_D(u) u \dif u 
\lesssim \frac{\beta}{N} \eF_\beta(X^{(N)}; \su^{(N)}),
\end{align*}
as claimed.
\end{proof}

\appendix

\section{Omitted proofs}
\label{app:proofs}

\begin{proof}[Proof of Proposition~\ref{prop:bd_tensor}]

Consider some $\mu\in \eP(T)$. Then
\begin{align*}
  \frac{1}{N}\eF_\beta(\bm{X}^N, \mu^{\otimes N})
  &= \frac{1}{N \beta} \Ex\,{\log \sum_{\bm{t}\in T^N} 
  \mu^{\otimes N}(\bm{t} )e^{\beta \bm{X_t}}}\\
  &= \frac{1}{N \beta} \Ex\,{\log \sum_{\bm{t}\in S_{\mu}^N} 
  \mu^{\otimes N}(\bm{t} )e^{\beta \bm{X_t}}}\\
  &= \frac{1}{N \beta} \Ex\, { \log \sum_{\nu\in \cP_{N,\mu}} \sum_{\bm{t} \in \cC_{N,\nu}} 
  \prod_{t\in S_{\mu}} \mu(t)^{N \nu(t)} e^{\beta \bm{X_t}}}\\
  &= \frac{1}{N \beta} \Ex\, { \log \sum_{\nu\in \cP_{N,\mu}} \sum_{\bm{t} \in \cC_{N,\nu}} 
  e^{\beta \bm{X_t} + N \langle\nu, \log \mu\rangle}}\\
\end{align*}
Let
\begin{align*}
  \xi_{\mu}(\nu) \deq \frac{1}{\beta}  \log \sum_{\bm{t} \in \cC_{N,\nu}} 
  e^{\beta \bm{X_t} + \langle\nu, \log \mu\rangle }
\end{align*}
By Lemma~\ref{lem:concentration}, $\xi_{\mu}(\nu)$ is a $\sqrt{2N}\Delta$-subgaussian.
Then, using the maximal inequality for subgaussian random variables,
\begin{align*}
  \frac{1}{N} \max_{\nu\in \cP_{N,\mu}}\Ex\xi_\mu(\nu)
&\le\frac{1}{N } \Ex\big[\max_{\nu\in \cP_{N,\mu}}\xi_{\mu}(\nu)\big]\\
&\le\frac{1}{N \beta} \Ex\Big[\log\sum_{\nu \in \cP_{N,\mu}} e^{\beta\xi_{\mu}(\nu)}\Big]\\
&\le \frac{1}{N}\Ex\big[\max_{\nu\in \cP_{N,\mu}} \xi_{\mu}(\nu)\big] + \frac{\log |\cP_{N,\mu}|}{N \beta}\\
&\le \frac{1}{N}\max_{\nu \in \cP_{N,\mu}} \Ex{\xi_{\mu}(\nu)}
+ \frac{2\Delta \sqrt{N\log |\cP_{N,\mu}|} }{N } + \frac{\log |\cP_{N,\mu}|}{N \beta}.
\end{align*}
Therefore, with the estimate $|\cP_{N,\mu}|\le |\cP_N | \le (N+1)^{|T|}$ (see e.g. \citet{csiszarInformationTheoryCoding2011}), we have 
\begin{align*}
  \frac{1}{N}\eF_\beta(\bm{X}^N, \mu^{\otimes N})
  = \frac{1}{N} \max_{\nu\in \cP_{N,\mu}} \Ex{\xi_{\mu}(\nu)} + o_N(1).
\end{align*}
Substituting the expression of $\xi_\mu(\nu)$,
\begin{align*}
 \frac{1}{N}\eF_\beta(\bm{X}^N, \mu^{\otimes N})
  &\le \frac{1}{N}\max_{\nu\in \cP_{N,\mu}} \Ex{\xi_{\mu}(\nu)} + o_N(1)\\
  &= \max_{\nu\in\cP_{N,\mu}} \set[\Big]{\frac{1}{N\beta}\Ex{\Psi_\beta(\bm{X}^N; \cC_{N,\nu})} 
  +  \frac{\langle\nu, \log \mu\rangle}{\beta} } + o_N(1)\\
  &\le \max_{\nu\in\cP_{N,\mu}} \set[\Big]{\frac{1}{N\beta}
\Ex{\Psi_\beta(\bm{X}^N; \cC_{N,\nu,\epsilon})} 
+  \frac{\langle\nu, \log \mu\rangle}{\beta} } + o_N(1)\\
  &\le \sup_{\nu\in \eP_{\mu}(T)}\set[\Big]{\frac{1}{N}
  \Ex{\Psi_{\beta}(\bm{X}^N; \cC_{N,\nu,\epsilon} )} 
   +  \frac{\langle\nu, \log \mu\rangle}{\beta}} + o_N(1).
\end{align*}
Thus,
\begin{align*}
 \eF_\beta(X;\mu)
 \le \liminf_{\epsilon\downarrow 0} \liminf_{N \to \infty}
   \sup_{\nu\in \eP_{\mu}(T)}\set[\Bigg]{\frac{1}{N}\Ex{\Psi_{\beta}(\bm{X}^N; \cC_{N,\nu,\epsilon} )} 
   +  \frac{\langle\nu, \log \mu\rangle}{\beta}} .
\end{align*}
The next step is to obtain a lower bound for $\mu\in \eP_+(T)$. We begin by writing
\begin{align*}
\eF_\beta(X; \mu)
&= \frac{1}{N} \eF_\beta(\bm{X}^N, \mu^{\otimes N})\\
&= \frac{1}{N \beta} \Ex\Big[ \log \sum_{\bm{t}\in T^N}
  \mu^{\otimes N}(\bm{t}) e^{\beta \bm{X_t}}\Big]\\
&\ge \frac{1}{N \beta} \Ex\Big[ \log \sum_{\bm{t}\in \cC_{N,\nu,\epsilon}} 
\mu^{\otimes N}(\bm{t})e^{\beta \bm{X_t}}\Big].
\end{align*}
Since $\mu \in \eP_+(T)$, $\lVert \log \mu \rVert_{\infty} < \infty$.
Using this and the fact that 
$\lVert\hat{P}_{\bm{t}}-\nu\rVert_{\infty} \le 2 \epsilon$ for any $\bm{t}\in \cC_{N,\nu,\epsilon}$, we have
\begin{align*}
\eF_\beta(X;\mu) 
&\ge \frac{1}{N\beta} \Ex\Big[\log \sum_{\bm{t}\in \cC_{N, \nu,\epsilon}}
\prod_{t\in T} \mu(t)^{N(\nu(t)-2\epsilon)} e^{\beta \bm{X_t}}\Big]\\
&\ge \frac{1}{N \beta} \Ex\Big[\log \sum_{\bm{t}\in \cC_{N, \nu, \epsilon}} 
  e^{N \langle \nu, \log \mu \rangle-2 N \epsilon \lVert \log \mu \rVert_{\infty}
\abs{T}} e^{\beta \bm{X_t}}\Big]\\
&= \frac{1}{N} \Ex{\Psi_\beta (\bm{X}^N; \cC_{N, \nu, \epsilon}) }
+ \frac{N \langle \nu, \log \mu \rangle- 2N \epsilon 
\lVert \log \mu \rVert_{\infty} \abs{T}}{N \beta} 
\end{align*}
Therefore,
\begin{align*}
\eF_\beta(X;\mu)
&\ge \limsup_{\epsilon\downarrow 0}\limsup_{N \to \infty} \sup_{\nu\in \eP(T)}
\set[\Bigg]{\frac{1}{N} \Ex{\Psi_\beta(\bm{X}^N ; \cC_{N, \nu, \epsilon})}
  + \frac{\langle \nu, \log \mu \rangle}{\beta} 
+ O(\epsilon)}.
\end{align*}
\end{proof}

\begin{proof}[Proof of Proposition~\ref{prop:limit}]
Fix some $\nu \in \eP(T)$. We first show that the limits as $N \to \infty$ exist for each $\epsilon \in (0, 1-\frac{1}{\abs{T}})$ 
pointwise. We will use the following shorthand notation:
\begin{align*}
  \cC_N &\deq \cC_{N, \nu, \epsilon}\\
  Z_N &\deq \sum_{\bm{t}^N\in \cC_N} e^{\beta \bm{X}^N_{\bm{t}^N}} \\
  a_N &\deq \frac{1}{N} \Ex \log Z_N.
\end{align*}
We first show that $Na_N$ is superadditive. For $N, M \ge 1$,
let $\bm{X}^N, \bm{X}^M, \bm{X}^{N+M}$ be independent.
By definition of tensorized processes, we can concatenate $\bm{X}_{\bm{t}^N}^N$ 
and $\bm{X}_{\bm{t}^M}^M$.
For $\bm{t}^N \in \cC_N$ and $\bm{t}^{M}\in \cC_{M}$,
\begin{align*}
  &\lVert\hat{P}_{\bm{t}^N}-\nu\rVert_{{\rm TV}}\le \epsilon
  \quad\text{and}\quad\lVert{\hat{P}_{\bm{t}^M}-\nu\rVert_{{\rm TV}}}\le \epsilon
\end{align*}
Then by concatenation,
\begin{align*}
  \hat{P}_{(\bm{t}^N, \bm{t}^M)} = \frac{N}{N+M} 
  \hat{P}_{\bm{t}^N} + \frac{M}{N+M} \hat{P}_{\bm{t}^M}
  \implies \lVert\hat{P}_{(\bm{t}^N, \bm{t}^M)}-\nu\rVert \le \epsilon.
\end{align*}
Thus,
\begin{align*}
  (M+N)a_{M+N}
  &=\Ex\,{\log \sum_{\bm{t}^{N+M}\in \cC_{N+M} } e^{\beta \bm{X}_{\bm{t}^{M+N}}^{M+N}}}
\ge \Ex\,{\log \sum_{\bm{t}^N\in \cC_N}\sum_{\bm{t}^M\in \cC_M} 
e^{\beta \bm{X}_{\bm{t}^M}^M + \beta \bm{X}_{\bm{t}^N}^N}}\\
&= \Ex\,{\log Z_N } + \Ex\,{\log Z_M} = Na_N + Ma_M
\end{align*}
Therefore, the limit $\lim_{N \to \infty} a_N = \lim_{N \to \infty}\frac{1}{N} \Ex \log Z_N$ exists by Fekete's lemma
(see, e.g., \citet[Theorem~1.1]{panchenko_sherrington-kirkpatrick_2013}).

Now we show the existence of the limit
 $\lim_{N \to \infty} \frac{1}{N} \log \abs{\cC_{N, \nu, \epsilon}}$.  Recall that $\cP_N \subset \eP(T)$ is set of types of length $N$. 
%We suppress $N$ for brevity. 
 The following continuity property of $H_\epsilon$ (see Proposition~\ref{prop:cont_H_eps} for the definition) will be useful:
\begin{align*}
\max_{\rho \in B_{{\rm TV}}(\nu,\epsilon)\cap \cP_N} \log \abs{\cC_{N,\rho}}
&\le \log \abs{\cC_{N, \nu, \epsilon}} \le {\abs{T}}\log (N+1) 
+ \max_{\rho \in B_{{\rm TV}}(\nu,\epsilon)\cap \cP_N } \log \abs{\cC_{N,\rho}}\\
&\le \sup_{\rho \in B_{{\rm TV}}(\nu,\epsilon)} N H(\rho) + O(\log N) 
= N H_{\epsilon}(\nu) + O(\log N)
\end{align*}
where the last inequality uses $\abs{N H(\rho) - \log \abs{\cC_{N,\rho}}}\le O(\log N)$ for $\rho\in \cP_N$ 
(see, e.g., \citet{csiszarInformationTheoryCoding2011}).
Thus,
\begin{align*}
\limsup_{N \to \infty}\frac{1}{N} \log \abs{C_{N, \nu, \epsilon}} \le H_\epsilon(\nu).
\end{align*}

Fix some $0< \delta<\epsilon$. By the definition of $H_\epsilon$, we can find some $\rho \in B_{\rm TV}(\nu,\epsilon-\delta)$  so that
\begin{align*}
H(\rho ) \ge H_{\epsilon-\delta}(\nu)- \delta .
\end{align*}
By \eqref{eq:round}, there exists some $\hat{\rho} \in \cP_N$, so that $\lVert\rho- \hat{\rho}\rVert_{{\rm TV}}   \le \frac{\abs{T}}{N}$.
Then 
\begin{align*}
  \lVert\hat{\rho}-\nu\rVert_{{\rm TV}} \le \lVert\hat{\rho} - \rho\rVert_{{\rm TV}} + \lVert \rho-\nu \rVert_{{\rm TV}}
    \le \epsilon - \delta + \frac{\abs{T}}{N} 
    \le \epsilon\quad \text{for } N \ge \frac{\abs{T}}{\delta}.
\end{align*}
Therefore, for $N \ge \abs{T} / \delta$ and $\abs{T} / N \le 1 - 1 / \abs{T}$,
\begin{align*}
 \frac{1}{N} \log \abs{\cC_{N, \nu, \epsilon}} 
 &\ge \frac{1}{N} \log \abs{ \cC_{N, \hat{\rho}}} \\
&\ge  H(\hat{\rho}) - O(N^{-1}\log N )\\
&\ge H(\rho) - o_N(1)\\
&\ge H_{\epsilon-\delta}(\nu) - \delta -  o_N(1).
\end{align*}
Taking $N \to \infty$ and using Proposition~\ref{prop:cont_H_eps} yields
\begin{align*}
\liminf_{N \to \infty} \frac{1}{N} \log \abs{\cC_{ \nu, \epsilon}} \ge H_{\epsilon}(\nu). 
\end{align*}
Therefore, for any $\nu \in \eP(T)$ and for any sufficiently small $\epsilon>0$,
\begin{align*}
&\lim_{N \to \infty} \frac{1}{N} \log \abs{\cC_{N, \nu, \epsilon}} = H_{\epsilon}(\nu),\\
&\lim_{N\to \infty} \frac{1}{N}\eF_\beta(\bm{X}^N, \su_{\cC_{N,\nu,\epsilon}})
=\bm{\psi}_{\beta,\epsilon}(\nu)- \frac{1}{\beta} H_{\epsilon}(\nu).
\end{align*}
Since $\epsilon \mapsto \bm{\psi}_{\beta, \epsilon}(\nu)$ is nonnegative and monotone increasing,
$\lim_{\epsilon \downarrow 0}\bm{\psi}_{\beta,\epsilon}(\nu)$  
exists.
Also, by continuity of entropy in \eqref{eq:cont_ent} 
\begin{align*}
\lim_{\epsilon \downarrow 0} H_{\epsilon}(\nu)= H(\nu) .
\end{align*}
This completes the proof. \end{proof}

\section{Miscellaneous technical results}
\label{app:misc}

\setcounter{proposition}{0}
\renewcommand{\theproposition}{\Alph{section}.\arabic{proposition}}

The following result is useful for converting between certain integrals and geometrically weighted discrete sums:

\begin{proposition}\label{prop:disc_to_int}
  If $f:[0,1] \to [0,\infty)$ is non-increasing
  and $g:[0,1]\to [0,\infty)$ is affine and non-decreasing, for $r \ge 2$,
  \begin{align}
  \label{eq:sum_int}
  \frac{1}{r}\sum_{k\ge 1} r^{-k} (f(r^{-k}) \wedge g(r^{-k})) 
  \le \int_0^1 (f(\epsilon)\wedge g(\epsilon)) \dif \epsilon 
  \le r^2 \sum_{k \ge 1} r^{-k} (f(r^{-k}) \wedge g(r^{-k})) 
  \end{align}

\end{proposition}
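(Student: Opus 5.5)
The plan is to compare the integral of $h \deq f \wedge g$ with the sum over the disjoint dyadic-type intervals $I_k \deq [r^{-k-1}, r^{-k}]$ (or $[r^{-k}, r^{-k+1}]$), which tile $(0,1]$. The one structural fact I need about $g$ is this. Since $g$ is affine, nondecreasing and nonnegative on $[0,1]$, we can write $g(\epsilon) = a + b\epsilon$ with $a = g(0) \ge 0$ and $b \ge 0$. Hence for $\lambda \in [0,1]$ and $x \in [0,1]$,
\begin{align*}
 \lambda g(x) \le g(\lambda x) \le g(x).
\end{align*}
In particular, $g(r^{-k+1}) \le r\, g(r^{-k})$. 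Monotonicity of $f$ is used only in the obvious direction on each interval.

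\emph{Upper bound.} Fix $\epsilon \in [r^{-k}, r^{-k+1}]$ with $k \ge 1$. Then $f(\epsilon) \le f(r^{-k})$, and $g(\epsilon) \le g(r^{-k+1}) \le r\, g(r^{-k})$. Hence
\begin{align*}
h(\epsilon) \le f(r^{-k}) \wedge r\,g(r^{-k}) \le r\, h(r^{-k}).
\end{align*}
Integrating over this interval, whose length is at most $r^{-k+1}$, gives
\begin{align*}
\int_{r^{-k}}^{r^{-k+1}} h \le r^{-k+1} \cdot r\, h(r^{-k}) = r^2 r^{-k} h(r^{-k}).
\end{align*}
Summing over $k \ge 1$ covers $(0,1]$ and yields the right-hand inequality in \eqref{eq:sum_int}.

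\emph{Lower bound.} Here the term with index $k$ may only be charged to points $\epsilon \le r^{-k}$. The reason is that $f$ may drop to $0$ immediately to the right of $r^{-k}$, so points to the right carry no information about $h(r^{-k})$. I therefore use $I_k = [r^{-k-1}, r^{-k}]$. For $\epsilon \in I_k$, write $\lambda = \epsilon r^k \in [1/r, 1]$. Then $f(\epsilon) \ge f(r^{-k})$ and $g(\epsilon) \ge \lambda\, g(r^{-k})$, so
\begin{align*}
h(\epsilon) \ge \lambda\, h(r^{-k}) = \epsilon r^{k} h(r^{-k}).
\end{align*}
Integrating over $I_k$ gives
\begin{align*}
\int_{I_k} h \ge \tfrac12 (1 - r^{-2})\, r^{-k} h(r^{-k}).
\end{align*}
Summing over the disjoint intervals $I_k$, $k \ge 1$, yields $\int_0^1 h \ge \tfrac{1-r^{-2}}{2} \sum_{k \ge 1} r^{-k} h(r^{-k})$.

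The constant $\tfrac{1-r^{-2}}{2}$ is at least $1/r$ exactly when $r^2 - 2r - 1 \ge 0$, i.e.\ $r \ge 1+\sqrt 2$. For such $r$ the left inequality in \eqref{eq:sum_int} follows.

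The main obstacle is the stated constant $1/r$ for $2 \le r < 1+\sqrt2$. A crude pointwise bound only gives $h \ge h(r^{-k})/r$ on $I_k$. That yields $\tfrac{1}{r}(1 - \tfrac1r)$, which is even worse than the bound above. To close this gap I would first try a global rather than interval-by-interval argument. The idea is to use the whole interval $[0, r^{-k}]$ for the $k$-th term, where $h(\epsilon) \ge \min(h(r^{-k}), \epsilon r^k h(r^{-k}))$, and to control the overcounting through the geometric decay of the weights $r^{-k}$. Alternatively, I would compare $h(r^{-k-1})$ with $h(r^{-k})$ using $g(r^{-k-1}) \ge g(r^{-k})/r$, so that consecutive terms can be bounded jointly. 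If neither recovers $1/r$ exactly, I would record the lower inequality with the constant $\tfrac{1-r^{-2}}{2} \ge \tfrac{3}{8}$. This universal constant suffices for every use of \eqref{eq:sum_int} in the paper, since all applications are up to universal multiplicative constants with $r$ fixed.
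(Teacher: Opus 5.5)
Your upper bound is the paper's argument: bound $f\wedge g$ on $[r^{-k},r^{-k+1}]$ by $f(r^{-k})\wedge r\,g(r^{-k})$ and sum. Your interval-wise lower bound with constant $\tfrac{1-r^{-2}}{2}$ is correct.

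The case you leave open cannot be closed, because for $2\le r<1+\sqrt2$ the left inequality in \eqref{eq:sum_int} is false. Take $f=\mathbf{1}_{[0,1/r]}$ and $g(\epsilon)=\epsilon$. Then
\begin{align*}
\int_0^1 (f\wedge g)\dif\epsilon=\frac{1}{2r^2},\qquad \frac1r\sum_{k\ge1} r^{-k}\,(f\wedge g)(r^{-k})=\frac1r\sum_{k\ge1}r^{-2k}=\frac{1}{r(r^2-1)}.
\end{align*}
The second quantity exceeds the first exactly when $r^2-2r-1<0$; at $r=2$ this is $1/8<1/6$. This example gives equality in your bound, so $\tfrac{1-r^{-2}}{2}$ is the optimal constant. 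Neither the global argument over $[0,r^{-k}]$ nor the two-scale comparison you propose can recover $1/r$, so drop that part of the plan.

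The paper's proof of the left inequality uses the crude pointwise bound you mention: on $[r^{-k-1},r^{-k}]$,
\[
f\wedge g\ \ge\ f(r^{-k})\wedge g(r^{-k-1})\ \ge\ \tfrac1r (f\wedge g)(r^{-k}).
\]
It then bounds $\sum_k r^{-k}(r-1)r\,\bigl(f(r^{-k})\wedge g(r^{-k-1})\bigr)$ by $r\sum_k\int_{r^{-k-1}}^{r^{-k}}(f\wedge g)\dif\epsilon$. That step treats the interval as having length $r^{-k}(r-1)$ rather than $r^{-k-1}(r-1)$, so it is off by a factor of $r$. Done correctly, this route gives only $(r-1)/r^2$, which is what you computed.

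Your fallback is the right statement. Use the constant $\tfrac{1-r^{-2}}{2}$; it yields $1/r$ whenever $r\ge 1+\sqrt2$, including the $r\ge 4$ used for Theorem~\ref{thm:stationary_2}. Every application in the paper is up to universal constants, so nothing downstream is affected.
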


\begin{proof}
Since $f$ is nonincreasing and $g$ is affine, nondecreasing, and nonnegative (and thus has the form $g(r) = ar + b$ with $a,b \ge 0$), for any $r \ge 2$ we have
\begin{align*}
  &\quad\sum_{k \ge 1} r^{-k} (f(r^{-k})\wedge g(r^{-k})) 
  \le \sum_{k \ge 1} r^{-k} (r-1) r \cdot 
  \frac{1}{r} (f(r^{-k})\wedge g(r^{-k})) \\
  &\le \sum_{k \ge 1} r^{-k} (r-1) r  (f(r^{-k})\wedge g(r^{-k-1})) 
  \le r\sum_{k \ge 1} \int_{r^{-k-1}}^{r^{-k}} 
  (f(\epsilon)\wedge g(\epsilon))  \dif \epsilon\\
  &\le r\int_{0}^{1} 
  (f(\epsilon)\wedge g(\epsilon))  \dif \epsilon
  = r \sum_{k \ge 1} \int_{r^{-k}}^{r^{-k+1}} 
  (f(\epsilon)\wedge g(\epsilon))  \dif \epsilon\\
  &\le r \sum_{k \ge 1} (r^{-k+1}-r^{-k}) (f(r^{-k})\wedge g(r^{-k+1})) 
  \le r \sum_{k \ge 1} (r^{-k+1}-r^{-k}) (f(r^{-k})\wedge r g(r^{-k})) \\
  &\le r^{3} \sum_{k \ge 1} (f(r^{-k})\wedge g(r^{-k})) .
\end{align*}
Dividing through by $r$ gives the desired inequalities. 
\end{proof}

\noindent The next result gives upper and lower bounds on truncated binomial sums in terms of Shannon entropies; it is useful for estimating volumes of Hamming balls. (See, e.g., Lemma~8 on p.~310 of \cite{macwilliams_1977} for a more refined estimate.)

\begin{proposition}\label{prop:binom_est}
  For some $m,n\in \bN$, if $p \deq m/n \le \frac{1}{2}$, then 
\begin{align*}
 \frac{1}{n+1}e^{n h(p)} \le
\sum_{i=0}^m \binom{n}{i}
\le e^{n h(p)}
\end{align*}
where $h(p) \deq -p \log p -(1-p)\log (1-p)$ is the binary Shannon entropy in nats.
\begin{proof}
  Let $V_m \deq \sum_{i=0}^{m} \binom{n}{i}$. Observe that for $i \le n$ 
\begin{align*}
\frac{(1-p)^{n-i}p^i}{(1-p)^{n-m}p^m}
= (1-p)^{m-i} p^{i-m}=\Big(\frac{1}{p}-1\Big)^{m-i}
\end{align*}
is decreasing as $i$ increases. Therefore,
\begin{align*}
  1= (1-p+p)^{n} \ge \sum_{i=1}^m (1-p)^{n-i}p^{i}\binom{n}{i}
   \ge (1-p)^{n-m}p^{m} V_m = e^{-n h(p)}V_m 
\end{align*}
yields the upper bound. For lower bound,
since for binomial distribution, its mode is $\floor{(n+1)p}=m$, i.e., 
for any $i \le n$,
\begin{align*}
  (1-p)^{n-i}p^{i}\binom{n}{i} \le (1-p)^{n-m}p^{m}\binom{n}{m} 
  = e^{-n h(p)}\binom{n}{m}
\end{align*}
Summing over $i$ gives the lower bound.
\end{proof}
\end{proposition}

\noindent The  result below is a special case of the decorrelation lemma of \citet{chu_raginsky_unified_2023}:

\begin{proposition}\label{prop:decorr} Let $\mu$ and $\nu$ be two probability measures on a Borel space $\eX$ such that $\mu \ll \nu$, and let $f,g : \eX \to \Reals_+$ be two nonnegative measurable functions. Then the following inequality holds:
	\begin{align*}
		\langle \mu, fg \rangle \le \Bigg\langle \mu, f\sqrt{2\log\Big(\frac{\dif\mu}{\dif\nu}+1\Big)} \Bigg\rangle + \langle \nu, f\cdot (\exp(g^2)-1)\rangle.
	\end{align*}
\end{proposition}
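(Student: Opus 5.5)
The plan is to reduce the inequality to a pointwise bound and then integrate. Let $\rho \deq \frac{\dif\mu}{\dif\nu}$, which exists and is nonnegative since $\mu \ll \nu$. Then $\langle \mu, fg\rangle = \langle \nu, \rho f g\rangle$ and $\langle \mu, f\sqrt{2\log(\rho+1)}\rangle = \langle \nu, \rho f \sqrt{2\log(\rho+1)}\rangle$. Since $f \ge 0$, it suffices to prove that for every $x \in \eX$,
\begin{align*}
\rho(x)\, g(x) \le \rho(x)\sqrt{2\log(\rho(x)+1)} + \big(e^{g(x)^2}-1\big),
\end{align*}
then multiply by $f(x)$ and integrate against $\nu$. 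All terms are nonnegative, so the integrals are well defined in $[0,\infty]$, and the inequality is preserved even when some integral is infinite.

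For the pointwise bound, fix numbers $\rho, g \ge 0$ and split into two cases.

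\emph{Case $g \le \sqrt{2\log(\rho+1)}$.} Here $\rho g \le \rho\sqrt{2\log(\rho+1)}$. The second term $e^{g^2}-1$ is nonnegative, so the bound holds.

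\emph{Case $g > \sqrt{2\log(\rho+1)}$.} Here $\rho < e^{g^2/2}-1$, so $\rho g \le (e^{g^2/2}-1)g$. It remains to show $(e^{y}-1)g \le e^{2y}-1$ with $y = g^2/2$. Since $e^{2y}-1 = (e^y-1)(e^y+1)$ and $e^y - 1 \ge 0$, this reduces to $g \le e^{g^2/2}+1$. That holds because $e^{g^2/2} \ge 1 + g^2/2 \ge \sqrt{2}\,g \ge g$, using AM--GM for the middle step. The first term $\rho\sqrt{2\log(\rho+1)}$ is nonnegative, so the bound holds.

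The only step requiring care is the elementary inequality $(e^{g^2/2}-1)g \le e^{g^2}-1$ in the second case, which the factorization above disposes of. Everything else is bookkeeping: measurability of the composed functions, and the convention $0\cdot\infty = 0$ on sets where $f$ or $\rho$ vanishes.
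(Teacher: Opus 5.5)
Your proof is correct. The change of measure $\langle\mu,fg\rangle=\langle\nu,\rho fg\rangle$, followed by the pointwise bound $\rho g\le\rho\sqrt{2\log(1+\rho)}+e^{g^2}-1$ (obtained from your case split at $g=\sqrt{2\log(1+\rho)}$ and the factorization $e^{g^2}-1=(e^{g^2/2}-1)(e^{g^2/2}+1)$), is the same change-of-measure-plus-relaxed-Young-inequality mechanism that underlies the decorrelation lemma of Chu and Raginsky (2023). The paper gives no proof of its own and only cites that lemma in its $\psi_2$ case, so your argument is a self-contained verification of the statement.
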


\section*{Acknowledgments} This
work was supported in part by the NSF under the award CCF--2348624 (``Towards a control framework for neural generative modeling").

\bibliography{softmax_gaussian.bbl}

\begin{thebibliography}{32}
\providecommand{\natexlab}[1]{#1}
\providecommand{\url}[1]{\texttt{#1}}
\expandafter\ifx\csname urlstyle\endcsname\relax
  \providecommand{\doi}[1]{doi: #1}\else
  \providecommand{\doi}{doi: \begingroup \urlstyle{rm}\Url}\fi

\bibitem[Adler and Taylor(2007)]{Adler2007}
Robert~J. Adler and Jonathan~E. Taylor.
\newblock \emph{Random Fields and Geometry}.
\newblock Springer, 2007.

\bibitem[Auffinger and Chen(2015)]{auffinger_parisi_2015}
Antonio Auffinger and Wei-Kuo Chen.
\newblock The {{Parisi}} formula has a unique minimizer.
\newblock \emph{Communications in Mathematical Physics}, 335\penalty0
  (3):\penalty0 1429--1444, 2015.

\bibitem[Ben~Arous et~al.(2005)Ben~Arous, Bogachev, and
  Molchanov]{ben_arous_2005}
G{\'e}rard Ben~Arous, Leonid~V. Bogachev, and Stanislav~A. Molchanov.
\newblock Limit theorems for sums of random exponentials.
\newblock \emph{Probability Theory and Related Fields}, 132\penalty0
  (4):\penalty0 579--612, 2005.

\bibitem[Bovier(2006)]{bovier_statistical_2006}
Anton Bovier.
\newblock \emph{Statistical {{Mechanics}} of {{Disordered Systems}}: {{A
  Mathematical Perspective}}}.
\newblock Cambridge University Press, 2006.

\bibitem[Boyd and Vandenberghe(2004)]{boyd_vanderberghe_cvx}
Stephen Boyd and Lieven Vandenberghe.
\newblock \emph{Convex Optimization}.
\newblock Cambridge University Press, 2004.

\bibitem[Chatterjee(2014)]{Chatterjee_2014}
Sourav Chatterjee.
\newblock \emph{Superconcentration and Related Topics}.
\newblock Springer, 2014.

\bibitem[Chu and Raginsky(2023)]{chu_raginsky_unified_2023}
Yifeng Chu and Maxim Raginsky.
\newblock A unified framework for information-theoretic generalization bounds.
\newblock In A.~Oh, T.~Naumann, A.~Globerson, K.~Saenko, M.~Hardt, and
  S.~Levine, editors, \emph{Advances in Neural Information Processing Systems},
  volume~36, pages 79260--79278. Curran Associates, Inc., 2023.

\bibitem[Chu and Raginsky(2026)]{chu_raginsky_alt}
Yifeng Chu and Maxim Raginsky.
\newblock {Talagrand} meets {Talagrand}: Upper and lower bounds on expected
  soft maxima of {Gaussian} processes with finite index sets.
\newblock In \emph{Conference on Algorithmic Learning Theory}, 2026.
\newblock URL \url{https://arxiv.org/abs/2502.06709}.

\bibitem[Cover and Thomas(2006)]{cover_thomas}
Thomas~M. Cover and Joy~A. Thomas.
\newblock \emph{Elements of Information Theory}.
\newblock Wiley, 2nd edition, 2006.

\bibitem[Csisz{\'a}r and K{\"o}rner(2011)]{csiszarInformationTheoryCoding2011}
Imre Csisz{\'a}r and J{\'a}nos K{\"o}rner.
\newblock \emph{Information {{Theory}}: {{Coding Theorems}} for {{Discrete
  Memoryless Systems}}}.
\newblock Cambridge University Press, 2011.

\bibitem[Dobrushin(1960)]{dobrushin_1960}
Roland~L. Dobrushin.
\newblock Passage to the limit under the information and entropy signs.
\newblock \emph{Theory of Probability and Its Applications}, 5\penalty0
  (1):\penalty0 25--32, 1960.

\bibitem[Fernique(1975)]{Fernique_1975}
Xavier Fernique.
\newblock Regularit\'e des trajectoires des fonctions al\'eatoires gaussiennes.
\newblock In \emph{Ecole d'Et{\'{e}} de Probabilit{\'{e}}s de Saint-Flour
  {IV}{\textemdash}1974}, pages 1--96. Springer, 1975.

\bibitem[Fernique(1978)]{Fernique_1978}
Xavier Fernique.
\newblock Caract\'erisation de processus \`a trajectoires major\'ees ou
  continues.
\newblock \emph{S\'eminaire de probabilit\'es de Strasbourg}, 12:\penalty0
  691--706, 1978.

\bibitem[Gu\'edon and Zvavitch(2003)]{guedon_supremum_2003}
Olivier Gu\'edon and Artem Zvavitch.
\newblock Supremum of a {{Process}} in {{Terms}} of {{Trees}}.
\newblock In \emph{{{Geometric}} Aspects of Functional Analysis}, pages
  136--147. {Springer}, 2003.

\bibitem[Guerra(2003)]{guerraBrokenReplicaSymmetry2003}
Francesco Guerra.
\newblock Broken {{Replica Symmetry Bounds}} in the {{Mean Field Spin Glass
  Model}}.
\newblock \emph{Communications in Mathematical Physics}, 233\penalty0
  (1):\penalty0 1--12, 2003.

\bibitem[Kieffer(1991)]{kiefferSampleConversesSource1991}
John~C. Kieffer.
\newblock Sample converses in source coding theory.
\newblock \emph{IEEE Transactions on Information Theory}, 37\penalty0
  (2):\penalty0 263--268, 1991.

\bibitem[Kontoyiannis and Zhang(2002)]{kontoyiannis_arbitrary_2002}
Ioannis Kontoyiannis and Junshan Zhang.
\newblock Arbitrary source models and {{Bayesian}} codebooks in rate-distortion
  theory.
\newblock \emph{IEEE Transactions on Information Theory}, 48\penalty0
  (8):\penalty0 2276--2290, 2002.

\bibitem[Liu(2025)]{liuSimpleSharpGeneralization2025}
Jingbo Liu.
\newblock Simple and {{Sharp Generalization Bounds}} via {{Lifting}}, September
  2025.
\newblock URL \url{https://arxiv.org/abs/2508.18682}.

\bibitem[{MacWilliams} and Sloane(1977)]{macwilliams_1977}
Florence~J. {MacWilliams} and Neil~J.A. Sloane.
\newblock \emph{The Theory of Error-Correcting Codes}.
\newblock North-Holland, Amsterdam, 1977.

\bibitem[Palaiyanur and Sahai(2008)]{palaiyanur_sahai_2008}
Hari Palaiyanur and Anant Sahai.
\newblock On the uniform continuity of the rate-distortion function.
\newblock In \emph{2008 IEEE International Symposium on Information Theory},
  pages 857--861, 2008.

\bibitem[Panchenko(2013)]{panchenko_sherrington-kirkpatrick_2013}
Dmitry Panchenko.
\newblock \emph{The {{Sherrington-Kirkpatrick Model}}}.
\newblock Springer {{Monographs}} in {{Mathematics}}. {Springer}, 2013.

\bibitem[Panchenko(2014)]{panchenko_parisi_2014}
Dmitry Panchenko.
\newblock The {{Parisi}} formula for mixed $p$-spin models.
\newblock \emph{The Annals of Probability}, 42\penalty0 (3), 2014.

\bibitem[Polyanskiy and Wu(2024)]{polyanskiyInformationTheoryCoding2024}
Yury Polyanskiy and Yihong Wu.
\newblock \emph{Information {{Theory}}: {{From Coding}} to {{Learning}}}.
\newblock Cambridge University Press, 2024.

\bibitem[Russo and Zou(2016)]{russo_controlling_2016}
Daniel Russo and James Zou.
\newblock Controlling bias in adaptive data analysis using information theory.
\newblock In Arthur Gretton and Christian~C. Robert, editors, \emph{Proceedings
  of the 19th International Conference on Artificial Intelligence and
  Statistics}, volume~51 of \emph{Proceedings of Machine Learning Research},
  pages 1232--1240, Cadiz, Spain, 2016. PMLR.

\bibitem[Sason(2013)]{sasonEntropyBoundsDiscrete2013}
Igal Sason.
\newblock Entropy {{Bounds}} for {{Discrete Random Variables}} via {{Maximal
  Coupling}}.
\newblock \emph{IEEE Transactions on Information Theory}, 59\penalty0
  (11):\penalty0 7118--7131, 2013.

\bibitem[Talagrand(1987)]{Talagrand_1987}
Michel Talagrand.
\newblock Regularity of gaussian processes.
\newblock \emph{Acta Mathematica}, 159:\penalty0 99--149, 1987.

\bibitem[Talagrand(1992)]{Talagrand_1992}
Michel Talagrand.
\newblock A simple proof of the majorizing measure theorem.
\newblock \emph{Geometric and Functional Analysis}, 2\penalty0 (1):\penalty0
  118--125, March 1992.

\bibitem[Talagrand(2006)]{talagrand_parisi_2006}
Michel Talagrand.
\newblock The {{Parisi}} formula.
\newblock \emph{Annals of Mathematics}, 163\penalty0 (1):\penalty0 221--263,
  2006.

\bibitem[Talagrand(2011)]{Talagrand_MF}
Michel Talagrand.
\newblock \emph{Mean Field Models for Spin Glasses, Volume I: Basic Examples}.
\newblock Springer, 2011.

\bibitem[Talagrand(2014)]{talagrand_2014}
Michel Talagrand.
\newblock \emph{Upper and Lower Bounds for Stochastic Processes: Modern Methods
  and Classical Problems}.
\newblock Springer, 2014.

\bibitem[van Handel(2025)]{vanhandel_subgaussian}
Ramon van Handel.
\newblock On the subgaussian comparison theorem, 2025.
\newblock URL \url{https://arxiv.org/abs/2512.18588}.

\bibitem[Xu and Raginsky(2017)]{xu_raginsky_2017}
Aolin Xu and Maxim Raginsky.
\newblock Information-theoretic analysis of generalization capability of
  learning algorithms.
\newblock In I.~Guyon, U.~Von Luxburg, S.~Bengio, H.~Wallach, R.~Fergus,
  S.~Vishwanathan, and R.~Garnett, editors, \emph{Advances in Neural
  Information Processing Systems}, volume~30. Curran Associates, Inc., 2017.

\end{thebibliography}
\end{document}